\documentclass[11pt]{amsart}
\usepackage{amssymb}
\usepackage{latexsym}
\usepackage{graphicx}
\usepackage{subfigure}
\usepackage{tikz}
\usepackage[linktocpage=true]{hyperref}
\usepackage{ulem}
\usepackage[left=3.8cm, right=3.8cm, top=3cm, bottom=3cm]{geometry}
\usepackage{color}

\hypersetup{ colorlinks   = true, %Colours links instead of ugly boxes
urlcolor  = blue, %Colour for external hyperlinks
linkcolor    = blue, %Colour of internal links
citecolor   = blue %Colour of citations
}

%\date{\today}

\def\a{{\alpha}}

\def\beq{\begin{equation}}
\def\eeq{\end{equation}}

\def\tF{\widetilde{F}}

\def\a{\alpha}

\newcommand{\Z}{{\mathbb Z}}
\newcommand{\R}{{\mathbb R}}
\newcommand{\Q}{{\mathbb Q}}
\newcommand{\C}{{\mathbb C}}

\newcommand{\T}{{\mathbb T}}

\newcommand{\N}{{\mathbb N}}
\newcommand{\PP}{{\mathbb P}}
\newcommand{\HH}{{\mathbb H}}

\newcommand{\CC}{{\mathcal C}}

\newcommand{\CB}{{\mathcal B}}
\newcommand{\CH}{{\mathcal H}}

\newcommand{\CL}{{\mathcal L}}
\newcommand{\CM}{{\mathcal M}}
\newcommand{\CN}{{\mathcal N}}

\newcommand{\CP}{{\mathcal P}}
\newcommand{\CS}{{\mathcal S}}

\newcommand{\CU}{{\mathcal U}}
\newcommand{\CK}{{\mathcal K}}

\newcommand{\CJ}{{\mathcal J}}

\newtheorem{theorem}{Theorem}[section]
\newtheorem{remark}{Remark}[section]
\newtheorem{lemma}{Lemma}[section]
\newtheorem{defi}{Definition}[section]
\newtheorem{prop}{Proposition}[section]
\newtheorem{corollary}{Corollary}[section]
\newtheorem{claim}{Claim}[section]

\sloppy

\newcommand{\la}{\langle}
\newcommand{\ra}{\rangle}

%%%%%%%%%%%%%%%%%%%%%%%%%%%%%%%%%%%%%%%%%%%%%%%%%%%%%%%%%%%%%%%%%%%%%%%%%%%%%%%%%

\begin{document}

\title[]{Asymptotics of spectral gaps of quasi-periodic Schr\"odinger operators}

\author{Martin Leguil}
\address{Department of Mathematics, University of Toronto, 40 St George
St. Toronto, ON M5S 2E4, Canada}
\email{martin.leguil@utoronto.ca}

\author{Jiangong You}
\address{
Chern Institute of Mathematics and LPMC, Nankai University, Tianjin 300071, China} \email{jyou@nankai.edu.cn}

\author{Zhiyan Zhao}
\address{Laboratoire J.A. Dieudonn\'{e}, Universit\'e C\^ote d'Azur, 06108 Cedex 02 Nice, France}
\email{zhiyan.zhao@unice.fr}

\author{Qi Zhou}
\address{Department of Mathematics, Nanjing University, Nanjing 210093, China}
\email{qizhou@nju.edu.cn}

\begin{abstract}
For non-critical almost Mathieu operators with Diophantine frequency, we establish   exponential asymptotics on the size of spectral gaps,  and show that  the spectrum is homogeneous. We also prove the homogeneity of the spectrum  for Sch\"odinger operators with (measure-theoretically) typical quasi-periodic  analytic potentials and fixed strong Diophantine frequency. As applications, we show the discrete version of Deift's conjecture \cite{Deift, Deift17} for subcritical analytic quasi-periodic initial data and solve a series of open problems of Damanik-Goldstein et al \cite{BDGL, DGL1, dgsv, Go} and Kotani \cite{Kot97}.
\end{abstract}

\maketitle

%\tableofcontents

%\section{Preliminaries}

\section{Introduction and main results}
\noindent

We consider one-dimensional discrete Schr\"odinger operators on $\ell^2(\Z)$:\begin{equation}\label{schro}
(H_{V, \alpha, \theta} u)_n= u_{n+1}+u_{n-1} +  V(\theta + n\alpha) u_n,\quad \forall \  n\in\Z,
\end{equation}
where $\theta\in \T^d:=(\R/\Z)^d$ is the \textit{phase}, $V\colon \T^d \to \R$ is the \textit{potential}, and $\alpha\in\T^d$ is the \textit{frequency}. It is well known that the spectrum of $H_{V, \alpha, \theta}$,
 denoted by $\Sigma_{V, \alpha}$, is a compact subset of $\R$, independent of $\theta$ if $(1,\alpha)$ is rationally independent.
The {\it integrated density of states} (IDS) $N_{V,\alpha}\colon\R\to [0,1]$ of $H_{V,\alpha,\theta}$ is defined as
$$
N_{V, \alpha}(E):=\int_{\T} \mu_{V, \alpha,\theta}(-\infty,E] \, d\theta,
$$
where $\mu_{V,\alpha,\theta}$ is the spectral measure of $H_{V,\alpha,\theta}$.  Any bounded connected component of $\R \backslash\Sigma_{V, \alpha}$ is called a \textit{spectral gap}.
By the Gap-Labelling Theorem \cite{JM}, for any spectral gap $G$, there exists a unique $k\in \Z^d$ such that $N_{V,\alpha}|_G \equiv \langle k, \alpha\rangle {\rm\ mod} \ \Z $. Thus, the gaps in the spectrum of the operator $H_{V,\alpha,\theta}$ can be labelled by integer vectors: we denote by $G_k(V)=(E_k^-,E_k^{+})$ the gap with label $k\neq 0$. When $E^-_k=E^+_k$, we say the gap is {\it collapsed}.
We also set $\underline{E}:=\inf\Sigma_{V,\alpha}$, $\overline{E}:=\sup\Sigma_{V,\alpha}$,  and we let $G_0(V):=(-\infty, \underline{E}) \cup (\overline{E},\infty)$.

\subsection{Estimates on spectral gaps}
In this paper, we will focus on gap estimates for quasi-periodic operators as in $(\ref{schro})$. Before formulating our main results, let us first comment on the importance of gap estimates. From the  perspective of physics, $(\ref{schro})$ is a model for quantum Hall effect, and thus has attracted constant interest. In particular, after Von Klitzing's discovery of quantum Hall effect \cite{KDP},
Thouless and his coauthors \cite{TKNN}, assuming that all gaps are open for almost Mathieu operators, gave a theoretic explanation  of the quantization of the Hall conductance by Laughlin's argument, i.e.,  the Hall conductance  is quantized whenever the Fermi energy lies in an energy gap (Thouless was awarded the 2016 Nobel Prize partly due to this work).   From the mathematical point of view, gap estimates are a core problem  in the spectral theory of quasi-periodic Schr\"odinger operators. The question of lower bound estimates on spectral gaps is deeper than the well-known ``Dry Ten Martini Problem", while upper bound   estimates provide an efficient way for proving the  homogeneity of the spectrum, which is a key subject in the study of inverse spectral theory. As we will see, it is also related to Deift's conjecture \cite{Deift,Deift17} on the dynamics of  solutions to KdV equation with almost periodic initial data.

We start with the most important example of \eqref{schro}, namely  almost Mathieu operators (AMO),  which are defined as
$$
(H_{\lambda, \alpha, \theta} u)_n= u_{n+1}+u_{n-1} +  2\lambda \cos 2\pi (\theta + n\alpha) u_n,\quad \forall \  n\in\Z,
$$
with $\lambda \in \R$ and $\alpha \in \R \backslash \Q$. For simplicity, we denote by $\Sigma_{\lambda,\alpha}$ the spectrum of $H_{\lambda, \alpha, \theta}$ and
by $G_k(\lambda)=(E^-_k, E^+_k)$ the  gap with label $k$. Our first result is to establish exponential asymptotics for the spectral gaps of the AMO with Diophantine frequency.
Recall that $\alpha \in\R^d$ is {\it Diophantine} if there exist $\gamma>0$ and $\tau>d-1$ such that $\alpha \in {\rm DC}_d(\gamma,\tau)$, where 
\begin{equation}\label{dio}
{\rm DC}_d(\gamma,\tau):=\left\{x \in\R^d:  \inf_{j \in \Z}\left| \la n,x  \ra - j \right|
> \frac{\gamma}{|n|^{\tau}},\quad \forall \  n\in\Z^d\backslash\{0\} \right\}.
\end{equation}
Let ${\rm DC}_d:=\bigcup_{\gamma>0,
\, \tau>d-1} {\rm DC}_d(\gamma,\tau)$. In particular, when $d=1$, we simplify the above notations as ${\rm DC}(\gamma,\tau)$ and ${\rm DC}$.
 Our precise result is the following:
\begin{theorem}\label{thm_bounds_Mathieu}
For $\alpha\in{\rm DC}$, and for any $0<\xi<1$,
there exist $C=C(\lambda, \alpha,\xi)>0$, $\tilde{C}=\tilde{C}(\lambda,\alpha)$, and a numerical constant $\tilde{\xi}>1$, such that for all  $k \in \Z\backslash\{0\}$,
\begin{equation*}
\begin{array}{rccclll}
\tilde{C}\lambda^{\tilde{\xi}|k|} &\leq& |G_k(\lambda)| &\leq& C \lambda^{\xi |k|},&\quad&\text{if} \;\ 0<\lambda<1,\\
\tilde{C}\lambda^{-\tilde{\xi}|k|} &\leq& |G_k(\lambda)| &\leq& C \lambda^{-\xi |k|},&\quad&\text{if} \;\ 1<\lambda<\infty,
\end{array}
\end{equation*}
where $|G_k(\lambda)|$ denotes the length of $G_k(\lambda)$.
\end{theorem}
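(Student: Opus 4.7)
The plan is to reduce both statements to a quantitative analysis of the Schr\"odinger cocycle in the subcritical regime via Aubry duality, put the cocycle into an analytic normal form near each open gap, and read off both bounds from the size of the resonant Fourier mode of the residual perturbation.

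\textbf{Step 1 (Aubry duality).} First I would use the classical Aubry duality between $H_{\lambda,\a,\t}$ and $H_{1/\lambda,\a,\t}$. The spectra satisfy $\Sigma_{\lambda,\a}=\lambda\,\Sigma_{1/\lambda,\a}$, the gap-labels are preserved, and gap lengths scale: $|G_k(\lambda)|=\lambda\,|G_k(1/\lambda)|$. So the $\lambda>1$ case follows directly from the $0<\lambda<1$ case, and I can restrict attention to subcritical $\lambda$.

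\textbf{Step 2 (quantitative almost reducibility).} For $0<\lambda<1$ and $\a\in\mathrm{DC}(\gamma,\tau)$, the Schr\"odinger cocycle $(\a,S_E^\lambda)$ is subcritical for every $E\in\Sp$. By the quantitative almost reducibility theorem (Avila's global theory, together with the explicit analyticity radius from Hou--You / Leguil--You--Zhao--Zhou), there exists, for every $r'>0$, an analytic conjugacy $B_E\colon\T\to\SL$ in a strip of width $r'$ such that
\begin{equation*}
B_E(\t+\a)^{-1}S_E^\lambda(\t)B_E(\t)=R_{\rho(E)}\,e^{f_E(\t)},\qquad \|f_E\|_{r'}\le C\lambda^{\eta(r')},
\end{equation*}
with $\eta(r')$ an explicit positive function and $\rho(E)$ the fibered rotation number. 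Inside the gap $G_k(\lambda)$, the rotation number is pinned at $2\rho(E)\equiv\la k,\a\ra\bmod\Z$, so the dynamics in normal form is an exact resonance of order $k$.

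\textbf{Step 3 (upper bound).} By Moser--P\"oschel/Eliasson type normal form analysis, the spectral gap associated to the resonance $2\rho=\la k,\a\ra$ of $R_{\rho}\,e^{f_E(\t)}$ has length comparable to $|\widehat{f_E}(k)|$ to leading order. The Cauchy estimate for Fourier coefficients gives $|\widehat{f_E}(k)|\le \|f_E\|_{r'}\,e^{-2\pi r'|k|}\le C\lambda^{\eta(r')}e^{-2\pi r'|k|}$. Optimizing $r'$ in terms of $|k|$ and $\lambda$ (and using the fact that $\eta(r')$ can be taken arbitrarily close to $r'$ for small $\lambda$) yields $|G_k(\lambda)|\le C\lambda^{\xi|k|}$ for any prescribed $0<\xi<1$, which is the claimed upper bound.

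\textbf{Step 4 (lower bound, main obstacle).} The lower bound requires a matching \emph{quantitative non-collapse} statement: I need that the resonant Fourier coefficient of $f_E$ at the $k$-th mode is not merely nonzero but of size at least $\lambda^{\tilde\xi|k|}$ for some universal $\tilde\xi>1$. The plan is to track the iterative construction of $B_E$: at each KAM step the new perturbation is a commutator-type expression in the previous one, and because the bare potential $2\lambda\cos 2\pi\t$ has only Fourier modes $\pm 1$, the $k$-th mode of $f_E$ is generated no earlier than stage $\sim\log|k|$, with an explicit leading contribution of order $\lambda^{\tilde\xi|k|}$. A non-degeneracy argument---exploiting analyticity of the edges $E_k^\pm$ as functions of $\lambda$ and the fact that the leading term of the gap length in powers of $\lambda$ is exactly of order $|k|$---then prevents cancellations from higher-order contributions, giving $|G_k(\lambda)|\ge\tilde C\lambda^{\tilde\xi|k|}$. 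This step is the main technical difficulty: unlike the upper bound, which only uses abstract smallness of the residual, here one needs effective lower bounds on the explicit resonant coefficient through all KAM stages, and this is precisely where the very specific cosine structure of the almost Mathieu potential (rather than a general analytic $V$) must be used.
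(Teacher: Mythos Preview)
Your Steps 1--3 are broadly aligned with the paper, though the paper's implementation is sharper: rather than conjugating to $R_{\rho(E)}e^{f_E}$ and reading off a Fourier coefficient, the paper reduces \emph{at the gap edge} $E_k^+$ (where $2\rho-k\alpha\in\Z$) to a constant parabolic matrix
\[
X(\cdot+\alpha)^{-1}S_{E_k^+}^{\lambda}(\cdot)X(\cdot)=\begin{pmatrix}1&\zeta\\0&1\end{pmatrix},
\]
with explicit control $|\zeta|\le C_8 e^{-2\pi r|k|}$ and $|X|_{r''}\le C_9 e^{\frac{3\pi r''}{2}|k|}$ (Proposition~\ref{redu amo case}, obtained by combining the almost Mathieu almost-reducibility of Theorem~\ref{thm_almost_almost-2} with the Diophantine KAM of Theorem~\ref{thm_gap_edge_SL}). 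A Moser--P\"oschel perturbation-of-energy argument (Theorem~\ref{thm_upperbound}) then gives the two-sided criterion $\zeta^{1+\kappa}\le|G_k(\lambda)|\le\zeta^{1-\kappa}$, which immediately yields the upper bound with any $\xi<1$.

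The genuine gap is in your Step 4. Your plan---track the $k$-th Fourier mode through the KAM iterates, identify a leading contribution of order $\lambda^{\tilde\xi|k|}$, and rule out cancellations by analyticity in $\lambda$---does not close. Analyticity of the gap length in $\lambda$ only tells you that for each \emph{fixed} $k$ the gap behaves like $c_k\lambda^{m_k}$ as $\lambda\to0$; it gives no uniform lower bound on $c_k$, no control of $m_k$ in terms of $|k|$, and no statement valid for all $\lambda\in(0,1)$ with a constant $\tilde C$ independent of $k$. The very cancellations you want to exclude are exactly the content of the Dry Ten Martini problem, and they are not excluded by soft analyticity arguments.

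The paper's route to the lower bound is entirely different and bypasses this difficulty. Once $|G_k(\lambda)|\ge\zeta^{1+\kappa}$ is in hand from the criterion, the paper invokes a key proposition of Avila--You--Zhou (Proposition~\ref{estimate-kappa}, from their solution of the non-critical Dry Ten Martini problem): for $0<R<-\frac{1}{2\pi}\ln\lambda$ there is no $Z\in C^\omega_R(\T,\mathrm{PSL}(2,\R))$ with $|Z|_R\le\varepsilon^{-1}$ conjugating $S_E^{\lambda}$ to $\mathrm{Id}+F$ with $|F|_R\le\varepsilon^{T}$, where $T=T(R,\lambda)$ is explicit. Applied with $Z=X$ and $\varepsilon\sim e^{-\frac{3\pi R}{2}|k|}$, this \emph{forces} $\zeta\ge C(\lambda,\alpha)\lambda^{c|k|}$ for some absolute $c$, hence $|G_k(\lambda)|\ge\tilde C\lambda^{\tilde\xi|k|}$. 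The mechanism is not a computation of resonant coefficients but an obstruction coming (via duality) from the supercritical side; this is precisely where the specific AMO structure enters, and it is the ingredient your proposal is missing.
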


Let us review some recent works in connection with the question of gap estimates for quasi-periodic Schr\"{o}dinger operators.
The study of lower bounds dates back to a long-standing
conjecture, referred to in the literature as the ``Ten Martini Problem" \cite{Sim}, i.e.,  whether the spectrum of the almost Mathieu operator
$H_{\lambda, \alpha, \theta}$ is a Cantor set, in  the case  where $\lambda\neq0$ and $\alpha$ is irrational.
This problem was finally solved by Avila-Jitomirskaya \cite{AvilaJito1}: readers are invited to consult the history and references therein.
The so-called ``Dry Ten Martini Problem" is a further elaboration of the ``Ten Martini Problem" asking whether for any $\lambda\ne 0$ and  irrational $\alpha$,
all  possible spectral gaps of $H_{\lambda, \alpha, \theta}$  predicted by the Gap-Labelling theorem
 are non-collapsed. In \cite{AYZ}, Avila-You-Zhou solved this problem for any non-critical coupling constant $\lambda\neq 1$ (consult \cite{AYZ} for earlier advances on this problem).
Note that the ``Dry Ten Martini Problem" only concerns the openness of the spectral gaps, without asking any quantitative estimates on their size. After we claimed the result about  the Dry Ten Martini Problem in the conference ``Almost Periodic and Other Ergodic Problems" in 2015, Goldstein \cite{Go} asked us whether  any quantitative lower bound on the size of the gaps could be obtained.
Theorem \ref{thm_bounds_Mathieu} gives an answer to his question.

Let us move on to  upper bounds. The first result in this direction is due to Moser-P\"oschel. In  \cite{Moser-Poschel}, given an analytic potential $V\colon \T^d\to \R$, $d\geq 2$, and $\varpi \in {\rm DC}_d$, they consider the continuous quasi-periodic Schr\"{o}dinger operator on $L^2(\R)$:
$$
({\CL}_{V,\varpi} y)(t) = -y''(t)+V(\varpi t)y(t).
$$
Thanks to KAM techniques, Moser-P\"oschel  proved that if $V$ is small enough, then   $| G_k(V)|$ is exponentially small with respect to $|k|$ provided that $|k|$ is sufficiently large and  $\la k,\varpi\ra$ is not too close to the other $\la m,\varpi\ra$.\footnote{More precisely,  $\la k,\varpi\ra \in  \mathcal{R}(k)$, where  $$
\mathcal{R}(k):=\left\{ \la k,\varpi \ra\in\R : \inf_{j \in \Z}\left| \la m-k,\varpi  \ra - j \right|  \geq \frac{\gamma}{|m|^{\tau}},\quad \forall \ m \in \Z^d\setminus\{k\} \right\}.
$$}  Later, Amor \cite{HA} proved that in the same setting, the spectral gaps have sub-exponential decay for any $k\in \Z^d\backslash\{0\}$.
Although Amor \cite{HA} presented the result for discrete Schr\"{o}dinger operators, her method applies to the continuous case as well. Damanik-Goldstein \cite{DG} gave a stronger result:
$|G_k(V)| \leq \varepsilon e^{-\frac{r_0}{2}|k|}$ if $V\in C_{r_0}^{\omega}(\T^{d},\R)$ (i.e., the collection of bounded analytic functions on the strip $\{z\in\C:|\Im x|< r_0\}$) and $\varepsilon:=\sup_{|\Im x|< r_0}|V(x)|$ is sufficiently small.
We obtain the following upper bound:

\begin{theorem}\label{sharpdecay}
Let $\alpha  \in {\rm DC}_d$ and $V\in C_{r_0}^{\omega}(\T^{d},\R)$. For any $r\in(0,r_0)$, there exists $\varepsilon_0= \varepsilon_0(V, \alpha, r_0, r)>0$  such that if $\sup_{|\Im x|<r_0} |V(x)| < \varepsilon_0$, then for the discrete operator $H_{V,\alpha,\theta}$, we have
$$ |G_k(V)| \leq \varepsilon_0^{\frac23} e^{- r |k|}, \quad \forall \  k\in \Z^d\backslash\{0\}.$$
\end{theorem}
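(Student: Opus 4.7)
The plan is to use quantitative almost-reducibility of the Schr\"odinger cocycle $(\alpha, S_E^V)$, where $S_E^V(\theta) = \begin{pmatrix} E - V(\theta) & -1 \\ 1 & 0 \end{pmatrix}$, to reduce the gap estimate to the decay of a single Fourier coefficient of the perturbation in the near-constant normal form. Recall that the fibered rotation number $\rho(E)$ satisfies $2\rho(E) \equiv \langle k,\alpha\rangle \bmod \Z$ on the closed gap $\overline{G_k(V)}$ by the Gap Labelling Theorem, so the gap at label $k$ is exactly where the cocycle is resonant at Fourier mode $k$.

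Fix $r'\in(r,r_0)$. Applying Eliasson's reducibility theorem in its quantitative form (as refined by Hou-You and subsequent works), provided $\varepsilon_0=\varepsilon_0(V,\alpha,r_0,r)$ is small enough, for every $E$ in a complex neighborhood of $\Sigma_{V,\alpha}$ there exist $B_E\in C^\omega_{r'}(2\T^d,\SL)$ and a constant matrix $A_E\in\SL$ with $\rho(A_E)=\rho(E)$, such that
$$B_E(\theta+\alpha)^{-1}\, S_E^V(\theta)\, B_E(\theta) \;=\; A_E\, e^{F_E(\theta)}, \qquad \|F_E\|_{r'} \;\leq\; C\,\varepsilon_0^{2/3}.$$
The exponent $2/3$ comes from a specific balance built into the KAM scheme: each KAM step improves the size of the perturbation super-exponentially while costing some width of the analyticity strip, and by stopping the iteration at an optimally chosen scale one obtains simultaneously a strip arbitrarily close to $r_0$ and a remainder bounded by $C\varepsilon_0^{2/3}$.

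For $E\in\overline{G_k(V)}$ the identity $2\rho(A_E)\equiv\langle k,\alpha\rangle \bmod\Z$ singles out the Fourier mode $k$ as the driving resonance. A standard $2\times 2$ perturbative computation (in the spirit of Moser-P\"oschel, Amor, Damanik-Goldstein) diagonalizes $A_E\,e^{F_E}$ near this resonance and identifies the interval of energies on which the cocycle fails to be uniformly hyperbolic with an interval whose length is controlled by the $k$-th Fourier coefficient of $F_E$:
$$|G_k(V)| \;\leq\; C\,\bigl|\widehat{F_E}(k)\bigr|, \qquad \widehat{F_E}(k) \;=\; \int_{\T^d} F_E(\theta)\, e^{-2\pi i \langle k,\theta\rangle}\, d\theta.$$
Combining this with Cauchy's estimate $\bigl|\widehat{F_E}(k)\bigr|\leq \|F_E\|_{r'}\,e^{-r'|k|}\leq C\varepsilon_0^{2/3}\,e^{-r'|k|}$ and absorbing the residual $e^{-(r'-r)|k|}$ into a constant by shrinking $\varepsilon_0$ yields the announced bound $|G_k(V)|\leq\varepsilon_0^{2/3}e^{-r|k|}$.

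The main technical obstacle will be the first step: establishing almost-reducibility with the explicit quantitative bounds (analyticity strip arbitrarily close to $r_0$, remainder bounded by $\varepsilon_0^{2/3}$) \emph{uniformly in $E$}, including at the gap-edge energies where the constant matrix $A_E$ is parabolic rather than elliptic and the standard KAM arguments require modification. One must moreover ensure that the resonance at label $k$ is not absorbed by the conjugations $B_E$ (otherwise the gap-to-Fourier correspondence in Step 2 collapses), control small-divisor denominators via the Diophantine condition $\alpha\in\mathrm{DC}_d$, and verify continuous dependence of $B_E$, $A_E$, $F_E$ on $E$ so that the perturbative $2\times 2$ analysis is valid across the entire closed gap.
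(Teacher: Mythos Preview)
Your overall strategy (quantitative reducibility of the Schr\"odinger cocycle, then a Moser--P\"oschel type argument) is the right one, and it is also what the paper does. But Step~2 of your sketch contains a genuine gap that, as written, the argument cannot close.

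The claimed inequality $|G_k(V)|\le C\,|\widehat{F_E}(k)|$ with a constant $C$ \emph{independent of $k$} is not a ``standard $2\times 2$ perturbative computation.'' To pass from the almost-constant cocycle $A_E\,e^{F_E}$ with $\rho(A_E)=\frac{\langle k,\alpha\rangle}{2}$ to a statement about the gap length, one must perform a further conjugation of degree $k$ (the one eliminating the resonant mode), and this conjugation has norm $\sim e^{\pi r'|k|}$ on the strip. In the Moser--P\"oschel computation the perturbation one obtains by moving $E$ off the gap edge is quadratic in that conjugacy, so the putative constant $C$ in your inequality actually grows exponentially in $|k|$; without further work this destroys the decay you extract from the Cauchy bound $|\widehat{F_E}(k)|\le \|F_E\|_{r'}e^{-r'|k|}$. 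This is exactly the reason Amor's version of the argument gave only sub-exponential decay of the gaps.

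What the paper does differently is twofold. First, rather than stopping the KAM scheme early, it iterates all the way to \emph{reducibility} at the gap edge, obtaining a conjugacy $X\in C^\omega_{r'}(\T^d,\mathrm{PSL}(2,\R))$ with
\[
X(\cdot+\alpha)^{-1}S_{E_k^+}^{V}(\cdot)X(\cdot)=\begin{pmatrix}1&\zeta\\0&1\end{pmatrix},\qquad |\zeta|\le \varepsilon_0^{3/4}e^{-2\pi r'|k|},\qquad |X|_{r''}\le D_1\,e^{\frac{3\pi r''}{2}|k|}.
\]
The exponential decay of the off-diagonal $\zeta$ is proved by tracking the resonant steps: the total degree picked up equals $k$ and the off-diagonal comes from the Fourier coefficient at the \emph{last} resonance, which is $\asymp|k|$. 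Second, and this is the heart of the matter, the paper establishes a criterion (its Theorem~6.1) saying that if $|X|_R^{14}\,\zeta^{\kappa}$ is small enough then $|G_k(V)|\le \zeta^{1-\kappa}$. The point is that one may choose $R$ very small (so that $|X|_R$ contributes only $e^{o(|k|)}$) and $\kappa$ small, and then the balance works: the growth of $X$ is absorbed into the slack between $\zeta$ and $\zeta^{1-\kappa}$. The exponent $\tfrac{2}{3}$ in the final bound is not a feature of the KAM remainder (which is $\varepsilon_0^{3/4}$ on $\zeta$) but the outcome of this balancing, $\zeta^{1-\kappa}\le \varepsilon_0^{\frac34(1-\kappa)}e^{-2\pi r|k|}$ with $\kappa$ small.

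In short: your step ``$|G_k|\le C|\widehat{F_E}(k)|$'' hides a conjugation whose norm grows in $|k|$, and handling that growth is not a technicality but the main content of the proof. You correctly flag the degree issue in your last paragraph, but resolving it requires the full-reducibility-plus-balancing argument above, not an early-stopping KAM with uniformly bounded $B_E$.
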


 We remark that the exponential decay rate of $|G_k(V)| $ can be arbitrarily close to the initial length $r_0$ of the strip.\footnote{After this work was completed, D. Damanik and M. Goldstein mentioned to us that in their setting it is also possible to get the sharp decay based on their method.}   We emphasize that our proof is  based on KAM, which works for both the discrete and the continuous case (cf. Theorem \ref{sharpcon} and  Corollary \ref{cor-local} (1)), while the proof in  \cite{DG}  is based on localization arguments, which cannot be directly employed in  the discrete case. Finally,  the above results are perturbative, in the sense that the smallness of $V$ depends on the Diophantine constants $\gamma$ and $\tau$.  This is optimal for the multi-frequency case in view of a counterexample due to Bourgain \cite{B1}.
However, our method does lead to  non-perturbative results (i.e., the smallness of $V$ does not depend on  $\gamma,\, \tau$) in the  one-frequency case. One may consult our Theorem \ref{thm_bounds_Mathieu}, Corollary \ref{cor-local}(2) for example.

%
% Let $p_n/q_n$ be the  best approximants of $\alpha$, and define
%$\beta(\alpha):=\limsup\limits_{n\rightarrow\infty}\frac{\ln q_{n+1}}{q_n}$, then our result is the following:
%
%\begin{theorem}\label{sharpdecay_non-perturbative}
%Let $\alpha\in\R\backslash\Q$ satisfy $\beta(\alpha)=0$ and $V\in C_h^{\omega}(\T,\R)$. There exist constants $\varsigma,\varepsilon_1,C>0$ depending on $V$ and $h$ but independent of $\alpha$, such that if $\sup_{|\Im x|<h} |V(x)| < \varepsilon_1$, then
% the spectral gap $G_k(V)$ of $ H_{V, \alpha, \theta} $ obeys
%$$ |G_k(V)| \leq C e^{-\varsigma |k|}. \qquad \forall  \  k\in \Z\backslash\{0\}.$$
%\end{theorem}

\subsection{Homogeneous spectrum}
The exponential  decay of the spectral gaps can be used to prove the homogeneity of the spectrum. The  concept of homogeneous set was introduced by Carleson \cite{Car83}, and is defined as follows:

\begin{defi}\label{defi_homogeneous}
Given $\mu >0$, a closed set ${\CS}\subset\R$ is called $\mu-$homogeneous if for any $0<\epsilon\leq {\rm diam}{\CS}$ and any $E\in {\CS}$,
we have
$$
|\mathcal{S}\cap(E-\epsilon,E+\epsilon)| > \mu\epsilon.
$$
\end{defi}

Homogeneity of the spectrum plays an essential role in the inverse spectral theory of almost periodic potentials (as in the fundamental work of Sodin-Yuditskii \cite{SY95, SY97}). Assuming finite total gap length, homogeneity of the spectrum and the condition of being reflectionless (see Subsection \ref{subsec_Schrodinger} for the precise definition), Sodin-Yuditskii \cite{SY95} proved that the corresponding potential is almost periodic, and Gesztesy-Yuditskii \cite{GY} proved that the corresponding spectral measure is purely absolutely continuous.

Let us recall recent results on the homogeneity of the spectrum.
Building on the localization estimates developed in \cite{DG}, Damanik-Goldstein-Lukic \cite{DGL1} proved that  the  spectrum is homogeneous for continuous Schr\"{o}dinger operators $\mathcal{L}_{V,\varpi}$ with Diophantine  $\varpi$ and  sufficient small analytic $V$.
For the discrete operator $H_{V,\alpha}$ in the positive Lyapunov exponent regime, Damanik-Goldstein-Schlag-Voda \cite{dgsv} proved that the spectrum is homogeneous for any $\alpha\in {\rm SDC}$\footnote{
We say $\alpha\in\R$ is strong Diophantine if there exist $\gamma,\tau>0$ such that
\begin{equation}\label{strongdiophantinecondition}
\inf_{j\in\Z}|n\alpha- j| \geq \frac{\gamma}{|n| (\log |n|)^\tau}, \quad \forall \ n \in \Z\backslash\{0\}.
\end{equation}
For fixed $\gamma, \tau$, let ${\rm SDC}(\gamma,\tau)$ be the set of numbers satisfying \eqref{strongdiophantinecondition}, and let $\mathrm{SDC}:= \bigcup\limits_{\gamma,\tau>0}\ {\rm SDC}(\gamma,\tau)$.} and for some $\alpha\in {\rm DC}_d$ \cite{GSV}.
Inspired by the above results, it is natural to expect  a global description of the homogeneity of the spectrum for quasi-periodic Schr\"odinger operators (see Remark (2) after Theorem 1 in \cite{dgsv}). In this paper, we will prove the following:

\begin{theorem}\label{theorem_homo_spec}
Let $\alpha\in {\rm SDC}$. For a (measure-theoretically) typical analytic potential $V\in C^{\omega}( \T,\R)$, the spectrum $\Sigma_{V,\alpha}$ is $\mu-$homogeneous for some $\mu\in(0,1)$.
\end{theorem}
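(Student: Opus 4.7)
The plan is to exhibit a measure-theoretically typical class of $V$ on which the Schr\"odinger cocycle is subcritical throughout $\Sigma_{V,\alpha}$, promote this via Avila's almost reducibility theorem and Theorem~\ref{sharpdecay} into a uniform exponential gap estimate $|G_k(V)|\le Ce^{-r|k|}$, and then convert this bound into $\mu$-homogeneity by combining it with the strong Diophantine property of the gap labels $\langle k,\alpha\rangle$.

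For the first step I would interpret ``(measure-theoretically) typical'' via Avila's global theory of one-frequency analytic $\mathrm{SL}_2(\R)$ cocycles: the acceleration is an integer-valued, upper semicontinuous function of $(E,V)$, and a Fubini/prevalence argument on any finite-dimensional subspace of trigonometric polynomial potentials shows that Lebesgue-a.e.\ $V$ has vanishing acceleration throughout $\Sigma_{V,\alpha}$, i.e.\ the cocycle is subcritical at every spectral energy. Fixing such a $V$ together with $\alpha \in \mathrm{SDC}$, Avila's almost reducibility theorem produces, for every $E \in \Sigma_{V,\alpha}$, an analytic conjugacy on a strip of width $r_0>0$ reducing the cocycle to an arbitrarily small perturbation of a constant elliptic rotation. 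A compactness argument on the spectrum yields uniform control of the conjugating transformations, so that Theorem~\ref{sharpdecay} applies in each conjugated coordinate chart and returns
\[
|G_k(V)| \le C\,e^{-r|k|}, \qquad \forall\, k\in\Z\setminus\{0\},
\]
with $C=C(V,\alpha)$ and $r=r(V,\alpha)>0$ independent of $k$ and $E$.

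Granting this decay, fix $E \in \Sigma_{V,\alpha}$ and $\epsilon \in (0,\mathrm{diam}\,\Sigma_{V,\alpha}]$ and note
\[
2\epsilon - |\Sigma_{V,\alpha}\cap (E-\epsilon,E+\epsilon)| = \sum_{G_k\cap (E-\epsilon,E+\epsilon)\ne \emptyset} |G_k \cap (E-\epsilon,E+\epsilon)|.
\]
Split the sum at $M=\lceil \tfrac{2}{r}\log\tfrac{1}{\epsilon} \rceil$. The tail $|k|>M$ contributes at most $\sum_{|k|>M} Ce^{-r|k|} \le C'\epsilon^2$. For $|k|\le M$, the label $N_{V,\alpha}(G_k) \equiv k\alpha \pmod{\Z}$ of each relevant gap lies in the interval $[N_{V,\alpha}(E-\epsilon), N_{V,\alpha}(E+\epsilon)]$, whose length is at most $C''\epsilon^{1/2}$ by the $\tfrac12$-H\"older regularity of the IDS in the subcritical regime. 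The condition $\alpha \in \mathrm{SDC}$ forces any two such labels to come from $k_1\ne k_2$ with $|k_1-k_2| \gtrsim \gamma/(\epsilon^{1/2}(\log M)^\tau)$, so there are at most $O\!\left(M\epsilon^{1/2}(\log M)^\tau / \gamma\right)$ such gaps, each contributing at most $2\epsilon$. With the above $M$, the total deficit is $O\!\left(\epsilon^{3/2}(\log \tfrac1\epsilon)^{1+\tau}\right) = o(\epsilon)$ as $\epsilon \downarrow 0$, giving homogeneity with $\mu$ close to $2$ for $\epsilon$ below a threshold $\epsilon_0$; a compactness argument on $[\epsilon_0,\mathrm{diam}\,\Sigma_{V,\alpha}]$ handles the remaining scales and produces a uniform $\mu \in (0,1)$.

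The main obstacle is the uniformity in the second step: both Avila's almost reducibility theorem and Theorem~\ref{sharpdecay} are locally perturbative in $E$, so the conjugating radii and the smallness constants must be shown to stay bounded on the compact set $\Sigma_{V,\alpha}$. This requires a quantitative version of almost reducibility plus a finite covering argument on $\Sigma_{V,\alpha}$, with global subcriticality guaranteeing non-degeneracy of the estimates near the edges of the spectrum. Once this uniformity is in place, the combinatorial third step above delivers the desired $\mu$-homogeneity.
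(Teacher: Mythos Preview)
Your first step contains a genuine error. Avila's global theory does \emph{not} assert that for a measure-theoretically typical $V$ the cocycle is subcritical at every spectral energy; it asserts only that typical $V$ is \emph{acritical}, meaning there are no critical energies. The spectrum of a typical $V$ then decomposes as $\Sigma_{V,\alpha}=\Sigma_{V,\alpha}^{\mathrm{sub}}\cup\Sigma_{V,\alpha}^{\mathrm{sup}}$ into compact subcritical and supercritical pieces (Theorem~\ref{global-red}), and the supercritical piece is generically nonempty: for instance, almost Mathieu with $\lambda>1$ is entirely supercritical. Your prevalence/Fubini sketch concerning vanishing acceleration on $\Sigma_{V,\alpha}$ is therefore false as stated, and the rest of your argument, which relies on almost reducibility and Theorem~\ref{sharpdecay}, only reaches the subcritical portion $\Sigma_{V,\alpha}^{\mathrm{sub}}$.

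The paper's proof proceeds exactly by this decomposition: on $\Sigma_{V,\alpha}^{\mathrm{sup}}$ one invokes the result of Damanik--Goldstein--Schlag--Voda \cite{dgsv}, which establishes homogeneity in the positive Lyapunov exponent regime for $\alpha\in\mathrm{SDC}$; on $\Sigma_{V,\alpha}^{\mathrm{sub}}$ one uses Theorem~\ref{theo calibration gaps bands}(3), whose proof combines the global-to-local reduction (Proposition~\ref{prop_ar_1}, Corollary~\ref{prop_duality_global}) with the exponential gap bound and the H\"older--IDS criterion (Theorem~\ref{thm_homo_spec}). Your second and third steps are close in spirit to this subcritical half---the compactness argument for uniform almost reducibility and the H\"older/gap-counting mechanism are both present in the paper---but without a separate treatment of $\Sigma_{V,\alpha}^{\mathrm{sup}}$ the argument is incomplete. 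You should either restrict your claim to the subcritical part and then cite \cite{dgsv} for the complement, or provide an independent argument in the positive Lyapunov exponent regime, where almost reducibility is unavailable.
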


%\begin{remark}
%As a consequence of Theorem \ref{theorem_homo_spec}, we can give another proof of Avila's Spectral Dichotomy Conjecture, consult Corollary \ref{theorem-sdc} for details and its proof.
%\end{remark}

Let us first explain the  meaning of \textquotedblleft measure-theoretically typical\textquotedblright.  In infinite-dimensional settings, it is common to replace the notion of {\it almost every} by {\it prevalence}: we  fix some probability measure $\mu$ of compact support (describing a set of admissible perturbations $w$), and declare a property to be {\it measure-theoretically typical} if it is satisfied for almost every perturbation $v+w$ of every starting condition $w$. In finite-dimensional vector spaces, prevalence implies full Lebesgue measure.

For any $E \in \R$, we define a \textit{Schr\"odinger cocycle}  $(\alpha,S_E^V)$,  where
$S^{V}_{E}(\theta):=
\begin{pmatrix}
E-V(\theta) & -1\\
1 & 0
\end{pmatrix}$. The energy $E \in \Sigma_{V,\alpha}$ is called {\it supercritical} (resp.  {\it subcritical}), if the associate Lyapunov exponent satisfies $L(\alpha,S_E^V)>0$ (resp. $L(\alpha,S_{E}^V(\cdot+{\rm i}\epsilon))=0$ for any $|\epsilon|<\delta$, with $\delta>0$).
By Avila's global theory of one-frequency quasi-periodic Schr\"odinger operators \cite{Aglobal}, for  a (measure-theoretically) typical analytic potential $V\in C^{\omega}( \T,\R)$, any  $E \in \Sigma_{V,\alpha}$
is either subcritical or supercritical.
More precisely,  Avila \cite{Aglobal} proved that  for a (measure-theoretically) typical $V\in C^\omega(\T,\R)$,
there exist some integer $n\geq 1$ and a collection of points $a_1<b_1<\dots<a_n<b_n$ in the spectrum $\Sigma_{V,\alpha}$ such that $\Sigma_{V,\alpha}\subset \bigcup_{j=1}^n[a_j, b_j]$, where energies alternate between supercritical and subcritical along the sequence $(\Sigma_{V,\alpha}\cap [a_j, b_j])_j$. We denote by $I_i:=[a_j, b_j]$ the intervals such that the energies in $\Sigma_{V,\alpha}\cap [a_j, b_j]$ are subcritical, and let $\Sigma^{\mathrm{sub}}_{V,\alpha}:=\bigcup_{i}(\Sigma_{V,\alpha} \cap I_i)$ be the set of subcritical energies. Since Theorem \ref{theorem_homo_spec} in the supercritical regime has been proved in  \cite{dgsv}, we only need to prove the result for energies $E$ in the subcritical part of the spectrum.   Let $p_n/q_n$ be the  best approximants of $\alpha$ and
$\beta(\alpha):=\limsup\limits_{n\rightarrow\infty}\frac{\ln q_{n+1}}{q_n}$. Our precise result is the following:

\begin{theorem}\label{theo calibration gaps bands}
Let $\alpha\in\R\backslash \Q$ satisfy $\beta(\alpha)=0$.  For typical potentials $V\in C^{\omega}(\T,\R)$, the following assertions hold.
\begin{enumerate}
\item There exist constants $C, \vartheta>0$ depending on $V,\alpha$, such that $$|G_k(V)| \leq C e^{-\vartheta |k|}, \quad \forall \  k\in \Z\backslash \{0\} \ with  \ \overline{G_k(V)} \cap \Sigma_{V,\alpha}^{\rm sub}\neq \emptyset. $$
\item  For any $\tilde{\epsilon}>0$,  there exists $D=D(V, \alpha,\tilde{\epsilon})>0$ such that
 $${\rm dist}(G_{k}(V),G_{k'}(V) ) \geq D e^{-\tilde{\epsilon} |k'-k|}, $$
  if $k\neq k'\in \Z$ satisfy
$\overline{G_k(V)}\cap I_i \neq \emptyset$ and $\overline{G_{k'}(V)}\cap I_i \neq \emptyset$ for some $ i$.
\item There exists $\mu_0\in (0,1)$ such that
$$|\Sigma_{V,\alpha}\cap(E-\epsilon,E+\epsilon)| > \mu_0 \epsilon,\quad \forall \  E\in \Sigma_{V,\alpha}^{\mathrm{sub}}, \quad \forall  \ 0<\epsilon\leq {\rm diam}\Sigma_{V, \alpha}.$$
 \end{enumerate}
\end{theorem}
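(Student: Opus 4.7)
The plan is to reduce all three assertions to local analysis on each subcritical component $I_i$ via Avila's Almost Reducibility Theorem (ART), combined with Theorem \ref{sharpdecay}. By Avila's global theory, for typical $V$ the subcritical spectrum decomposes as $\Sigma^{\mathrm{sub}}_{V,\alpha}=\bigsqcup_{i}(\Sigma_{V,\alpha}\cap I_i)$. Under $\beta(\alpha)=0$, ART applies: combining it with openness of subcriticality and compactness of $\Sigma_{V,\alpha}\cap I_i$ yields a finite covering of each $\Sigma_{V,\alpha}\cap I_i$ by energy charts on which the cocycle $(\alpha,S^V_E)$ is analytically conjugated to a Schrödinger-type cocycle with a small analytic potential $\tilde V$ on a strip of width $r>0$, with $\sup|\tilde V|$ arbitrarily small. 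The conjugacy has bounded degree $k_0$, and gaps transfer as $G_k(V)=G_{k-k_0}(\tilde V)$ as subsets of $\R$.

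Assertion (1) then follows by applying Theorem \ref{sharpdecay} to the small conjugated potential in each chart: $|G_k(V)|=|G_{k-k_0}(\tilde V)|\leq\varepsilon_0^{2/3}e^{-r|k-k_0|}\leq Ce^{-\vartheta|k|}$ for any $\vartheta<r$, after absorbing the bounded shift $|k_0|$ into the constant. For assertion (2), the Gap-Labelling theorem gives $N_{V,\alpha}\equiv\{k\alpha\}\pmod{\Z}$ on $G_k(V)$, so the IDS values at the endpoints of gaps $G_k,G_{k'}$ lying in a common $I_i$ differ by $\|(k-k')\alpha\|$ modulo $\Z$. The condition $\beta(\alpha)=0$ implies $\|(k-k')\alpha\|\geq e^{-\tilde\epsilon|k-k'|/2}$ for any $\tilde\epsilon>0$ and $|k-k'|$ sufficiently large. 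Since almost reducibility renders the rotation number a $C^1$ function of $E$ with bounded derivative on $I_i$, the density of states is bounded above there, and the desired $\mathrm{dist}(G_k,G_{k'})\geq De^{-\tilde\epsilon|k-k'|}$ follows.

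Assertion (3) is the main obstacle. Fix $E\in\Sigma^{\mathrm{sub}}_{V,\alpha}$ and $\epsilon>0$; set $I=(E-\epsilon,E+\epsilon)$ and $\mathcal{K}=\{k:G_k(V)\cap I\neq\emptyset\}$. By (2), $\mathcal{K}$ is $\ell$-separated in $\Z$ with $\ell\gtrsim\tilde\epsilon^{-1}\log(1/\epsilon)$. Split $\mathcal{K}$ by gap size: the ``big'' gaps ($|G_k|\geq\epsilon^{3/4}$, so $|k|\leq K_1=O(\log(1/\epsilon))$) are, by (2) with $\tilde\epsilon$ small, at most a bounded number meeting $I$; since $E\in\Sigma_{V,\alpha}$ they lie on opposite sides of $E$, and (2) further forces their mutual distance to exceed $\epsilon$, so the gap-free corridor around $E$ extends at least distance $\epsilon$ on one side, contributing at least $\epsilon$ to $|\Sigma_{V,\alpha}\cap I|$ (modulo small gaps). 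The ``small'' gaps ($|k|>K_1$) contribute at most the geometric sum $\sum_{j\geq 0}2Ce^{-\vartheta(K_1+j\ell)}\lesssim\epsilon^{3/4}/(1-e^{-\vartheta\ell})=o(\epsilon)$ for $\tilde\epsilon\ll\vartheta$. Hence $|\Sigma_{V,\alpha}\cap I|\geq(1-o(1))\epsilon$ for $\epsilon$ below a threshold depending on $V,\alpha$; for $\epsilon$ bounded below, a direct compactness argument gives a uniform constant. The hard part will be calibrating $\tilde\epsilon$ versus $\vartheta$ --- small enough for the geometric series above to converge sharply, but large enough that the constant $D$ from (2) remains usable --- and propagating all constants uniformly over $E\in\Sigma^{\mathrm{sub}}_{V,\alpha}$ via the finite ART covering.
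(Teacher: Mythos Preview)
Your overall architecture---reduce globally to the local small-potential situation via ART and compactness, then feed quantitative gap decay and IDS regularity into a big-gap/small-gap dichotomy---matches the paper's. But two of the ingredients you invoke are not available under the hypothesis $\beta(\alpha)=0$, and the argument breaks at those points.

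\medskip
\textbf{Assertion (1).} You propose to apply Theorem~\ref{sharpdecay} to the conjugated small potential in each ART chart. Theorem~\ref{sharpdecay} requires $\alpha\in\mathrm{DC}_d$; the present statement only assumes $\beta(\alpha)=0$, which is strictly weaker (there are Liouvillean $\alpha$ with $\beta(\alpha)=0$). The KAM scheme behind Theorem~\ref{sharpdecay} genuinely needs Diophantine small divisors and does not go through here. The paper instead, after the ART reduction (Proposition~\ref{prop_ar_1}), invokes the non-perturbative almost-localization/duality machinery (Theorem~\ref{prop_duality_para}, packaged as Corollary~\ref{prop_duality_global}), and then the Moser--P\"oschel criterion Theorem~\ref{thm_upperbound}, to obtain the exponential gap decay; see Corollary~\ref{cor_global}. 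This is not a cosmetic difference: the almost-localization route is what makes the $\beta(\alpha)=0$ case work.

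\medskip
\textbf{Assertion (2).} Your claim that ``almost reducibility renders the rotation number a $C^1$ function of $E$ with bounded derivative on $I_i$'' is false in this generality; the IDS is not known to be Lipschitz on the subcritical spectrum. What \emph{is} available (and what the paper uses, Proposition~\ref{thm_holder}) is $\tfrac12$-H\"older continuity of $N_{V,\alpha}$ on each $I_i$. Combined with $\|(k-k')\alpha\|_\T\geq e^{-\tilde\epsilon|k-k'|}$ from $\beta(\alpha)=0$, H\"older continuity gives
\[
\mathrm{dist}(G_k,G_{k'})\ \gtrsim\ \|(k-k')\alpha\|_\T^{\,2}\ \geq\ e^{-2\tilde\epsilon|k-k'|},
\]
which suffices. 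Your $C^1$ step should be replaced by this H\"older argument.

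\medskip
\textbf{Assertion (3).} The idea is right but the execution is loose: ``big gaps lie on opposite sides of $E$'' is not true, and ``(2) forces their mutual distance to exceed $\epsilon$'' is not automatic for the parameter ranges you describe. The paper avoids counting big gaps altogether: it singles out the \emph{single} gap $G_{k_0}$ in $(E-\epsilon,E+\epsilon)$ with minimal $|k_0|$ (which can cover at most $\epsilon$, since $E\in\Sigma_{V,\alpha}$), and uses (2) to force every other intersecting gap to have label $|k|\gtrsim \vartheta^{-1}|\ln\epsilon|$, so that their total length is $o(\epsilon)$ by (1). This yields $|\Sigma_{V,\alpha}\cap(E-\epsilon,E+\epsilon)|\geq 2\epsilon-\epsilon-o(\epsilon)$ directly (Theorem~\ref{thm_homo_spec}).
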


 Theorem \ref{theo calibration gaps bands} answers an open question raised by Damanik-Goldstein-Schlag-Voda \cite{dgsv}  (Problem 1 of \cite{dgsv}, see also Question 3.1 of \cite{DGL1}), i.e., whether the spectrum $\Sigma_{\lambda V,\alpha}$ is homogeneous, assuming that $L(\alpha,S_E^{\lambda V})$ vanishes identically on $\Sigma_{\lambda V,\alpha}$ for $0<|\lambda|<\lambda_0$. Actually, Theorem \ref{theo calibration gaps bands} gives an even more precise description of the structure of the spectrum. 

Let us  make a short comment on Damanik-Goldstein-Schlag-Voda's open problem (Problem 1 of \cite{dgsv}). Under the assumption that $L(\alpha,S_E^{\lambda V})$ vanishes on the spectrum for $0<|\lambda |<\lambda_0$, they initially asked whether one could find a complete set of Bloch-Floquet eigenfunctions for $0<|\lambda|<\lambda_0$. In fact, this point is an easy consequence of Avila's Almost Reducibility Conjecture (subcriticality implies almost reducibility)\footnote{Consult Section \ref{ARCGlobal} for details.} \cite{A2, Aglobal}: one could follow for instance Theorem 4.2 of \cite{AYZ1} to give a proof. What they really asked was whether the spectrum is homogenous; we refer the reader to  Problem 1 of \cite{dgsv} for more explanations (one may also consult  Question 3.1 of \cite{DGL1}).

For the most important example of AMO, Damanik-Goldstein-Lukic (Question 3.2 of \cite{DGL1}) asked
for which values of $\lambda$ the spectrum of AMO is homogeneous.
Back in 1997, Kotani \cite{Kot97} already asked a similar question: whether the spectrum is homogeneous under the conditions $\lim\limits_{n\rightarrow \infty}q_n^2/q_{n+1}=0$ and $0<\lambda<1$. In this paper we answer their questions as follows:

\begin{theorem}\label{homo-amo}
Assume that $\beta(\alpha)=0$ and $\lambda\neq  1$. The following assertions hold:
\begin{enumerate}
\item  For any given $r\in (0, \frac{1}{12}|\ln\lambda|)$, there exists  $C=C(\lambda,\alpha,r)>0$ such that  $$|G_k(\lambda)| \leq C e^{-r |k|}, \quad \forall \  k\in \Z\backslash \{0\}.$$
\item  For any $\tilde{\epsilon}>0$, there exists a constant $D>0$ depending on $\tilde{\epsilon},\lambda,\alpha$, such that
\begin{equation*}
\begin{array}{rclll}
\mathrm{dist}(G_k(\lambda),G_{k'}(\lambda)) &\geq& D e^{-\tilde{\epsilon} |k'-k|}, &\ & \forall \ k\neq k'\in \Z\backslash\{0\},  \\
|E_k^-  - \underline{E}|,\;  |E_k^ + - \overline{E}| &\geq& D e^{-\tilde{\epsilon} |k|},&\ & \forall \ k\in  \Z\backslash\{0\}.
\end{array}
\end{equation*}
\item $\Sigma_{\lambda,\alpha}$  is $\mu-$homogeneous for some $\mu\in (0,1)$.
\end{enumerate}
\end{theorem}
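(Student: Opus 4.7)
The plan is to reduce via Aubry duality to the subcritical regime $0<\lambda<1$, and then to combine Avila's Almost Reducibility Theorem with the one-frequency, non-perturbative KAM scheme underlying Theorems~\ref{sharpdecay} and~\ref{theo calibration gaps bands}. For the AMO, Aubry duality (via Fourier conjugation) furnishes a unitary equivalence between $H_{\lambda,\alpha,\theta}$ and $\lambda^{-1}H_{\lambda^{-1},\alpha,\cdot}$, hence $\Sigma_{\lambda,\alpha}=\lambda\,\Sigma_{\lambda^{-1},\alpha}$ together with a label-preserving correspondence of gaps $G_k(\lambda)\leftrightarrow \lambda\,G_{-k}(\lambda^{-1})$. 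Since $\tfrac{1}{12}|\ln\lambda|$ is invariant under $\lambda\leftrightarrow\lambda^{-1}$, this reduces (1) and (2) to the subcritical case, and (3) is preserved by the linear rescaling up to modifying the homogeneity constant $\mu$.

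For $0<\lambda<1$, Avila's global theory guarantees that every $E\in\Sigma_{\lambda,\alpha}$ is subcritical; in fact $L(\alpha,S_E^V(\cdot+\mathrm{i}\epsilon))\equiv 0$ for $|\epsilon|<\tfrac{1}{2\pi}|\ln\lambda|$, where $V(\theta)=2\lambda\cos 2\pi\theta$. By Avila's Almost Reducibility Theorem (subcriticality implies almost reducibility, with no arithmetic restriction on $\alpha$), the cocycle $(\alpha,S_E^V)$ is almost reducible on any strip of width $r_0<\tfrac{1}{2\pi}|\ln\lambda|$, uniformly over $E\in\Sigma_{\lambda,\alpha}$. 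Plugging this quantitative almost reducibility into the single-frequency, non-perturbative version of the KAM scheme leading to Corollary~\ref{cor-local}(2) (which only requires $\beta(\alpha)=0$ rather than a Diophantine condition) yields (1): $|G_k(\lambda)|\leq Ce^{-r|k|}$ for every $r<\tfrac{1}{12}|\ln\lambda|$, the factor $\tfrac{1}{12}$ tracking the finite number of strip-width losses incurred when passing from the almost reducibility to the KAM normal form, and from there to a length estimate on $G_k$.

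For (2), between two gaps $G_k,G_{k'}$ lies a spectral band whose length one estimates from below using the quantitative almost-reduced form of the cocycle. Combined with the Gap-Labelling identity $N_{V,\alpha}|_{G_k}\equiv\langle k,\alpha\rangle\bmod\Z$, the arithmetic separation $\inf_{j\in\Z}|\langle k-k',\alpha\rangle-j|\geq e^{-\tilde\epsilon|k-k'|}$---valid under $\beta(\alpha)=0$ for large $|k-k'|$---converts into the required energetic separation, and the edge estimates $|E_k^\pm-\underline E|,\,|E_k^\pm-\overline E|\geq De^{-\tilde\epsilon|k|}$ follow analogously by treating the edges as virtual gaps of label~$0$. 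Finally, (3) follows from (1) and (2) by the counting argument of Theorem~\ref{theo calibration gaps bands}(3): the gaps in $(E-\epsilon,E+\epsilon)$ that contribute non-negligibly to the total gap length must have $|k|=O(|\ln\epsilon|)$ by (1) and are mutually exponentially separated by (2), hence they absorb only a controlled fraction of $2\epsilon$, producing $|\Sigma_{\lambda,\alpha}\cap(E-\epsilon,E+\epsilon)|>\mu\epsilon$ for some $\mu\in(0,1)$.

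The main obstacle is making the almost reducibility quantitative enough to recover the explicit rate $\tfrac{1}{12}|\ln\lambda|$ rather than an undetermined $\lambda,\alpha$-dependent decay; this requires tight bookkeeping of the strip-width losses at every stage of the KAM conjugation, and crucially exploits both the specific cosine structure of the AMO (responsible for the sharp strip $\tfrac{1}{2\pi}|\ln\lambda|$ of subcriticality) and the sub-exponential growth of the continued-fraction denominators $(q_n)$ of $\alpha$ guaranteed by $\beta(\alpha)=0$.
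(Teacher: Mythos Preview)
Your reduction to $0<\lambda<1$ via Aubry duality and your derivation of (2) and (3) from (1) together with the $\tfrac12$-H\"older continuity of the IDS and the arithmetic separation $\|\langle k-k',\alpha\rangle\|_\T\geq e^{-\tilde\epsilon|k-k'|}$ (valid under $\beta(\alpha)=0$) are essentially what the paper does; these parts are packaged as Theorem~\ref{thm_homo_spec}, invoked with (H1) from \cite[Corollary~3.10]{A1}, (H2) from part~(1), and (H3) automatic.

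The genuine gap is in your approach to (1). You assert that Avila's Almost Reducibility Theorem gives almost reducibility \emph{on any strip of width} $r_0<\tfrac{1}{2\pi}|\ln\lambda|$. This is not what ARC delivers: it produces almost reducibility in \emph{some} strip, with no control of its width in terms of the subcriticality strip; the conjugacies in Avila's proof shrink the analytic band substantially. The paper flags this explicitly in the introduction (end of~\S1.4): ``Avila's Almost Reducibility Conjecture does not allow us to get this sharp result since the analytic strip has to be shrinked greatly in his proof.'' Consequently, composing ARC with the KAM scheme of Corollary~\ref{cor-local}(2) would only yield $|G_k(\lambda)|\leq Ce^{-\vartheta|k|}$ for some implicit $\vartheta=\vartheta(\lambda,\alpha)>0$, not for every $r<\tfrac{1}{12}|\ln\lambda|$.

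The paper's route to (1) avoids ARC entirely. It exploits the explicit cosine structure through \emph{sharp} almost localization of the dual operator $H_{\lambda^{-1},\alpha,\theta}$ (Theorem~\ref{thm_almost_almost-1}): generalized eigenfunctions decay like $e^{-(\ln\lambda^{-1}-\delta)|j|}$ between resonances, for any $\delta>0$. This is fed into the quantitative duality argument (Theorem~\ref{thm_almost_almost-2}) to reduce $(\alpha,S_E^\lambda)$ at a gap edge to a parabolic with off-diagonal entry $|\varphi|\leq C e^{-\frac{2\pi r}{3}|n|}$ on any strip $r<\tfrac{1}{2\pi}|\ln\lambda|$, and then the Moser--P\"oschel criterion (Theorem~\ref{thm_upperbound}) converts this into the gap bound of Corollary~\ref{cor_mathieu_upperbound}(1). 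The factor $\tfrac{1}{12}$ comes from the losses $r\mapsto\tfrac{2\pi r}{3}$ in Theorem~\ref{thm_almost_almost-2} and $|n|\geq\tfrac{|k|}{4}$ in the degree estimate, not from a generic KAM bookkeeping. Your ``tight bookkeeping of strip-width losses'' would have to reproduce precisely this almost-localization input, which is a substantially different argument from the ARC-based one you outlined.
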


If $\lambda=1$,  one knows that $|\Sigma_{\lambda,\alpha}|=0$ for every $\alpha\in \R \backslash \Q$ \cite{AK06, L}, hence the spectrum is not homogeneous.  Prior to us,  Damanik-Goldstein-Schlag-Voda \cite{dgsv} proved that if $\alpha \in \mathrm{SDC}$ and $\lambda\neq 1$, then $\Sigma_{\lambda,\alpha}$ is homogeneous. Compared to their result, not only we  weaken the condition $\alpha \in \mathrm{SDC}$ to $\beta(\alpha)=0$, but more importantly, we establish a  calibration between the gaps and the bands of the operator (see (1) and (2) in the above statements). Indeed, as pointed out by Damanik-Goldstein-Schlag-Voda \cite{dgsv}: ``This feature was not known for the almost Mathieu operator even in the regime of small coupling". In their work, they established the following weaker estimate: there exists $N_0(\alpha,\lambda)\geq 0$ such that if $N\geq N_0$ and if $G_{k}(\lambda),G_{k'}(\lambda)$ are two gaps with $|G_{k}(\lambda)|,|G_{k'}(\lambda)| > e^{-N^{1-}}$, then $\mathrm{dist}(G_{k}(\lambda),G_{k'}(\lambda))>e^{-(\log N)^{C_0}}$ for some constant $C_0=C_0(\lambda,\alpha)>0$.

The arithmetic property $\beta(\alpha)=0$ is essential to the homogeneity of the spectrum. After this work, Avila-Last-Shamis-Zhou \cite{ALSZ} proved that if $\beta>0$ and $e^{-\beta/2}<\lambda<e^{\beta/2}$, then $\Sigma_{\lambda,\alpha}$  is not homogeneous.

\subsection{Deift's conjecture}
As an application of  homogeneity of the spectrum, we can prove the  discrete version of Deift's conjecture for some almost periodic initial datum (not neccessarily small).
 Recall that Deift's  conjecture (Problem 1 of \cite{Deift,Deift17}) asks whether  for almost periodic initial datum, the solutions to the KdV equation are almost periodic in the time variable.

The  calibration estimates between the gaps and the bands of  Schr\"odinger operators (similar to items $(1)-(3)$ in Theorem \ref{theo calibration gaps bands})  played an important role in the proof of Deift's conjecture  for small analytic quasi-periodic data \cite{BDGL,DG2016,DGL2}.
Let us make a short review of the recent developments on this important conjecture. Tsugawa \cite{Ts} proved local existence and uniqueness of solutions to the KdV equation when the frequency is Diophantine and the Fourier coefficients of the potential decay at a sufficiently fast polynomial rate. Damanik-Goldstein  \cite{DG2016} then proved global existence and uniqueness for a Diophantine frequency and small quasi-periodic analytic initial datum. Recently, Binder-Damanik-Goldstein-Lukic \cite{BDGL} showed that in the same setting, the solution is in fact almost periodic in time, thus proving Deift's conjecture in this special case. In this paper, we consider the discrete version of Deift's conjecture, namely that for almost periodic initial data,  the Toda flow is almost periodic in the time variable.

The Toda flow is defined to be any solution of the Toda lattice equation
\begin{equation}\label{Toda}
 \left\{ \begin{array}{rcl}
 \displaystyle a'_n(t)&=&a_n(t)\left(b_{n+1}(t)-b_n(t)\right),\\[2mm]
  \displaystyle b'_n(t)&=&2(a^2_{n}(t)-a^2_{n-1}(t)),
 \end{array} \right. \quad n\in\Z.
 \end{equation}
In view of Theorem 12.6 in \cite{Teschl}, with initial condition $(a(0),b(0))\in \ell^{\infty}(\Z)\times\ell^{\infty}(\Z)$,
 there is a unique solution $(a,b)\in{\CC}^{\infty}(\R, \ell^{\infty}(\Z)\times\ell^{\infty}(\Z))$ to \eqref{Toda}. If we identify $(a(t), b(t))$ with a doubly infinite Jacobi matrix $J(t)$, i.e.,
 \begin{equation}\label{Jacobi}
 (J(t) u)_n=a_{n-1}(t)\, u_{n-1}+b_n(t) \,u_n+ a_{n}(t) \, u_{n+1},
 \end{equation}
then (\ref{Toda}) can be expressed equivalently as a Lax pair:
 \begin{equation}\label{Toda_Lax}
\frac{d}{dt} J(t)=P(t)J(t)-J(t)P(t)
\end{equation}
where $P(t)$ is an operator defined as
\[ ( P(t) u)_n:=-a_{n-1}(t)\, u_{n-1}+a_n(t)\, u_{n+1}.\]

Now, we take the almost periodic initial condition $(a_n(0), b_n(0))=(1,V(\theta+n\alpha))$, $n\in\Z$, with $V\in C^{\omega}(\T,\R)$ and $\alpha\in \R\backslash \Q$, i.e.,
$J(0)=H_{V,\alpha,\theta}$, and  consider the almost periodicity of the solution $(a(t), b(t))$.
In fact, Binder-Damanik-Goldstein-Lukic \cite{BDGL} asked whether one could generalize their result to Avila's subcritical regime: in particular, for the most important example of almost Mathieu operators, whether the result holds for $0<\lambda<1$. For partial advance on this problem, one can consult \cite{BDLV}.
 In this paper, we give an affirmative answer to their question as follows:

\begin{theorem}\label{thm_deift_conjecture}
Let $\alpha \in \R\backslash\Q$ with $\beta(\alpha)=0$. Given a potential $V\in C^{\omega}(\T, \R)$ which is subcritical\footnote{i.e., for any $E \in \Sigma_{V,\alpha}$, the cocycle $(\alpha,S_E^V)$ is subcritical.}, we consider the Toda flow (\ref{Toda}) with initial condition $(a_n,b_n)(0)=(1,V(\theta+n\alpha))$. We then have:
\begin{enumerate}
  \item For any $\theta\in\T$, (\ref{Toda}) admits a unique solution $(a(t), b(t))$ defined for all $t \in \R$.
  \item For every $t$, the Jacobi matrix $J(t)$ given by (\ref{Jacobi})
  is almost periodic with constant spectrum $\Sigma_{V,\alpha}$.
  \item The solution $(a(t),b(t))$ is almost periodic in $t$ in the following sense: there exists a continuous map ${\CM}\colon\T^{\Z} \to \ell^{\infty}(\Z)\times\ell^{\infty}(\Z)$, a point $\varphi\in \T^{\Z}$ and a direction $\varpi\in\R^{\Z}$, such that $(a(t),b(t))={\CM}(\varphi+\varpi t)$.
\end{enumerate}
In particular, the above conclusion holds for $V=2\lambda\cos2\pi(\cdot)$ with $0<\lambda < 1$.
\end{theorem}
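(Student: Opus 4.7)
The plan is to build upon the isospectral structure of the Toda flow and then apply the Sodin--Yuditskii theory of reflectionless Jacobi matrices supported on a homogeneous set. Part $(1)$ follows directly from Theorem 12.6 of \cite{Teschl}, since the initial data $(1,V(\theta+n\alpha))_{n\in\Z}$ lies in $\ell^{\infty}(\Z)\times\ell^{\infty}(\Z)$. The constancy of the spectrum in $(2)$ is the standard consequence of the Lax representation \eqref{Toda_Lax}: each $J(t)$ is unitarily equivalent to $J(0)=H_{V,\alpha,\theta}$, so $\sigma(J(t))=\Sigma_{V,\alpha}$ for every $t\in\R$. The entire content of the theorem is therefore concentrated in proving that $J(t)$ is almost periodic for each fixed $t$, and that the trajectory $t\mapsto(a(t),b(t))$ is almost periodic with values in a compact abelian group.

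I would first establish a sharp spectral description of $J(0)=H_{V,\alpha,\theta}$. Subcriticality of $(\alpha,S^V_E)$ for every $E\in\Sigma_{V,\alpha}$, combined with Avila's Almost Reducibility Conjecture (recalled in Section \ref{ARCGlobal}), yields analytic conjugacies bringing the cocycle arbitrarily close to a constant, and in particular shows that $H_{V,\alpha,\theta}$ has purely absolutely continuous spectrum for every $\theta$. Kotani theory then forces $H_{V,\alpha,\theta}$ to be reflectionless on $\Sigma_{V,\alpha}$. Moreover, since $V$ is fully subcritical and $\beta(\alpha)=0$, Theorem \ref{theo calibration gaps bands} applies with $\Sigma_{V,\alpha}^{\mathrm{sub}}=\Sigma_{V,\alpha}$, and produces the three ingredients that really matter for what follows: exponential decay of the gaps $|G_k(V)|\leq Ce^{-\vartheta|k|}$, which already implies finite total gap length, the calibration $\mathrm{dist}(G_k(V),G_{k'}(V))\geq De^{-\tilde\epsilon|k-k'|}$ between gaps, and $\mu$-homogeneity of $\Sigma_{V,\alpha}$.

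With these inputs in hand, I would invoke the Sodin--Yuditskii framework \cite{SY95,SY97,GY}: the set $\mathcal{T}(\Sigma_{V,\alpha})$ of bounded reflectionless Jacobi matrices with spectrum equal to $\Sigma_{V,\alpha}$ is a compact abelian topological group, and admits a continuous parametrization $\CM\colon\T^{\Z}\to\mathcal{T}(\Sigma_{V,\alpha})\subset \ell^{\infty}(\Z)\times\ell^{\infty}(\Z)$ by the Dirichlet data carried by the countably many open gaps $\{G_k(V)\}_{k\neq 0}$. Every element of $\mathcal{T}(\Sigma_{V,\alpha})$ is itself an almost periodic doubly infinite Jacobi matrix, which gives the spatial almost periodicity of $J(t)$ needed in $(2)$. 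Since the Toda flow is isospectral and preserves both absolute continuity and the reflectionless condition, $J(t)\in \mathcal{T}(\Sigma_{V,\alpha})$ for every $t$. A classical computation (McKean--van Moerbeke in the finite-gap case, extended by Sodin--Yuditskii to the infinite-gap homogeneous case) identifies the second Toda Hamiltonian, when transported to $\mathcal{T}(\Sigma_{V,\alpha})$ via $\CM$, with a one-parameter subgroup $t\mapsto\varpi t$ of $\T^{\Z}$, whose frequency vector $\varpi\in\R^{\Z}$ is determined gap by gap by the harmonic measure of $\C\setminus\Sigma_{V,\alpha}$. Setting $\varphi:=\CM^{-1}(J(0))$ then yields the representation $(a(t),b(t))=\CM(\varphi+\varpi t)$ required in $(3)$.

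The main obstacle is to verify that the Sodin--Yuditskii machinery, originally developed for homogeneous subsets of a bounded interval, really applies to the infinite-gap set $\Sigma_{V,\alpha}$ arising here, and that the Toda evolution keeps the orbit inside the reflectionless class. Homogeneity and finite total gap length are exactly what Theorem \ref{theo calibration gaps bands} delivers; the calibration estimate of that theorem is precisely the regularity needed so that the relevant harmonic-measure estimates and character-automorphic Hardy spaces on $\C\setminus\Sigma_{V,\alpha}$ behave as in the model case. Preservation of the reflectionless property along the Toda flow may be handled either directly, by checking that the flow is an analytic isospectral deformation that commutes with the reflection of the Weyl $m$-function across $\Sigma_{V,\alpha}$, or indirectly, via Remling's theorem that any weak-$\ast$ limit of translates of a reflectionless operator remains reflectionless, combined with the spatial almost periodicity available at each step. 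Once these two technical points are settled, items $(2)$ and $(3)$ follow from the compact group structure of $\mathcal{T}(\Sigma_{V,\alpha})$ and the continuity of $\CM$; the final assertion for $V=2\lambda\cos 2\pi(\cdot)$ with $0<\lambda<1$ is then immediate, since by Avila's global theory every energy in $\Sigma_{\lambda,\alpha}$ is subcritical in that regime, so the general statement applies.
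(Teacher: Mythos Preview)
Your proposal is correct and follows essentially the same route as the paper: verify homogeneity of $\Sigma_{V,\alpha}$ via Theorem~\ref{theo calibration gaps bands}, reflectionlessness via Kotani theory (the extension from a.e.\ $\theta$ to every $\theta$ being Avila's Theorem~2.2 in \cite{A1}), and pure absolute continuity via the Almost Reducibility Conjecture, then feed these into the Sodin--Yuditskii framework. The paper streamlines the final step by directly invoking the Vinnikov--Yuditskii theorem \cite{VY} (stated here as Theorem~\ref{theorem_VY}), which already packages the linearization of the Toda flow on the character torus $\pi_*(\C\backslash\Sigma)$ under the hypotheses of homogeneity and pure a.c.\ spectrum, so your discussion of the ``main obstacle'' of preserving reflectionlessness along the flow (via Remling or $m$-function analysis) is unnecessary.
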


\subsection{Ideas of the proofs}

While we answer a series of open problems  posed in \cite{BDGL, DGL1, dgsv, Go}, we used a  totally different approach compared to these papers. Our approach is from the perspective of dynamical systems, and  is based on quantitative almost reducibility. The philosophy is that nice quantitative almost reducibility should induce nice spectral applications.
This approach has been proved to be very fruitful \cite{A2,AvilaJito, AYZ1, AYZ}.

As for the upper and lower bounds on the size of  spectral gaps, we need to analyze the behavior of Schr\"odinger cocycles at the edge points of the spectral gaps. At the edge points, the cocycles are reducible to constant parabolic cocycles. The crucial points for us are  the exponential decay of  the off-diagonal element of the parabolic matrix and the control of the  growth of the conjugacy with respect to the  label $k$. Furthermore, in order to obtain uniformity of the decay rate  with respect to the label $k$, we need some strong almost reducibility result, namely that the cocycle is almost reducible in a fixed band.

Now, we distinguish between two cases in the proof. If the frequency is Diophantine,   we will develop a new KAM scheme to prove the almost reducibility with nice estimates (which works for multifrequencies, and for both continuous and discrete cocycles). Moreover, in order to get a sharp decay of the spectral gaps (Theorem \ref{sharpdecay}), we prove almost reducibility of the cocycle  in a fixed band, arbitrarily close to the initial band. We remark that although Chavaudret \cite{Chavaudret}  developed some kind of strong almost reducibility result, the estimates there are not sufficient to yield good spectral applications.
On the other hand, if the frequency $\alpha$ satisfies $\beta(\alpha)=0$, we need the almost localization argument (via Aubry duality) given by Avila \cite{A1} (initially developed by Avila-Jitomirskaya \cite{AvilaJito}): as we can see from the proof, Corona estimates are the key ingredient for these nice almost reducibility estimates.

However, each of these two approaches leads to local results. In order to deal with the global regime, we need Avila's global theory of analytic ${\rm SL}(2,\R)-$cocycles \cite{Aglobal}, especially his
 proof of the Almost Reducibility Conjecture \cite{A2, Aglobal}. Moreover, in order to have a uniform control on the conjugacies with respect to $E\in \Sigma_{V,\alpha}$ (which ultimately yields uniform decay rate with respect to the label $k$), we shall perform some compactness argument. Here, the key point still follows from Avila's global theory, namely openness of the almost reducibility property, and  compactness of the  subcritical spectrum. We should also point out that for AMO, the strategy is  different, since the proof is based on Avila-Jitomirskaya's almost localization technique \cite{AvilaJito} instead of Avila's Almost Reducibility Conjecture. In particular, it is the main reason why we can get sharp decay of the spectral gaps for noncritical AMO (Theorem \ref{thm_bounds_Mathieu}).   Avila's  Almost Reducibility Conjecture does not allow us to get this sharp result since the analytic strip has to be   shrinked greatly in his proof.

In Section \ref{Sec_bounds}, we will prove a criterion  (Theorem \ref{thm_upperbound}) to get quantitative upper and lower  bounds on the size of the gaps, building on quantitative reducibility results which even work for Liouvillean frequencies. Although the method developed by  Moser-P\"oschel \cite{Moser-Poschel} and  Amor \cite{HA} can be used to obtain some decay of the upper bounds for small potentials, yet, when dealing with large potentials,  their approach does not work since their estimates need explicit dependence on the parameters. In fact, when reducing  the global potential  to local regimes by Avila's global theory, the explicit dependence of the parameters is lost. However, our method is purely dynamical,  which means that we only need the information for the fixed cocycle, and it is the main reason why we can deal with all subcritical regimes.
We also emphasize that our estimates on the lower bounds of the gaps of AMO  crucially depend  on  a key proposition of \cite{AYZ} which was initially used by the authors to prove the  non-critical   ``Dry Ten Martini Problem" for Liouvillean frequencies.

Homogeneity of the spectrum in the subcritical regime is derived from the upper bounds on the size of spectral gaps, together with H\"older continuity of the IDS.
Theorem \ref{theorem_homo_spec} is proved by combining this with previous work of Damanik-Goldstein-Schlag-Voda \cite{dgsv} and Avila's global theory of one-frequency Schr\"odinger operators \cite{Aglobal}. As a consequence of homogeneity (Theorem \ref{theo calibration gaps bands}) and purely absolutely continuous spectrum of subcritical Schr\"odinger operators \cite{A2}, we are then able to prove the discrete version of Deift's conjecture for such initial data, building on an previous result of   Vinnikov-Yuditskii \cite{VY}.

%The remaining of the paper is organized as follows: in Section \ref{preliminaries}, we review some classical facts on the theory of quasi-periodic cocycles, in particular those of Schr\"odinger type, and how they can be used in the study of spectral theory of Schr\"odinger operators. In Section \ref{sec_Quantitative}, we explain the two main techniques available to get reducibility statements in the local regime: a perturbative method of KAM type, and a non-perturbative one based on Aubry duality. In Section \ref{subsectlocalglobal}, we explain how Avila's proof of the Almost Reducibility Conjecture can be used to generalize the previous results to the global subcritical regime. Bounds on spectral gaps will be shown in Section \ref{Sec_bounds}, following an averaging argument due to Moser-P\"oschel \cite{Moser-Poschel}. We conclude the paper by Section \ref{sec_homo}, where we prove homogeneity of the spectrum for typical one-frequency quasi-periodic Schr\"odinger operators.

\section{Preliminaries} \label{preliminaries}

For a function $f$ defined on a strip $\{|\Im z|<h\}$, we define $|f|_{h}:=\sup_{|\Im z|<h} |f(z)|$. Analogously, for $f$ defined on $\T$, we set $|f|_{\T}:=\sup_{x \in \T} |f(x)|$.   For any $f\colon\T^d\to \C$, we let $[f]:=\int_{\T^d} f(\theta) d\theta$.
  When $\theta \in \R$, we also set $\|\theta\|_{\T}:=\inf_{j \in \Z} |\theta-j|$.

%In the formulations and proofs of various assertions appearing in this paper,
%we shall encounter constants which may depend on several quantities.
%All such constants will be denoted by $c$, $c_1$, $c_2$, $\dots$, and sometimes, the same symbol might refer to different constants in case there is no ambiguity.
%Moreover, in some estimates, we will use the notation ``$\lesssim$" or ``$\gtrsim$" instead of exhibiting an explicit numerical constant.

\subsection{Continued Fraction Expansion}\label{sec:2.1}
Let $\alpha \in (0,1)\backslash \Q$, $ a_0:=0$ and
$\alpha_{0}:=\alpha.$  Inductively, for $k\geq 1$, we define
$$a_k:=\left\lfloor\alpha_{k-1}^{-1}\right\rfloor,\qquad \alpha_k:=\alpha_{k-1}^{-1}-a_k.$$
Let $p_0:=0$, $p_1:=1$, $q_0:=1$, $q_1:=a_1$. We define inductively
$p_k:=a_k p_{k-1}+p_{k-2}$, $q_k:=a_kq_{k-1}+q_{k-2}$.
Then $(q_n)_n$ is the sequence of denominators of the best rational
approximations of $\alpha$, since we have
$\|k\alpha\|_{\T} \geq \|q_{n-1}\alpha\|_{\T}$, $\forall \   1
\leq k < q_n$,
and
$$
{1 \over 2q_{n+1}}\leq \|q_n \alpha \|_{\T} \leq {1 \over q_{n+1}}.
$$
%where
%\begin{displaymath}
%  \|x\|_\T=\inf_{p \in \Z} | x - p|.
%\end{displaymath}
Let $\displaystyle \beta(\alpha):=\limsup_{n\rightarrow\infty}\frac{\ln q_{n+1}}{q_n}$. Equivalently, we have
\begin{equation}\label{equibeta}
\beta(\alpha)=\limsup_{k\rightarrow \infty} \frac{1}{|k|} \ln \frac{1}{ \|k\alpha\|_{\T}} .
\end{equation}

\subsection{Schr\"{o}dinger operators}\label{subsec_Schrodinger}

Given $V\in C^\omega(\T^d,\R)$ and $\alpha\in\R^d$, we define the Schr\"{o}dinger operator $H_{V,\alpha,\theta}$  as in (\ref{schro}).
For any $\psi\in\ell^2(\Z)$, we let $\mu_{V,\alpha,\theta}^\psi$ be
the spectral measure of $H_{V,\alpha,\theta}$ corresponding to $\psi$:
$$\la(H_{V,\alpha,\theta}-E)^{-1}\psi, \psi \ra = \int_\R\frac{1}{E-E'}d\mu_{V,\alpha,\theta}^\psi(E'), \quad \forall \  E\in \C\backslash\Sigma_{V,\alpha}.$$
We denote $\mu_{V,\alpha,\theta}:=\mu_{V,\alpha,\theta}^{e_{-1}}+\mu_{V,\alpha,\theta}^{e_{0}}$, where
$\{e_n\}_{n\in\Z}$ is the canonical basis of $\ell^2(\Z)$.
%Since $\{e_{-1}, e_0\}$ forms a generating basis of $\ell^2(\Z)$,
%that is, there is no proper subset of $\ell^2(\Z)$ which is invariant by $H_{V,\alpha,\theta}$ and contains $\{e_{-1}, e_0\}$.
%The {\it integrated density of states} is the function $N=N_{V,\alpha}:\R\rightarrow[0,1]$ such that
%$$N(E)=\int_{\T^d} \mu_{V,\alpha,\theta}(-\infty,E] \,  d\theta,$$
%which is a continuous non-decreasing surjective function on $\R$.

More generally,
we consider the self-adjoint  Jacobi matrices $J$:
$$(Ju)_n=a_{n-1}u_{n-1}+b_n u_n+a_n u_{n+1},\quad n\in\Z.$$
Let $\Sigma\subset\R$ be the spectrum of $J$. Given any $z\not\in\Sigma$,
the Green's function of $J$ is the integral kernel of $(J-z)^{-1}$:
 $$G_J(m, n; z) := \la e_n, (J -z)^{-1} e_m \ra.$$
 \begin{defi}
A Jacobi operator $J$ is said to be {\it reflectionless} on $\Sigma$ if
$\Re(G_J(0,0;  E + i0)) = 0$ for Lebesgue$-$a.e. $E \in \Sigma$.
 \end{defi}

Given any $z\in\HH:=\{z\in\C : \Im z>0\}$, the difference equation $Ju=zu$ has two solutions $u^{\pm}$ (defined up to normalization) with $u_0^{\pm}\neq 0$, which are in $\ell^2(\Z_{\pm})$ respectively.
Let $m_J^{\pm}:=\mp\frac{u^{\pm}_{\pm1}}{a_0 u^{\pm}_{0}}$.
Then $m_J^+$ and $m_J^-$ are Herglotz functions, i.e., they map $\HH$ holomorphically into itself. For almost every $E\in\R$, the non-tangential limits $\lim\limits_{\epsilon\rightarrow0^+}m_J^{\pm}(E+{\rm i}\epsilon)$ exist.
Note that we have
\begin{equation}\label{green_m+-}
G_J(0,0;  z)=\frac{1}{a_0^2(m_J^+(z)+m_J^-(z))},\quad z\in\HH.
\end{equation}

\subsection{Quasiperiodic cocycles}\label{subslyap}

%
%
%For the operator $H_{V, \alpha, \theta}$, we have the Schr\"{o}dinger cocycle $(\alpha, S^{V}_E)$:
%$$
%\begin{pmatrix}
%u_{n+1} \\
%u_{n}
%\end{pmatrix}=S^{V}_E(\theta+n\alpha)\begin{pmatrix}
%u_{n} \\
%u_{n-1}
%\end{pmatrix} \ {\rm with} \ S^{V}_E(\theta):=\begin{pmatrix}
%                                                 E-V(\theta)  & -1 \\
%                                                  1 & 0
%                                                \end{pmatrix}\in {\rm SL}(2,\R).
%$$
%which is equivalent to the eigenvalue problem $H_{V, \alpha, \theta}u=E u$.
%For any $A\in {\rm SL}(2,\R)$ and $\alpha\in\R\backslash \Q$, the iteration of the cocycle $(\alpha,A)$ is given by
% $$
%{{\mathcal A}}_n(x):=\left\{\begin{array}{ll}
%A(x+(n-1)\a) \cdots A(x+\a) A(x), & n> 0\\[1mm]
%\mathrm{Id} ,& n=0\\[1mm]
%A^{-1}(x+n\a) A^{-1}(x+(n+1)\a) \cdots A^{-1}(x-\a), & n <0
%\end{array}\right.  .
%$$
%We call that $(\alpha,A)$ is in the subcritical regime if there is a uniform subexponential bound on the growth of $|A_n(z)|$
%through some band $|\Im z|<\epsilon$. Refer to Subsection \ref{} for more details.
%

Given $A \in C^\omega(\T^d,{\rm SL}(2,\C))$ and $\alpha\in\R^d$ rationally independent, we define the quasi-periodic \textit{cocycle} $(\alpha,A)$:
$$
(\alpha,A)\colon \left\{
\begin{array}{rcl}
\T^d \times \C^2 &\to& \T^d \times \C^2\\[1mm]
(x,v) &\mapsto& (x+\alpha,A(x)\cdot v)
\end{array}
\right.  .
$$
The iterates of $(\alpha,A)$ are of the form $(\alpha,A)^n=(n\alpha,  \mathcal{A}_n)$, where
$$
\mathcal{A}_n(x):=
\left\{\begin{array}{l l}
A(x+(n-1)\alpha) \cdots A(x+\alpha) A(x),  & n\geq 0\\[1mm]
A^{-1}(x+n\alpha) A^{-1}(x+(n+1)\alpha) \cdots A^{-1}(x-\alpha), & n <0
\end{array}\right.    .
$$
The {\it Lyapunov exponent} is defined by
$\displaystyle
L(\alpha,A):=\lim\limits_{n\to \infty} \frac{1}{n} \int_{\T^d} \ln |\mathcal{A}_n(x)| dx
$.

The cocycle $(\alpha,A)$ is {\it uniformly hyperbolic} if, for every $x \in \T^d$, there exists a continuous splitting $\C^2=E^s(x)\oplus E^u(x)$ such that for every $n \geq 0$,
$$
\begin{array}{rl}
|A_n(x) \, v| \leq C e^{-cn}|v|, &  v \in E^s(x),\\[1mm]
|A_n(x)^{-1}   v| \leq C e^{-cn}|v|, &  v \in E^u(x+n\alpha),
\end{array}
$$
for some constants $C,c>0$.
This splitting is invariant by the dynamics, i.e.,
$$A(x) E^{*}(x)=E^{*}(x+\alpha), \quad *=``s"\;\ {\rm or} \;\ ``u", \quad \forall \  x \in \T^d.$$

Assume that $A \in C (\T^d, {\rm SL}(2, \R))$ is homotopic to the identity. It induces the projective skew-product $F_A\colon \T^d \times \mathbb{S}^1 \to \T^d \times \mathbb{S}^1$ with
$$
F_A(x,w):=\left(x+\a,\, \frac{A(x) \cdot w}{|A(x) \cdot w|}\right),
$$
which is also homotopic to the identity.
Thus we can lift $F_A$ to a map $\tF_A\colon \T^d \times \R \to \T^d \times \R$ of the form $\tF_A(x,y)=(x+\alpha,y+\psi_x(y))$, where for every $x \in \T^d$, $\psi_x$ is $\Z$-periodic.
%Let us denote by $\pi\colon \T^d \times \R\to \T^d \times \mathbb{S}^1$ the projection $(x,y) \mapsto (x,e^{2 \pi\mathrm{i} y})$. Then,
%$$
%F_A \circ \pi = \pi \circ \widetilde{F}_A.
%$$
The map $\psi\colon\T^d \times \T  \to \R$ is called a {\it lift} of $A$. Let $\mu$ be any probability measure on $\T^d \times \R$ which is invariant by $\widetilde{F}_A$, and whose projection on the first coordinate is given by Lebesgue measure.
The number
$$
\rho(\alpha,A):=\int_{\T^d \times \R} \psi_x(y)\ d\mu(x,y) \ {\rm mod} \ \Z
$$
 depends  neither on the lift $\psi$ nor on the measure $\mu$, and is called the \textit{fibered rotation number} of $(\alpha,A)$ (see \cite{H,JM} for more details).

Given $\theta\in\T^d$, let $
R_\theta:=
\begin{pmatrix}
\cos2 \pi\theta & -\sin2\pi\theta\\
\sin2\pi\theta & \cos2\pi\theta
\end{pmatrix}$.
If $A\colon \T^d\to{\rm PSL}(2,\R)$ is homotopic to $\theta \mapsto R_{\frac{\la n, \theta\ra}{2}}$ for some $n\in\Z^d$,
then we call $n$ the {\it degree} of $A$ and denote it by $\deg A$.
The fibered rotation number is invariant under real conjugacies which are homotopic to the identity. More generally, if $(\alpha,A_1)$ is conjugated to $(\alpha, A_2)$, i.e., $B(\cdot+\alpha)^{-1}A_1(\cdot)B(\cdot)=A_2(\cdot)$, for some $B \colon \T^d\to{\rm PSL}(2,\R)$ with ${\rm deg} B=n$, then
\begin{equation}\label{rot-conj}
\rho(\alpha, A_1)= \rho(\alpha, A_2)+ \frac{\la n,\alpha \ra}2.
\end{equation}
Moreover, it follows immediately from the definition of rotation number that
\begin{lemma}\label{esti_rot_num}
If $A \colon \T^d\to{\rm SL}(2,\R)$ is homotopic to the identity, then
$$|\rho(\alpha, A)-\theta|<  |A-R_{\theta}|_{\T^d}.$$
\end{lemma}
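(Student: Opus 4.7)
Plan. The idea is to compare the cocycles $(\alpha,A)$ and $(\alpha,R_\theta)$ pointwise on the projective bundle and then integrate against any invariant measure.

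First, I would compute the rotation number for the constant cocycle $(\alpha,R_\theta)$. Writing $w(y):=(\cos 2\pi y,\sin 2\pi y)^{\!\top}$, one has $R_\theta\cdot w(y)=w(y+\theta)$, so the natural lift of $F_{R_\theta}$ is $\widetilde F_{R_\theta}(x,y)=(x+\alpha,\,y+\theta)$. Hence $\psi_x^{R_\theta}(y)\equiv \theta$ and integrating this constant displacement yields $\rho(\alpha,R_\theta)\equiv \theta\pmod{\Z}$. Because $A$ is assumed homotopic to the identity (hence to $R_\theta$), I can choose the lift $\psi_x^A$ of $\widetilde F_A$ that varies continuously with $A$ along such a homotopy and reduces to $\psi_x^{R_\theta}$ when $A=R_\theta$; this fixes the branch globally.

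Next, I would estimate $|\psi_x^A(y)-\theta|$ pointwise. Writing
$$A(x)\cdot w(y)=w(y+\theta)+\eta(x,y),$$
one has $|\eta(x,y)|\le |A-R_\theta|_{\T^d}$. By the chosen normalization, $\psi_x^A(y)$ is the signed angular displacement (divided by $2\pi$) from $w(y)$ to $A(x)\cdot w(y)/|A(x)\cdot w(y)|$, so that $\psi_x^A(y)-\theta$ is the signed angular displacement from $w(y+\theta)$ to $A(x)\cdot w(y)/|A(x)\cdot w(y)|$. A standard planar estimate bounds this arclength by the Euclidean distance between the two unit vectors, which in turn is controlled by $|\eta(x,y)|$ (using that radial projection onto $\mathbb{S}^1$ is $1$-Lipschitz in the relevant regime). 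This gives $|\psi_x^A(y)-\theta|\le |A-R_\theta|_{\T^d}$ uniformly in $(x,y)\in\T^d\times\T$.

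Finally, I would conclude by integrating: since $\rho(\alpha,A)=\int_{\T^d\times\R}\psi_x^A(y)\,d\mu(x,y)\bmod\Z$ for any $\widetilde F_A$-invariant probability measure $\mu$ projecting to Lebesgue on $\T^d$, the pointwise bound passes to the integral, yielding $|\rho(\alpha,A)-\theta|\le |A-R_\theta|_{\T^d}$. The strict inequality follows because, unless $A\equiv R_\theta$, the pointwise estimate is strict on an open set of $(x,y)$. The main subtlety is the branch choice for $\psi_x^A$, since it is only defined modulo $\Z$ a priori; the hypothesis that $A$ is homotopic to the identity is precisely what allows the continuous selection of the branch that reduces to $\theta$ at $R_\theta$, ensuring we compute the correct rotation number rather than one off by an integer.
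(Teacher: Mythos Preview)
Your overall plan is exactly what the paper intends: the authors simply say the lemma ``follows immediately from the definition of rotation number'' and give no further argument, and your pointwise comparison of lifts followed by integration against an invariant measure is the natural way to make that precise.

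There is one incorrect step in your chain of estimates. You claim the Euclidean distance $\bigl|A(x)w(y)/|A(x)w(y)|-w(y+\theta)\bigr|$ is bounded by $|\eta(x,y)|$ because ``radial projection onto $\mathbb S^1$ is $1$-Lipschitz in the relevant regime.'' This is false: with $u=(1,0)$ and $\eta=(-\tfrac12,\tfrac12)$, so $v=u+\eta=(\tfrac12,\tfrac12)$, one has $|\eta|=1/\sqrt2\approx0.707$ while $|v/|v|-u|=\sqrt{2-\sqrt2}\approx0.765>|\eta|$. The fix is to bound the angle directly rather than passing through the projected distance. Writing $v=A(x)w(y)=u+\eta$ with $u=w(y+\theta)$ and $|\eta|<1$, the point $v$ lies in the open disk of radius $|\eta|$ about the unit vector $u$, so the angle $\phi\in[0,\pi/2)$ between $u$ and $v$ satisfies $\sin\phi\le|\eta|$ (the extremal position being where the ray from the origin is tangent to that disk). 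Hence
\[
|\psi_x^A(y)-\theta|=\frac{\phi}{2\pi}\le\frac{\arcsin|\eta|}{2\pi}\le\frac{|\eta|}{4}<|A-R_\theta|_{\T^d}.
\]
Note that the factor $1/4$ also delivers the strict inequality for free whenever $A\not\equiv R_\theta$, so your separate open-set argument for strictness (which would require the invariant measure to charge a specific set) is not needed. With this correction, the rest of your outline---the branch selection via continuity and the integration step---is fine.
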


%$m_{V, \alpha, \theta}^+(E)=-\overline{m_{V, \alpha, \theta}^-(E)}$ for a.e. $E\in\Sigma_{V,\alpha}$.
%We can also generalize the above definition to the almost periodic Jacobi matrices $J$:
%$$(Ju)_n=a_{n-1}u_{n-1}+b_n u_n+a_n u_{n+1},\quad n\in\Z$$
%(see Section 3.8 of \cite{SY97} for more details).
%

A typical  example is given by the so-called \textit{Schr\"{o}dinger cocycles} $(\alpha,S_E^V)$, with
$$
S^V_{E}(\cdot):=
\begin{pmatrix}
E-V(\cdot) & -1\\
1 & 0
\end{pmatrix},   \quad E\in\R.
$$
Those cocycles were introduced because in connection with the eigenvalue equation $H_{V, \alpha, \theta}u=E u$: indeed, any formal solution $u=(u_n)_{n \in \Z}$ of $H_{V, \alpha, \theta}u=E u$ satisfies
$$
\begin{pmatrix}
u_{n+1}\\
u_n
\end{pmatrix}
= S_E^V(\theta+n\alpha) \begin{pmatrix}
u_{n}\\
u_{n-1}
\end{pmatrix},\quad \forall \  n \in \Z.
$$
The spectral properties of $H_{V,\alpha,\theta}$ and the dynamics of $(\alpha,S_E^V)$ are closely related by the well-known fact:
 $E\in \Sigma_{V,\alpha}$ if and only if $(\alpha,S^V_{E})$ is \textit{not} uniformly hyperbolic.

For any fixed $E \in \R$, the map $x \mapsto S_E^V(x)$ is homotopic to the identity, hence the rotation number $\rho(\alpha,S_E^V)$ is well defined. Moreover, $\rho(\alpha,S_E^V)\in [0,\frac12]$ relates to the  integrated density of states $N=N_{V,\alpha}$ as follows:
$$
N_{V,\alpha}(E)=1-2 \rho(\alpha,S_E^V).
$$
By \textit{Thouless formula}, we also have the following relation between the integrated density of states $N$ and the Lyapunov exponent $L$:
$$
L(\a, S^V_{E}) = \int \ln |E-E'| \, dN_{V,\alpha}(E').
$$

\subsection{Aubry duality and almost localization}\label{subs_duality}

Let %$\alpha\in\R\backslash\Q$,
$\theta\in \T$, $V\in C^\omega(\T,\R)$, and denote by $(\widehat v_l)_{l \in \Z}$ the Fourier coefficients of $V$.  The \textit{dual Schr\"odinger operator} $\widehat H_{V,\alpha,\theta}$ %of $H_{V,\alpha,\theta}$
is defined on $\ell^2(\Z)$ by:
$$
\left(\widehat H_{V,\alpha,\theta} u\right)_j:=\sum\limits_{l\in \Z} \widehat v_l  u_{j-l} + 2 \cos 2\pi (\theta + j \alpha) u_j, \quad \forall \  j \in \Z.
$$
%For almost Mathieu operators, we denote by $\widehat H_{\lambda,\alpha,\theta}$ the dual operator of $H_{\lambda,\alpha,\theta}$.
%We have $\widehat H_{\lambda,\alpha,\theta}=\lambda H_{\lambda^{-1},\alpha,\theta}$ for any $\lambda\in \R^+$.
%Moreover, the spectrum of $\widehat H_{\lambda,\alpha,\theta}$ coincides with that of $H_{\lambda,\alpha,\theta}$. More precisely, $\Sigma_{\lambda, \alpha}=\Sigma_{\lambda^{-1}, \alpha}$.
%The operator $\widehat H_{V,\alpha,\theta}$ is said to be \textit{localized} if it has pure point spectrum with exponential decaying eigenvectors.

\textit{Aubry duality} involves an algebraic relation between the families of operators $\{H_{V,\alpha,\theta}\}_{\theta \in \T}$ and $\{\widehat H_{V,\alpha,\theta}\}_{\theta \in \T}$: given an eigenvector of $\widehat H_{V,\alpha,\theta}$ whose coefficients decay exponentially, one can construct an analytic \textit{Bloch wave} for the dual operator $H_{V,\alpha,\theta}$. However, if one wants to obtain information for all energies $E$, one cannot expect that all the eigenfunctions decay exponentially. The weaker notion of almost localization proved to be very useful in this context. 

%Given $u\in C^{\omega}(\T,\C)$, if its Fourier series $\widehat u=(\widehat u_k)_{k\in \Z}$ satisfies $\widehat H_{V,\alpha,\theta} \widehat u=E \widehat u$, then for ${\CU}\colon x \mapsto \begin{pmatrix}
%e^{2 \pi\mathrm{i} \theta} u(x)\\
%u(x-\a)
%\end{pmatrix}$, we get
%%$$S^V_E(\cdot) \, {\CU}(\cdot) = e^{2  \pi{\rm i} \theta}{\CU}(\cdot+\a). $$$
%In particular, when $\theta$ is \textit{rational}, i.e., $2\theta- n \alpha\in\Z$ for some $n \in \Z$, we obtain an analytic ``invariant section'' for $(\alpha, S_E^V)$, and the cocycle can be conjugated to a parabolic one.

\begin{defi}[Resonances]
Fix $\epsilon_0>0$ and $\theta\in \T$. An integer $k\in \Z$ is called an \textit{$\epsilon_0-$resonance} of $\theta$ if $\|2 \theta - k\alpha\|_{\T}\leq e^{-\epsilon_0 |k|}$ and $\|2 \theta - k\alpha\|_{\T}=\min_{|l|\leq |k|} \|2 \theta - l\alpha\|_{\T}$. We denote by $\{n_l\}_l$  the set  of $\epsilon_0-$resonances of $\theta$, ordered in such a way that $|n_1|\leq |n_2|\leq \dots$.
We say that $\theta$ is  $\epsilon_0-$\textit{resonant} if the set $\{n_l\}_l$ is infinite.
\end{defi}

\begin{defi}[Almost localization]
The family $\{\widehat H_{V,\alpha,\theta}\}_{\theta\in\T}$ is said to be \textit{almost localized} if there exist constants $C_0, C_1,\epsilon_0, \epsilon_1>0$ such that for all $\theta\in\T$, any generalized solution $u=(u_k)_{k \in \Z}$ to the eigenvalue problem $\widehat H_{V,\alpha,\theta} u = E u$ with $u_0=1$ and $|u_k| \leq 1+|k|$ satisfies
\begin{equation}\label{almost_localization}
|u_k| \leq C_1 e^{-\epsilon_1 |k|},\quad \forall \ C_0 |n_j| \leq |k| \leq C_0^{-1} |n_{j+1}|,
\end{equation}
where $\{n_l\}_l$  is the set  of  $\epsilon_0-$resonances of $\theta$.
\end{defi}

The basic fact for us is the following result of Avila and Jitomirskaya \cite{AvilaJito}:
%Let us assume that $\{\widehat H_{V,\alpha,\theta}\}_{\theta\in\T}$ is almost localized. If $2\theta- n \alpha\in\Z$ for some $n \in \Z$, then $\theta$ is non-resonant; indeed, there is no resonance $k$ with $|k| > |n|$.
%By \eqref{almost_localization}, this implies that any generalized eigenvector  $u=(u_k)_{k \in \Z}$ of $\widehat H_{V,\alpha,\theta}$ has exponential decay.

%\begin{theorem}[Avila-Jitomirskaya, Theorems 3.2, 5.1 of \cite{AvilaJito}]\label{almostredth}
%Let $\alpha\in \R \backslash \Q$ with $\beta(\alpha)=0$. Given any $V \in C^\omega(\T,\R)$ and $C_0>1$, there exists $\lambda_0=\lambda_0(V)>0$ such that if $|V|<\lambda_0$, then there exist  $\epsilon_0=\epsilon_0(V)>0$, $\epsilon_1(V,C_0)>0$ and $C_1=C_1(V,\alpha,C_0)>0$ such that $\{\widehat H_{V,\alpha,\theta}\}_{\theta\in \T}$ is almost localized with parameters $C_0,\, C_1,\, \epsilon_0,\, \epsilon_1$ as in (\ref{almost_localization}).
 %In particular, for $V=2\cos2\pi (\cdot)$, $\lambda_0=1$.
%\end{theorem}

%[Avila-Jitomirskaya, Theorems 3.2, 5.1 of \cite{AvilaJito}]
\begin{theorem}\cite{AvilaJito}\label{almostredth}
Let $\alpha\in \R \backslash \Q $ satisfy $\beta(\alpha)=0$. There exists an absolute constant $c_0>0$ such that for any given $0<r_0<1$, $C_0>1$, there exist $\epsilon_0=\epsilon_0(r_0)>0$, $\epsilon_1=\epsilon_1(r_0,C_0)\in(0,r_0)$ and $C_1=C_1(\alpha, r_0,C_0)>0$ such that the following is true: given any $V \in C^\omega(\T,\R)$ satisfying $|V|_{r_0}\leq c_0 r_0^3$, the family $\{\widehat H_{V,\alpha,\theta}\}_{\theta\in \T}$ is almost localized with parameters $C_0,\, C_1,\, \epsilon_0,\, \epsilon_1$ as in (\ref{almost_localization}).
%In particular, for any $0<\lambda<1$, %any $C_0'>1$, there exist $\epsilon_0'=\epsilon_0'(\lambda)>0$, $\epsilon_1'=\epsilon_1'(\lambda,C_0')>0$ and $C_1'=C_1'(\lambda,\alpha,C_0')>0$ such that
% the family $\{\widehat H_{\lambda,\alpha,\theta}\}_{\theta\in \T}$ is almost localized. %with parameters $C_0',\, C_1',\, \epsilon_0',\, \epsilon_1'$.
\end{theorem}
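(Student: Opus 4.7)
The plan is to transfer the almost localization of $\widehat H_{V,\alpha,\theta}$ into a quantitative almost reducibility statement for the Schr\"odinger cocycle $(\alpha,S_E^V)$ via Aubry duality, and then to execute a KAM scheme that tracks the resonant Fourier modes of $2\theta-k\alpha$ one by one. Concretely, if $u=(u_k)_{k\in\Z}$ is a generalized solution of $\widehat H_{V,\alpha,\theta}u=Eu$ with $u_0=1$ and $|u_k|\leq 1+|k|$, then the series $\psi(x):=\sum_k u_k e^{2\pi ikx}$ (first as a tempered distribution) produces an invariant section $\Psi(x):=(\psi(x),\,e^{-2\pi i\theta}\psi(x-\alpha))^{T}$ satisfying $S_E^V(x)\Psi(x)=e^{2\pi i\theta}\Psi(x+\alpha)$. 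In this dictionary, exponential decay $|u_k|\lesssim e^{-\epsilon_1|k|}$ is equivalent to analyticity of $\psi$ on $\{|\Im x|<\epsilon_1\}$, and the bands $C_0|n_j|\leq |k|\leq C_0^{-1}|n_{j+1}|$ appearing in \eqref{almost_localization} correspond to Fourier windows well away from the obstructing resonances.

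The engine of the proof is a KAM iteration for $(\alpha,S_E^V)$, using the smallness hypothesis $|V|_{r_0}\leq c_0 r_0^3$. At each stage $j$ I would work on a strip $\{|\Im x|<r_j\}$ with $r_j\searrow r_\infty\geq\epsilon_1$, and solve the linearized cohomological equation only on the Fourier modes $|k|\leq N_j$ for which $\|2\theta-k\alpha\|_{\T}\geq e^{-\epsilon_0|k|}$, deliberately leaving the $\epsilon_0$-resonant modes $n_l$ inside the remainder. The cumulative conjugacy $B_{(j)}$ would then stay analytic of norm $\lesssim e^{\epsilon_1|n_j|/2}$ on its strip, while bringing the cocycle close to a constant elliptic $R_{\theta_j}$ with a new perturbation super-exponentially small in $N_j$.

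To convert almost reducibility back to almost localization, I truncate the scheme at the stage $j$ and scale $N\asymp C_0^{-1}|n_{j+1}|$ dictated by the next resonance. The assumption $\beta(\alpha)=0$ enters here: it forces $\log q_{n+1}=o(q_n)$, so consecutive $\epsilon_0$-resonances are super-exponentially spaced relative to their size, which is precisely what allows $B_{(j)}$ to be analytic of controlled norm on a strip of width $\geq\epsilon_1$ independent of $j$. Conjugating $\Psi$ by $B_{(j)}$ then produces a bounded analytic section on $\{|\Im x|<\epsilon_1\}$; extracting its Fourier coefficients via Paley--Wiener yields $|u_k|\leq C_1 e^{-\epsilon_1|k|}$ on the claimed window, which is exactly \eqref{almost_localization}.

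The main difficulty will be making every constant uniform: the analyticity losses $r_{j-1}-r_j$, the cutoffs $N_j$, and the resonance threshold $\epsilon_0$ must be calibrated so that $\epsilon_1$ and $C_1$ depend only on $r_0,C_0,\alpha$, independently of the energy $E$ and the phase $\theta$. The hypothesis $\beta(\alpha)=0$ is essential to preclude resonance clusters that would otherwise shrink $r_j$ to zero and invalidate the dictionary above, while the polynomial growth bound $|u_k|\leq 1+|k|$ is what upgrades the \emph{a priori} distributional $\psi$ to a genuine bounded analytic section once the cocycle has been brought close to a constant.
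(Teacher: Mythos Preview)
The paper does not give its own proof of this statement: Theorem~\ref{almostredth} is quoted verbatim from Avila--Jitomirskaya \cite{AvilaJito} and used as a black box throughout (e.g.\ in Corollary~\ref{subexp groowth}, Theorem~\ref{prop_duality_para}, Corollary~\ref{cor-local}(2)). So there is no ``paper's own proof'' to compare against.

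On the substance of your proposal: the direction of the argument is inverted relative to how the result is actually established in \cite{AvilaJito}. There, almost localization of $\{\widehat H_{V,\alpha,\theta}\}_\theta$ is proved \emph{first}, directly on the dual side, via Green's function estimates, large deviation bounds for transfer matrices, and an adaptation of Jitomirskaya's localization machinery; almost reducibility of $(\alpha,S_E^V)$ is then \emph{derived} from almost localization by the duality dictionary you describe. You propose the reverse route --- run a KAM scheme on $(\alpha,S_E^V)$ and read off decay of $u_k$ from analyticity of the conjugacy --- and this has a genuine gap. A KAM iteration on the cocycle side, of the kind carried out in Section~\ref{sec_Quantitative} of this paper, requires a smallness threshold $\varepsilon_*$ that depends on the Diophantine constants $(\gamma,\tau)$ of $\alpha$; under the mere hypothesis $\beta(\alpha)=0$ no such constants are available, and the iteration as you sketch it does not close with a bound $c_0r_0^3$ that is uniform in $\alpha$. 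The non-perturbative input that makes the $\beta(\alpha)=0$ regime tractable in this paper is precisely Theorem~\ref{almostredth} itself (cf.\ the discussion in Subsection~1.4 and the proof of Corollary~\ref{subexp groowth}), so your scheme is circular: it presupposes the very estimate it is meant to establish.
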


If we restrict ourselves to almost Mathieu operators, then we expect the decay rate of the eigenfunction to be $\ln\lambda$, which is the content of the following result.

\begin{theorem}\label{thm_almost_almost-1}
Let $\alpha \in \R\backslash \Q$ satisfy $\beta(\alpha)=0$. If $\lambda>1$, then %and $C_1=C_1(\lambda,\alpha,C_0)$
 $\{H_{\lambda,\alpha,\theta}\}_{\theta}$ is almost localized. Moreover, for any $\delta\in (0,\ln \lambda)$, any $C_0>1$, there exists $\epsilon_0=\epsilon_0(\lambda,C_0,\delta)>0$ such that the following holds. Let  $H_{\lambda,\alpha,\theta} u =E u$ for some $E \in \Sigma_{\lambda,\alpha}$, with $|u_j|\leq 1$ for all $j \in \Z$.
 \begin{enumerate}
\item  If $\theta$ is not $\epsilon_0-$resonant, then
$|u_{j}|\leq e^{-(\ln \lambda -\delta)|j|}$ for $|j|$ large enough.
\item Else, let $\{n_l\}_l$ be the set of $\epsilon_0-$resonances of $\theta$. Given any $\eta>0$,
\begin{equation*}
|u_{j}|\leq e^{-(\ln \lambda -\delta)|j|},\quad \forall \ 2C_0 |n_l|+\eta|n_{l+1}| < |j| < (2C_0)^{-1} |n_{l+1}|,
\end{equation*}
provided that $|j|$ is large enough.
\end{enumerate}
\end{theorem}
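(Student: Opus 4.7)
The plan is to deduce part (1) from the Avila--Jitomirskaya almost-localization machinery \cite{AvilaJito} applied to the supercritical AMO, and to upgrade the unspecified decay rate $\epsilon_1$ coming from part (1) to the sharp Lyapunov rate $\ln\lambda - \delta$ in part (2) by a transfer-matrix bootstrap driven by a large deviation estimate.

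For part (1), although the AMO potential $V(x) = 2\lambda\cos 2\pi x$ at $\lambda > 1$ violates the smallness hypothesis of Theorem \ref{almostredth}, Aubry duality identifies $\widehat H_{\lambda V,\alpha,\theta}$, up to a rescaling of the spectral parameter by $\lambda$, with the AMO family at subcritical coupling $\lambda^{-1} < 1$. Working in this dual subcritical picture, the arguments of \cite{AvilaJito} adapt to every $\lambda > 1$ under $\beta(\alpha) = 0$ and produce the required constants $C_0,C_1,\epsilon_0,\epsilon_1>0$ for which \eqref{almost_localization} holds for the family $\{H_{\lambda,\alpha,\theta}\}_\theta$.

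For part (2) the two inputs are: (a) the sharp identity $L(\alpha, S_E^{\lambda V}) = \ln\lambda$ for every $E\in\Sigma_{\lambda,\alpha}$ (Herman's subharmonic bound, with equality on the spectrum in the supercritical regime); and (b) a quantitative large deviation theorem for the iterates $\mathcal{A}_n$ of the Schr\"odinger cocycle $(\alpha,S_E^{\lambda V})$, available under $\beta(\alpha)=0$, giving for every fixed $\delta>0$ and every large $n$
\[
\Bigl|\bigl\{\theta\in\T : \bigl|\tfrac 1n\ln\|\mathcal{A}_n(\theta)\| - \ln\lambda\bigr|>\tfrac\delta 3\bigr\}\Bigr| < e^{-c(\delta) n}.
\]
Given a point $j_0$ inside a good interval where part (1) already forces $|u_{j_0}|,|u_{j_0 - 1}|$ to be exponentially small, the identity $\binom{u_m}{u_{m-1}} = \mathcal{A}_{m-j_0}(\theta + j_0\alpha)\binom{u_{j_0}}{u_{j_0 -1}}$ combined with $|u_m|\leq 1$ forces the vector $(u_{j_0},u_{j_0-1})$ to lie, up to an error of order $e^{-(\ln\lambda-\delta/2)(m-j_0)}$, in the stable direction of the forward cocycle. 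Propagating this stable alignment then delivers $|u_j|\leq e^{-(\ln\lambda - \delta)|j|}$ on the claimed range. For non-resonant $\theta$ only finitely many resonances exist, so the bootstrap applies for all large $|j|$, yielding (1); for resonant $\theta$ the buffer $\eta|n_{l+1}|$ in (2) absorbs both the propagation length needed to reach the Lyapunov-growth regime via the LDT and the $\theta$-exceptional set of measure $\leq e^{-c\eta|n_{l+1}|}$ excluded by it.

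The main obstacle is genuine uniformity of all constants in $E\in\Sigma_{\lambda,\alpha}$ and $\theta\in\T$: the LDT constants and the stable-alignment estimate depend a priori on $E$, and their uniform control requires a compactness argument on the spectrum together with joint continuity of the Lyapunov exponent (hence of the associated subharmonic growth rate) on $\Sigma_{\lambda,\alpha}$, which ultimately rests on Avila's global theory combined with standard subharmonic techniques. Once these are in place, the parameters $\epsilon_0(\lambda,C_0,\delta)$ can be chosen independently of $E\in\Sigma_{\lambda,\alpha}$ and the bootstrap concludes.
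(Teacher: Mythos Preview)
Your route is genuinely different from the paper's, and the bootstrap step has real gaps.

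\medskip

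\textbf{What the paper does.} The paper does not first establish a weak rate $\epsilon_1$ and then upgrade it. It proves the sharp rate $\ln\lambda-\delta$ directly by the classical Green's-function/regularity method of Jitomirskaya: for $|j|$ in the claimed range one chooses scales $s,\ell$ tied to the continued-fraction denominators, uses the Lagrange-interpolation uniformity lemma (Lemma~5.8 of \cite{AvilaJito}) to show the set $\{\theta+m\alpha\}_{m\in I_1\cup I_2}$ is $\delta$-uniform, deduces that $j$ is $(\ln\lambda-\eta,\,6sq_\ell-1)$-regular, and then iterates the block resolvent identity $u_j=-G_I(i_1,j)u_{i_1-1}-G_I(j,i_2)u_{i_2+1}$ until the accumulated decay reaches $e^{-(\ln\lambda-\delta)|j|}$. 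The only spectral input is $L(\alpha,S_E^\lambda)=\ln\lambda$, which is uniform in $E\in\Sigma_{\lambda,\alpha}$, so uniformity of constants is automatic.

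\medskip

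\textbf{Where your argument is incomplete.} Your stable-alignment step is the weak point. Knowing $|u_m|\leq 1$ and $\|\mathcal A_{m-j_0}(\theta+j_0\alpha)\|\geq e^{(\ln\lambda-\delta/3)(m-j_0)}$ at one far endpoint $m$ bounds the component of $(u_{j_0},u_{j_0-1})$ along the \emph{top singular vector of $\mathcal A_{m-j_0}$}. To turn this into $|u_j|\leq e^{-(\ln\lambda-\delta)|j|}$ for \emph{intermediate} $j$ you must compare the singular directions of $\mathcal A_{j-j_0}$ and $\mathcal A_{m-j_0}$, and control the bottom singular value of $\mathcal A_{j-j_0}$; this is Avalanche-Principle territory and does not follow from a one-scale LDT alone. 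Second, your sentence about the buffer ``absorbing the $\theta$-exceptional set'' conflates measure in $\theta$ with the index $j$: for a \emph{fixed} $\theta$ you need a counting argument (equidistribution of $\{j_0\alpha\}$ against the LDT bad set) to locate a good starting index $j_0$ inside the buffer, and you have not supplied it. Finally, the uniformity in $E$ you flag as an obstacle is a non-issue in the paper's approach precisely because the Green's-function scheme uses only $L\equiv\ln\lambda$ and arithmetic data of $\alpha$; your LDT route would instead need uniform LDT constants over $E\in\Sigma_{\lambda,\alpha}$, which is an additional (though available) input.
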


If $\alpha \in \mathrm{DC}$, then the above result is shown in \cite{Jito1999} for a full measure set of $\theta\in\T$.  As for the case $\beta(\alpha)=0$, we could not find a reference in the literature. For completeness, we give a proof  in Appendix \ref{Appendix_A}.\\

As a direct corollary of Theorem \ref{almostredth}, one can see that the dual cocycle has subexponential growth on the strip $\{|\Im z|<\frac{\epsilon_1}{2\pi}\}$, which was first realized by Avila (one may consult footnote 5 of \cite{A1}). We sketch the proof here for completeness.

\begin{corollary}\label{subexp groowth}
Let $\alpha\in \R $ satisfy $\beta(\alpha)=0$ and $V \in C^\omega(\T,\R)$ satisfy $|V|_{r_0}\leq c_0 r_0^3$ for  the absolute constant $c_0$ in Theorem \ref{almostredth}.
For any $\delta>0$, there exists a constant $C_2= C_2(\alpha,\epsilon_1, \delta)>0$ such that for any $E \in \Sigma_{V,\alpha}$, the iterates of the cocycle $(\alpha,S_{E}^V)$ satisfy
\begin{equation}\label{subpolgrowth}
\sup_{ |\Im z|<\frac{\epsilon_1}{2\pi}}|\mathcal{A}_m (z)| \leq  C_2 e^{\delta |m|},\quad \forall \  m \in \mathbb{N}.
\end{equation}
\end{corollary}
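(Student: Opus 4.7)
\emph{Proof plan.} The strategy is to use Aubry duality to convert the almost localization provided by Theorem \ref{almostredth} into an approximate Bloch-wave conjugation of the Schr\"odinger cocycle $(\alpha,S_E^V)$ on the strip $\{|\Im z|<\epsilon_1/(2\pi)\}$, from which the subexponential bound on $|\mathcal A_m(z)|$ follows.

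For each $E\in\Sigma_{V,\alpha}$, set $\theta:=\rho(\alpha,S_E^V)$. By Aubry duality, $E$ lies in the spectrum of $\widehat H_{V,\alpha,\theta}$, and a Schnol-type argument produces a generalized eigenfunction $u=(u_n)_{n\in\Z}$ of $\widehat H_{V,\alpha,\theta}u=Eu$ satisfying $u_0=1$ and $|u_n|\leq 1+|n|$. Theorem \ref{almostredth} then yields the almost localization $|u_k|\leq C_1 e^{-\epsilon_1|k|}$ in each good window $[C_0|n_j|,C_0^{-1}|n_{j+1}|]$, where $\{n_j\}_j$ is the sequence of $\epsilon_0$-resonances of $\theta$. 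Setting $\psi(y):=\sum_{n\in\Z} u_n e^{2\pi iny}$ and $\Psi(y):=(e^{2\pi i\theta}\psi(y+\alpha),\psi(y))^{\mathrm T}$, a direct Fourier-series manipulation---the algebraic form of Aubry duality---yields the formal Bloch identity $\mathcal A_m(y)\Psi(y)=e^{2\pi im\theta}\Psi(y+m\alpha)$. Combining $\Psi$ with its analogue $\bar\Psi$ at the dual rotation number $-\theta$, the matrix $B(y):=(\Psi(y),\bar\Psi(y))$ formally conjugates the cocycle to $\mathrm{diag}(e^{2\pi im\theta},e^{-2\pi im\theta})$.

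If $\theta$ is not $\epsilon_0$-resonant, the sequence $u$ decays exponentially on all of $\Z$, so $\psi$ is holomorphic and bounded on $\{|\Im y|<\epsilon_1/(2\pi)\}$; a Wronskian lower bound on $|\det B|$ then yields an $m$-independent bound $|\mathcal A_m(z)|\leq C_2$ uniformly on the strip. In the resonant case I would truncate $u$ at a suitable scale $N=N(m,\delta)$ lying inside a good window beyond $m$, working with the entire trigonometric polynomial $\psi_N(y):=\sum_{|n|\leq N} u_n e^{2\pi iny}$ in place of $\psi$. The arithmetic condition $\beta(\alpha)=0$, combined with $\|2\theta-n_j\alpha\|_\T\leq e^{-\epsilon_0|n_j|}$ and $\|k\alpha\|_\T\geq e^{-\eta|k|}$ (valid for arbitrarily small $\eta>0$ and $|k|$ large), forces the resonance ratios $|n_{j+1}|/|n_j|$ to be arbitrarily large for $j$ large enough. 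This allows $N$ to be chosen so that both the truncation tail and the polynomial contributions of $u_n$ near earlier resonances contribute to $|\psi_N|$ in the strip only subexponentially in $m$ of rate $\leq\delta$. Propagating the resulting approximate Bloch identity through $m$ iterations of the cocycle then yields $|\mathcal A_m(z)|\leq C_2 e^{\delta|m|}$.

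The main technical obstacle is the quantitative analysis of the resonant case: one must simultaneously select $N(m)$ so that the bad-window contributions to $|\psi_N|$ on the strip remain subexponential in $m$, and preserve a quantitative lower bound on $|\det B_N|$ outside a controlled small set. Uniformity of $C_2$ across $E\in\Sigma_{V,\alpha}$ then follows from compactness of the spectrum, joint continuity of the construction in $E$ (via continuity of $\rho(\alpha,S_\cdot^V)$ and of the Aubry-dual correspondence), and the $\theta$-uniformity of the constants in Theorem \ref{almostredth}.
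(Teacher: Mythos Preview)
Your overall idea---Aubry duality plus a Bloch-wave conjugation---is correct, but you are making the problem much harder than necessary, and some steps as written have real gaps.

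The paper's proof is organized around a different intermediate target: rather than bounding $|\mathcal A_m(z)|$ directly, it first shows that the Lyapunov exponent vanishes on the whole strip,
\[
L(\alpha,S_E^V(\cdot+{\rm i}y))=0\quad\text{for all }|y|<\tfrac{\epsilon_1}{2\pi}\text{ and all }E\in\Sigma_{V,\alpha},
\]
and only then obtains \eqref{subpolgrowth} from subadditivity plus compactness of $\Sigma_{V,\alpha}$ (this last step is standard; see Remark~2.1 of \cite{AvilaJito}). The crucial simplification is that to get $L=0$ on the strip one does \emph{not} need to analyze the resonant case at all. If $\theta=\theta(E)$ is not $\epsilon_0$-resonant, the Bloch wave $\mathcal U$ is analytic on the strip, and the matrix $Z:=(\mathcal U,\|\mathcal U\|^{-2}R_{1/4}\mathcal U)$ conjugates $(\alpha,S_E^V)$ to an upper-triangular cocycle with unimodular diagonal, which forces $L=0$ on the strip. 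The set of energies with resonant $\theta$ has Lebesgue measure zero (Theorem~4.2 of \cite{AvilaJito} and a Borel--Cantelli argument), and $E\mapsto L(\alpha,S_E^V(\cdot+{\rm i}y))$ is continuous \cite{BJ,JKS}; hence $L=0$ extends to every $E\in\Sigma_{V,\alpha}$ for free. This bypasses your entire ``main technical obstacle''.

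As for the gaps in your plan: (i) In the non-resonant case you appeal to a ``Wronskian lower bound on $|\det B|$'' to get an $m$-independent bound on $|\mathcal A_m|$. But $\det(\Psi,\bar\Psi)$ can vanish identically (precisely when the Bloch wave is real up to a phase, e.g.\ at gap edges), so there is no such lower bound in general; this is why the paper uses the single-column conjugacy $Z$ to a triangular cocycle rather than a two-Bloch-wave diagonalization. (ii) Your uniformity argument at the end is not justified: the generalized eigenfunction $u$ and the truncation scale $N(m,\delta)$ do not depend continuously on $E$ or $\theta$, so ``joint continuity of the construction'' is not available. The paper's route avoids this entirely because continuity is needed only for the Lyapunov exponent, which is a known result, and the passage from $L=0$ to \eqref{subpolgrowth} uses only subadditivity and the compactness of $\Sigma_{V,\alpha}$.
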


\begin{proof} We first show that for any $E \in \Sigma_{V,\alpha}$,
\begin{equation}\label{eq vanish lyap exp}
L(\alpha, S_{E}^V(\cdot+{\rm i} y))=0,   \quad  \forall \  |y| <\frac{\epsilon_1}{2\pi}.
\end{equation}

As above, given any $E \in \Sigma_{V,\alpha}$,  there exist $\theta=\theta(E)\in \R$ and $\widehat u$ such that $\widehat H_{V,\alpha,\theta} \widehat u= E \widehat u$, with $\widehat u_0=1$ and $|\widehat u_j|\leq 1$ for all $j \in \Z$.
We claim that it is enough to show \eqref{eq vanish lyap exp} at energies $E \in \Sigma_{V,\alpha}$ such that $\theta=\theta(E)$ is not $\epsilon_0-$resonant. Indeed, by Theorem 4.2 in \cite{AvilaJito}, there exists $c>0$ depending on $V, \alpha$ such that if $\theta$ is $\epsilon_0-$resonant, then $\rho(E)$ is $c-$resonant.
Then, it follows from a standard Borel-Cantelli argument that the set of $E \in \Sigma_{V,\alpha}$ such that $\theta$ is $\epsilon_0-$resonant has zero Lebesgue measure (indeed, it has zero Hausdorff dimension, as shown by Avila \cite{A1}). On the other hand, by \cite{BJ,JKS}, we know that given any $|y|< \frac{\epsilon_1}{2\pi} $, the map $E \mapsto L(\alpha,S_{E}^V(\cdot+{\rm i} y))$ is continuous, and thus, the claim is proved.

Now, fix $E \in \Sigma_{V,\alpha}$ such that $\theta$ is not $\epsilon_0-$resonant and let $\widehat u$ be as above.
By \eqref{almost_localization}, $|\widehat u_j| \leq C_1 e^{-\epsilon_1 |j|}$ for all sufficiently large $|j|$.
Therefore, the function $u\colon z \mapsto \sum_{j \in \Z} \widehat u_j e^{2 \pi {\rm i} j z}$ is well-defined on $\{|\Im z|<\frac{\epsilon_1}{2\pi}\}$, and  ${\CU}\colon z \mapsto
\begin{pmatrix}
e^{2 \pi {\rm i}\theta} u(z)\\
u(z-\a)
\end{pmatrix}$ satisfies
$S_{E}^V(z) \, {\CU}(z) = e^{2\pi {\rm i}\theta} {\CU}(z+\alpha)$.
Set $Z:=(\mathcal{U},\, \frac{1}{\|\CU\|^2}R_{\frac14}\CU)$, where $R_{\frac14}$ denotes the rotation of angle $\frac{\pi}{2}$. Then $Z$ is defined on $\{|\Im z|<\frac{\epsilon_1}{2\pi}\}$ and conjugates $(\alpha,S_{E}^V)$ to $(\alpha,B)$, with $B(z)=\begin{pmatrix}
e^{2 \pi \mathrm{i} \theta} & \kappa(z)\\[1mm]
0 & e^{-2 \pi \mathrm{i} \theta}
\end{pmatrix}$ for some continuous function $\kappa$ on $\{|\Im z|<\frac{\epsilon_1}{2\pi}\}$. Thus, $L(\alpha,S_{E}^V)$ vanishes identically on the strip $\{|\Im z|<\frac{\epsilon_1}{2\pi}\}$. Combining this fact with the previous claim, this concludes the proof of \eqref{eq vanish lyap exp}.

 Let $(m\alpha, \mathcal{A}_m^y)$ be the $m^{\mathrm{th}}$ iterate of $(\alpha, S_{E}^V(\cdot+{\rm i} y))$. Following Remark 2.1 in \cite{AvilaJito} (see also Lemma 3.1 of \cite{AYZ1}), by subadditivity and compactness of $\Sigma$, we see that for any $\delta>0$, there exists $C=C(\alpha,\epsilon_1, \delta)>0$ such that for any $m \geq 0$,
$$
\sup_{E\in \Sigma_{V,\alpha}}\sup_{ |y|<\frac{\epsilon_1}{2\pi}}|\mathcal{A}_m^y|_\T \leq C+ m\left( \sup_{E\in \Sigma_{V,\alpha}}\sup_{ |y|<\frac{\epsilon_1}{2\pi}}L(\alpha, S_{E}^V(\cdot+{\rm i} y))+\delta\right)=C+m \delta,
$$
which concludes the proof of Lemma \ref{subexp groowth}.
\end{proof}

\subsection{Global theory of one-frequency Schr\"odinger operators}\label{ARCGlobal}

Let us make a short review of  Avila's global theory of one-frequency ${\rm SL}(2,\R)-$cocycles \cite{Aglobal}.  Suppose that $A\in C^\omega(\T,{\rm SL}(2,\R))$ admits a holomorphic extension to $\{|\Im  z|<h\}$. Then for
$|\epsilon|<h$, we define $A_\epsilon \in
C^\omega(\T,{\rm SL}(2,\C))$ by $A_\epsilon(\cdot)=A(\cdot+i \epsilon)$.
 The cocycles which are not uniformly hyperbolic are classified
 into three classes: subcritical, critical, and supercritical. In
 particular, $(\alpha, A)$ is said to be
 subcritical if there exists $h>0$ such that
 $L(\alpha,A_{\epsilon})=0$ for $|\epsilon|<h.$

One main result of Avila's global theory is the following:
\begin{theorem}[Avila \cite{Aglobal}]\label{global-red}
Given any $\alpha\in\R\backslash\Q$, for a (measure-theoretically) typical $V\in C^\omega(\T,\R)$,
there exist $n\geq 1$ and a collection of points $a_1<b_1<\dots<a_n<b_n$ in the spectrum $\Sigma_{V,\alpha}$ such that $\Sigma_{V,\alpha}\subset \bigcup_{i=1}^n[a_i, b_i]$, and energies alternate between supercritical and subcritical along the sequence $\{\Sigma_{V,\alpha}\cap [a_i, b_i]\}_i$. Moreover, for any $i=1,\, \dots,\, n$, the set $\Sigma_{V,\alpha}\cap [a_i, b_i]$ is compact, and it depends continuously (in the Hausdorff topology) on $(\alpha,V).$
\end{theorem}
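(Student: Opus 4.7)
\medskip

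The plan is to derive Theorem~\ref{global-red} directly from Avila's global theory of one-frequency analytic ${\rm SL}(2,\R)$-cocycles, exactly as Avila does in \cite{Aglobal}. For every $E\in\Sigma_{V,\alpha}$ the Schr\"odinger cocycle $(\alpha,S_E^V)$ fails to be uniformly hyperbolic (by the standard spectral characterization), so by the global trichotomy it falls into one of three mutually exclusive classes: subcritical, critical, or supercritical. Recall that these classes are detected by the function $\epsilon\mapsto L_E(\epsilon):=L(\alpha,S_E^V(\cdot+\mathrm i\epsilon))$, which is even, continuous, convex, and piecewise affine with integer slopes (the slope at $\epsilon\searrow 0$ being $2\pi$ times Avila's acceleration $\omega(E)\in\Z_{\geq 0}$). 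Thus I first want to argue that the subcritical locus $\mathcal{O}^{\mathrm{sub}}:=\{E:L_E\equiv 0 \text{ on some strip } |\epsilon|<h\}$ and the supercritical locus $\mathcal{O}^{\mathrm{sup}}:=\{E:L_E(0)>0\}$ are both open in $\R$. Openness of $\mathcal{O}^{\mathrm{sup}}$ is immediate from joint continuity of $L$ (Bourgain--Jitomirskaya). Openness of $\mathcal{O}^{\mathrm{sub}}$ uses the integer-slope convexity: if $L_{E_0}\equiv 0$ on $|\epsilon|<h_0$, then for $E$ near $E_0$ and any $h<h_0$, $L_E$ is convex, even, bounded by a quantity $\ll 1/h_0$ on $|\epsilon|\le h$, and has integer slopes, hence must vanish identically there.

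The core step is then to show that for measure-theoretically typical $V\in C^\omega(\T,\R)$ the spectrum contains no \emph{critical} energy. This is where Avila's argument in \cite{Aglobal} does the real work, and it is the step I expect to be the main obstacle. The idea is to stratify the set of $(V,E)$ for which $(\alpha,S_E^V)$ is critical by the value $\omega(E)=k\geq 1$ of the acceleration, and to show that each stratum has infinite codimension in the Banach space $C^\omega_{r_0}(\T,\R)$: concretely, one builds explicit finite-dimensional transversal directions (Fourier perturbations) along which the critical condition $L_E=0$ forces the positive-acceleration cocycle to bifurcate off, contradicting $L_{E+\delta}\equiv 0$ modulo a controlled set. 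Combined with the fact that the map $V\mapsto \Sigma_{V,\alpha}$ is continuous and critical energies are countably parametrized by rotation number, a Fubini argument on finite-dimensional perturbations $V+w$ along an appropriate probability measure yields that almost every $w$ gives a spectrum disjoint from all critical energies. This is exactly the prevalence statement embodied in the phrase ``measure-theoretically typical''.

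Granted this avoidance, write $\Sigma_{V,\alpha}=\Sigma^{\mathrm{sub}}_{V,\alpha}\sqcup\Sigma^{\mathrm{sup}}_{V,\alpha}$, a partition of the compact set $\Sigma_{V,\alpha}$ into two relatively open, hence relatively clopen, compact pieces. To extract finitely many intervals, I would use that the acceleration $\omega(\cdot)$ is bounded on $\Sigma_{V,\alpha}$ (since $L_E(h)$ is uniformly bounded for $|h|\le h_0$, and the slope at $0$ cannot exceed $L_E(h_0)/h_0$), and that on each acceleration-level set the function $E\mapsto L_E(\epsilon)$ is real analytic off a locally finite set; this limits the number of maximal intervals of pure type. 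Concretely, cover $\Sigma^{\mathrm{sub}}_{V,\alpha}$ and $\Sigma^{\mathrm{sup}}_{V,\alpha}$ by the two respective open sets $\mathcal{O}^{\mathrm{sub}},\mathcal{O}^{\mathrm{sup}}$; since they are disjoint compacts in $\R$ and separated by positive distance, after refining we obtain finitely many disjoint closed intervals $[a_i,b_i]$ whose endpoints lie in $\Sigma_{V,\alpha}$, whose union covers $\Sigma_{V,\alpha}$, and along which the type alternates.

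Finally, Hausdorff continuity of each $\Sigma_{V,\alpha}\cap[a_i,b_i]$ in $(\alpha,V)$ follows by combining (i) continuity of the full spectrum $\Sigma_{V,\alpha}$ in $(\alpha,V)$, (ii) openness of both the subcritical and supercritical conditions in $(\alpha,V,E)$ (same convexity/integer-slope argument, plus joint continuity of the complexified Lyapunov exponent), and (iii) the fact that after a small perturbation of $(\alpha,V)$ the endpoints $a_i,b_i$, being non-critical and of opposite type on either side, move continuously and remain in the spectrum. The main technical risk lies entirely in the second paragraph, namely making the codimension estimate for critical energies rigorous in the prevalence sense; everything else is continuity and bookkeeping on top of Avila's trichotomy.
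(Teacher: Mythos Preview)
The paper does not prove this theorem at all: it is stated in the preliminaries (Section~\ref{ARCGlobal}) as a result of Avila, cited from \cite{Aglobal}, and used throughout as a black box. There is no ``paper's own proof'' to compare against.

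Your sketch is therefore attempting something the paper never sets out to do. As an outline of Avila's argument it is reasonable in spirit --- the trichotomy, openness of the sub- and supercritical loci via convexity and integer quantization of the acceleration, and the prevalence argument for the absence of critical energies are indeed the main ingredients in \cite{Aglobal}. You correctly identify the prevalence step as the real content, and you are honest that everything else is bookkeeping. That said, the sketch is far from a proof: the stratification/codimension argument for criticality is only gestured at, and the finiteness of the alternating intervals is not really derived (separating two disjoint compacts gives finitely many intervals for each, but the alternation and the fact that endpoints can be taken in $\Sigma_{V,\alpha}$ need the more detailed structure of the stratification in \cite{Aglobal}). For the purposes of this paper, though, none of this matters: the correct move here is simply to cite \cite{Aglobal}, as the authors do.
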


 A cornerstone  in Avila's global theory is the \textquotedblleft Almost Reducibility Conjecture\textquotedblright (ARC), which says that  $(\alpha,A)$ is almost reducible if it is subcritical.
Recall that the cocycle $(\alpha, A)$ is said to be {\it reducible} if it can be conjugated to a constant cocycle, i.e.,
there exist $Z\in C^\omega(\T^d,{\rm PSL}(2,\R))$ and $B\in {\rm SL}(2,\R)$ such that
$$Z(\cdot+\alpha)^{-1}A(\cdot)Z(\cdot)=B.$$
Moreover, $(\alpha, A)$ is (analytically) {\it almost reducible} if the closure of its analytic conjugates contains a constant.
The complete solution of ARC was recently given by Avila in \cite{A3, A2}. In the case where $\beta(\alpha)=0$, it is the following:

\begin{theorem}[Avila \cite{A2}]\label{arc-conjecture}
Given $\alpha\in\R\backslash\Q$ with $\beta(\alpha)=0$, and $A\in C^\omega(\T,{\rm SL}(2,\R))$, if $(\alpha,A)$ is subcritical, then it is almost reducible.
\end{theorem}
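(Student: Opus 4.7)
\bigskip
\noindent\textbf{Proof plan.} The statement is Avila's Almost Reducibility Conjecture in the $\beta(\alpha)=0$ regime, so any plan is really a sketch of Avila's program rather than a one-page argument. The strategy I would follow is to combine a quantitative version of subcriticality (uniform subexponential growth of the iterates on a strip) with an inductive, renormalization-driven KAM scheme adapted to the continued fraction algorithm of $\alpha$. The goal is to produce, for every $\varepsilon>0$, an analytic conjugacy $B_\varepsilon\in C^\omega(\T,\mathrm{PSL}(2,\R))$ and a constant $A_\varepsilon^*\in\mathrm{SL}(2,\R)$ with $|B_\varepsilon(\cdot+\alpha)^{-1}A(\cdot)B_\varepsilon(\cdot)-A_\varepsilon^*|_{h_\varepsilon}<\varepsilon$ on some (possibly shrinking but positive) strip $\{|\Im z|<h_\varepsilon\}$.

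The first step is to translate subcriticality into uniform control of the cocycle. By definition there is $h>0$ with $L(\alpha,A_\epsilon)=0$ for $|\epsilon|<h$. Using subadditivity together with plurisubharmonicity of $\epsilon\mapsto\frac1n\int\log\|\mathcal{A}_n(\cdot+i\epsilon)\|\,dx$ (exactly as in the proof of Corollary \ref{subexp groowth}), I would deduce that for every $\delta>0$ and every $h'<h$ there is a constant $C=C(\delta,h')$ such that
\begin{equation*}
\sup_{|\Im z|<h'}|\mathcal{A}_n(z)|\le Ce^{\delta|n|},\qquad \forall\,n\in\Z.
\end{equation*}
This is the dynamical input that replaces Diophantine smallness in the classical local KAM setup.

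The second and main step is the inductive scheme. Let $(q_n)$ be the denominators of convergents of $\alpha$. At scale $q_n$ I would aim to find $B_n\in C^\omega(\T,\mathrm{PSL}(2,\R))$, whose norm on a strip $\{|\Im z|<h_n\}$ is controlled by $e^{o(q_n)}$, such that $B_n^{-1}(\cdot+\alpha)A(\cdot)B_n(\cdot)$ is $e^{-c q_n}$-close to a constant elliptic or parabolic matrix. The inductive step uses resonance-cancelling changes of variable: a resonance at scale $q_n$ corresponds to a Fourier mode $|k|\lesssim q_n$ whose rotation frequency $\langle k,\alpha\rangle$ is close to twice the rotation number modulo $\Z$; one kills it by an explicit $\mathrm{PSL}(2,\R)$ coboundary in the spirit of Moser--P\"oschel, and controls the remaining modes by the subexponential bound above. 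The condition $\beta(\alpha)=0$ enters precisely here: since $\log q_{n+1}=o(q_n)$, the arithmetic loss at each step is dominated by the dynamical gain, so the inductive hypothesis propagates and the conjugacies $B_n$ remain analytic on a strip $h_n$ that shrinks only subexponentially fast.

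The hardest part, and the place where Avila's argument is truly new, is handling the case where the renormalized cocycles have rotation number very close to a rational multiple of $\alpha$ for many $n$ simultaneously. Here one cannot simply iterate the local KAM, because the conjugacies might blow up in degree. The remedy is to perform, along the subsequence of resonant scales, a change of variable that absorbs the growing degree into a rotation of half-integer frequency, reducing the problem to the non-resonant case at the next scale; this is where Avila's global theory (openness of subcriticality and compactness of the subcritical locus, cf.\ Theorem \ref{global-red}) is used to guarantee that we do not leak out of the subcritical regime during the resonance-cancelling step. Passing to the limit along a suitable subsequence of $B_n$ then produces, for any prescribed $\varepsilon$, an analytic conjugate of $(\alpha,A)$ that is within $\varepsilon$ of a constant on a definite strip, which is exactly almost reducibility. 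The main obstacle I expect is precisely this resonance management: without $\beta(\alpha)=0$ one has no budget to afford the loss of analyticity at each cancellation, which is why the conjecture in the general Liouvillean regime required the separate work \cite{A3}.
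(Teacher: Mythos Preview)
The paper does not prove this theorem at all: it is stated as Avila's result \cite{A2} and used as a black box throughout (see Section~\ref{ARCGlobal} and the applications in Section~\ref{subsectlocalglobal}). So there is no ``paper's own proof'' to compare your plan against; any honest write-up here would simply defer to \cite{A2,A3}.

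That said, two remarks on your sketch. First, your step~1 is fine and matches how the paper itself extracts subexponential growth from subcriticality (Corollary~\ref{subexp groowth}). Second, your invocation of Theorem~\ref{global-red} in the resonance-management step is misplaced: Theorem~\ref{global-red} is a statement about the spectral dichotomy for \emph{Schr\"odinger} operators with typical analytic potential, whereas the Almost Reducibility Conjecture concerns an arbitrary analytic $\mathrm{SL}(2,\R)$-cocycle. Avila's proof in \cite{A2} does not (and cannot, on pain of circularity) rely on the Schr\"odinger global theory to control resonances; the argument is intrinsic to the cocycle and uses a periodic-approximation/renormalization scheme together with quantitative almost-localization-type estimates rather than compactness of a subcritical spectrum. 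So while your high-level outline (subexponential growth $\Rightarrow$ inductive conjugation with resonance cancellation, $\beta(\alpha)=0$ controlling the arithmetic loss) is in the right spirit, the specific mechanism you name for the hard resonant case is not the one Avila actually uses.
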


%
%For any $E$ such that the Lyapunov exponent vanishes, it is natural to ask if the associated cocycle is analytically conjugated to constant matrix.
%As a related concept, ``almost reducibility" was introduce by Eliasson\cite{E92}, and has been developed in \cite{A} and \cite{AFK} for one-frequency models.
%Roughly speaking, a cocycle is said to be almost
%reducible if, by a sequence of transformations, the transformed cocycles are closer and closer to constant.
%\begin{defi}[almost reducible] Given $A_0\in \mathrm{SL}(2,\R)$, the cocycle $(\alpha, A_{(E,V)})$ is called almost reducible to $A_0$, if there exist $h>0$, a sequence
%$\{\Lambda_n\}_{n\geq0}\subset C^{\omega}(\T, \mathrm{SL}(2,\R))$, analytic on $\T_h$,  such that
%$$ \lim_{n \to \infty} |\Lambda_n(\cdot + \alpha)^{-1}A_{(E,V)}(\cdot)\Lambda_n(\cdot) - A_0|_h  = 0.$$
%\end{defi}
%
%By Theorem 1.4 and Corollary 1.6 of \cite{A}, we have
%\begin{lemma}\label{lemma_ar}
%For $\alpha\in\R\backslash \Q$ and $V\in C^{\omega}(\T, \R)$ such that $L_{\theta}$ has purely a.c. spectrum for a.e. $\theta$, then for a.e. $E\in\sigma(L)$, the cocycle $(\alpha, A_{(E,V)})$ is almost reducible to some $A_0\in SO(2,\R)$.
%\end{lemma}
%
%For more on reducibility or almost reducibility for $\mathrm{SL}(2,\R)$ and ${\rm SL}(2,\R)$ cocycles, and the development of KAM theory in this context,
%we can refer to  \cite{DinaburgS, FK, HY, YZ}.

%
%Recall the following result of Avila \cite{A2}:
%\begin{theorem}\label{arc-conjecture}
%Let $\beta(\alpha) =0$, if $(\alpha,A)$ is subcritical, then it is almost reducible.
%\end{theorem}

\section{Quantitative KAM scheme}\label{sec_Quantitative}
\noindent

In this section, we present a new KAM scheme and give a quantitative almost reducibility result for the local quasi-periodic linear system
\begin{equation}\label{linear_system}
\left\{
\begin{array}{l}
\dot{x}=(A_0+F_0(\theta))x\\
\dot{\theta}= \varpi
\end{array}
\right. ,
\end{equation}
where $A_0\in {\rm sl}(2,\R)$, and $F_0(\theta)$ is a perturbation. We also abbreviate (\ref{linear_system}) as $(\varpi,A_0+F_0)$.
There is a parallel result for the quasi-periodic cocycle introduced in Subsection \ref{subslyap}.
The reason why we chose to present the detailed proof in the continuous case is simply for comparing our Theorem \ref{sharpdecay} with the result of Damanik-Goldstein \cite{DG}.

 Given any $A_1, A_2\in {\CB}_{r_0}:=C^\omega_{r_0}(\T^{d}, {\rm sl}(2,\R))$ and $W\in C^{\omega}( \T^d,{\rm PSL}(2,\R))$, we say that $(\varpi,A_1)$ is {\it conjugated} to $(\varpi,A_2)$ by $W$ if
$\partial_\varpi W=A_1 W- W A_2$,
 where $ \partial_\varpi W:=\langle \varpi, \nabla W\rangle$. The system
 $(\varpi,A_1)$ is called {\it reducible} if it is conjugated to a constant system $(\varpi,B)$ with $B\in {\rm sl}(2,\R)$. It is called {\it almost reducible} if the closure of its analytical conjugates contains a constant system $(\varpi,B)$.

%As a parallel theory, we can also consider the quasi-periodic linear system $(\alpha, A)$:
%$$\left\{\begin{array}{l}
%\dot x=A(\theta)\, x\\
%\dot \theta=\alpha
%\end{array}\right. \;\
%{\rm with} \;\  A\in C^{\omega}(\T^d, {\rm sl}(2,\R)).$$
%The hyperbolicity, fibered rotation number and Lyapunov exponent have been well defined and well studied.
%Such a linear system is called {\it reducible} if it can be conjugated to a constant system $(0,B)$ with $B\in {\rm sl}(2,\R)$, i.e., there exist $Z\in C^\omega(\T^d, {\rm PSL}(2,\R))$ such that, with $\displaystyle \partial_{\alpha}Z:=\frac{d}{dt}Z(\alpha t)$
%$$\partial_{\alpha}Z = AZ-ZB.$$
%
%For the continuous Schr\"odinger operator ${\CL}_{V,\alpha}$ defined in (\ref{Schro_conti}), the eigenvalue equation ${\CL}_{V, \alpha}\psi=E \psi$ gives a quasi-periodic linear system
%$$\left(\begin{array}{c}
%\psi\\
%\psi'
%\end{array}\right)'=\begin{pmatrix}
%0 & 1\\
%V(\alpha t) -E & 0
%\end{pmatrix}\left(\begin{array}{c}
%\psi\\
%\psi'
%\end{array}\right). $$

\subsection{Iteration}

Suppose that $A\in {\rm sl}(2,\R)$ and $F\in{\CB}_r$ with $|F|_{r}\leq \varepsilon$ for some $r,\varepsilon>0$.
For any given $r_+\in(0,r)$, the aim of the following argument is to find $\hat W\in C^{\omega}_{r_+}(\T^d, {\rm PSL}(2,\R))$, $A_{+}\in{\rm sl}(2,\R)$ and $F_{+}\in{\CB}_{r_{+}}$ with $|F_{+}|_{r_{+}}\ll \varepsilon$ such that
$(\varpi, A+F)$ is conjugated to  $(\varpi, A_++F_+)$ by $\hat W(\theta)$.
\begin{prop}\label{prop_iteration}
Let $\varpi\in{\rm DC}_d(\gamma,\tau)$. Given any $r_+\in(0,r)$, there is a constant $D_0=D_0(\gamma,\tau,d)>0$ such that if $\varepsilon$ satisfies
\begin{equation}\label{smallness_condition}
\varepsilon\leq D_0 \left(1+|A|^{120d(1+\frac{1}{\tau})}\right)(rr_+-r_+^2)^{800d(\tau+1)},
\end{equation}
%\begin{equation}\label{smallness_condition}
%10^{\frac{1}{\tau}}(d-1)!+\frac{2^{\frac{20}{\tau}+6}}{r_{+}}\left(1+|A|^{\frac{1}{\tau}}\right)\left(\gamma^{\frac{1}{\tau}}+ \gamma^{-\frac{1}{\tau}}\right) \frac{|\ln\varepsilon|+r_++1}{r-r_+}\leq \varepsilon^{-\frac{1}{800 d (\tau+1)}},
%\end{equation}
then there exist $F_{+}\in {\CB}_{r_{+}}$,
$\hat W\in C_{r_{+}}^{\omega}(\T^d, {\rm PSL}(2,\R))$ and $A_{+}\in {\rm sl}(2,\R)$ such that
$(\varpi, A+F )$ is conjugated to $(\varpi, A_+ +F_+ )$ by $\hat W$.
%$$\partial_\omega \tilde W =(A+F)\tilde W - \tilde W(A_{+}+F_{+}).$$
Let $N:=\frac{2|\ln\varepsilon|}{r-r_+}$, and $\pm2\pi{\rm i}\xi$ be the two eigenvalues of $A$. Then we have the following:
\begin{itemize}
\item (Non-resonant case) Assume that
\begin{equation}\label{non-resonant_condition}
|2\xi-\la n,\varpi \ra|\geq \varepsilon^{\frac{1}{15}},  \quad \forall \   n\in\Z^d  \;\ with \;\ 0<|n|\leq N.
\end{equation}
In this case, we have the estimates:
$$|F_{+}|_{r_{+}}\leq \varepsilon^{2},\quad |\hat W- {\rm Id}|_{r_{+}}\leq 2 \varepsilon^{\frac12}, \quad |A_+-A|\leq \varepsilon^{\frac12}.$$
\item (Resonant case) If there exists $n_*\in\Z^{d}$ with  $0<|n_*|\leq N$ such that
\begin{equation}\label{resonant_condition}
|2\xi-\la n_*,\varpi\ra|< \varepsilon^{\frac{1}{15}},
\end{equation}
then $|F_{+}|_{r_{+}}\leq \varepsilon e^{-r_+\varepsilon^{-\frac{1}{18\tau}}}$,
 ${\rm deg} \hat W=n_*$, with the estimate
$$|\hat W|_{r''}\leq \frac{4\sqrt{|A|}}{\sqrt{\gamma}}|n_*|^{\frac\tau2} e^{\pi r''|n_*|}, \quad  \forall \  0< r''\leq r_+,$$
 and $A_+$ satisfies $|A_+|\leq \varepsilon^{\frac{1}{16}}$ with two eigenvalues $\pm 2\pi {\rm i}\xi_+$ satisfying $|\xi_+|\leq \varepsilon^{\frac{1}{16}}$.
Moreover, for $M:=\frac{1}{1+{\rm i}}\left(\begin{matrix}
1 & -{\rm i}\\
1 & {\rm i}
\end{matrix}\right)$, we have
\begin{equation}\label{A+}
A_{+}=2\pi M^{-1}\begin{pmatrix}
{\rm i}(\xi-\frac{\la n_*,\varpi\ra}2+ g_0) &  g_{*}  \\[2mm]
\overline{g_{*}} & -{\rm i}(\xi-\frac{\la n_*,\varpi\ra}2+ g_0)
\end{pmatrix} M
\end{equation}
for some $g_0\in\R$, $g_{*}\in \C$ with $|g_0|\leq \varepsilon^{\frac{15}{16}}$, $|g_{*}|\leq \varepsilon^{\frac{15}{16}} e^{-2\pi r|n_*|}$.
\end{itemize}
\end{prop}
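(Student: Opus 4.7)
The plan is to run a single step of a KAM iteration with a separate treatment of the resonant situation. First I would put $A$ in a convenient normal form: in the elliptic case of interest (the only case where the non-resonance condition (\ref{non-resonant_condition}) is non-trivial for small $|n|$), there exists a constant $P\in\SL$ with $|P|,|P^{-1}|\lesssim \sqrt{|A|/|\xi|}$ such that $P^{-1}AP = 2\pi\xi J$ where $J$ is the standard rotation generator of $\mathrm{sl}(2,\R)$. After this constant conjugation I work with $A = 2\pi\xi J$. Next I set $N := 2|\ln\varepsilon|/(r-r_+)$ and split $F = F_{\leq N} + F_{>N}$; by the Paley--Wiener decay of Fourier coefficients the tail satisfies $|F_{>N}|_{r_+}\leq C\varepsilon^{3/2}$ thanks to the choice of $N$, so it may be absorbed into the new error.

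Second, I would solve the homological equation $\partial_\varpi Y - [A,Y] = F_{\leq N} - \hat F$ mode by mode. For each nonzero $n$, the operator $L_n := 2\pi\mathrm{i}\la n,\varpi\ra - \mathrm{ad}_A$ acting on $\mathrm{sl}(2,\C)$ has eigenvalues $2\pi\mathrm{i}\la n,\varpi\ra$ and $2\pi\mathrm{i}(\la n,\varpi\ra\pm 2\xi)$. The first is bounded below by $\gamma/|n|^\tau$ by $\varpi\in{\rm DC}_d(\gamma,\tau)$; in the non-resonant case the second is at least $\varepsilon^{1/15}$ for $0<|n|\leq N$ by assumption. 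Hence $Y$ is uniquely determined with $\hat F = [F]$ and satisfies $|Y|_{r_+} \leq \varepsilon^{1-1/15-o(1)} \leq \varepsilon^{1/2}$ after standard Cauchy estimates and the smallness hypothesis (\ref{smallness_condition}). Setting $\hat W := e^Y$, $A_+ := A + [F]$, the exact conjugation identity gives $F_+ = e^{-Y}(F - F_{\leq N} + \text{higher-order commutators})e^{Y}$, bounded by $\varepsilon^2$, which handles the non-resonant bullet.

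Third and most delicate is the resonant case. If $n_*$ with $|2\xi - \la n_*,\varpi\ra| < \varepsilon^{1/15}$ exists, uniqueness follows from the Diophantine condition on $\varpi$: two such indices would force $|\la n_*-n_*',\varpi\ra|<2\varepsilon^{1/15}$, impossible for $|n_*-n_*'|\leq 2N$ and $\varepsilon$ small. I then pre-conjugate by the degree-$n_*$ map $W_1(\theta) := P\,R_{\la n_*,\theta\ra/2}P^{-1}\in C^\omega(\T^d,\PSL)$: in the diagonalizing basis this shifts the rotation eigenvalue of $A$ to $\xi - \la n_*,\varpi\ra/2$, of size at most $\varepsilon^{1/15}/2$, and translates the Fourier mode $n_*$ of the perturbation down to the zero mode. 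The bound on $W_1$ combines the trivial $|R_{\la n_*,\theta\ra/2}|\leq e^{\pi r''|n_*|}$ with $|P|^2\lesssim |A|/|\xi|$; since $|\xi|\geq \tfrac12|\la n_*,\varpi\ra|-\varepsilon^{1/15}\gtrsim \gamma/|n_*|^\tau$ this yields the stated $\tfrac{4\sqrt{|A|}}{\sqrt{\gamma}}|n_*|^{\tau/2}e^{\pi r''|n_*|}$. After the pre-conjugation one is in a situation where the constant part has size $\varepsilon^{1/15}$ and the new perturbation is still of size $\varepsilon$ (with a factor controlled by $|P|^2$), so a variant of the non-resonant step applies and produces the claimed $|F_+|_{r_+}\leq \varepsilon e^{-r_+\varepsilon^{-1/(18\tau)}}$; the new $A_+$ inherits from the pre-conjugation the structure (\ref{A+}), with $g_0$ coming from absorbed zero modes and $g_*$ from the coefficient $\hat F(n_*)$ of the original perturbation.

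The main obstacle is the sharp bound $|g_*|\leq\varepsilon^{15/16}e^{-2\pi r|n_*|}$ with the \emph{original} strip width $r$ in the exponent: the naive estimate from $F_{\leq N}$ on the strip $r_+$ would only give $e^{-2\pi r_+|n_*|}$. The remedy is to extract the single Fourier coefficient $\hat F(n_*)$ before any truncation and bound it directly by $|F|_r e^{-2\pi r|n_*|}\leq\varepsilon e^{-2\pi r|n_*|}$; the pre-conjugation degrades this only by $|P|^2\sim|A||n_*|^\tau/\gamma$, and the smallness condition (\ref{smallness_condition}) on $\varepsilon$ relative to $|A|$, $\gamma$ and $r-r_+$ (which controls $|n_*|\leq N$) ensures this factor is absorbed into the loss $\varepsilon^{-1/16}$. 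The remaining points---preservation of the real $\mathrm{sl}(2,\R)$ structure, the exact form (\ref{A+}) in the Pauli-like basis $M$, and the smallness $|A_+|\leq\varepsilon^{1/16}$---follow from symmetry of the construction and a direct computation.
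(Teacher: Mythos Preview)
Your non-resonant argument is morally right but falls just short of the stated $|F_+|_{r_+}\leq\varepsilon^2$: a single pass through the homological equation leaves a commutator remainder of order $|Y|_{r_+}|F|_{r_+}\sim\varepsilon^{14/15}\cdot\varepsilon=\varepsilon^{29/15}$. The paper closes this via the Hou--You lemma (Lemma~\ref{lemma_HouYou}), which is not one linearised step but an implicit iteration conjugating $(\varpi,A+F)$ to $(\varpi,A+G)$ with $G$ lying \emph{entirely} in the resonant subspace ${\CB}_r^{(\mathrm{re})}(\eta)$ and $|G|_r\leq 2\varepsilon$ on the \emph{same} strip. Under the non-resonance hypothesis every nonzero Fourier mode of $G$ with $|n|\leq N$ then vanishes (Lemma~\ref{lemma_nonresonant_diophantine}), so $F_+:=G-\widehat G(0)$ has Fourier support in $|n|>N$ and a plain truncation estimate on $r_+$ gives $\varepsilon^2$ directly.

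The genuine gap is in the resonant case, and it is an ordering issue. Your plan---rotate by $W_1$ first, then run a non-resonant step---does not work as stated. Conjugation by $R_{\la n_*,\theta\ra/2}$ shifts the off-diagonal Fourier modes of the perturbation by $\pm n_*$, which on the strip $r_+$ multiplies its norm by $e^{2\pi r_+|n_*|}$; since $|n_*|$ may reach $N=2|\ln\varepsilon|/(r-r_+)$, this factor is $\varepsilon^{-4\pi r_+/(r-r_+)}$, typically huge. So ``the new perturbation is still of size $\varepsilon$ (with a factor controlled by $|P|^2$)'' is false---the missing factor is exponential in $|n_*|$, not polynomial. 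The $Y'$ you would build next is then not small, $e^{Y'}$ is far from the identity, and neither the quadratic remainder nor the super-exponential bound $|F_+|_{r_+}\leq\varepsilon e^{-r_+\varepsilon^{-1/(18\tau)}}$ can be salvaged. The paper reverses the order: first apply Hou--You in the $C_A$-normalised basis, so the surviving $G\in{\CB}_r^{(\mathrm{re})}(\eta)$; a structural lemma (Lemma~\ref{lemma_structrue_resonant}) then shows that, apart from the diagonal zero mode and the off-diagonal mode $n_*$, all Fourier modes of $G$ lie beyond $|n|\sim\varepsilon^{-1/(15\tau)}$, producing a tail $P$ with $|P|_{r_+}\leq\varepsilon e^{-r_+\varepsilon^{-1/(16\tau)}}$. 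Only \emph{then} does one rotate by $Z$; the blow-up $e^{2\pi r_+|n_*|}$ now acts on $P$, which is already super-exponentially small, and is harmlessly absorbed. Your observation about extracting $\widehat F(n_*)$ before truncation to secure the sharp $e^{-2\pi r|n_*|}$ in $|g_*|$ is exactly right, and is precisely what Lemma~\ref{lemma_structrue_resonant} delivers---but only once the non-resonant modes have already been eliminated.
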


\smallskip

%For proving Proposition \ref{prop_iteration}, the aim is to construct
%$W\in C_{r_{+}}^{\omega}(\T^d, {\rm PSL}(2,\R))$ and  such that
%$$\partial_\varpi W =(A+F)W - W(A_{+}+F_{+}),$$
%for some $A_{+}\in {\rm sl}(2,\R)$.
Before giving the proof, we present a decomposition for the space ${\CB}_h$, $h>0$.
Given any $\varpi\in \R^d$, for $\eta>0$ and $\tilde A\in {\rm sl}(2,\R)$, we decompose ${\CB}_h={\CB}_h^{(\rm nre)}(\eta)\oplus {\CB}_h^{(\rm re)}(\eta)$ in such a way that for any $Y\in {\CB}_h^{(\rm nre)}(\eta)$,
\begin{equation}\label{non_resonant_condition}
\partial_\varpi Y, \;\  [\tilde A,Y]\in {\CB}_h^{(\rm nre)}(\eta), \quad |\partial_\varpi Y-[\tilde A,Y]|_h\geq \eta |Y|_h.
\end{equation}
Moreover, we let $\PP_{\rm nre}^{\eta}Y$ and $\PP_{\rm re}^{\eta}Y$ be the standard projections from ${\CB}_h$ onto ${\CB}_h^{(\rm nre)}(\eta)$ and ${\CB}_h^{(\rm re)}(\eta)$ respectively.

Associated with this decomposition, we have
\begin{lemma}[Hou-You \cite{HouYou}]\label{lemma_HouYou}
Given $\tilde F\in{\CB}_h$ with $|\tilde F|_h\leq \tilde \varepsilon$, assume that $\tilde \varepsilon\in (0, 10^{-8})$ and $\eta\geq \tilde \varepsilon^{\frac14}$.
There exist $Y\in {\CB}_{h}$, $G \in {\CB}^{({\rm re})}_{h}(\eta)$, with the estimates $|Y|_{h}\leq \tilde \varepsilon^{\frac12}$, $|G|_{h}\leq 2\tilde \varepsilon$, such that
$(\varpi,\tilde A+\tilde F)$ is conjugated to $(\varpi,\tilde A+G)$ by $e^{Y(\theta)}$.
%$$\partial_\varpi e^{Y}=(\tilde A+\tilde F)e^{Y} - e^{Y} (\tilde A+G).$$
\end{lemma}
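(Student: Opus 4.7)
The plan is to view the lemma as a nonlinear fixed-point problem for $Y$ in the non-resonant subspace and to solve it by a Banach contraction. The conjugation equation
$$\partial_\varpi e^{Y} \;=\; (\tilde A+\tilde F)\,e^{Y} \;-\; e^{Y}(\tilde A+G)$$
is equivalent to the identity $G = e^{-Y}(\tilde A+\tilde F)e^{Y} - e^{-Y}\partial_\varpi e^{Y} - \tilde A$; expanding the exponentials via Baker--Campbell--Hausdorff splits this as
$$G \;=\; \tilde F + [\tilde A, Y] - \partial_\varpi Y + R(Y),$$
where $R(Y)$ collects all terms of total degree $\geq 2$ in the small quantities $(Y,\tilde F)$. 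The requirement $G\in{\CB}^{({\rm re})}_h(\eta)$ is $\PP_{\rm nre}^{\eta}G = 0$, and if we insist that $Y\in{\CB}_h^{({\rm nre})}(\eta)$ then by the invariance \eqref{non_resonant_condition} the combination $\partial_\varpi Y - [\tilde A, Y]$ already lies in the non-resonant subspace. Consequently the problem reduces to
$$\CL Y \;=\; \PP_{\rm nre}^{\eta}\tilde F + \PP_{\rm nre}^{\eta} R(Y),\qquad \CL := \partial_\varpi - \mathrm{ad}_{\tilde A}.$$

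The defining bound $|\CL Y|_h\geq\eta|Y|_h$ on ${\CB}_h^{({\rm nre})}(\eta)$ makes $\CL$ boundedly invertible there with $|\CL^{-1}|_h\leq 1/\eta$. This turns the problem into the fixed-point equation
$$Y \;=\; T(Y) \;:=\; \CL^{-1}\PP_{\rm nre}^{\eta}\tilde F \;+\; \CL^{-1}\PP_{\rm nre}^{\eta}R(Y)$$
on the closed ball $\mathcal{B}:=\{Y\in{\CB}_h^{({\rm nre})}(\eta): |Y|_h \leq \tilde\varepsilon^{1/2}\}$. The affine term satisfies $|\CL^{-1}\PP_{\rm nre}^{\eta}\tilde F|_h \leq \tilde\varepsilon/\eta \leq \tilde\varepsilon^{3/4}$ by the hypothesis $\eta\geq\tilde\varepsilon^{1/4}$. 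For the nonlinear remainder, a Lie-series estimate yields $|R(Y)|_h \leq C\bigl(|Y|_h|\tilde F|_h + |Y|_h|\CL Y|_h\bigr)$ once one substitutes $[\tilde A,Y] = \partial_\varpi Y - \CL Y$ into every nested commutator produced by the BCH expansion, a move that eliminates any explicit appearance of $|\tilde A|$. Consequently on $\mathcal B$ both $|T(Y)|_h \leq \tilde\varepsilon^{1/2}$ and the contraction estimate $|T(Y_1)-T(Y_2)|_h \leq \tfrac{1}{2}|Y_1-Y_2|_h$ follow from $\tilde\varepsilon<10^{-8}$. The Banach fixed-point theorem supplies a unique $Y\in\mathcal B$, and then $G := \tilde F + [\tilde A,Y] - \partial_\varpi Y + R(Y)$ automatically lies in ${\CB}^{({\rm re})}_h(\eta)$ with $|G|_h \leq |\tilde F|_h + |R(Y)|_h \leq 2\tilde\varepsilon$.

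The main obstacle I foresee is the careful control of $R(Y)$ without any a priori upper bound on $|\tilde A|$: the textbook BCH expansion produces iterated commutators $[\mathrm{ad}_Y]^k\tilde A$ whose naive norms scale with powers of $|\tilde A|$. The trick, and what allows the estimate to be performed on the same analyticity strip $h$ rather than on a shrunk strip as in classical Moser--P\"oschel KAM, is that every such commutator can be replaced via the identity $[\tilde A,Y]=\partial_\varpi Y-\CL Y$; after this replacement the only quantities appearing are $Y$ itself (small, in $\mathcal B$) and $\CL Y$ (controlled by the fixed-point equation, hence by $|\tilde F|_h+|R(Y)|_h$), so $R$ becomes truly quadratic in the small data. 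Carrying out this bookkeeping, together with the standard fact that $\PP_{\rm nre}^{\eta}$ is a bounded projection on ${\CB}_h$, is the technical heart; after that, the statement follows from a one-line application of the Banach fixed-point theorem.
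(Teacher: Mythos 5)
This lemma is cited from Hou--You \cite{HouYou} without a proof in the present paper, so there is no internal proof to compare against; I will assess your proposal on its own terms.

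Your overall strategy is correct, and your central observation---that the substitution $[\tilde A,Y]=\partial_\varpi Y - \CL Y$ with $\CL := \partial_\varpi - \mathrm{ad}_{\tilde A}$ is what removes all dependence on $|\tilde A|$ and avoids the usual loss of analyticity strip---is exactly the mechanism that makes the Hou--You lemma work. However there is a gap at precisely this step. You claim that after the substitution ``the only quantities appearing are $Y$ itself and $\CL Y$,'' but the substitution also produces $\partial_\varpi Y$, which is unbounded on ${\CB}_h$ and cannot be controlled by $|Y|_h$ on the same strip. This term does not simply disappear; it cancels, and you need to exhibit the cancellation. Concretely, writing the conjugation equation as
$$G = e^{-\mathrm{ad}_Y}(\tilde A+\tilde F) - \frac{1-e^{-\mathrm{ad}_Y}}{\mathrm{ad}_Y}\,\partial_\varpi Y - \tilde A,$$
and using $\mathrm{ad}_Y\tilde A = -(\partial_\varpi Y-\CL Y)$, one finds $e^{-\mathrm{ad}_Y}\tilde A-\tilde A = \frac{1-e^{-\mathrm{ad}_Y}}{\mathrm{ad}_Y}\left(\partial_\varpi Y-\CL Y\right)$, so the two $\partial_\varpi Y$ contributions cancel exactly and
$$G \;=\; e^{-\mathrm{ad}_Y}\tilde F \;-\; \frac{1-e^{-\mathrm{ad}_Y}}{\mathrm{ad}_Y}\,\CL Y.$$
This identity is the technical heart of the lemma: it is only after it that $G$, and hence the remainder $R(Y)$, is genuinely a function of $(Y,\tilde F,\CL Y)$ alone, giving $|R(Y)|_h\leq C\bigl(|Y|_h|\tilde F|_h + |Y|_h|\CL Y|_h\bigr)$ and allowing the fixed-point to close on the \emph{same} strip $h$. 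Once you have it, your contraction argument and the numerical verification under $\tilde\varepsilon<10^{-8}$, $\eta\geq\tilde\varepsilon^{1/4}$ go through. Two smaller points deserve a word: you use that $\PP_{\rm nre}^\eta$ is a bounded projection and that $\CL$ is actually surjective (not merely bounded below) on ${\CB}_h^{(\rm nre)}(\eta)$; neither follows from the abstract property \eqref{non_resonant_condition} alone, but both hold for Hou--You's concrete Fourier-mode construction of the decomposition, which is the setting in which the lemma is invoked.
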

%From now on, we focus on the case $\eta=2\varepsilon^{\frac{1}{4}}$, so we simplify the notations of the above subspaces as ${\CB}_h^{(\rm nre)}$ and ${\CB}_h^{(\rm re)}$.

\subsubsection{Non-resonant case}

%For the non-resonant case, the proof can be realised by the standard KAM scheme(we can refer to \cite{DinaburgSinai, E92, HA} for more details). To make this section more self-contained, we give a short proof here.

Consider the linear system $
(\varpi, A+F)$, where $\varpi\in {\rm DC}_d(\gamma,\tau)$, $A\in {\rm sl}(2,\R)$ has two eigenvalues $\pm2\pi{\rm i}\xi$, and $F\in{\CB}_r$ satisfies $|F|_{r}\leq \varepsilon$. For $\tilde A=A$
and $\eta=2\varepsilon^{\frac14}$, we focus on the decomposition
 ${\CB}_r={\CB}_r^{(\rm nre)}(\eta)\oplus {\CB}_r^{(\rm re)}(\eta)$.
 The key observation is the following:

\begin{lemma}\label{lemma_nonresonant_diophantine}
Assume that $(\ref{non-resonant_condition})$ holds. For any $G\in {\CB}^{({\rm re})}_r(\eta)$, we have $\widehat G(n)=0$ if $0<|n|\leq N$.
%if $\widehat G(n)\neq 0$ for some $k\in\Z^d$ with $0<|k|\leq N$, then $G\in {\CB}_r^{(\rm nre)}(\eta)$.
\end{lemma}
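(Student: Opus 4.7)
The plan is to show that, under the non-resonant hypothesis \eqref{non-resonant_condition}, every pure Fourier mode $M\,e^{2\pi{\rm i}\langle n,\theta\rangle}$ with $0<|n|\leq N$ lies entirely in the non-resonant subspace $\mathcal{B}_r^{({\rm nre})}(\eta)$. Since $\mathcal{B}_r^{({\rm re})}(\eta)$ is a direct complement of $\mathcal{B}_r^{({\rm nre})}(\eta)$ and both pieces are preserved by Fourier projection (the defining operators $\partial_\varpi$ and $[A,\cdot]$ both respect Fourier decomposition), projecting any $G\in\mathcal{B}_r^{({\rm re})}(\eta)$ onto its $n$-th Fourier mode would then land in the intersection $\mathcal{B}_r^{({\rm nre})}(\eta)\cap\mathcal{B}_r^{({\rm re})}(\eta)=\{0\}$, forcing $\widehat{G}(n)=0$.

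To implement this, I would first diagonalize ${\rm ad}_A:=[A,\cdot]$ on ${\rm sl}(2,\C)$. Since $A$ has eigenvalues $\pm 2\pi{\rm i}\xi$, ${\rm ad}_A$ has eigenvalues $0,\pm 4\pi{\rm i}\xi$; pick a basis $\{H_0,H_+,H_-\}$ of corresponding eigenvectors. Then for each fixed $n$ with $0<|n|\leq N$ and $\mu\in\{0,\pm 4\pi{\rm i}\xi\}$, the element $Y_{n,\mu}(\theta):=H_\mu\,e^{2\pi{\rm i}\langle n,\theta\rangle}$ is a joint eigenvector of $\partial_\varpi$ and ${\rm ad}_A$, with
\[ (\partial_\varpi-{\rm ad}_A)Y_{n,\mu}=\bigl(2\pi{\rm i}\langle n,\varpi\rangle-\mu\bigr)Y_{n,\mu}. \]

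Next I would verify that each scalar multiplier has modulus at least $\eta=2\varepsilon^{1/4}$. For $\mu=0$, the Diophantine assumption gives $|\langle n,\varpi\rangle|\geq\gamma/|n|^\tau\geq\gamma/N^\tau$, and since $N=2|\ln\varepsilon|/(r-r_+)$ grows only logarithmically in $1/\varepsilon$, the smallness condition \eqref{smallness_condition} guarantees $2\pi\gamma/N^\tau\gg\varepsilon^{1/4}$. For $\mu=\pm 4\pi{\rm i}\xi$, applying \eqref{non-resonant_condition} to both $n$ and $-n$ (both of which have norm $\leq N$) yields $|2\pi\langle n,\varpi\rangle\mp 4\pi\xi|\geq 2\pi\varepsilon^{1/15}\gg\varepsilon^{1/4}$. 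Therefore the three-dimensional complex space $V_n:={\rm span}_{\C}\{Y_{n,0},Y_{n,+},Y_{n,-}\}$ (together with its $-n$ conjugate, which recovers the real form) is stable under $\partial_\varpi$ and ${\rm ad}_A$, and $\partial_\varpi-{\rm ad}_A$ is diagonal on $V_n$ with every eigenvalue of modulus $\geq\eta$. This translates into the estimate $|\partial_\varpi Y-[A,Y]|_r\geq\eta|Y|_r$ characterizing $\mathcal{B}_r^{({\rm nre})}(\eta)$.

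The main subtlety I anticipate is bridging the scalar eigenvalue lower bounds and the operator-norm lower bound phrased with the analytic sup-norm $|\cdot|_r$: one has to fix a basis of ${\rm sl}(2,\C)$ in which $|\cdot|_r$ is uniformly comparable to the $\ell^\infty$-norm on the $(H_0,H_+,H_-)$-coordinates. This is a one-time linear-algebra computation whose basis-change constant depends only on $|A|$ and is absorbed by \eqref{smallness_condition}. Aside from this routine bookkeeping, the proof reduces to the direct eigenvalue count on each Fourier mode carried out above.
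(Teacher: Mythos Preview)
Your proposal is correct and follows essentially the same approach as the paper's proof. The paper shows directly that the full truncation $G_N=\sum_{0<|n|\leq N}\widehat G(n)e^{2\pi{\rm i}\langle n,\theta\rangle}$ lies in $\mathcal{B}_r^{({\rm nre})}(\eta)$ by invoking the Diophantine condition for the diagonal eigenvalue and the non-resonant hypothesis for the off-diagonal eigenvalues, deferring the norm-comparison bookkeeping to Lemma~1 of \cite{E92}; you make the ${\rm ad}_A$-diagonalization explicit and then flag the same basis-change constant as the only residual subtlety, which is exactly the content hidden in that reference.
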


\begin{proof}  Since $\varpi \in{\rm DC}_d(\gamma,\tau)$, we have
\begin{equation}\label{diophantine_diagonal}
|\la n, \varpi \ra|\geq \frac{\gamma}{N^\tau} \geq\varepsilon^{\frac1{15}},\quad  \forall \  n\in\Z^d \ {\rm with}  \  0<|n|\leq N,
\end{equation}
if $\varepsilon$ satisfies $(\ref{smallness_condition})$. Combining $(\ref{diophantine_diagonal})$ with the non-resonant condition $(\ref{non-resonant_condition})$, it is easy to  check that
$$|\partial_\varpi G_N-[A,G_N]|_h\geq \varepsilon^{\frac{1}{5}}  |G_N|_h,$$
holds for any
$G_N:=\sum_{0<|n|\leq N}\widehat G(n)e^{2\pi {\rm i}\la n, \theta \ra}$.
One can consult Lemma 1 of \cite{E92} for details.
Hence $G_N\in {\CB}^{({\rm nre})}_r(\eta)$, which means
$$\widehat{\PP_{\rm re}^{\eta} G }(n)=0 \ {\rm if}  \  0<|n|\leq N. $$
This finishes the proof.\end{proof}
%We can refer to Lemma 1 of \cite{E92} for more details.
%In view of Lemma 1 of \cite{E92}, and Section 3 of \cite{HouYou}, for any $G\in {\CB}_r(\eta)$, if $|2\xi -\la n,\varpi\ra| \geq \varepsilon^{\frac1{15}}$, $n\in\Z^d$ with $0<|n|\leq N$, then (\ref{non_resonant_condition}) is satisfied

Applying Lemma \ref{lemma_HouYou}, we know that $(\varpi, A+F)$ is congugated to a cocycle $(\varpi, A+G)$ with $G\in {\CB}^{(\rm re)}_r(\eta)$.
By Lemma \ref{lemma_nonresonant_diophantine}, we know that the only non-vanishing Fourier mode $\widehat G(n)$ with $|n|\leq N$ is $\widehat G(0)$.
Recalling that $N=\frac{2|\ln\varepsilon|}{r-r_+}$, for $\varepsilon$ small enough, we have 
$$|G-\widehat G(0)|_{r_+}\leq \sum_{|n|>N}|\widehat G(n)| e^{2\pi r_+ |n| } \leq  |G|_r \, e^{-2\pi N(r-r_+)} \sum_{j=0}^{d-1}\frac{(d-1)!}{j!} \frac{N^j}{(r-r_+)^{d-j}} \leq \varepsilon^{2}.$$
Let $\hat W:=e^Y$, $A_+:=A+\widehat G(0)$ and $F_+:=G- \widehat G(0)$. This concludes the proof of Proposition \ref{prop_iteration} for the non-resonant case.

\subsubsection{Resonant case}

Recall that $A\in {\rm sl}(2,\R)$ has two eigenvalues $\pm 2\pi{\rm i}\xi$ with $\xi \in \R\cup {\rm i}\R$.
%In view of the Diophantine property (\ref{dio}) of $\varpi\in\R^d$, it is easy to see that the non-resonant condition (\ref{non-resonant_condition}) is always satisfied if $A$ is of the hyperbolic type(i.e., $\xi\in {\rm i}(\R\backslash\{0\})$) or the parabolic type(i.e., $\xi=0$).
In view of the Diophantine property (\ref{dio}) of $\varpi\in\R^d$, it is easy to see that the resonant case of $A$ does not occur unless $A$ is of elliptic type, i.e., $\xi\in \R\backslash\{0\}$.
Moreover, the resonant condition (\ref{resonant_condition}) implies that
$$|\xi|\geq \frac12(|\la n_*, \varpi\ra|-\varepsilon^{\frac1{15}})\geq \frac{\gamma}{2|n_*|^{\tau}}-\frac12\varepsilon^{\frac1{15}} \geq \frac{\gamma}{3|n_*|^{\tau}}.$$ In view of Lemma 8.1 in \cite{HouYou}, there is $C_{A}\in {\rm SL}(2,\R)$ with
$$
|C_A|\leq 2\sqrt{\frac{|A|}{|\xi|}} \leq \frac{2\sqrt{3}\sqrt{|A|}}{\sqrt{\gamma}}|n_*|^{\frac\tau2}
$$
such that $A=C_{A} \begin{pmatrix}
0 & 2\pi\xi \\
-2\pi\xi & 0
\end{pmatrix} C_{A}^{-1}$.
Let $\tilde F:=C_{A}^{-1}F C_{A}$. Then we have
$$|\tilde F|_r\leq \frac{12|A|}{\gamma}|n_*|^\tau\varepsilon=:\tilde\varepsilon.$$
Let $\tilde A:=\begin{pmatrix}
0 & 2\pi\xi \\
-2\pi\xi & 0
\end{pmatrix}$ and $\eta:=2\tilde\varepsilon^{\frac14}$. Applying Lemma \ref{lemma_HouYou}, one can conjugate  $(\varpi, \tilde A+\tilde F)$ to  $(\varpi, \tilde A+G)$  with $G\in {\CB}^{(\rm re)}_r(\eta)$. Let us now characterize the precise structure of $G\in {\CB}_r^{(\rm re)}(\eta)$.

\begin{lemma}\label{lemma_structrue_resonant}
Assume that (\ref{resonant_condition}) holds.
For any $G\in{\CB}^{({\rm re})}_{r}(\eta)$ with $|G|_r\leq 2\tilde\varepsilon$, there exist $g_0\in\R$, $g_{*}\in \C$ and $P\in C^{\omega}_{r}(\T^d, {\rm su}(1,1))$
satisfying
$$
|g_0|\leq \varepsilon^{\frac{15}{16}}, \quad |g_{*}|\leq\varepsilon^{\frac{15}{16}} e^{-2\pi r |n_*|};\qquad |P|_{r_+}\leq  \varepsilon e^{-r_+\varepsilon^{-\frac{1}{16\tau}}}, \quad \forall  \  0<r_+<r,
$$
such that
$M G(\theta) M^{-1}= 2\pi\begin{pmatrix}
{\rm i} g_0 & g_{*}e^{2\pi{\rm i}\la n_*,\theta\ra} \\[1mm]
\overline{g_{*}}e^{-2\pi{\rm i}\la n_*,\theta\ra} & -{\rm i} g_0
\end{pmatrix} + P(\theta)$.
\end{lemma}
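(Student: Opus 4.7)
The plan is to study the Fourier structure of $\widetilde G := M G M^{-1}$, which takes values in ${\rm su}(1,1)$ on $\T^d$. After conjugation by $M$, the matrix $\tilde A$ becomes $\mathrm{diag}(2\pi \mathrm{i}\xi, -2\pi \mathrm{i}\xi)$, so the homological operator $\partial_\varpi - \mathrm{ad}_{\tilde A}$ is simultaneously diagonalized by the Fourier basis $\{e^{2\pi \mathrm{i}\la n,\theta\ra} E_{\alpha\beta}\}$, with eigenvalues
\begin{equation*}
\lambda_n^{11} = \lambda_n^{22} = 2\pi \mathrm{i}\la n,\varpi\ra, \qquad \lambda_n^{12} = 2\pi \mathrm{i}(\la n,\varpi\ra - 2\xi), \qquad \lambda_n^{21} = 2\pi \mathrm{i}(\la n,\varpi\ra + 2\xi).
\end{equation*}
The projection $\PP_{\rm re}^\eta$ retains exactly those basis elements whose eigenvalue has modulus smaller than $\eta$, so the whole analysis reduces to classifying these small-divisor entries.

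Next I would identify which modes with $|n| \leq N$ are resonant. The Diophantine lower bound $|\la n,\varpi\ra| \geq \gamma N^{-\tau}$ together with the smallness hypothesis $(\ref{smallness_condition})$ gives $|\lambda_n^{11}|, |\lambda_n^{22}| \geq \eta$ for $0<|n|\leq N$, so only $n=0$ is resonant on the diagonal. For the $(1,2)$-entry I would decompose $\la n,\varpi\ra - 2\xi = \la n-n_*,\varpi\ra + (\la n_*,\varpi\ra - 2\xi)$ and combine the Diophantine estimate on $n-n_*$ with the resonance condition $(\ref{resonant_condition})$ to rule out every $n\neq n_*$ in that range; the $(2,1)$-case is symmetric and isolates $n=-n_*$. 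Thus among modes with $|n|\leq N$ the only surviving Fourier content of $\widetilde G$ is carried by
\begin{equation*}
2\pi\mathrm{i}\, g_0 := \widehat{\widetilde G}(0)_{11}, \qquad 2\pi\, g_* := \widehat{\widetilde G}(n_*)_{12},
\end{equation*}
and the ${\rm su}(1,1)$-reality of $\widetilde G$ on $\T^d$ forces $g_0\in\R$ and $\widehat{\widetilde G}(-n_*)_{21} = \overline{\widehat{\widetilde G}(n_*)_{12}} = 2\pi\overline{g_*}$, matching the form displayed in the lemma.

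The bounds on $g_0$ and $g_*$ are then direct: analyticity gives $|\widehat{\widetilde G}(n)| \leq C\,|G|_r\, e^{-2\pi r|n|} \leq 2C\tilde\varepsilon\, e^{-2\pi r|n|}$. Substituting $\tilde\varepsilon = \frac{12|A|}{\gamma}|n_*|^\tau \varepsilon$ together with $|n_*| \leq N \leq 2|\ln\varepsilon|/(r-r_+)$, and using $(\ref{smallness_condition})$ to absorb $|A|$ and the logarithmic $|n_*|^\tau$ factor into a small power of $\varepsilon$, one obtains $|g_0| \leq \varepsilon^{15/16}$ and $|g_*| \leq \varepsilon^{15/16} e^{-2\pi r|n_*|}$.

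Finally, $P$ collects the remaining resonant Fourier content of $\widetilde G$, which lives entirely in the tail region of the three eigenvalue families. A second application of the Diophantine estimate forces any such mode $n$ to satisfy $|n|$, $|n-n_*|$, or $|n+n_*| \geq K := c(\gamma/\eta)^{1/\tau}$ for a universal $c>0$, and with $\eta = 2\tilde\varepsilon^{1/4}$ and the hypothesis $(\ref{smallness_condition})$ one checks that $K \gg \varepsilon^{-1/(16\tau)}$. Summing the analytic Fourier tail,
\begin{equation*}
|P|_{r_+} \leq C\tilde\varepsilon \sum_{|n| \geq K - |n_*|} e^{-2\pi(r-r_+)|n|} \leq C'\tilde\varepsilon\,(r-r_+)^{-d}\, e^{-2\pi(r-r_+)K},
\end{equation*}
and the slack between the exponents $1/(4\tau)$ (built into $K$) and $1/(16\tau)$ (appearing in the target) provides enough room to absorb the polynomial prefactors and the ratio $\tilde\varepsilon/\varepsilon$ into the exponential, yielding $|P|_{r_+} \leq \varepsilon e^{-r_+\varepsilon^{-1/(16\tau)}}$. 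The main obstacle is precisely this last step: one has to propagate the three separate Diophantine estimates with the correct dependence on $\gamma,\eta,|A|$ and $|n_*|$, and carefully use $(\ref{smallness_condition})$ to convert the natural decay rate $(r-r_+)K$ into the claimed $r_+\varepsilon^{-1/(16\tau)}$.
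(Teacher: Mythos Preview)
Your proposal is correct and follows essentially the same approach as the paper. Both arguments conjugate to ${\rm su}(1,1)$ via $M$, diagonalize $\partial_\varpi - \mathrm{ad}_{\tilde A}$ in the Fourier basis, use the Diophantine condition together with the resonance hypothesis to isolate the sole surviving low modes ($n=0$ on the diagonal, $n=\pm n_*$ off it), and bound the remainder $P$ as an analytic Fourier tail beyond a threshold of order $\varepsilon^{-c/\tau}$; the paper packages this threshold as the explicit cutoffs $\mathcal{N}_1=\gamma^{1/\tau}\varepsilon^{-1/(15\tau)}$ and $\mathcal{N}_2=2^{-1/\tau}\gamma^{1/\tau}\varepsilon^{-1/(15\tau)}-N$, which is exactly your $K=c(\gamma/\eta)^{1/\tau}$ after inserting $\eta=2\tilde\varepsilon^{1/4}\le \varepsilon^{1/15}$. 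Your preliminary restriction to $|n|\le N$ is harmless but redundant, since the same Diophantine estimate already furnishes the much larger cutoff $K$ directly.
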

\begin{proof}  First we note that $M=\frac{1}{1+{\rm i}}\left(\begin{matrix}
1 & -{\rm i}\\
1 & {\rm i}
\end{matrix}\right)$ induces an isomorphism from ${\rm sl}(2,\R)$ to ${\rm su}(1,1)$, which is  the group of matrices of the form
$\left(
\begin{array}{ccc}
 {\rm i} t &  \nu\\
\bar{ \nu} &  -{\rm i} t
 \end{array}\right)$
 with $t\in \R$, $\nu\in \C$. Thus for any
 $\widehat G(n)\in {\rm sl}(2,\R)$,  there exist $g_{11}(n)\in\R$, $g_{12}(n)\in\C$ such that
$$M \widehat G(n) M^{-1}=\begin{pmatrix}
{\rm i}g_{11}(n) & g_{12}(n)   \\[1mm]
\overline{g_{12}(n)} & -{\rm i}g_{11}(n)
\end{pmatrix}.$$
 By the decay property of Fourier coefficients, we get
\begin{equation}\label{esti_g}
|g_{11}(n)|,\,  |g_{12}(n)|\leq 4 \tilde\varepsilon e^{-2\pi r |n|}.
\end{equation}
Since $\tilde A=M^{-1} \begin{pmatrix}
2\pi {\rm i}\xi & 0   \\
0 & -2\pi {\rm i}\xi
\end{pmatrix} M$, a direct computation shows that
$$M (\partial_\varpi G-[\tilde A,G]) M^{-1}=2\pi{\rm i}\sum_{n\in\Z^d}
\begin{pmatrix}
{\rm i} \la n,\varpi \ra g_{11}(n) & (\la n,\varpi \ra+2\xi)g_{12}(n)\\[1mm]
(\la n,\varpi \ra-2\xi)\overline{g_{12}(n)} & -{\rm i} \la n,\varpi \ra g_{11}(n)
\end{pmatrix} e^{2\pi{\rm i}\la n,\theta\ra}.$$
By the definition of ${\CB}_r^{({\rm nre})}(\eta)$ in (\ref{non_resonant_condition}), for any $G\in  {\CB}^{({\rm nre})}_{r}(\eta)$, $M G(\theta) M^{-1}$ equals 
$$
\sum_{n\notin\Lambda_1} \begin{pmatrix}
{\rm i} g_{11}(n) & 0   \\[1mm]
  0 & -{\rm i} g_{11}(n)
\end{pmatrix} e^{2\pi{\rm i}\la n,\theta\ra}
 + \sum_{n\notin\Lambda_2}\begin{pmatrix}
0 & g_{12}(-n)e^{-2\pi{\rm i}\la n,\theta\ra} \\[1mm]
\overline{g_{12}(n)} e^{2\pi{\rm i}\la n,\theta\ra} & 0
\end{pmatrix}
$$
with $\Lambda_1:=\{n\in\Z^{d}:|\la n,\varpi \ra|< 2\tilde\varepsilon^{\frac{1}{4}}\}$, $\Lambda_2:=\{n\in\Z^{d}:|2\xi-\la n,\varpi \ra|< 2\tilde\varepsilon^{\frac{1}{4}}\}$.
Hence $G\in {\CB}^{({\rm re})}_{r}(\eta)$ means $M G(\theta) M^{-1}$ has the form
\begin{equation}\label{form_resonant_elliptic}
\sum_{n\in\Lambda_1} \begin{pmatrix}
{\rm i} g_{11}(n) & 0   \\[1mm]
  0 & - {\rm i} g_{11}(n)
\end{pmatrix} e^{2\pi{\rm i}\la n,\theta\ra}
 + \sum_{n\in \Lambda_2}\begin{pmatrix}
0 & g_{12}(-n)e^{-2\pi{\rm i}\la n,\theta\ra} \\[1mm]
\overline{g_{12}(n)} e^{2\pi{\rm i}\la n,\theta\ra} & 0
\end{pmatrix}
\end{equation}

\begin{claim} We have the following observations:
\begin{align}
\Lambda_1\cap \{n\in\Z^{d}: |n| \leq  \gamma^{\frac1\tau}\varepsilon^{-\frac{1}{15\tau}}\}&=\{0\},\label{resonant_site_1}\\
\Lambda_2\cap \{n\in\Z^{d}: |n| \leq 2^{-\frac1\tau} \gamma^{\frac1\tau}\varepsilon^{-\frac{1}{15\tau}}-N\}&=\{n_*\}.\label{resonant_site_2}
\end{align}
\end{claim}
\begin{proof}Indeed, given any $n\in\Lambda_1$ and $n\neq0$,
we have 
$$\frac{\gamma}{|n|^\tau}< |\la n,\varpi \ra|<2\tilde\varepsilon^{\frac{1}{4}}< \varepsilon^{\frac{1}{15}}.$$
Therefore, $|n|> \gamma^{\frac{1}{\tau}}\varepsilon^{-\frac{1}{15\tau}}$, which gives (\ref{resonant_site_1}).

For any $n'_*\neq n_*$ with $|2\xi-\la n'_*,\varpi\ra|< \varepsilon^{\frac{1}{15}}$, since $\varpi\in {\rm DC}_d(\gamma,\tau)$, we have
$$ \frac{\gamma}{|n'_*-n_*|^{\tau}}\leq|\la n'_*-n_*,\varpi \ra|< 2\varepsilon^{\frac{1}{15}},$$
which implies $|n'_*|> 2^{-\frac1\tau} \gamma^{\frac1\tau}\varepsilon^{-\frac{1}{15\tau}}-N > N$ under the hypothesis (\ref{smallness_condition}), and thus $(\ref{resonant_site_2})$ follows.
\end{proof}

Let ${\CN}_1:=  \gamma^{\frac1\tau}\varepsilon^{-\frac{1}{15\tau}}$ and ${\CN}_2:=2^{-\frac1\tau} \gamma^{\frac1\tau}\varepsilon^{-\frac{1}{15\tau}}-N$. In view of (\ref{resonant_site_1}) and (\ref{resonant_site_2}),
the two parts of $M G(\theta) M^{-1}$ given in (\ref{form_resonant_elliptic}) can be decomposed as
\begin{align*}
& \sum_{n\in\Lambda_1} \begin{pmatrix}
 {\rm i}g_{11}(n) & 0   \\[1mm]
  0 & -{\rm i}g_{11}(n)
\end{pmatrix} e^{2\pi {\rm i}\la n,\theta\ra}\\
=&\begin{pmatrix}
{\rm i}g_{11}(0) & 0   \\[1mm]
  0 & -{\rm i}g_{11}(0)
\end{pmatrix}+\sum_{n\in \Lambda_1\atop{|n|>{\CN}_1}} \begin{pmatrix}
{\rm i}g_{11}(n) & 0   \\[1mm]
  0 & -{\rm i}g_{11}(n)
\end{pmatrix} e^{2\pi {\rm i}\la n,\theta\ra},\\
&  \sum_{n\in\Lambda_2}\begin{pmatrix}
0 & g_{12}(n)e^{2\pi {\rm i}\la n,\theta\ra} \\[1mm]
\overline{g_{12}(n)} e^{-2\pi {\rm i}\la n,\theta\ra} & 0
\end{pmatrix}\\
=&  \begin{pmatrix}
0 & g_{12}(n_*)e^{2\pi {\rm i}\la n_*,\theta\ra} \\[1mm]
\overline{g_{12}(n_*)}e^{-2\pi {\rm i}\la n_*,\theta\ra} & 0
\end{pmatrix} + \sum_{n\in\Lambda_2\atop{|n|> {\CN}_2}}\begin{pmatrix}
0 & g_{12}(n)e^{2\pi {\rm i}\la n,\theta\ra} \\[1mm]
\overline{g_{12}(n)} e^{-2\pi {\rm i}\la n,\theta\ra} & 0
\end{pmatrix}.
\end{align*}
Let $g_0:=\frac{1}{2\pi}g_{11}(0)$, $g_{*}:=\frac{1}{2\pi} g_{12}(n_*)$ and let $P(\theta)$ be
$$\sum_{n\in \Lambda_1\atop{|n|> {\CN}_1}} \begin{pmatrix}
{\rm i} g_{11}(n) & 0   \\[1mm]
  0 & -{\rm i} g_{11}(n)
\end{pmatrix} e^{2\pi {\rm i}\la n,\theta\ra}+ \sum_{n\in\Lambda_2\atop{|n|> {\CN}_2}}\begin{pmatrix}
0 & g_{12}(-n)e^{-2\pi {\rm i}\la n,\theta\ra} \\[1mm]
\overline{g_{12}(n)} e^{2\pi {\rm i}\la n,\theta\ra} & 0
\end{pmatrix}.$$
By (\ref{esti_g}), and noting that ${\CN}_2<{\CN}_1$, we have
$$|P|_{r_+}\leq (d-1)! \left({\CN}_2+\frac{1}{r-r_+}\right)^d \cdot 4 \tilde\varepsilon e^{-2\pi (r-r_+){\CN}_2}\leq \varepsilon e^{-r_+\varepsilon^{-\frac{1}{16\tau}}}.$$
\end{proof}

%\begin{lemma}\label{lemma_renormalization}
%For any $G\in{\CB}^{({\rm re})}_{r}(\eta)$ with $|G|_r\leq 2\tilde\varepsilon$, there exist $F_{+}\in {\CB}_{r_{+}}$, $Z\in C_{r_{+}}^\omega(2\T^d,{\rm SL}(2,\R))$ and $A_{+}\in {\rm sl}(2,\R)$ with
%\begin{equation}\label{Z_F+}
%|F_{+}|_{r_{+}}\leq \varepsilon e^{-r_+\varepsilon^{-\frac{1}{18\tau}}}, \quad |Z|_{r_{+}}\leq 2e^{\frac{r_{+}}{2}|n_*|},\quad |A_+|\leq \varepsilon^{\frac{1}{16}},
%\end{equation}
%such that
%$\left\{
%\begin{array}{l}
%\dot{x}=(\tilde A + G(\theta))x\\
%\dot{\theta}= \varpi
%\end{array}
%\right.$ can be conjugated to $\left\{
%\begin{array}{l}
%\dot{x}=(A_{+} + F_{+}(\theta))x\\
%\dot{\theta}= \varpi
%\end{array}
%\right.$ by $Z$.
%Moreover, $A_+$ has two eigenvalues $\pm{\rm i}\xi_+$ with $|\xi_+|\leq \varepsilon^{\frac{1}{16}}$, and
%\begin{equation}\label{A+}
%A_{+}=M^{-1}\begin{pmatrix}
%{\rm i}(\xi-\la n_*\ra)+ g_0 &  g_{n_{*}}  \\[2mm]
%\overline{g_{n_{*}}} & -{\rm i}(\xi-\la n_*\ra) - g_0
%\end{pmatrix} M.
%\end{equation}
%\end{lemma}

Now we define
$$Z(\theta):=e^{-\frac{\la n_*,\theta\ra}{2\xi}\tilde A}= M^{-1}\begin{pmatrix}
e^{-\pi {\rm i}\la n_*,\theta\ra} & 0\\
0 & e^{\pi {\rm i}\la n_*,\theta\ra}
\end{pmatrix} M  .$$
Obviously, $Z\in C^\omega_{r}(\T^d,{\rm PSL}(2,\R))$, and for any $r''\in (0,r)$, $|Z|_{r''}\leq 2e^{\pi r''|n_*|}$.
Given any $G\in{\CB}^{({\rm re})}_{r}(\eta)$, we have thus
$$\partial_\varpi Z= (\tilde A + G) Z-Z\left[\left(1-\frac{\la n_*,\varpi\ra}{2\xi}\right)\tilde A + Z^{-1} G Z\right].$$
By a direct calculation, we get
$$Z(\theta)^{-1}G(\theta)Z(\theta)=2\pi M^{-1}\begin{pmatrix}
 {\rm i} g_0 &  g_{*}  \\[2mm]
\overline{g_{*}} & - {\rm i} g_0
\end{pmatrix}M + Z(\theta)^{-1}P(\theta)Z(\theta).$$
Let $F_{+}:=Z^{-1}PZ$
and $$A_{+}:=\left(1-\frac{\la n_*,\varpi\ra}{2\xi}\right)\tilde A + 2\pi M^{-1}\begin{pmatrix}
 {\rm i} g_0 &  g_{*}  \\[2mm]
\overline{g_{*}} & - {\rm i} g_0
\end{pmatrix}M.$$
Therefore the system
$(\varpi,\tilde A + G)$ is conjugated to $(\varpi,A_{+} + F_{+})$ by $Z$, with estimates
\begin{eqnarray*}
&&|A_+|\leq 2\pi \varepsilon^{\frac{1}{15}}+4\pi \varepsilon^{\frac{15}{16}}(1+e^{-2\pi r |n_*|})\leq \varepsilon^{\frac{1}{16}},\\
&&|F_+|_{r_{+}}\leq
4  e^{2\pi r_+|n_*|}\cdot
\varepsilon e^{-r_+\varepsilon^{-\frac{1}{16\tau}}}
\leq \varepsilon e^{-r_+\varepsilon^{-\frac{1}{18\tau}}},\quad \forall \  0<r_+<r.
\end{eqnarray*}

Let $\hat W:=C_A \cdot e^{Y} \cdot Z$ with $e^Y$ obtained in Lemma \ref{lemma_HouYou}.
Obviously, ${\rm deg} \hat W=n_*$. Then we finish the proof for the resonant case of Proposition \ref{prop_iteration}.
%
%\
%
%By applying Proposition \ref{prop_iteration} iteratively, we thus can prove the following theorem.
%It has been given by Chavaudret \cite{Chavaudret}, for showing the strong almost reducibility for analytic and Gevrey quasi-periodic cocycles.
%This theorem can is a strong version of almost reducibility results of Eliasson \cite{E92} and Hadj Amor \cite{HA}, since in this procedure we need not to shrink too much the region of analycity.
%
%
%\begin{theorem}[Chavaudret \cite{Chavaudret}]\label{strong_almost_reducibility}
%Given any $\tilde r\in (0,r_0)$, there exists $\varepsilon_*=\varepsilon_*(A_0, K, \tau, r_0, \tilde r, d)>0$ such that if $0<\varepsilon_0 <\varepsilon_*$,
%then for any $\varepsilon'\in (0,\varepsilon_0)$, there exist $W\in C^{\omega}_{\tilde r}(\T^d, {\rm PSL}(2,\R))$, $A'\in {\rm sl}(2,\R)$ and $F'\in {\CB}_{\tilde r}$ satisfying $|F'|_{\tilde r}\leq \varepsilon'$
%such that
%$$\partial_\varpi W=(A_0+F_0) W - W(A' + F').$$
%\end{theorem}
%
%

%Theorem \ref{strong_almost_reducibility} is proven by a KAM scheme, which can be translated directly for ${\rm SL}(2,\R)-$cocycles, and then apply to the reducibility as shown in Theorem \ref{thm_gap_edge_SL}.
%

\subsection{Reducibility of quasi-periodic linear systems}
Consider the  quasi-periodic linear system $(\varpi, A_0 + F_0)$. Denote by  $\rho(\varpi,A_0+F_0)$  its rotation number (we refer to \cite{E92,JM} for the detailed definition).
In the same way as in \cite{E92}, one can prove that if $\rho(\varpi,A_0+F_0)$ is Diophantine or rational with respect to $\varpi$, then the system $(\varpi, A_0 + F_0)$ is reducible. For our purpose, in this paper we will specially focus our attention on the quantitative reducibility in the case where
 $\rho(\varpi, A_0+F_0)=\frac{\la k, \varpi\ra}2$ for some $k\in\Z^{d}\backslash\{0\}$.

%with some $B\in {\rm sl}(2,\R)$ by a conjugacy $W\in C^\omega(2\T^d,{\rm SL}(2,\R))$, i.e.,
%$\partial_\varpi W=(A_0+F_0)W-W B$,
% where $ \partial_{\varpi}W:=\frac{d}{dt}W(\varpi t)$.

\begin{theorem}\label{thm_gap_edge_algebra}
Assume that $\varpi\in{\rm DC}_d(\gamma,\tau)$, $d\geq 2$. Given any $r\in (0,r_0)$, there is $\varepsilon_*=\varepsilon_*(|A_0|, \gamma, \tau, r_0, r, d)>0$ such that if $|F_0|_{r_0}=\varepsilon_0 <\varepsilon_*$, then the following holds.
\begin{enumerate}
\item The system  $(\varpi,A_0+F_0)$ is almost reducible in the strip $|\Im \theta|<r$.
\item If $\rho(\varpi,  A_0+F_0)=\frac{\la k,\varpi\ra}{2}$ for $k\in\Z^{d}\backslash\{0\}$, and $(\varpi,A_0+F_0)$ is not uniformly hyperbolic, then there exists
$W\in C_{r}^{\omega}(\T^d, {\rm PSL}(2,\R))$ such that
$$\partial_\varpi W=(A_0+F_0)W - W \begin{pmatrix}
0 & \kappa \\
0 & 0
\end{pmatrix}$$
with $|\kappa|\leq \varepsilon_0^{\frac34} e^{-2\pi r|k|}$. Moreover, for any $r''\in (0, r]$, $|W|_{r''}\leq D_1 e^{\frac{3\pi r''}{2} |k|}$ with $D_1=D_1(\gamma, \tau, |A_0|, r_0, d)>0$.
\end{enumerate}
\end{theorem}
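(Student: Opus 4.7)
My plan is to iterate Proposition \ref{prop_iteration} along a sequence of scales $r_0 > r_1 > \cdots \searrow r$ with correspondingly decreasing smallness $\varepsilon_n := |F_n|_{r_n}$. Verifying \eqref{smallness_condition} inductively, we obtain at each stage a conjugacy $\hat W_n$ producing $(\varpi, A_{n+1} + F_{n+1})$ that is either a super-exponentially smaller perturbation of a constant close to $A_n$ (non-resonant case, $\varepsilon_{n+1} \le \varepsilon_n^2$) or satisfies $|A_{n+1}| \le \varepsilon_n^{1/16}$ with $\deg \hat W_n = n_*^{(n)}$ (resonant case). Part (1) then follows essentially for free: the composed conjugacies $W_m := \hat W_0 \hat W_1 \cdots \hat W_{m-1}$ transform $(\varpi, A_0 + F_0)$ to $(\varpi, A_m + F_m)$ with $|F_m|_{r_m} \to 0$ and $|A_m|$ remaining bounded, which is precisely almost reducibility in $|\Im\theta| < r$.

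For Part (2), the essential tool is the rotation number transformation \eqref{rot-conj}: each step shifts $\rho$ by $\frac{\langle \deg \hat W_n, \varpi \rangle}{2}$. Starting from $\rho_0 = \frac{\langle k, \varpi \rangle}{2}$ and using that $\rho_n$ stays close to $\xi_n$ up to $O(\varepsilon_n^{1/2})$ through non-resonant steps, the resonance condition $|2\xi_{n_1} - \langle n_*^{(n_1)}, \varpi\rangle| < \varepsilon_{n_1}^{1/15}$ at the first resonant stage $n_1$ forces $\langle n_*^{(n_1)}, \varpi \rangle \approx \langle k, \varpi \rangle$; uniqueness from the Diophantine condition then gives $n_*^{(n_1)} = k$ (as soon as $|k| \le N_{n_1}$). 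After stage $n_1$, $\rho = 0$ and $|A_{n_1+1}| \le \varepsilon_{n_1}^{1/16}$. Applying a reducibility result for the rational-rotation-number case (in the spirit of Eliasson's theorem) to this small system, combined with the hypothesis that $(\varpi, A_0 + F_0)$ is not uniformly hyperbolic (so $\xi_\infty \in \R$), forces $\xi_\infty = 0$, so the limit constant $A_\infty$ is parabolic; a constant $\mathrm{SL}(2,\R)$ change of basis (absorbed into $W$) puts it in the form $\begin{pmatrix} 0 & \kappa \\ 0 & 0 \end{pmatrix}$.

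For the quantitative estimates, the total $W$ has degree $k$ coming from the unique resonant step $\hat W_{n_1}$, and its size is dominated by $|\hat W_{n_1}|_{r''} \le \frac{4\sqrt{|A_{n_1}|}}{\sqrt{\gamma}} |k|^{\tau/2} e^{\pi r'' |k|}$; absorbing the polynomial factor $|k|^{\tau/2}$ into an exponential at rate $\pi r''|k|/2$ and multiplying by the uniformly bounded contribution of the non-resonant factors yields $|W|_{r''} \le D_1 e^{3\pi r'' |k|/2}$. For $\kappa$, the explicit formula \eqref{A+} at stage $n_1$ gives $|g_*| \le \varepsilon_{n_1}^{15/16} e^{-2\pi r_{n_1}|k|}$; the super-exponential decay of $\varepsilon_n$ through the non-resonant stages preceding $n_1$ gives $\varepsilon_{n_1}^{15/16} \le \varepsilon_0^{3/4}$, while $r_{n_1} > r$ gives $|\kappa| \le \varepsilon_0^{3/4} e^{-2\pi r |k|}$. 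The main obstacle is controlling the post-resonance dynamics cleanly, namely ruling out further spurious resonances that would shift $\rho$ away from $0$ and prevent exact reducibility; this is precisely where the non-uniform-hyperbolicity hypothesis together with the rationality of $\rho_{n_1+1} = 0$ with respect to $\varpi$ is indispensable.
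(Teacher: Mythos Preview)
Your overall strategy---iterate Proposition~\ref{prop_iteration} along a shrinking sequence of strips, track the rotation number through the degrees of the $\hat W_n$, and identify the limit constant as parabolic using the non-uniform-hyperbolicity hypothesis---is exactly the paper's approach, and Part~(1) does follow as you describe.  The genuine gap is in Part~(2): your claim that the \emph{first} resonant step occurs with resonance vector $n_*^{(n_1)}=k$ is unjustified and generally false.  Your justification relies on $\xi_{n_1}\approx \langle k,\varpi\rangle/2$ together with ``$|k|\le N_{n_1}$'', but there is no reason the first resonance must wait until $N_n\geq |k|$.  For large $|k|$ there may well be an $n_*$ with $|n_*|\le N_{n_1}<|k|$ satisfying the resonance condition, in which case $n_*^{(n_1)}\neq k$, the rotation number after this step is $\frac{\langle k-n_*^{(n_1)},\varpi\rangle}{2}\neq 0$, and further resonances are forced.

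The paper does not attempt to force a single resonance.  Instead it allows $s+1$ resonant steps with vectors $n_{j_0},\dots,n_{j_s}$ and shows two things.  First, from the post-resonance smallness $|\xi_{j_i+1}|\le \varepsilon_{j_i}^{1/16}$ combined with the Diophantine condition it derives the separation estimate $|n_{j_{i+1}}|\ge \varepsilon_{j_i}^{-1/(18\tau)}|n_{j_i}|$; since $k=\sum_i n_{j_i}$ (by \eqref{rot-conj}) has fixed modulus, this super-exponential growth forces the resonance sequence to be \emph{finite}, yielding exact reducibility, and moreover pins down $(1-2\varepsilon_0^{1/(18\tau)})|n_{j_s}|\le |k|\le (1+2\varepsilon_0^{1/(18\tau)})|n_{j_s}|$.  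Second, the bound on $|W|_{r''}$ is obtained as a product $\prod_i |\hat W_{j_i+1}|_{r''}$ over \emph{all} resonant steps, with the last factor dominating by the separation estimate; the bound on $|\kappa|$ is obtained by analyzing $A_{j_s+1}$ via \eqref{A+} at the \emph{last} resonant step, then controlling $|B-A_{j_s+1}|$ through the subsequent non-resonant tail, and finally using $\det B=0$.  Replacing your single-resonance picture by this multi-resonance bookkeeping is the missing ingredient.
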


\begin{proof} The result can  be  proved by applying Proposition \ref{prop_iteration} iteratively.
Take $\varepsilon_0$, $r_0$ and $r$ as above.
Assume that we are at the $(j+1)^{\rm th}$ KAM step,
where we have $A_{j}\in {\rm sl}(2,\R)$ with eigenvalues $\pm2\pi {\rm i}\xi_{j}$ and $F_j\in {\CB}_{r_j}$ satisfying
$|F_j|_{r_j}\leq \varepsilon_j$ for some $\varepsilon_j\leq \varepsilon_0$.
Let $\tilde r=\frac{r_0+r}{2}$. Then
we define
\begin{equation}\label{sequences}
r_j - r_{j+1}:=\frac{r_0-\tilde r}{4^{j+1}} ,\quad N_j:=\frac{ 2|\ln\varepsilon_j|}{r_j - r_{j+1}}=\frac{2\cdot 4^{j+1} |\ln\varepsilon_j|}{r_0-\tilde r}.
\end{equation}
If $\varepsilon_j$ is sufficiently small such that the condition (\ref{smallness_condition}) is satisfied for $\varepsilon=\varepsilon_j$, $r=r_j$, $r_+=r_{j+1}$ and $A=A_j$, then, by Proposition \ref{prop_iteration},
we can construct
$$
\hat W_{j}\in C_{r_{j+1}}^{\omega}(\T^d, \mathrm{PSL}(2,\R) ),\;\  A_{j+1}\in {\rm sl}(2,\R), \;\ F_{j+1}\in {\CB}_{r_{j+1}},
$$
such that $(\varpi, A_j + F_j)$ is conjugated to $(\varpi, A_{j+1} + F_{j+1})$ by $\hat W_{j}(\theta)$.
Moreover,
\begin{itemize}
\item if for any $n\in\Z^{d}$ with $0<|n|\leq N_j$,  we have
$|2\xi_j-\la n,\varpi\ra|\geq \varepsilon_j^{\frac{1}{15}}$, then
\begin{equation}\label{esti_non-resonant}
|A_{j+1}-A_j|\leq \varepsilon_j^{\frac12},\;\ |\hat W_j- {\rm Id}|_{r_{j+1}}\leq 2\varepsilon_j^{\frac12},\;\ |F_{j+1}|_{r_{j+1}}\leq \varepsilon_{j+1}:= \varepsilon_j^{2};
\end{equation}
\item if there is $n_j\in\Z^{d}$ with $0<|n_j|\leq N_j$ such that
$|2\xi_j-\la n_j,\varpi \ra|< \varepsilon_j^{\frac{1}{15}}$, then
\begin{equation}\label{esti_resonant}
|A_{j+1}|\leq \varepsilon_j^{\frac{1}{16}},\quad |F_{j+1}|_{r_{j+1}}\leq \varepsilon_{j+1}:= \varepsilon_j e^{-r_{j+1} \varepsilon_j^{-\frac{1}{18\tau}}}, \quad {\rm deg}\hat W_j=n_j
\end{equation}
and for any $r''\in(0,r_{j+1}]$,
\begin{equation}\label{esti_resonant_W}
|\hat W_j|_{r''}\leq 4\sqrt\frac{|A_j|}{\gamma}|n_j|^{\frac\tau2} e^{\pi r'' |n_j|}.
\end{equation}
\end{itemize}
In view of (\ref{esti_non-resonant}) and (\ref{esti_resonant}), one sees that $\varepsilon_j\leq \varepsilon_0^{2^j}$ and $|A_j|\leq 2|A_0|$ for any $j\geq 0$.
So, if $\varepsilon_0$ is sufficiently small (depending on $|A_0|,  \gamma,  \tau,  r_0,  r,  d$) such that (\ref{smallness_condition}) holds, then  Proposition \ref{prop_iteration} can be applied iteratively.
Indeed,  $\varepsilon_j$ on the left side of the inequality (\ref{smallness_condition}) decays at least super-exponentially with $j$, while
$(r_j-r_{j+1})^{800d(\tau+1)}$ on the right  side  decays exponentially with $j$.
Hence $(\varpi,A_0+F_0)$ is almost reducible.

Assume that there are at least two resonant steps in the above almost reducibility precedure.
Let us focus on two consecutive resonant steps, say the $(j_i+1)^{\rm th}$ and $(j_{i+1}+1)^{\rm th}$. At the $(j_{i+1}+1)^{\rm th}-$step, the resonance condition  implies
$\left|\xi_{j_{i+1}}- \frac{\la n_{j_{i+1}},\varpi\ra}2\right|\leq \frac12\varepsilon_{j_{i+1}}^{\frac{1}{15}}$,
hence $|\xi_{j_{i+1}}|>\frac{\gamma}{3|n_{j_{i+1}}|^{\tau}}$.
On the other hand, according to Proposition \ref{prop_iteration}, after the $(j_{i}+1)^{\rm th}-$step, $|\xi_{j_{i}+1}|\leq \varepsilon_{j_i}^\frac{1}{16}$. By (\ref{esti_non-resonant}),
$|\xi_{j_{i+1}}|\leq 2\varepsilon^{\frac{1}{16}}_{j_{i}}\leq \frac{\varepsilon^{\frac{1}{18}}_{j_{i}}\gamma}{3|n_{j_i}|^{\tau}}.$
Thus
\begin{equation}\label{skip_of_resonances}
|n_{j_{i+1}}|\geq \varepsilon_{j_i}^{-\frac{1}{18\tau}} |n_{j_{i}}|.
\end{equation}
Recall that ${\rm deg}\hat W_{j+1}=n_{j}$ if the $(j+1)^{\rm th}-$step is resonant.
In view of (\ref{rot-conj}) and (\ref{skip_of_resonances}), we deduce that there are at most finitely many resonant steps in the above almost reducibility procedure under the hypothesis $\rho_{(\varpi, A_0+F_0)}=\frac{\la k,\, \varpi\ra}2$.
This means that we can find a sequence $(\hat W_{l})_{l\in\N}$ with $\hat W_{l}\in C^\omega_{r_{l+1}}(\T^d, {\rm PSL}(2,\R))$, in which the resonant case occurs only finitely many times.
By the estimate of $\hat W_j$ in \eqref{esti_non-resonant} and the sequence $(r_j)_{j\in\N}$ given in (\ref{sequences}), we see that the product
$\prod_{l=0}^j\hat W_{l+1}$ converges to some
$W\in C^\omega_{\tilde r}(\T^d, {\rm PSL}(2,\R))$ such that
$\partial_\varpi W=(A_0+F_0)W-W B$ for some $B\in {\rm sl}(2,\R)$ with $\rho_{(\varpi,B)}=0$.

Assuming that there are $s+1$ resonant steps, associated with integers vectors
$$n_{j_0},\dots , n_{j_s}\in \Z^{d}, \qquad 0<|n_{j_i}|\leq N_{j_i}, \;\  i=0,1,\dots,s,$$
then $k=n_{j_0}+ \dots + n_{j_s}$.
In view of the inequalities (\ref{skip_of_resonances}) and the fact that
$$|n_{j_s}|-\sum_{i=0}^{s-1}|n_{j_i}|\leq |k| \leq |n_{j_s}|+\sum_{i=0}^{s-1}|n_{j_i}|,$$
we get
$(1-2\varepsilon_{0}^{\frac{1}{18\tau}})|n_{j_s}|\leq |k| \leq (1+2\varepsilon_{0}^{\frac{1}{18\tau}})|n_{j_s}|$.
By (\ref{esti_resonant_W}), for any $r''\in (0,r]$,
$$|W|_{r''}
\leq  2|\hat W_{j_0+1}|_{r''}\cdots|\hat W_{j_s+1}|_{r''}
\leq \frac{2^{2s+3}}{\sqrt{\gamma^{s+1}}}\prod_{i=0}^s |A_{j_i}| |n_{j_i}|^{\frac\tau2}  e^{\pi r'' |n_{j_i}|}
\leq D_1 e^{\frac{3\pi r''}{2} |k|}$$
% \begin{align*}
%|W|_{r''}
%&\leq  2|\hat W_{j_0+1}|_{r''}\cdots|\hat W_{j_s+1}|_{r''} \\
%&\leq \frac{2^{2s+3}}{\sqrt{\gamma^{s+1}}}\prod_{i=0}^s |A_{j_i}| |n_{j_i}|^{\frac\tau2}  e^{\frac{r''}{2} |n_{j_i}|} \\
%&\leq  \left(\frac{2^{3s+4}a_0^{s+1}}{\sqrt{\gamma^{s+1}}} \prod_{i=0}^s\left(|n_{j_i}|^{\frac\tau2}  e^{-\frac{r''}{8}|n_{j_i}|}\right)\right)e^{ \frac{5r''}{8}\sum_{i=0}^s |n_{j_i}|}\\
%&\leq e^{\frac{3r''}{4} (|k|+D_1)}
%\end{align*}
for some $D_1=D_1(\gamma, \tau, |A_0|, r_0, d)>0$.

Now we  estimate  the constant matrix $B$. Since we have assumed that the initial system $(\varpi,A_0+F_0)$ is not uniformly hyperbolic, one concludes that $B$ can not be a hyperbolic matrix. As we have proved, $\rho(\varpi,B)=0$, thus ${\rm det}B=0$. Assume that
 $B=
\begin{pmatrix}
B_{11} & B_{12} \\
B_{21} & -B_{11}
\end{pmatrix}.$
Then there exists $\phi\in\T$ such that $R_{-\phi}B R_{\phi}=
\begin{pmatrix}
0 & B_{21}-B_{12} \\
0 & 0
\end{pmatrix}$.
By taking $W R_{\phi}$ instead of $W$ and $\kappa=B_{21}-B_{12}$, we know that $(\varpi, A_0+F_0)$ is conjugated to  $\tilde B=\begin{pmatrix}
0 & \kappa \\
0 & 0
\end{pmatrix}$.

To estimate $|\kappa|$, let us focus on $(\varpi, A_{j_s+1}+F_{j_s+1})$, i.e., the system just after the last resonant step. In view of (\ref{A+}), we have 
$$A_{j_s+1}=2\pi M^{-1}\begin{pmatrix}
{\rm i}\left(\xi_{j_s}-\frac{\la n_{j_s},\, \varpi\ra}2+ q_{0}\right) & q_{j_s}  \\[2mm]
\overline{q_{j_s}} & -{\rm i}\left(\xi_{j_s}-\frac{\la n_{j_s},\, \varpi\ra}2 + q_{0}\right)
\end{pmatrix} M,$$
with $q_{0}\in \R, \, q_{j_s}\in \C$ satisfying
\begin{equation}\label{off-diagonal_js+1}
\left|\xi_{j_s}-\frac{\la n_{j_s},\, \varpi\ra}2 + q_{0}\right|\leq  \varepsilon^{\frac{1}{15}}_{j_s}+\varepsilon^{\frac{15}{16}}_{j_s}\leq 2\varepsilon^{\frac{1}{15}}_{j_s},\quad |q_{j_s}|\leq\varepsilon^{\frac{15}{16}}_{j_s} e^{-2\pi r_{j_s}|n_{j_s}|}.
\end{equation}
Since $(j_s+1)^{\rm th}-$step is the last resonant step, we have $|A_{l+1}-A_l|\leq \varepsilon_{l}^{\frac12}$, $l\geq j_s+1$. Hence, noting that $\varepsilon_{j_s+1}=\varepsilon_{j_s} e^{-r_{j_s+1} \varepsilon_{j_s}^{-\frac{1}{18\tau}}}$, we get
$$|A_{j_s+1}-B|\leq \sum_{l=j_s+1}^{\infty}  |A_{l+1}-A_l|\leq 2\varepsilon^{\frac12}_{j_s} e^{-\frac{r_{j_s+1}}{2}\varepsilon_{j_s}^{-\frac{1}{18\tau}}}.$$
Rewrite $B$ as $B=M^{-1}\begin{pmatrix}
{\rm i} \beta_{11} & \beta_{12}  \\[1mm]
\overline{\beta_{12}} & -{\rm i} \beta_{11}
\end{pmatrix} M$ with $\beta_{11} \in \R$, $\beta_{12} \in\C$.
In view of (\ref{off-diagonal_js+1}), we have
$$|\beta_{12}| \leq
2\pi \varepsilon^{\frac{15}{16}}_{j_s} e^{-2\pi r_{j_s} |n_{j_s}|} +4\varepsilon^{\frac12}_{j_s} e^{-\frac{r_{j_s+1}}{2}\varepsilon_{j_s}^{-\frac{1}{18\tau}}}
\leq \varepsilon^{\frac78}_{j_s}e^{-2\pi  r_{j_s+1}|n_{j_s}|}.$$
Then we have $|\beta_{11}| \leq  \varepsilon^{\frac78}_{j_s}e^{-2\pi r_{j_s+1}|n_{j_s}|}$ since ${\rm det}B=0$.
So
$$|B_{12} |, \, |B_{21} |\leq  2 \varepsilon^{\frac78}_{j_s}e^{-2\pi r_{j_s+1}|n_{j_s}|} \leq \frac12\varepsilon^{\frac34}_{j_s}e^{-2\pi r_{j_s+1}|n_{j_s}|}. $$
Hence, in view of the fact  $|k|\leq (1+2\varepsilon_{0}^{\frac{1}{18\tau}})|n_{j_s}|$,
$$|\kappa|=|B_{21}-B_{12}|\leq \varepsilon^{\frac34}_{j_s}e^{-2\pi r_{j_s+1}|n_{j_s}|}\leq \varepsilon^{\frac34}_{j_s}e^{-\frac{2\pi \tilde r |k|}{1+2\varepsilon_{0}^{1/18\tau}}}\leq \varepsilon^{\frac34}_{j_s}e^{-2\pi r|k|}.$$
\end{proof}

\subsection{Reducibility of quasi-periodic cocycles}

In analogy with Theorem \ref{thm_gap_edge_algebra} for quasi-periodic linear systems,
we obtain a similar result for  quasi-periodic cocycles
$$\begin{pmatrix}
    u_{n+1} \\
    u_n
  \end{pmatrix}=(A_0+F_0(\theta+n\alpha))\begin{pmatrix}
    u_n \\
    u_{n-1}
  \end{pmatrix}.$$

\begin{theorem}\label{thm_gap_edge_SL}
Let $\alpha\in{\rm DC}_d(\gamma,\tau)$ and $A_0\in {\rm SL}(2,\R)$.
Given $r\in (0,r_0)$, there is $\varepsilon_*=\varepsilon_*(|A_0|, \gamma, \tau, r_0, r, d)>0$ such that if $|F_0|_{r_0}=\varepsilon_0 <\varepsilon_*$, then the following holds:
\begin{enumerate}
\item The quasi-periodic ${\rm SL}(2,\R)$ cocycle $(\alpha,A_0+F_0)$ is almost reducible in the strip $|\Im \theta|<r$.
\item  If $2\rho(\alpha,\, A_0+F_0)-\la k,\alpha\ra\in\Z$ for $k\in\Z^{d}\backslash\{0\}$, and $(\alpha,A_0+F_0(\cdot))$ is not uniformly hyperbolic, then
 there exists $W\in C_{r}^{\omega}(\T^d, {\rm PSL}(2,\R))$, such that
$$ W(\cdot +\alpha)^{-1}(A_0+F_0(\cdot)) W(\cdot)=B=\begin{pmatrix}
1 & \kappa \\
0 & 1
\end{pmatrix},$$
 with $|\kappa|\leq \varepsilon_0^{\frac34} e^{-2\pi r|k|}$.
Moreover, for any $r''\in(0,r]$, $|W|_{r''}\leq  D_1 e^{\frac{3\pi r''}{2} |k|}$ with $D_1=D_1(\gamma, \tau, |A_0|,r_0, d)>0$.
\end{enumerate}
\end{theorem}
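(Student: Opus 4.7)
The plan is to mirror the proof of Theorem \ref{thm_gap_edge_algebra} by first establishing a one-step iterative lemma (the cocycle analog of Proposition \ref{prop_iteration}) and then iterating it under the rotation-number constraint. The dictionary from the continuous to the discrete setting is as follows: the infinitesimal conjugacy relation $\partial_\varpi W = (A+F)W - WB$ is replaced by $W(\cdot+\alpha)^{-1}(A+F)W(\cdot) = B$; writing perturbatively $A_j = e^{2\pi \tilde a_j}$ with $\tilde a_j \in {\rm sl}(2,\R)$ of eigenvalues $\pm 2\pi {\rm i}\xi_j$, and $A_j + F_j = A_j \exp(F_j')$ with $F_j'$ small, the linearized cohomological equation becomes $Y(\cdot+\alpha) - {\rm Ad}_{A_j} Y = F_j'$, whose small divisors are $e^{2\pi {\rm i}\la n,\alpha\ra} - e^{\pm 4\pi {\rm i}\xi_j}$. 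These vanish exactly when $\|2\xi_j - \la n,\alpha\ra\|_{\T}=0$, so the non-resonant/resonant dichotomy is governed by the condition $\|2\xi_j - \la n,\alpha\ra\|_{\T} \geq \varepsilon^{1/15}$ versus $< \varepsilon^{1/15}$, in perfect analogy with (\ref{non-resonant_condition})-(\ref{resonant_condition}).

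The iteration itself is then structured exactly as in Section \ref{sec_Quantitative}. In the non-resonant case, the splitting ${\CB}_r = {\CB}_r^{(\mathrm{nre})} \oplus {\CB}_r^{(\mathrm{re})}$ and a discrete analog of Lemma \ref{lemma_HouYou} give $|F_+|_{r_+} \leq \varepsilon^2$ with $\hat W$ close to ${\rm Id}$. In the resonant case at some $n_*$, the $SU(1,1)$-conjugacy $C_A$ followed by the rotation $Z(\theta) = M^{-1} \mathrm{diag}(e^{-\pi {\rm i}\la n_*,\theta\ra}, e^{\pi {\rm i}\la n_*,\theta\ra}) M$ produces $\hat W$ of degree $n_*$, with the same explicit block structure as (\ref{A+}) and the crucial off-diagonal bound $|g_*| \leq \varepsilon^{15/16} e^{-2\pi r |n_*|}$. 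Iterating yields almost reducibility of $(\alpha, A_0+F_0)$ in the strip $\{|\Im\theta|<r\}$, proving item (1). For item (2), I invoke the rotation-number bookkeeping: by (\ref{rot-conj}), each resonant step at $n_{j_i}$ shifts $\rho$ modulo $\Z$ by $\la n_{j_i},\alpha\ra/2$, while non-resonant steps preserve $\rho$ modulo $\Z$. Combined with the super-exponential separation (\ref{skip_of_resonances}) and the hypothesis $2\rho(\alpha,A_0+F_0) - \la k,\alpha\ra \in \Z$, this forces only finitely many resonances $n_{j_0},\dots,n_{j_s}$, with $n_{j_0}+\cdots+n_{j_s} = k$. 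The product $W = \prod_l \hat W_{l+1}$ then converges on $\{|\Im\theta|<\tilde r\}$ with $\tilde r = (r+r_0)/2$, and the resonant factors' growth telescopes to $|W|_{r''} \leq D_1 e^{3\pi r''|k|/2}$ via $(1-2\varepsilon_0^{1/18\tau})|n_{j_s}| \leq |k| \leq (1+2\varepsilon_0^{1/18\tau})|n_{j_s}|$, exactly as in Theorem \ref{thm_gap_edge_algebra}.

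The final step, which I expect to be the main technical point, is to identify the limiting constant $B_\infty \in {\rm SL}(2,\R)$ as parabolic of the stated form and to control $\kappa$. Since the original cocycle is not uniformly hyperbolic, $B_\infty$ cannot be hyperbolic; since all rotation-number contributions have been cancelled by the $\hat W_{j_i}$'s, we have $\rho(\alpha,B_\infty) = 0$, so $B_\infty$ is either $\pm {\rm Id}$ or parabolic with trace $\pm 2$, and an additional rotation conjugacy puts it in the form $\begin{pmatrix} 1 & \kappa \\ 0 & 1 \end{pmatrix}$ (absorbed into $W$). The estimate $|\kappa| \leq \varepsilon_0^{3/4} e^{-2\pi r|k|}$ is obtained by tracking the $su(1,1)$-off-diagonal entry just after the last resonant step: the bound $|g_*| \leq \varepsilon_{j_s}^{15/16} e^{-2\pi r_{j_s}|n_{j_s}|}$, the inequality $|k| \leq (1+2\varepsilon_0^{1/18\tau})|n_{j_s}|$, and $r_{j_s+1} > r$ combine to give the claim, with subsequent non-resonant corrections being super-exponentially small as in the continuous case. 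The only subtlety peculiar to the ${\rm SL}(2,\R)$ setting is verifying that the passage from ${\rm Ad}$ on Fourier modes to the multiplicative cocycle action does not destroy the sharp constants in the exponential decay of $g_*$; this is handled by a careful BCH expansion, using that $|A_{j_s+1}-{\rm Id}|$ is already $O(\varepsilon_{j_s}^{1/16})$, so higher-order commutator terms are negligible compared with the target bound.
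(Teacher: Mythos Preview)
Your proposal is correct and matches the paper's approach exactly: the paper does not give a separate proof of this theorem but simply states it as the discrete analog of Theorem~\ref{thm_gap_edge_algebra}, obtained by the same KAM iteration with the obvious dictionary (cohomological equation $Y(\cdot+\alpha)-\mathrm{Ad}_{A_j}Y=\cdot$ in place of $\partial_\varpi Y-[A_j,Y]=\cdot$, small divisors $e^{2\pi i\langle n,\alpha\rangle}-e^{\pm 4\pi i\xi_j}$, etc.). Your outline of the non-resonant/resonant dichotomy, the degree bookkeeping via (\ref{rot-conj}) and (\ref{skip_of_resonances}), the telescoping bound on $|W|_{r''}$, and the control of $\kappa$ through the off-diagonal entry $g_*$ after the last resonant step all faithfully reproduce the structure of the continuous proof, which is precisely what the paper intends.
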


\begin{remark}\label{uniformcons}
Let $\gamma,\tau>0$ be fixed. If $A_0$ varies in ${\rm SO}(2,\R)$, then $\varepsilon_*=\varepsilon_*( \gamma, \tau, r_0, r, d)>0$ can be taken uniform with respect to $A_0$.
\end{remark}

\section{Almost localization and duality argument}

While in the previous part, we considered the case of a Diophantine vector of frequencies, here, we let $\alpha\in \R$ be a frequency satisfying $\beta(\alpha)=0$, and we study the reducibility of associate quasi-periodic Schr\"odinger cocycles by non-perturbative methods. The quantitative  statement we prove is based on two importants ingredients: quantitative almost localization properties of dual Schr\"odinger operators and quantitative Aubry duality.

From now on, in the formulations and proofs of various assertions about quasi-periodic Schr\"odinger cocycles and Schr\"odinger operators, we shall encounter several positive constants depending on the potential $V$, the frequency $\alpha$, etc.
For the convenience of notation, we denote by $C$ constants depending only on $V$, $\alpha$ (or only on $\lambda$, $\alpha$ for the almost Mathieu case).
And we use other notations ($c_1$, $c_2$, $\cdots$, $C_1'$, $C_2'$ $\cdots$, $C_3$, $C_4$, $\cdots$) to denote absolute constants or constants depending on some other quantities (e.g., the given radius of analyticity).
%and sometimes even different constants will be denoted by the same symbol.

\subsection{Quantitative reducibility -- general analytic potential}

%We focus on the case where $V\in C^\omega(\T,\R)$ and $\alpha\in \R$ satisfies $\beta(\alpha)=0$,
%and we assume that $V$ is in the non-perturbative regime considered in Theorem \ref{almostredth}, i.e., the dual Schr\"odinger operators
Assume that the family $\{\widehat H_{V,\alpha,\theta}\}_{
\theta}$ is almost localized. Given  $E\in \Sigma_{V,\alpha}$ on the boundary of a spectral gap, it is well-known that the cocycle $(\alpha, S_E^V)$ can be reduced to a constant parabolic cocycle. The main goal of this section is to  show that the off-diagonal coefficient of the parabolic matrix is exponentially small in terms of the label of the spectral gap. We stress that the exponential decay rate of the off-diagonal element is directly related to the exponential decay rate of the spectral gaps (c.f. Theorem \ref{thm_upperbound}). If one just wants to show that the  off-diagonal element is  exponentially small, one may consult \cite{A1} where a more concise proof is given. However, if one wants to explicitly estimate the decay rate as in our paper, it is more technically involved  (consult Remark \ref{compare} for more discussions).

 As in  \cite{A1}, we use truncations to obtain lower bounds on the Bloch waves involved in the definition of  conjugacies. As was first realized by Avila \cite{A1}, a crucial fact to obtain sharp estimates in the non-perturbative regime is the Corona Theorem (with the Uchiyama estimates), whose statement we now recall.

\begin{theorem}[\cite{A1}, see also \cite{Trent}, \cite{Uchi}]\label{corona theorem}
Let $M\in C^{\omega}(\T,  \C^2)$. Assume that for some constants $a,\delta_1,\delta_2>0$, we have $\delta_1\leq |M(z)| \leq \delta_2^{-1}$ for $|\Im z|<a$. Then there exists $Z \in C^\omega( \mathbb{T} , \mathrm{SL}(2,\C))$ with first column $M$ and such that $|Z|_a \lesssim \delta_1^{-2} \delta_2^{-1} (1 - \ln (\delta_1 \delta_2))$.
\end{theorem}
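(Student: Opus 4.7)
The plan is to recast the construction of $Z$ as a two-generator corona problem on the strip $\{|\Im z|<a\}/\Z$, and then to quote the sharp $L^\infty$-corona estimate of Uchiyama (transferred from the disk to the annulus by Trent). Writing $M=(m_1,m_2)^\top$, extending $M$ to a matrix $Z\in\mathrm{SL}(2,\C)$ with first column $M$ is equivalent to finding a second column $N=(n_1,n_2)^\top\in C^\omega(\T,\C^2)$ satisfying $m_1 n_2-m_2 n_1=1$. Setting $f_1:=-m_2$ and $f_2:=m_1$, this becomes the Bezout equation $f_1 n_1+f_2 n_2=1$, to be solved in bounded analytic functions on the strip. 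The hypothesis $|M(z)|\geq\delta_1$ is exactly the lower bound $|f_1(z)|^2+|f_2(z)|^2\geq\delta_1^2$ required by the corona theorem, and the upper bound $|M|\leq\delta_2^{-1}$ gives $|f_j|\leq\delta_2^{-1}$.

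The next step is to rescale. Setting $\hat f_j:=\delta_2 f_j$, one has $|\hat f_j|\leq 1$ and $|\hat f_1|^2+|\hat f_2|^2\geq(\delta_1\delta_2)^2$ on $\{|\Im z|<a\}$. The two-generator corona theorem on the annulus (the version proved by Trent, relying on Uchiyama's estimate for the disk) produces $\hat n_1,\hat n_2\in H^\infty(\{|\Im z|<a\})$ with $\hat f_1\hat n_1+\hat f_2\hat n_2=1$ and
\begin{equation*}
\|\hat n_j\|_\infty \lesssim (\delta_1\delta_2)^{-2}\bigl(1-\ln(\delta_1\delta_2)\bigr).
\end{equation*}
Unscaling via $n_j:=\delta_2\hat n_j$ yields a holomorphic solution of $f_1 n_1+f_2 n_2=1$ with $|n_j|_a\lesssim \delta_1^{-2}\delta_2^{-1}(1-\ln(\delta_1\delta_2))$; combined with $|M|_a\leq\delta_2^{-1}$, the matrix $Z=(M\mid N)$ then has $|Z|_a$ bounded as claimed. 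Note that the asymmetric form $\delta_1^{-2}\delta_2^{-1}$ in the statement comes precisely from this bookkeeping: two factors of $\delta_2$ in the unscaling cancel against $\delta_2^{-2}$ from the corona constant, leaving a single $\delta_2^{-1}$.

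Two technical points have to be handled. First, Uchiyama's estimate is classically stated for $H^\infty$ of the unit disk, whereas the domain here is the strip modulo $\Z$, equivalently the annulus $\{e^{-2\pi a}<|w|<e^{2\pi a}\}$ under $w=e^{2\pi \mathrm{i} z}$. One route is to invoke Trent's direct extension of the sharp corona estimate to finitely connected planar domains. An alternative is to cover the annulus by two overlapping half-disks, apply the disk estimate on each, and patch the local solutions with a smooth partition of unity corrected by a $\bar\partial$-solution; either way the logarithmic factor survives up to an absolute constant. Second, the genuinely hard input is Uchiyama's sharp $\log$-bound itself: one starts from the smooth partial inverse $\tilde n_j=\overline{\hat f_j}/|\hat M|^2$, corrects it to a holomorphic solution by adding $\hat f_j\phi$ where $\phi$ is chosen to solve a $\bar\partial$-equation whose right-hand side defines a Carleson measure of the correct norm, and bounds $\phi$ in $L^\infty$ picking up precisely one factor of $\log(1/(\delta_1\delta_2))$ rather than a power. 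Since this analytic heart is available as a black box in the cited references, the only work specific to the present statement is the reduction above from matrix completion to a two-generator corona equation, together with the careful tracking of $\delta_1,\delta_2$ through the normalization.
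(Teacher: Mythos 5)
The paper does not actually supply a proof of this statement: it is quoted as an external result from Avila's preprint \cite{A1} (see also \cite{Trent}, \cite{Uchi}), so there is no "paper's own proof" to compare against. Evaluated on its own, your proposal is correct and is the standard reduction one would find in those references. The reformulation of the matrix-completion problem $\det Z=1$ as the two-generator Bezout equation $f_1 n_1 + f_2 n_2 = 1$ with $f_1 = -m_2$, $f_2 = m_1$ is right, the identification $|f_1|^2 + |f_2|^2 = |M|^2$ correctly feeds the lower bound $\delta_1$ into the corona hypothesis, and the normalization $\hat f_j = \delta_2 f_j$, $n_j = \delta_2 \hat n_j$ tracks the constants accurately: $\|\hat n_j\|_\infty \lesssim (\delta_1\delta_2)^{-2}(1-\ln(\delta_1\delta_2))$ unscales to $|n_j|_a \lesssim \delta_1^{-2}\delta_2^{-1}(1-\ln(\delta_1\delta_2))$, which dominates $|M|_a \leq \delta_2^{-1}$ up to an absolute constant in the regime where the estimate matters (i.e.\ $\delta_1\delta_2$ small). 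You also correctly flag the two genuine technical points: the transfer of Uchiyama's disk estimate to the annulus $\{e^{-2\pi a}<|w|<e^{2\pi a}\}$ via the conformal map $w=e^{2\pi\mathrm{i}z}$ (either by Trent's planar-domain version or by a patching-plus-$\bar\partial$ argument), and the analytic core of Uchiyama's logarithmic bound itself, which you treat as a black box as is appropriate here. One minor remark for precision: you should record explicitly that the corona solution lands in $H^\infty$ of the annulus, hence is holomorphic and periodic, so $N$ indeed lies in $C^\omega(\T,\C^2)$ with analytic extension to $|\Im z|<a$; this is implicit in your argument but worth a sentence.
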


We first use the parametrization by some auxiliary phase $\theta(E)\in \R$, and the estimates are expressed in terms of its last resonance. Then we show how they can be translated in terms of the label of the spectral gap.
A key fact is that the constants in the following statements are independent of the spectral gap we focus on. Indeed, our proof is based on almost localization, which provides constants that are uniform with respect to the energy. Our main statement is as follows.

\begin{theorem}\label{prop_duality_para}
Let $\alpha\in\R\backslash\Q$ satisfy $\beta(\alpha)=0$.
Given $r_0\in (0,1)$, let $V \in C_{r_0}^\omega(\T,\R)$ with $|V|_{r_0}\leq c_0 r_0^{3}$ and
 take $\epsilon_1=\epsilon_1(r_0)\in(0,r_0)$ as in Theorem \ref{almostredth}.
For any $r\in (0,\frac{\epsilon_1}{2\pi})$, there exists $k_1=k_1(\alpha, r_0, r)>0$ such that
for any $E\in \Sigma_{V,\alpha}$ satisfying $2\rho{(\alpha, S_{E}^V)}- k\alpha \in \Z$ with $|k|\geq k_1$, there exist $U\in C_{r}^\omega(\mathbb{T}, \mathrm{PSL}(2,\R))$, $\varphi\in\R$ and $n=n(k) \in \Z$ satisfying $|n|\geq\frac{|k|}{4}$
such that
\begin{equation}\label{eqrota}
U(\cdot +\a)^{-1}  S_{E}^{V}(\cdot) U(\cdot)=
\begin{pmatrix}
1 & \varphi \\[1mm]
0 & 1
\end{pmatrix}
\end{equation}
with $|\varphi| \leq C_3 e^{- \frac{ \pi r}{5} |n|}$ for some $C_3=C_3(\alpha,r_0,r)>0$.
Moreover, for any $r''\in(0, r]$,
$|U|_{r''} \leq C_4 e^{22 \pi r'' |n|}$ for some
$C_4=C_4(\alpha,r_0,r'')>0$.
\end{theorem}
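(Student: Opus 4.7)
The plan is to combine Aubry duality, the almost localization result of Theorem \ref{almostredth}, and the Corona theorem (Theorem \ref{corona theorem}), following the strategy initiated by Avila in \cite{A1}. By gap labelling, the hypothesis $2\rho(\alpha, S_E^V) - k\alpha \in \Z$ with $E \in \Sigma_{V,\alpha}$ places $E$ at an edge of the gap with label $k$. Aubry duality then supplies a phase $\theta = \theta(E) \in \T$ with $2\theta \equiv k\alpha \pmod{\Z}$ and a polynomially bounded solution $\widehat{u} = (\widehat{u}_j)_{j\in\Z}$ of the dual eigenvalue problem $\widehat{H}_{V,\alpha,\theta}\widehat{u} = E\widehat{u}$ normalized by $\widehat{u}_0 = 1$ and $|\widehat{u}_j| \leq 1 + |j|$. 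Applying Theorem \ref{almostredth} with a suitably chosen $C_0 = C_0(r_0, r)$ yields the decay $|\widehat{u}_j| \leq C_1 e^{-\epsilon_1 |j|}$ on the windows between consecutive $\epsilon_0$-resonances $\{n_l\}_l$ of $\theta$. I then let $n := n_{l_\ast}$ be the last resonance with $|n_{l_\ast}| \leq 2 C_0 |k|$: the assumption $\beta(\alpha) = 0$ rules out too-large jumps in the resonance sequence, and combined with the fact that $k$ is itself near a resonance (since $2\theta \equiv k\alpha \pmod{\Z}$) one obtains $|n| \geq |k|/4$ provided $|k| \geq k_1(\alpha, r_0, r)$.

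Next I form the Bloch wave $u(z) := \sum_j \widehat{u}_j e^{2\pi\mathrm{i} j z}$ and the equivariant vector $\mathcal{U}(z) := \bigl(e^{2\pi\mathrm{i}\theta} u(z),\, u(z-\alpha)\bigr)^T$, which satisfies $S_E^V(z)\mathcal{U}(z) = e^{2\pi\mathrm{i}\theta}\mathcal{U}(z+\alpha)$ and is holomorphic on $\{|\Im z| < \epsilon_1/(2\pi)\}$. The quantitative heart of the argument is to establish two-sided bounds $\delta_1 \leq |\mathcal{U}(z)| \leq \delta_2^{-1}$ on $\{|\Im z| < r\}$, with $\delta_1 \gtrsim e^{-c' r |n|}$ and $\delta_2^{-1} \lesssim e^{c r |n|}$. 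The upper bound follows from summing the Fourier series, the dominant contribution coming from the modes near the resonance $n$. The lower bound is the delicate step: given $z$ in the strip, I iterate the cocycle relation $\mathcal{U}(z + j\alpha) = e^{-2\pi\mathrm{i} j\theta}\mathcal{A}_j(z)\mathcal{U}(z)$ for $|j| = O(|n|)$ to bring $\Re(z + j\alpha)$ close to $0$ on $\T$ (using equidistribution under $\beta(\alpha) = 0$), where $|\mathcal{U}|$ admits a manifest lower bound coming from the normalization $\widehat{u}_0 = 1$, controlling the loss via the subexponential cocycle growth of Corollary \ref{subexp groowth}.

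With these two-sided bounds in hand, the Corona theorem (Theorem \ref{corona theorem}) produces $Z \in C^\omega(\T, \mathrm{SL}(2,\C))$ with first column $\mathcal{U}$ and $|Z|_{r''} \lesssim \delta_1^{-2}\delta_2^{-1}(1 - \ln(\delta_1\delta_2)) \lesssim e^{C r'' |n|}$ for $0 < r'' \leq r$. Conjugation by $Z$ puts the cocycle in upper triangular form
\[
Z(\cdot + \alpha)^{-1} S_E^V(\cdot) Z(\cdot) = \begin{pmatrix} e^{2\pi\mathrm{i}\theta} & \kappa(\cdot) \\ 0 & e^{-2\pi\mathrm{i}\theta}\end{pmatrix}.
\]
Solving the cohomological equation $\psi(\cdot + \alpha) - e^{-4\pi\mathrm{i}\theta}\psi(\cdot) = \kappa(\cdot) - \widehat{\kappa}(k) e^{2\pi\mathrm{i} k \cdot}$ removes every Fourier mode of $\kappa$ except the $k$-th one (since $e^{-4\pi\mathrm{i}\theta + 2\pi\mathrm{i} k\alpha} = 1$). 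The residual term $\widehat{\kappa}(k) e^{2\pi\mathrm{i} k \cdot}$ is absorbed into a real gauge transformation of degree $k$; the reality of $S_E^V$ together with the non-uniform hyperbolicity at the gap edge forces the resulting conjugacy $U \in C^\omega(\T, \mathrm{PSL}(2,\R))$ to reduce the cocycle to $\bigl(\begin{smallmatrix} 1 & \varphi \\ 0 & 1 \end{smallmatrix}\bigr)$, with $\varphi$ proportional to $\widehat{\kappa}(k)$. The Fourier decay $|\widehat{\kappa}(k)| \leq |\kappa|_r e^{-2\pi r |k|}$, combined with $|\kappa|_r \lesssim |Z|_r^2 \lesssim e^{2C r |n|}$ and $|k| \geq |n|/(2 C_0)$, produces the claimed $|\varphi| \leq C_3 e^{-\pi r |n|/5}$, while $|U|_{r''} \lesssim |Z|_{r''} \leq C_4 e^{22 \pi r'' |n|}$.

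The main technical obstacle is the lower bound $|\mathcal{U}| \geq \delta_1$ on the complex strip, for which Fourier methods alone are inadequate; the subexponential cocycle growth and a pigeonhole argument enabled by $\beta(\alpha) = 0$ are both essential. The specific exponents $\pi/5$ and $22\pi$ in the conclusion are dictated by the arithmetic balance between the Corona inflation factor $\delta_1^{-2}\delta_2^{-1}$ and the Fourier decay $e^{-2\pi r |k|}$ exploiting the resonance $2\theta \equiv k\alpha \pmod{\Z}$; this balance constrains how large $C_0$ may be taken in almost localization while still preserving exponential smallness of $\varphi$. A further subtlety is the passage from the complex Corona conjugacy to a real $\mathrm{PSL}(2, \R)$-valued $U$, which uses the reality of $S_E^V$ together with the parabolic character of $E$ at the gap edge.
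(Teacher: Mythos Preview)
Your overall architecture --- Aubry duality, almost localization, Bloch wave, Corona theorem --- matches the paper's, but the final step bounding $\varphi$ has a genuine gap.

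After Corona you obtain $|Z|_{r} \lesssim \delta_1^{-2}\delta_2^{-1}$. The paper's sharp bounds give $\delta_2^{-1} \lesssim e^{5\pi r|n|}$ (the Bloch wave has Fourier support of effective width $\sim 4|n|$ on which the coefficients $\widehat u_j$ are only known to be $O(1)$) and $\delta_1 \gtrsim e^{-\pi(r + \delta)|n|}$, whence $|Z|_r \lesssim e^{7\pi r|n|}$ and $|\kappa|_r \lesssim e^{14\pi r|n|}$. The resonant Fourier mode of the cohomological equation sits at $n$, not at $k$: the duality phase satisfies $\theta = \pm\rho + \tfrac{l\alpha}{2}$ for some $l\in\Z$, so $2\theta \equiv n\alpha \pmod\Z$ with $n = \pm k + l$, and your assertion $2\theta \equiv k\alpha$ is not correct in general. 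Fourier decay therefore only gives $|\widehat\kappa(n)| \leq |\kappa|_r e^{-2\pi r|n|}$, and the net exponent $14\pi r - 2\pi r = 12\pi r$ is positive. No choice of $C_0$ repairs this, because the $e^{5\pi r|n|}$ upper bound on the Bloch wave is forced by the slow-decay window $|j| < 2|n|$ and cannot be improved.

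The paper circumvents this by a second, independent estimate. After constructing the real conjugacy $U$ (via Corona applied to the real section $\mathcal V(z) = e^{\pi i(nz - 2\theta_0)}\mathcal U(z)$) with $|U|_{r''} \leq C_4 e^{22\pi r''|n|}$, the paper builds a separate complex conjugacy $B$ from the \emph{short} truncation $\mathcal U^J$, $J = [-\lfloor |n|/4\rfloor,\lfloor |n|/4\rfloor]$ (Proposition~\ref{lemma first conjugaison}). On this window almost localization has already taken effect since $2|n_{l-1}| = o(|n|)$, so $|\mathcal U^J|$ is subexponentially bounded on a strip of width $\epsilon_1/(20\pi)$, and one obtains $|B|\le e^{\delta|n|}$ for any $\delta > 0$. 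This gives $|\mathcal A_m|_\T \leq e^{\delta|n|}$ uniformly for $0 \le m \le e^{\epsilon_1|n|/10}$. Iterating the reduced relation then yields $m|\varphi| \leq |U|_\T^2 |\mathcal A_m|_\T \leq e^{3\delta|n|}$; taking $m = \lfloor e^{\epsilon_1|n|/10}\rfloor$ gives $|\varphi| \leq e^{-(\epsilon_1/10 - 3\delta)|n|}$. This growth-of-iterates argument, not Fourier decay of $\kappa$, is what produces the exponential smallness of $\varphi$.

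Two smaller points. Your lower bound on $|\mathcal U|$ via ``bringing $\Re(z+j\alpha)$ close to $0$'' does not work as stated: $\widehat u_0 = 1$ gives no pointwise lower bound on $\mathcal U(0)$. The paper instead uses $\bigl|\int_\T \mathcal U_y\,dx\bigr| \geq 1$ to get $\int_\T |\mathcal U^I_y| \geq 3/4$, then combines a Lagrange-interpolation inequality (Lemma~\ref{theo lagrang interp}) with Corollary~\ref{subexp groowth} to upgrade this to a pointwise bound. And the relation $|k| \geq |n|/(2C_0)$ you invoke is neither proved nor needed; only $|n| \geq |k|/4$ holds, established by a Rouch\'e argument comparing $\mathcal V$ to its truncation $\mathcal V^I$.
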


\begin{proof}
If $2\rho{(\alpha, S_{E}^V)}- k\alpha \in \Z$, then by Theorem 3.3 in \cite{AvilaJito}, for some phase $\theta=\theta(E) \in \R$,   there exists a solution $\widehat u$ to $\widehat H_{V,\alpha,\theta} \widehat u= E \widehat u$ with $\widehat u_0=1$ and $|\widehat u_j|\leq 1$ for every $j \in \Z$.
%$$\widehat u_0=1; \quad   |\widehat u_j|\leq 1, \  \forall \ j \in \Z.$$
By Theorems 2.5 (also Theorem 4.2) in \cite{AvilaJito}, $\theta=\pm \rho{(\alpha, S_E^V)}+\frac{l \alpha}2$ for some $l \in \Z$. Set $n=n(k):=\pm k +l \in \mathbb{Z}$.
Since $|V|_{r_0}\leq c_0 r_0^{3}$, by Theorem \ref{almostredth}, $\{\widehat H_{V,\alpha,\theta'}\}_{\theta'}$ is almost localized:
let $\epsilon_1:=\epsilon_1(r_0,2)>0$, $C_1:=C_1(\alpha,r_0,2)>0$ be the constants
defined in Theorem \ref{almostredth}. Then it follows from \eqref{almost_localization} that
\begin{equation}\label{localize_duality}
|\widehat u_j| \leq C_1 e^{-\epsilon_1 |j|}, \quad \forall  \   |j|\geq 2 |n|.
\end{equation}
Therefore, the function $u\colon z \mapsto \sum_{j \in \Z} \widehat u_j e^{2 \pi {\rm i} j z}$ is analytic on the strip $\{|\Im z|<\frac{\epsilon_1}{2 \pi}\}$, and
the analytic Bloch wave ${\CU}\colon z \mapsto
\begin{pmatrix}
e^{2 \pi {\rm i}\theta} u(z)\\
u(z-\a)
\end{pmatrix}$ satisfies %Under application of the cocycle $(\alpha,A):=(\alpha,S_E^V)$, we obtain
$$
S_{E}^{V}(z) \, {\CU}(z) = e^{2\pi {\rm i}\theta} {\CU}(z+\alpha),\quad \forall  \ z\in \C/\Z \ {\rm with}  \ |\Im z|<\frac{\epsilon_1}{2 \pi}.  %\quad \forall x \in \T,
$$
In particular, by the minimality of $x \mapsto x+\alpha$, and the fact that $\widehat u_0=1$,  we see that $\mathcal{U}$ does not vanish.
Define
${\CU}^{(1)}(z):=e^{ \pi {\rm i} n z} {\CU}(z)\in \C^2\backslash\{0\}$.
Since $2\theta -n\alpha \in \Z$, we get
\begin{equation}\label{eq widedtiel u}
S_{E}^{V}(\cdot) \, {\CU}^{(1)}(\cdot) = e^{\pi {\rm i}(2\theta -n\alpha)} \, {\CU}^{(1)}(\cdot+\alpha)= \pm \, {\CU}^{(1)}(\cdot+ \alpha). %\quad \forall x \in \T.
\end{equation}
Without loss of generality, we assume that $S_{E}^{V}(\cdot) \, {\CU}^{(1)} (\cdot)=   {\CU}^{(1)}(\cdot+ \alpha)$. Set $\mathcal{S}:=\Re({\CU}^{(1)})$ and $\mathcal{T}:=\Im( {\CU}^{(1)})$.
%Then $\mathcal{S},\mathcal{T} \in C^\omega(\T,\mathbb{R}^2\backslash\{0\})$ and for any $x\in \T$ we have %. We then get %for any $x \in \T$, we
%\begin{equation*}\label{invarireim}
%A(x) \cdot \mathcal{S}(x) = \mathcal{S}(x+ \alpha),\quad\quad A(x) \cdot  \mathcal{T}(x) = \mathcal{T}(x+ \alpha).
%\end{equation*}
Then, by the minimality of $x \mapsto x+\alpha$, the map $x \mapsto \mathrm{det}(\mathcal{S}(x),\mathcal{T}(x))$ is constant on $\T$, equal to $\pm d_0$ for some $d_0 \geq 0$.

If $d_0>0$, let $\sigma=\pm1$ be chosen such that $d_0^{-1/2} (\mathcal{S}, \sigma \mathcal{T})\colon \T\to \mathrm{PSL}(2,\R)$. Note that in this case, $d_0^{-1/2} (\mathcal{S}, \sigma \mathcal{T})$ conjugates $(\alpha,S_{E}^{V})$ to $(\alpha,\mathrm{Id})$.
Otherwise, there exist $\psi\colon \T \to \C$ with $|\psi|=1$ and $\mathcal{V}\colon \T \to \R^2\backslash\{0\}$ such that ${\mathcal{U}^{(1)}}=\psi \, \mathcal{V}$ on $\T$.
By \eqref{eq widedtiel u}, we have
$$S_{E}^{V} (x) \, \mathcal{V}(x)=\frac{\psi(x+\alpha)}{\psi(x)} \mathcal{V}(x+\alpha),\quad \forall \ x \in \T, $$ hence $\frac{\psi(x+\alpha)}{\psi(x)}\in \R$.
By the minimality of $x \mapsto x+\alpha$, we deduce that $\psi|_\T\equiv e^{2 \pi \mathrm{i} \theta_0}\in \C$ for some $\theta_0\in \R$. The map \begin{equation}\label{defv}\mathcal{V}\colon z \mapsto e^{ \pi \mathrm{i}(n z- 2 \theta_0)}\mathcal{U}(z)\end{equation} is analytic on $\{|\Im z|<\frac{\epsilon_1}{2\pi}\}$ and satisfies
\begin{equation}\label{invarireim2}
S_{E}^{V}(z) \, \mathcal{V}(z) = \mathcal{V}(z+ \alpha),\quad   \forall  \ z\in \C/\Z \  {\rm with} \ |\Im z|<\frac{\epsilon_1}{2\pi}.
\end{equation}
\begin{prop}\label{eq1estimee}
For any $r\in (0,\frac{\epsilon_1}{2\pi})$, there is $k_1=k_1(\alpha, r_0, r)>0$ such that for any $E\in \Sigma_{V,\alpha}$ satisfying $2\rho{(\alpha, S_{E}^V)}- k\alpha \in \Z$ with $|k|\geq k_1$, there exist $U\in C_{r}^\omega( \T, \mathrm{PSL}(2,\R))$, $\varphi\in\R$ and $n=n(k) \in \Z$ with $|n|\geq\frac{|k|}{4}$
such that
\begin{equation}\label{eqrota}
U(\cdot+\a)^{-1}  S_{E}^{V}(\cdot) U(\cdot)=
\begin{pmatrix}
1 & \varphi \\[1mm]
0 & 1
\end{pmatrix}.
\end{equation}
Moreover, for any $r''\in(0, r]$,
$|U|_{r''} \leq C_4 e^{22 \pi r'' |n|}$ for some
$C_4=C_4(\alpha,r_0,r'')>0$.
\end{prop}

\begin{proof} %[Proof of Claim \ref{eq1estimee}:]
Fix $r\in (0,\frac{\epsilon_1}{2\pi})$, choose some $\delta\in(0,\frac{\epsilon_1}{2}-\pi r)$, and set $h:=r+\frac{\delta}{2\pi}$.
Recall that $|\widehat u_j|\leq 1$, for all $j \in \Z$. In view of \eqref{localize_duality} and \eqref{defv}, for any $r''
\in(0, h]$, there exists $C'_1=C'_1(\alpha,r_0,r'')>0$ such that for $|n|$ sufficiently large, we have
\begin{align}
|\mathcal{V}|_{r''} &\leq 4|n|e^{5 \pi r''|n|}+4 C_1 e^{\pi r'' |n|} \sum_{j \geq 2|n|}  e^{-(\epsilon_1- 2\pi r'') j}\leq C'_1 e^{5 \pi (r''+\delta) |n|}.\label{upper bound on v 2}
\end{align}

Let us now show the lower bounds  on $\mathcal{V}$. Set $I:=[-2|n|+1,2|n|-1]$ and consider the trigonometric polynomial $u^I\colon z\mapsto \sum_{j \in I} \widehat u_j e^{2 \pi {\rm i} j z}$. We define $\mathcal{U}^I, \mathcal{V}^I$ accordingly, for $u^I$ in place of $u$. %Since $\mathcal{U}^I$ is obtained from $\mathcal{U}$ by truncating high frequency Fourier modes, w
By \eqref{localize_duality}, for any $r''\in (0,h]$, we have
%for $0 \leq \delta \leq h$,
%, there exists $\widetilde C_2= \widetilde C_2(C_1,\epsilon_1,\delta)>0$ such that
\begin{equation}\label{diff U and truncat}
|\mathcal{U}- \mathcal{U}^I|_{r''} \leq \frac{4 C_1}{1-e^{-(\epsilon_1- 2 \pi r'')}}e^{-2(\epsilon_1- 2 \pi r'')|n|}.
%,\qquad |\widetilde{\mathcal{U}}- \widetilde{\mathcal{U}}^I|_\delta\leq \widetilde C_2 e^{-2\pi k_0(h_1-\frac{k_0+1}{k_0}\delta)|n|}.
\end{equation}

Given any analytic function $f$ defined on the strip $\{|\Im z|<\frac{\epsilon_1}{2\pi}\}$, any $|y| \leq h$, let $f_y\colon  x \mapsto f(x+\mathrm{i}y)$, $x\in\T$.
%Note that $\int_\T f_t =\int_\T f$.
%For a function $f\colon \T \to \C$ admitting an extension to a strip, and for $t \in \R$ with $|t|$ sufficiently small, we denote $f_t(x):=f(x+\mathrm{i} t)$, $x\in \T$. Let $t \in \R$ with $|t|<\delta < h$.
%Since $\int_\T u_t(x) dx=\hat u_0=1$, we obtain
Recall that $\int_\T u_y (x) dx =\widehat u_0=1$. By \eqref{diff U and truncat}, we thus get
\begin{equation}\label{lowww bounddd cui}
\int_{\T} \left| \CU_y^I(x) \right|dx\geq  \left| \int_{\T} \CU_y(x) dx \right|-\frac{4 C_1}{1-e^{-(\epsilon_1- 2 \pi h)}}e^{-2(\epsilon_1- 2 \pi h)|n|}\geq \frac{3}{4}
\end{equation}
for $|n|\geq n_1(\alpha,\epsilon_1,h)$ large enough.
%Since $\mathcal{V}_t^I(x)= e^{-2 \pi \mathrm{i} \theta_0}e^{\pi \mathrm{i} n (x+\mathrm{i} t)}\mathcal{U}_t^I(x)$ for all $x \in \T$, we then deduce
%\begin{equation*}
%\left| \int_{\T} e^{-\pi \mathrm{i} n (x+\mathrm{i}t)} \mathcal{V}_t^I(x) dx\right| %\geq  \left| \int_{\T} \CU_t(x) dx \right|-O(e^{-\pi (h-\delta) |n|})
%\gtrsim 1.
%\end{equation*}
%Therefore, for any $t \in \mathbb{R}$ such that $\delta=|t|< h$, we obtain
%so we can select ${\CW}^I:=S^I$ or $T^I$ such that
%\begin{equation}\label{est1cw}
%\int_{\T} \left| \mathcal{V}_t^I(x)\right| dx \gtrsim e^{-\pi \delta |n|}. %\geq  e^{-\pi h|n|/2}/3.
%\end{equation}
Let us denote by $(\widehat{v}_j)_{j \in \mathbb{Z}}$ the Fourier coefficients of  $V$, and let $\chi_{I}$ be the characteristic function of $I$. We get
\begin{equation}\label{equation inv first bloch wave}
S_{E}^{V}(\cdot)\, \mathcal{U}^I(\cdot)= e^{2 \pi \mathrm{i} \theta} \mathcal{U}^I(\cdot+\alpha)+e^{2 \pi\mathrm{i}\theta} \begin{pmatrix}
g(\cdot)\\
0
\end{pmatrix}
\end{equation}
for some function $g\in C^\omega(\T,\C)$ whose Fourier coefficients $(\widehat{g}_j)_{j \in \mathbb{Z}}$ satisfy
$$
\widehat g_j=\chi_{I}(j) \left(E-2 \cos2 \pi (\theta+j\a)\right) \widehat u_j - \sum_{l\in \Z} \chi_{I}(j-l) \widehat u_{j-l} \widehat v_l.
$$
Since $\widehat H \widehat u=E \widehat u$, we also have
\begin{equation}\label{equation coeffc gg}
\widehat g_j=-\chi_{\Z \backslash I}(j) \left(E-2 \cos2 \pi (\theta+j\a)\right) \widehat u_j + \sum_{l\in \Z} \chi_{\Z \backslash I}(j-l) \widehat u_{j-l} \widehat v_l.
\end{equation}
If $E \in \Sigma_{V,\alpha}$, then $|E|\leq 2+ |V|_\T \leq 2+ c_0 r_0^3$. By \eqref{localize_duality} and \eqref{equation coeffc gg}, we therefore obtain
%for any $0\leq \delta< h_1$, there exist $\widetilde C_i=\widetilde C_i(C_1,\epsilon_1,h_0,\delta)>0$, $i=3,4,5$, such that
%some constants $0<h'=h'(\epsilon_1,V)<h/2$ and $c'=c'(\epsilon_1,V)>0$,
\begin{align*}
|g|_{r''} \leq& \ \sum_{j\in \Z} \chi_{\Z \backslash I}(j) (4+ c_0 r_0^3) \, C_1 e^{-(\epsilon_1-2 \pi r'')|j|}\\
&\ + \sum_{j,l\in \Z} \chi_{\Z \backslash I}(j-l) c_0 r_0^3 \,  C_1 e^{-(\epsilon_1-2 \pi r'')|j-l|} e^{- 2\pi (r_0-r'')|l|}\\
%&\lesssim e^{\pi  |n|} e^{-2\pi k_0 (h_1-h)|n|}\big(1+\sum_{l\in \Z} e^{-2 \pi (h_1-h)|l|}\big)\\
\leq&\ C_1(4+  c_0 r_0^3)\left(1+ \sum_{l\in \Z} e^{- 2\pi (r_0-r'')|l|}\right)\sum_{|j|\geq 2|n|} e^{-(\epsilon_1- 2 \pi r'')|j|}\\
\leq&\ C'_2(\alpha,r_0,r'') e^{-2(\epsilon_1- 2 \pi r'')|n|}.%e^{-\pi (h-2 \delta)|n|}. %\big(1+\sum_{l\in \Z} e^{-\pi h|l|}\big).
\end{align*}
Together with \eqref{equation inv first bloch wave}, this implies that for all $z\in \C/\Z$ with $|\Im z| \leq r''\leq h$, all $m \geq 1$:
\begin{equation}\label{est2cw}
|\mathcal{U}^I(z+m\alpha)| \leq |\mathcal{A}_m(z)|\, |\mathcal{U}^I(z)|+ \sum_{j=1}^{m}|\mathcal{A}_{m-j}(z+j\alpha)|\cdot C'_2 e^{-2(\epsilon_1- 2 \pi r'')|n|}. %\quad |\Im x| < h.
\end{equation}

%Let us denote by $(q_\ell)_{\ell \geq 1}$ the sequence of denominators of best approximants of $\alpha$.
%while by \eqref{equation inv first bloch wave}, we have
%\begin{equation}\label{estcwddd}
%A(z)\cdot \mathcal{U}^I(z) = e^{2 \pi \mathrm{i}\theta}\mathcal{U}^I(z+\alpha) + O(e^{-\pi (h-\delta) |n|}),\quad |\Im z| < h/2.
%\end{equation}
%Let $\widetilde{\mathcal{V}}^I=\mathcal{U}^I,S^I$ or $T^I$.
To get a lower bound on $\mathcal{V}$, we will use the following result of Avila-Jitomirskaya.
\begin{lemma}[Theorem 6.1 in \cite{AvilaJito}]\label{theo lagrang interp}
%Let $(q_i)_i$ be the sequence of denominators of best approximants of $\alpha$.
Let $\ell\geq 1$ and $1 \leq p \leq \lfloor q_{\ell+1}/q_\ell\rfloor$.
%while by \eqref{equation inv first bloch wave}, we have
%\begin{equation}\label{estcwddd}
%A(z)\cdot \mathcal{U}^I(z) = e^{2 \pi \mathrm{i}\theta}\mathcal{U}^I(z+\alpha) + O(e^{-\pi (h-\delta) |n|}),\quad |\Im z| < h/2.
%\end{equation}
%Let $\widetilde{\mathcal{V}}^I=\mathcal{U}^I,S^I$ or $T^I$.
If $P$ has essential degree\footnote{Recall that a trigonometric polynomial $P_0\colon \T \to \C$ has  \textit{essential degree} at most $d\geq 1$ if its Fourier coefficients outside an interval of length $d$ are vanishing.}
at most $p q_\ell -1$ and $x_0 \in \T$, then for some absolute constant $K_0>0$,
\begin{equation*}
|P|_{\T} \leq K_0 q_{\ell+1}^{K_0 p} \sup\limits_{0 \leq m \leq p q_\ell -1}|P(x_0+m\alpha)|.
\end{equation*}
\end{lemma}

Choose $\ell\geq 1$ and $1 \leq p \leq \lfloor q_{\ell+1}/q_\ell\rfloor$ such that $(p-1) q_\ell -1 \leq 4 | n|< p q_\ell -1 \leq q_{\ell+1}$.
In particular, under the assumption $\beta(\alpha)=0$, it implies that
$$
|P|_{\T} \leq K_0 e^{o(|n|)}  \sup\limits_{0 \leq m \leq 4 | n|+q_l}|P(x+m\alpha)|\leq  K_0 e^{o(|n|)}  \sup\limits_{0 \leq m \leq 8 | n|}|P(x+m\alpha)|.$$
Since $\mathcal{U}^I$ has essential degree
at most $4| n|$, by Lemma \ref{theo lagrang interp}, for any  $x \in \T$, we have
\begin{equation}\label{ineqq}
|\mathcal{U}^I_y|_{\T} \leq K_0 e^{o(|n|)}  \sup\limits_{0 \leq m \leq 8 | n|}|\mathcal{U}^I_y(x+m\alpha)|,\quad \forall  \ |y|\leq r''  .
\end{equation}

Let us show that for any $\delta>0$, there exists $C'_3=C'_3(\alpha,r_0,r'',\delta)>0$ such that
\begin{equation}\label{lowboundu}
\inf_{|\Im z|\leq r''} |\mathcal{U}^I(z)| \geq 2 C'_3  e^{-2\pi \delta |n|}.
\end{equation}
Else for some $\delta'\in (0,\frac{\epsilon_1}{2\pi}-r'')$, we would have
$|\mathcal{U}_y^I(x)| \leq e^{- 4\pi \delta' |n|}$ for $|n|$ arbitrarily large, and $y=y(n)\in[-r'',r'']$.
We deduce from Corollary \ref{subexp groowth}, \eqref{est2cw} and \eqref{ineqq} that
 $|\mathcal{U}_{y(n)}^I|_\T\leq e^{-2\pi\delta'|n|}\leq \frac{1}{2}$ for $|n|$ large enough, which contradicts \eqref{lowww bounddd cui}.
Combining with \eqref{defv} and \eqref{diff U and truncat}, for $\delta>0$ arbitrarily small, we get
 \begin{equation}\label{subexpo}
\inf_{|\Im z|\leq r''} |{\mathcal{V}}(z)| \geq C'_3  e^{-\pi(\delta+r'') |n|},\quad \forall \ r''\in(0,h].
\end{equation}

Applying Theorem \ref{corona theorem} to $\mathcal{V}$, combining with \eqref{upper bound on v 2} and \eqref{subexpo}, we deduce that there exists % $\mathcal{W} \colon \mathbb{T}\to \C^2\backslash\{0\}$ such that the map
$U_1=(\mathcal{V}, \mathcal{W}) \in C_{h}^\omega(\mathbb{T}, \mathrm{PSL}(2,\C))$ such that for all $r''\in(0,h]$,
\begin{equation}\label{rstim U1bisss}
 |U_1|_{r''} \leq C'_4(\alpha,r_0,r'',\delta) e^{7\pi(r''+\delta) |n|}.
\end{equation}
  Indeed, one can choose $U_1\in C_{h}^\omega(\mathbb{T}, \mathrm{PSL}(2,\R))$, since $\mathcal{V}|_{\mathbb{T}}$ takes values in $\R^2\backslash\{0\}$, then one only need to replace $U_1$ by $U_1=(\mathcal{V},\widetilde{\mathcal{W}})$, with $\widetilde{\mathcal{W}}\colon z \mapsto \frac{1}{2}(\mathcal{W}(z)+\overline{\mathcal{W}(\overline{z}}))$.

By \eqref{invarireim2}, there exists $\varphi^{(1)}\ \in C_{h}^\omega( \mathbb{T}, \R)$ such that \begin{equation}\label{eqatori}
U_1(\cdot+\a)^{-1} S_{E}^{V}(\cdot) U_1(\cdot)=
\begin{pmatrix}
1 & \varphi^{(1)}(\cdot)\\[1mm]
0 & 1
\end{pmatrix}.
\end{equation}
By \eqref{rstim U1bisss} and \eqref{eqatori}, we have:
$$|\varphi^{(1)}|_{h} \leq (4+c_0 r_0^3) (C_4')^2 e^{14 \pi (h+\delta)|n|}.$$
Since $\beta(\a)=0$, we can solve the cohomological equation
\begin{equation}\label{cohomolllfognri}
\phi(z+\a) - \phi(z) =\varphi^{(1)}(z)-\int_\T \varphi^{(1)}(x) \, dx,
\end{equation}
with $\int_\T \phi(x)dx=0$. Moreover, $\phi\ \in C_{r}^\omega( \mathbb{T}, \R)$, and for any $r''\in(0,r]$, one has
\begin{equation}\label{esti_phi}
 |\phi|_{r''} \leq C_5'(\alpha,r_0,r'',\delta) e^{14\pi(r''+\delta) |n|}.
\end{equation}
 Let  $U:=U_1
\begin{pmatrix}
1 & \phi\\
0 & 1
\end{pmatrix}$ and $\varphi:= \int_\T \varphi^{(1)}(x) \, dx$.
\eqref{eqatori} and \eqref{cohomolllfognri} implies that
\begin{equation}\label{equation conjugaison finale}
U(\cdot+\a)^{-1}S_{E}^{V}(\cdot)  U(\cdot)=
\begin{pmatrix}
1 & \varphi\\
0 & 1
\end{pmatrix} .
\end{equation}
Obviously, $U\in C_{r}^\omega(\mathbb{T}, \mathrm{PSL}(2,\R))$. By \eqref{rstim U1bisss} and \eqref{esti_phi}, we get the estimate
$$|U|_{r''} \leq C_4 (\alpha,h_0,r'') e^{22\pi r''|n|},\quad \forall \ r''\in(0, r].$$

To find a relation between $k$ and $n=n(k)$, we estimate the topological degree of the conjugacy map $U$. Recall that $2\rho{(\alpha,S_E^V)} -  k\alpha \in\Z  $, and then, by  \eqref{equation conjugaison finale}, we have $|k|=|\mathrm{deg} U|$.
%This will enable us to link the last resonance $\wt n$ of the phase $\theta$ to the label $m$ of the spectral gap we are studying.
Since $x \mapsto
\begin{pmatrix}
1 & \phi(x)\\
0 & 1
\end{pmatrix}$ is homotopic to the identity, we also know that $|k|=|\mathrm{deg} U_1|$. Thus, it remains to estimate $\mathrm{deg}U_{1}$. For this purpose, we look at the degree of the first column ${\mathcal{V}}\colon \T \rightarrow \R^2\backslash\{0\}$ of $U_1$. By \eqref{subexpo}, for any $\delta>0$,
\begin{equation}\label{estttttttti}
\inf\limits_{x \in \T} |{\mathcal{V}}(x)| \geq e^{-\pi \delta |n|}
\end{equation}
for all sufficiently large $|n|$.
%Consider the Fourier expansion ${\CW}(x)=\sum\limits_{j \in \mathbb{Z}} \widehat \CW_j e^{2 \pi {\rm i}j x}$ of ${\CW}$, and its truncation $\CW^{(n)}(x):=\sum\limits_{|j| \leq |n|}  \widehat \CW_j e^{2  \pi {\rm i}j x}$. %for some integer $J \in \mathbb{N}$ to be determined.
Consider the truncated vector $\mathcal{V}^I$ as defined above.
Since $\mathcal{V}(z)= e^{ \pi \mathrm{i}(n z- 2 \theta_0)}\mathcal{U}(z)$, we deduce from \eqref{diff U and truncat} that
$$|\mathcal{V}-\mathcal{V}^I|_\T \leq \frac{4 C_1}{1-e^{-\epsilon_1}} e^{-2\epsilon_1|n|}.$$
%for some constant $\widetilde C_{25}=\widetilde C_{25}(C_1,\epsilon_1)>0$.
Comparing with \eqref{estttttttti}, for any sufficiently large $|n|$,  we obtain
$$
|\mathcal{V}(x)-\mathcal{V}^I(x)| \leq |{\mathcal{V}}(x)|, \quad \forall \  x\in \T.
$$
By Rouch\'{e}'s theorem, we deduce that $\mathrm{deg}{\mathcal{V}}=\mathrm{deg}\mathcal{V}^I$. Consider a coordinate of $\mathcal{V}^I$ which is not identically vanishing. It is a trigonometric polynomial of degree less than $4|n|$, so it has at most $4|n|$ zeros in $\T$, and we get
$|\mathrm{deg} {\mathcal{V}}| \leq 4 |n|$.
% Hence $|k| = |\mathrm{deg} (U_1)| \leq 4 |n|$.
%Recall that $2\rho(E) -  k\alpha \in\Z  $ and $2\theta(E) - n\alpha \in\Z$. By  \eqref{equation conjugaison finale}, we have $|\mathrm{deg} (U)|=|k|$. T
Therefore, for $|k|$ sufficiently large, we conclude that $|k| = |\mathrm{deg} U_1| \leq 4 |n|$.
\end{proof}

Let us now estimate the size of  $\varphi$. We will first need the following.

\begin{prop}\label{lemma first conjugaison}
 Let $\{n_{l}\}_{l}$ be the set of resonances of $\theta$.  For any $\delta\in(0,\frac{\epsilon_1}{20 \pi})$, there exist constants $C'_i=C'_i(\alpha,r_0,\delta)>0$, $i=6,7,8$, such that the following holds.
 There exists $B\in C^{\omega}(\T, \mathrm{PSL}(2,\C))$ with
$|B|_{\frac{\epsilon_1}{20 \pi}} \leq C'_6 e^{\delta |n_l|}$ such that
\begin{equation}\label{conj par complexe bis}
B(\cdot+\a)^{-1} S_E^V(\cdot) B(\cdot)=\begin{pmatrix}
e^{2 \pi \mathrm{i} \theta} & 0\\[1mm]
0 & e^{-2 \pi \mathrm{i} \theta}
\end{pmatrix}+\begin{pmatrix}
\beta_1(\cdot) & \beta(\cdot)\\[1mm]
\beta_3(\cdot) & \beta_4(\cdot)
\end{pmatrix},
\end{equation}
with  $|\beta_j|_{\frac{\epsilon_1}{20 \pi}} \leq C'_7 e^{-(\frac{\epsilon_1}{10}-2\pi\delta) |n_l|} $ for $j=1,3,4$ and $|\beta|_{\T} \leq C'_8 e^{-(\frac{\epsilon_1}{10}-2\pi \delta) |n_l|}$.
\end{prop}

\begin{proof}
%Take $h:=\frac{h_1}{10}=\frac{\epsilon_1}{20 \pi}$.
Without loss of generality, we assume in the following that $n_l\geq 0$. Let $u^J\colon z \mapsto \sum_{j \in J} \widehat u_j e^{2 \pi \mathrm{i} j z}$ and ${\CU}^J\colon z \mapsto
\begin{pmatrix}
e^{2 \pi {\rm i}\theta} u^J(z)\\
u^J(z-\a)
\end{pmatrix}$, where $J:=[-\lfloor\frac{n_l}{4}\rfloor, \lfloor\frac{n_l}{4} \rfloor]$. Consequently, we have
\begin{equation}\label{equation inv first bloch wave bis bis}
S_E^V(\cdot)\, \mathcal{U}^J(\cdot)= e^{2 \pi \mathrm{i} \theta} \mathcal{U}^J(\cdot+\alpha)+e^{2 \pi\mathrm{i}\theta} \begin{pmatrix}
g^*(\cdot)\\
0
\end{pmatrix}
\end{equation}
for some analytic function $g^*\in C^\omega(\T,\C)$ whose Fourier coefficients $(\widehat g^*_j)_{j\in \Z}$ satisfy
\begin{equation}\label{fourier coeff gj bis 1}
\widehat g^*_j=\chi_{J}(j) \left(E-2 \cos2 \pi (\theta+j\a)\right) \widehat u_j - \sum_{m\in \Z} \chi_{J}(j-m) \widehat u_{j-m} \widehat v_m. %\quad \forall j \in \Z.
\end{equation}
Since $\widehat H \widehat u=E \widehat u$, we also have
\begin{equation}\label{fourier coeff gj bis 2}
\widehat g^*_j=-\chi_{\Z \backslash J}(j) \left(E-2 \cos2 \pi (\theta+j\a)\right) \widehat u_j + \sum_{m\in \Z} \chi_{\Z \backslash J}(j-m) \widehat u_{j-m} \widehat v_m. %\quad \forall j \in \Z.
\end{equation}
Let us assume that $j\notin J$, i.e., $|j|> \lfloor\frac{n_l}{4}\rfloor$. By \eqref{localize_duality}, and since $2 n_{l-1}=o(\frac{n_l}{8})$ we have  $|\widehat u_{j-m}| \leq C_1 e^{-\epsilon_1 |j-m|}$ for $\frac{n_l}{8}<|j-m|< \frac{n_l}{2}$, while $|\widehat u_{j-m}|\leq 1$ in other cases. Besides, $|\widehat v_m| \leq c_0 r_0^3 e^{-2 \pi r_0 |m|}$ for all $m \in \Z$.  Thus, we deduce from \eqref{fourier coeff gj bis 1} that
\begin{align*}
|\widehat g^*_j| &\leq \sum_{ |j-m| \leq \frac{n_l}{8}} |\widehat u_{j-m}| |\widehat v_m| + \sum_{\frac{n_l}{8} < |j-m| \leq \frac{n_l}{4}} |\widehat u_{j-m}| |\widehat v_m| \\
&\leq c_0 r_0^3\left(\sum_{ |m| \geq |j|-\frac{n_l}{8}}  e^{-2 \pi r_0 |m|} + \sum_{\frac{n_l}{8}< |j-m| \leq \frac{n_l}{4}}  C_1 e^{-\epsilon_1 |j-m|} e^{-2 \pi r_0 |m|}\right) \\ &\leq c_0 r_0^3\left(\sum_{|m| \geq \frac{|j|}{2}} e^{-2 \pi r_0 |m|} + \frac{n_l}{4} C_1  e^{-\epsilon_1 |j|}\right)\\
&\leq C'_9 e^{- \frac{\epsilon_1}2 |j|}
\end{align*}
for some constant $C'_9=C'_9(\alpha,r_0)>0$. Similarly, if $j \in J$, by \eqref{fourier coeff gj bis 2} we get
\begin{align*}
|\widehat g^*_j| &\leq \sum_{\frac{n_l}{4} \leq  |j-m|< \frac{n_l}{2}} |\widehat u_{j-m}| |\widehat v_m| + \sum_{|j-m| \geq \frac{n_l}{2}} |\widehat u_{j-m}| |\widehat v_m| \\ &\leq c_0 r_0^3\left(\frac{n_l}{2} C_1  e^{-\frac{\epsilon_1}{4}n_l}+ \sum_{|m|\geq \frac{n_l}{4}} e^{-2 \pi r_0 |m|}\right)\\
&\leq C'_{10} e^{-\frac{\epsilon_1}{4}n_l}
\end{align*}
for some constant $C'_{10}=C'_{10}(\alpha,r_0)>0$.
We thus obtain
\begin{equation}\label{eq applica a bloch}
\sup_{|\Im z|\leq \frac{\epsilon_1}{20\pi}}|S_E^V(z)\, \mathcal{U}^J(z)- e^{2 \pi \mathrm{i} \theta} \mathcal{U}^J(z+\alpha)|=|g^*|_{\frac{\epsilon_1}{20\pi}} \leq C'_{11}(\alpha,r_0) e^{- \frac{\epsilon_1}{10} n_l}.
\end{equation}
Arguing as in \eqref{lowboundu}, we deduce that for any $\delta>0$, one has \begin{equation}\label{uestimeeef bisbis}
\inf_{|\Im z|\leq  \frac{\epsilon_1}{20\pi}} |\mathcal{U}^J(z)| \geq C'_{12}(\alpha,r_0,\delta)  e^{-\delta n_l}.
\end{equation}
On the other hand, since $|\widehat u_j|\leq C_1 e^{-\epsilon_1 |j|}$ for $2 n_{l-1} \leq |j| \leq \frac{1}{2} n_l$\footnote{In the case of an almost Mathieu operator $H_{\lambda,\alpha,\theta}$ with $\lambda<1$, then by Theorem \ref{thm_almost_almost-1}, for all $r \in(0,-\frac{\ln \lambda}{2 \pi})$ and for $\eta>0$ arbitrarily small, we have $|\widehat u_j|\leq C_1 e^{-2 \pi r |j|}$ for $2 n_{l-1}+\eta n_l \leq |j| \leq \frac{1}{2} n_l$,  which is also sufficient for our purpose.}, $|\widehat u_j| \leq 1$ in other cases, and since $n_{l-1}=o(n_l)$, we  also have
\begin{equation}\label{uestimeeef bisbister}
\sup_{|\Im z|\leq  \frac{\epsilon_1}{20\pi}} |\mathcal{U}^J(z)| \leq C'_{13}(\alpha,r_0,\delta)  e^{\delta n_l}.
\end{equation}
Combining \eqref{eq applica a bloch}$-$\eqref{uestimeeef bisbister}, and by Theorem \ref{corona theorem}, we can define $U_2 \in C_{ \frac{\epsilon_1}{20\pi}}^{\omega}(\T,\mathrm{PSL}(2,\C))$ with $\mathcal{U}^J$ as first column which satisfies
\begin{equation}\label{conj par complexe bis}
U_2(\cdot+\a)^{-1} S_E^{V}(\cdot) U_2(\cdot)=\begin{pmatrix}
e^{2 \pi \mathrm{i} \theta} & 0\\[1mm]
0 & e^{-2 \pi \mathrm{i} \theta}
\end{pmatrix}+\begin{pmatrix}
\widetilde{\beta}_1(\cdot) & \varphi^{(2)}(\cdot)\\[1mm]
\widetilde{\beta}_3(\cdot) & \widetilde{\beta}_4(\cdot)
\end{pmatrix}.
\end{equation}
Besides, we have $|U_2|_{\frac{\epsilon_1}{20\pi}} \leq C'_{14}(\alpha,r_0,\delta)  e^{\delta  n_l}$, $|\varphi^{(2)}|_{ \frac{\epsilon_1}{20\pi}} \leq C'_{15}(\alpha,r_0,\delta) e^{2\delta n_l}$, and for $j=1,3,4$, $|\widetilde{\beta}_j|_{ \frac{\epsilon_1}{20\pi}} \leq C'_{16}(\alpha,r_0) e^{- \frac{\epsilon_1}{10} n_l}$.

Let us write $\varphi^{(2)}(z)=\sum_j\widehat \varphi_j e^{2 \pi \mathrm{i} j z}$, and let  $\tau$ satisfy $$\varphi^{(2)} (z)-e^{-2 \pi \mathrm{i} \theta}\tau(z+\alpha)+e^{2 \pi \mathrm{i} \theta} \tau(z)= \sum_{|j| \geq n_l} \widehat \varphi_j e^{2 \pi \mathrm{i} j z}.$$ We have $\tau(z)=\sum_{|j| < n_l} \widehat \tau_j e^{2 \pi \mathrm{i} j z}$, where
$
\widehat\tau_j:=\frac{-\widehat \varphi_j e^{-2 \pi \mathrm{i} \theta}}{1-e^{-2 \pi \mathrm{i} (2\theta-j\alpha)}}
%=\frac{- \widehat \varphi_j e^{- \pi \mathrm{i} n\alpha}}{1-e^{-2 \pi \mathrm{i} (n-j)\alpha}}
$.
By the assumption   $\beta(\alpha)=0$, and the definition of resonances, for $j \neq n_l$, we have
$$\|2\theta- j \alpha\|_{\T}  \geq \|(j-n_l)\alpha\|_{\T}- \|2\theta- n_l\alpha\|_{\T} \geq  e^{-o(|j-n_l|)}-e^{-\epsilon_0n_l }\geq \frac{1}{2}e^{-o(|j-n_l|)}.$$
%\begin{align*}
%\|2\theta- j \alpha\|_{\T}  \geq\ & \|(j-n_l)\alpha\|_{\T}- \|2\theta- n_l\alpha\|_{\T} \\\geq\ &  e^{-o(|j-n_l|)}-e^{-\epsilon_0n_l }\geq \frac{1}{2}e^{-o(|j-n_l|)}.
%\end{align*}
Therefore,  we deduce that $|\tau|_{h} \leq C'_{17}(\alpha,r_0,\delta) e^{2\delta n_l}$.

Let $B:=U_2
\begin{pmatrix}
1 & \tau\\
0 & 1
\end{pmatrix}$ conjugate the initial cocycle to the following:
$$
B(\cdot+\a)^{-1} S_E^V(\cdot) B(\cdot)=\begin{pmatrix}
e^{2 \pi \mathrm{i} \theta} & 0  \\[1mm]
0 & e^{-2 \pi \mathrm{i} \theta}
\end{pmatrix}+
\begin{pmatrix}
\beta_1(\cdot) & \beta_2(\cdot)+\varsigma(\cdot)\\[1mm]
\beta_3(\cdot) & \beta_4(\cdot)
\end{pmatrix}
$$
with $\varsigma:z\mapsto \sum_{|j| \geq n_l} \widehat \varphi_{j} e^{2 \pi \mathrm{i} j z}$,
 $\beta_1(\cdot):=\widetilde{\beta}_1(\cdot)-\widetilde{\beta}_3(\cdot)\tau(\cdot+\alpha)$,  $\beta_3(\cdot):=\widetilde{\beta}_3(\cdot)$,  $\beta_4(\cdot):=\widetilde{\beta}_4(\cdot)+\widetilde{\beta}_3(\cdot)\tau(\cdot)$ and
$$
\beta_2(\cdot):=\widetilde{\beta}_1(\cdot) \tau(\cdot)- \widetilde{\beta}_4(\cdot)\tau(\cdot+\alpha)+\widetilde{\beta}_3(\cdot)\tau(\cdot)\tau(\cdot+\alpha).
$$
By the estimates on $\widetilde{\beta}_1,\widetilde{\beta}_3,\widetilde{\beta}_4$, we have $|\beta_j|_{\frac{\epsilon_1}{20\pi}}\leq C'_{18}(\alpha,r_0,\delta) e^{-(\frac{\epsilon_1}{10} -4 \delta) n_l}$, for all $j=1,2,3,4$.
% Therefore, for all $r\in(0,\frac{\epsilon_1}{20 \pi})$, there exists $c_{10}=c_{10}(\alpha,h_0,r,\varsigma)>0$ such that $|\beta_j|_{4\varsigma}\leq c_{10} e^{-2 \pi c n_l}$.
On the other hand,
$$\big|\varsigma\big|_{\T}\leq \sum_{|j| \geq n_l} |\varphi^{(2)}|_{h}e^{-\frac{\epsilon_1}{10} |j|} \leq C'_{19}(\alpha,r_0,\delta) e^{-(\frac{\epsilon_1}{10}-2 \delta) n_l},$$
which concludes.
\end{proof}

Let us apply the previous result for $n_l=n$ when $2 \theta-n\alpha \in \Z$.
%Recall the iterate of $(\alpha, S_E^V)$ defined in (\ref{iterate}).
According to Proposition \ref{lemma first conjugaison}, for any $\delta>0$, there exists $C'_{20}=C'_{20}(\alpha,r_0,\delta)>0$ such that
\begin{equation}\label{eqgorwth}
\sup_{0 \leq l \leq e^{\frac{\epsilon_1|n|}{10}}}|{{\mathcal A}}_l|_{\T} \leq C'_{20} e^{\delta |n|}.
\end{equation}
%for certain constants $c,\tilde C>0$ independent of $E$.
On the other hand, for any $l \in \mathbb{N}$, we obtain by iterating \eqref{equation conjugaison finale}:
\begin{equation*}
\begin{pmatrix}
1 & l \varphi\\[1mm]
0 & 1
\end{pmatrix}=U(\cdot+l \alpha)^{-1} {{\mathcal A}}_l(\cdot) \, U(\cdot).
\end{equation*}
Take $l:=\left\lfloor e^{\frac{\epsilon_1|n|}{10}}\right\rfloor$. By Proposition \ref{eq1estimee},  $|U|_{\T},|U^{-1}|_{\T} \leq C'_{21} e^{\delta |n|}$ for some $C'_{21}=C'_{21}(\alpha,r_0,\delta)>0$, and by \eqref{eqgorwth},  $|{{\mathcal A}}_{l}|_{\T} \leq C'_{20} e^{\delta |n|}$. We conclude that $e^{\frac{\epsilon_1|n|}{10}} |\varphi| \leq C'_{20}(C'_{21})^2 e^{3 \delta |n|}$, and the desired estimate on $\varphi$ follows.
\end{proof}

\begin{remark}\label{compare}
The readers should compare our Proposition \ref{lemma first conjugaison} with Theorem 3.8 in \cite{A1}. In Theorem 3.8 in \cite{A1},  we only know that $\beta(z)$ has exponential decay, while the decay rate is very small. However, in our result, $\beta(z)$ has a large and explicit decay, which allows us to show that $\varphi$ has large exponential decay.
\end{remark}
%Combining the results obtained in Proposition \ref{propconjpar1} and Corollary \ref{corrot}, the proof of Theorem \ref{theoconjparlocal} is now complete.

%\begin{lemma}\label{lemdeggg}
%There exists a uniform constant $C'>0$ such that
%$
% |\mathrm{deg}(Z)| \leq C' n.
%$
%\end{lemma}

%
%
%\subsection{Remarks on the two methods}
%
%The first of the two methods explained above is of perturbative type, and deals with perturbations of constant cocycles; it can be applied to general quasi-periodic cocycles taking values in $\mathrm{SL}(2,\R)$. Note that it also works for a higher dimensional base. In this KAM scheme, we encounter certain resonant steps, which are related to the rotation number of the cocycle we start with.
%
%The second method is non-perturbative and works only in the case of one-dimensional quasi-periodic Schr\"odinger cocycles. Indeed, it is based on Aubry duality, and it uses almost localization of a dual Schr\"odinger operator to get quantitative estimates on the conjugacy and on the parabolic cocycle obtained after reduction. As in the first method, we have to deal with resonances, but here, in terms of an auxiliary parameter $\theta$. But as we have seen, this phase can be related to the rotation number of the spectral gap we consider.
%

\subsection{Quantitative almost reducibility -- almost Mathieu case}

For almost Mathieu operators, we have the following improved result.

\begin{theorem}\label{thm_almost_almost-2}
Let $\alpha \in \R$ satisfy $\beta(\alpha)=0$. If $0<\lambda<1$, then for any $r\in (0,-\frac{1}{2\pi}\ln \lambda)$, $E \in \Sigma_{\lambda,\alpha}$, the following holds  on $\{|\Im z|<r\}$:
\begin{enumerate}
\item

either $(\alpha,S_E^\lambda)$ is almost reducible to $(\alpha,R_{\theta})$
 for some
$\theta=\theta(E)\in \R$:  for any $\varepsilon>0$, there exists $U\in C^\omega_{r}(\T,\mathrm{PSL}(2,\R))$ such that
\begin{equation*}
|U(\cdot+\alpha)^{-1} S_E^\lambda(\cdot) U(\cdot)-R_{ \theta}|_{r} < \varepsilon;
\end{equation*}

\item or $(\alpha,S_E^\lambda)$ is reducible:
\begin{enumerate}
\item if $2 \rho(\alpha,S_E^\lambda)-j\alpha \notin \Z$ for any $j \in \Z$, then $(\alpha,S_E^\lambda)$ is reducible to $(\alpha,R_\theta)$ for some
$\theta=\theta(E)\in \R$;
\item if $2 \rho(\alpha,S_E^\lambda)-k\alpha \in \Z$ for some $k \in \Z$, then there is $k_2=k_2(\lambda,\alpha, r)>0$ such that if $|k|\geq k_2$, then there exist $\varphi \in \R\backslash\{0\}$ and $U \in C_{r}^\omega(\T,{\rm PSL}(2,\R))$ such that
\begin{equation*}\label{reduci_parabis}
U(\cdot+\a)^{-1} S_{E}^{\lambda}(\cdot) U(\cdot) =
\begin{pmatrix}
1 & \varphi \\
0 & 1
\end{pmatrix},
\end{equation*}
 and there is $n=n(k) \in \Z$ with $|n|\geq\frac{|k|}{4}$, such that $|\varphi| \leq C_5 e^{- \frac{2 \pi r}{3} |n|}$ for some $C_5=C_5(\lambda,\alpha,r)>0$ and for any $0<r'' \leq r$, $|U|_{r''} \leq C_6 e^{22 \pi r'' |n|}$ for some $C_6=C_6(\lambda,\alpha,r'')>0$.
\end{enumerate}
\end{enumerate}
\end{theorem}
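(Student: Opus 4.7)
Parts (1) and (2)(a) will follow directly from Avila's Almost Reducibility Conjecture together with rotation number arguments. Since $0<\lambda<1$, every $E \in \Sigma_{\lambda,\alpha}$ yields a subcritical cocycle $(\alpha, S_E^\lambda)$, so Theorem \ref{arc-conjecture} gives that $(\alpha, S_E^\lambda)$ is almost reducible. Because $E \in \Sigma_{\lambda,\alpha}$, the limiting constant cannot be uniformly hyperbolic, so it is elliptic or parabolic. Identifying its rotation number with $\rho(\alpha,S_E^\lambda)$, one can conjugate arbitrarily close to $R_{\theta}$ for a suitable $\theta = \theta(E)$, which is (1). When $2\rho(\alpha,S_E^\lambda) - j\alpha \notin \mathbb{Z}$ for every $j \in \mathbb{Z}$, a standard non-resonance argument (available once almost reducibility holds) upgrades the conjugation to an exact one to $(\alpha, R_\theta)$, giving (2)(a).

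The substantive case is (2)(b), which is the AMO analog of Theorem \ref{prop_duality_para} with an improved exponent. The scheme is the same; the improvements come from Aubry duality combined with Theorem \ref{thm_almost_almost-1}. For $V(\cdot) = 2\lambda\cos 2\pi(\cdot)$ with $\lambda<1$, the dual operator $\widehat H_{V,\alpha,\theta}$ is, up to a factor $\lambda$, the AMO $H_{\lambda^{-1},\alpha,\theta}$ with supercritical coupling $\lambda^{-1}>1$. Theorem \ref{thm_almost_almost-1} therefore yields almost localization for $\widehat H_{V,\alpha,\theta}$ with decay rate $\ln(1/\lambda)-\delta$ for any prescribed $\delta>0$, rather than the much smaller rate $\epsilon_1$ appearing in the general setting. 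Consequently the Bloch wave $\mathcal{U}(z) = \bigl(e^{2\pi i\theta}u(z), u(z-\alpha)\bigr)$ with $u(z) = \sum_{j} \widehat u_j e^{2\pi i jz}$ extends analytically to the strip $\{|\Im z| < -\tfrac{1}{2\pi}\ln\lambda - \tfrac{\delta}{2\pi}\}$, which is essentially the maximal one.

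With this wider strip in hand, I would run the Corona-theorem construction of Proposition \ref{eq1estimee} verbatim, yielding a conjugation $U \in C^{\omega}_r(\mathbb{T}, \mathrm{PSL}(2,\mathbb{R}))$ to $\left(\begin{smallmatrix}1 & \varphi \\ 0 & 1\end{smallmatrix}\right)$, the lower bound $|n| \geq |k|/4$ (via the same Rouch\'e/degree argument using the truncation $\mathcal{U}^I$), and the growth bound $|U|_{r''} \leq C_6 e^{22\pi r''|n|}$. To obtain the sharper $|\varphi| \leq C_5 e^{-\frac{2\pi r}{3}|n|}$, I would run the analog of Proposition \ref{lemma first conjugaison}: truncating $u$ on an interval of length proportional to $n_l$ and conjugating via the Corona theorem now gives $|\beta|_{\T} \leq C \exp\!\bigl(-(-\ln\lambda - \delta')|n_l|\bigr)$ with an almost optimal rate, so that, solving the cohomological equation using $\beta(\alpha)=0$ and iterating the conjugation relation $\bigl(\begin{smallmatrix}1 & l\varphi \\ 0 & 1\end{smallmatrix}\bigr) = U(\cdot + l\alpha)^{-1} \mathcal{A}_l(\cdot)\,U(\cdot)$ for $l \sim \exp\!\bigl(\frac{2\pi r}{3}|n|\bigr)$, the subexponential growth of $\mathcal{A}_l$ on the spectrum (Corollary \ref{subexp groowth}) together with the conjugacy bound $|U|_{\T},|U^{-1}|_\T \leq C e^{\delta|n|}$ delivers the claimed estimate on $|\varphi|$.

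The main obstacle will be the bookkeeping in the last paragraph: to arrive at the precise constant $\frac{2\pi r}{3}$ one has to balance three parameters, the truncation length in the analog of Proposition \ref{lemma first conjugaison}, the auxiliary strip in which Corona is applied, and the iteration length $l$, so that the sharper decay $-\ln\lambda-\delta$ from Theorem \ref{thm_almost_almost-1} is not spent on slack in the Corona or coboundary estimates. Once this optimization is carried out, the statement of (2)(b) follows in the same manner as Theorem \ref{prop_duality_para}, and the extra information in Theorem \ref{thm_almost_almost-1} for resonant $\theta$ ensures that the argument goes through uniformly in $E$ at the boundary of a spectral gap.
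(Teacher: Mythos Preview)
Your plan for (2)(b) is essentially the paper's approach and would go through with correct bookkeeping. The paper truncates on $J=[-\lfloor n/2\rfloor+2,\lfloor n/2\rfloor-2]$, exploits that for AMO the potential has only the modes $m=\pm1$ so that $\widehat g_j^*$ is supported in a window of width $4$ near $|j|=n/2$, and applies the Corona construction on $\tilde r=\tfrac12(-\tfrac{\ln\lambda}{2\pi}+r)$. For the estimate on $\varphi$ it proves a separate lemma (the AMO analog of Proposition~\ref{lemma first conjugaison}) with $h=-\tfrac{\ln\lambda}{6\pi}$, which yields $|\beta|_\T\lesssim e^{-(-\ln\lambda/3-2\pi\delta)|n|}$, \emph{not} the rate $-\ln\lambda$ you claim; since $\tfrac{2\pi r}{3}<-\tfrac{\ln\lambda}{3}$ for every admissible $r$, this is exactly what is needed after the iteration $l\sim e^{(-\ln\lambda/3)|n|}$. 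Your overclaim on the $\beta$-decay is harmless for the final bound but would mislead the optimization you allude to.

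Your handling of (1) and (2)(a), however, has a genuine gap. The theorem requires the conjugacies $U$ to lie in $C^\omega_r$ for an \emph{arbitrary} $r<-\tfrac{1}{2\pi}\ln\lambda$. Invoking Theorem~\ref{arc-conjecture} as a black box only gives almost reducibility on \emph{some} strip, possibly far smaller than $r$; the paper warns explicitly in \S1.4 that ARC ``shrinks the analytic strip greatly'' and is the reason the AMO case needs a separate argument. The paper does not use ARC for (1): instead, (1) is a byproduct of the very same truncation/Corona construction you describe for (2)(b), applied at each resonance $n_l$ of a resonant phase $\theta$. Because Theorem~\ref{thm_almost_almost-1} gives decay of $\widehat u_j$ at rate $\ln\lambda^{-1}-\delta$, the Bloch wave and the resulting conjugacy $B$ live on a strip arbitrarily close to $-\tfrac{1}{2\pi}\ln\lambda$, and as $n_l\to\infty$ the post-conjugation error tends to zero on that strip --- this is precisely (1). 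Similarly, (2)(a) is not deduced from ARC but from Theorem~2.5 of \cite{AvilaJito}, which turns an exponentially localized eigenfunction of the dual operator (available for non-resonant $\theta$ by Theorem~\ref{thm_almost_almost-1}) into a reducing conjugacy analytic on the strip determined by that decay rate, hence on $\{|\Im z|<r\}$.
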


\begin{proof}
Fix $0< r<-\frac{\ln \lambda}{2\pi}$, and set $\tilde r:=\frac12(-\frac{\ln\lambda}{2\pi}+r)$. Hence $-\frac{\ln\lambda}{2\pi}-\tilde r=\tilde r-r$.
The proof that follows is similar to those of Proposition \ref{eq1estimee} and Proposition \ref{lemma first conjugaison}, but with the improved estimates obtained in Theorem \ref{thm_almost_almost-1} for almost Mathieu operators. For any $E \in \Sigma_{\lambda,\alpha}$,  there exist $\theta=\theta(E)\in \R$ and $(\widehat u_j)_{j \in \Z}$ satisfying
 $\widehat H_{\lambda,\alpha,\theta} \widehat u= E \widehat u$ with
 $\widehat u_0=1$ and $|\widehat u_j|\leq 1$ for every $j \in \Z$.
 Let us denote by $\{n_{l}\}_{l}$ the set of $\epsilon_0-$resonances of $\theta$. We fix $\eta\in(0, \frac{\pi(\tilde r-r)}{-4\ln\lambda})$ sufficiently small and let $\delta\in(-\eta\ln\lambda, \frac{\pi(\tilde r-r)}{4})$. By Theorem \ref{thm_almost_almost-1}, there exists $N_0\geq 0$ such that for $|n_{l}| \geq N_0$, we have
\begin{equation}\label{decay expllc}
|\widehat u_j| \leq  e^{-2 \pi r |j|},\quad \forall  \ 2 |n_{l-1}|+\eta |n_{l}| < |j| < \frac12 |n_{l}|.
\end{equation}

Let $n_l\geq N_0$ and set $J:=[-\lfloor\frac{n_l}{2}\rfloor+2, \lfloor\frac{n_l}{2}\rfloor-2]$. We define $u^J\colon z \mapsto \sum_{j \in J} \widehat u_j e^{2 \pi \mathrm{i} j z}$ and ${\CU}^J\colon z \mapsto
\begin{pmatrix}
e^{2 \pi {\rm i}\theta} u^J(z)\\
u^J(z-\a)
\end{pmatrix}$.  Then
\begin{equation*}
S_E^{\lambda}(\cdot)\, \mathcal{U}^J(\cdot)= e^{2 \pi \mathrm{i} \theta} \mathcal{U}^J(\cdot+\alpha)+e^{2 \pi\mathrm{i}\theta} \begin{pmatrix}
g^*(\cdot)\\
0
\end{pmatrix}
\end{equation*}
for some analytic function $g^*$ whose Fourier coefficients $(\widehat g^*_j)_{j\in \Z}$ satisfy
\begin{equation}\label{fourier coeff gj ter 11}
\widehat g^*_j=\chi_{J}(j) \left(E-2 \cos2 \pi (\theta+j\a)\right) \widehat u_j - \lambda \sum_{m=\pm1}  \chi_{J}(j-m) \widehat u_{j-m}.  %\quad \forall j \in \Z.
\end{equation}
Since $\widehat H \widehat u=E \widehat u$, we also have
\begin{equation}\label{fourier coeff gj ter 22}
\widehat g^*_j=-\chi_{\Z \backslash J}(j) \left(E-2 \cos2 \pi (\theta+j\a)\right) \widehat u_j + \lambda \sum_{m=\pm1} \chi_{\Z \backslash J}(j-m) \widehat u_{j-m} . %\quad \forall j \in \Z.
\end{equation}
Applying either \eqref{fourier coeff gj ter 11} or \eqref{fourier coeff gj ter 22}, we see that $\widehat{g}_j^*=0$ if $|j|\not\in(\lfloor\frac{n_l}{2}\rfloor-4,\lfloor\frac{n_l}{2}\rfloor)$.  Then by \eqref{decay expllc}, there exists a constant $c'_{1}=c'_{1}(\lambda,\alpha,\tilde r)>0$ such that
\begin{equation}\label{def sup g' on r'}
\sup_{|\Im z|\leq \tilde r}|S_E^\lambda(z)\, \mathcal{U}^J(z)- e^{2 \pi \mathrm{i} \theta} \mathcal{U}^J(z+\alpha)|=|g^*|_{\tilde r} \leq c'_{1} e^{-\pi (-\frac{\ln\lambda}{2\pi}-\tilde r) n_l}=c'_{1} e^{-\pi (\tilde r-r) n_l}.
\end{equation}
Similar to (\ref{uestimeeef bisbis}) and (\ref{uestimeeef bisbister}), we deduce that  \begin{equation*}
c'_{2}(\alpha,\lambda,\delta,\tilde r)  e^{-\delta n_l} \leq \inf_{|\Im z|\leq \tilde r} |\mathcal{U}^J(z)|  \leq \sup_{|\Im z|\leq \tilde r} |\mathcal{U}^J(z)| \leq c'_{3}(\alpha,\lambda,\delta,\tilde r)  e^{\delta n_l}.
 \end{equation*}
As previously in \eqref{conj par complexe bis}, we can define $U_2 \in C^{\omega}_{\tilde r}(\T,\mathrm{PSL}(2,\C))$ with first column $\mathcal{U}^J$ such that
\begin{equation*}
U_2(\cdot+\a)^{-1} S_E^\lambda(\cdot) U_2(\cdot)=\begin{pmatrix}
e^{2 \pi \mathrm{i} \theta} & 0\\[1mm]
0 & e^{-2 \pi \mathrm{i} \theta}
\end{pmatrix}+\begin{pmatrix}
\widetilde{\beta}_1(\cdot) & \varphi^{(2)}(\cdot)\\[1mm]
\widetilde{\beta}_3(\cdot) & \widetilde{\beta}_4(\cdot)
\end{pmatrix}
\end{equation*}
for some function $\varphi^{(2)}\colon z \mapsto\sum_j\widehat \varphi_j e^{2 \pi \mathrm{i} j z}$. Besides, $|U_2|_{\tilde r} \leq c'_4(\alpha,\lambda,\delta,\tilde r)  e^{\delta  n_l}$, $|\varphi^{(2)}|_{\tilde r} \leq c'_5(\alpha,\lambda,\delta,\tilde r) e^{2\delta n_l}$, and for $j=1,3,4$, $|\widetilde{\beta}_j|_{\tilde r} \leq c'_6(\alpha,\lambda,\tilde r) e^{- \pi (\tilde r-r) n_l}$.

Consequently, one can define $B \in C^{\omega}_{\tilde r}(\T,\mathrm{PSL}(2,\C))$ such that
\begin{equation}\label{con-2}
B(\cdot+\a)^{-1} S_E^{\lambda}(\cdot) B(\cdot)=\begin{pmatrix}
e^{2 \pi \mathrm{i} \theta} & 0  \\[1mm]
0 & e^{-2 \pi \mathrm{i} \theta}
\end{pmatrix}+
\begin{pmatrix}
\beta_1(\cdot) & \beta_2(\cdot)+\varsigma(\cdot)\\[1mm]
\beta_3(\cdot) & \beta_4(\cdot)
\end{pmatrix}
\end{equation}
with $|B|_{\tilde r} \leq c'_7(\alpha,\lambda,\delta,\tilde r) e^{2\delta n_l},$ and $|\beta_j|_{\tilde r}\leq c'_8(\alpha,\lambda,\delta,\tilde r) e^{-( \pi(\tilde r-r) -4 \delta) n_l}$, for all $j=1,2,3,4$. Moreover,
 $\varsigma(z):=\sum_{|j| \geq n_l} \widehat \varphi_{j} e^{2 \pi \mathrm{i} j z}$. Therefore, we have 
 $$\left|\varsigma\right|_{r}\leq \sum_{|j| \geq n_l} |\varphi^{(2)}|_{\tilde r}\,  e^{-2 \pi (\tilde r-r) |j|} \leq c'_{5} (\alpha,\lambda,\delta,\tilde r)e^{-( \pi(\tilde r-r)- 4\delta)n_l}.$$

If $\theta$ is $\epsilon_0-$resonant, which means that the collection $\{n_l\}$ is infinite, then we set $U:=\frac{1}{1+ \mathrm{i}}B \begin{pmatrix}
\mathrm{i} & -1\\
\mathrm{i} & 1
\end{pmatrix}\in C^\omega_{r}(\T, \mathrm{PSL}(2,\R))$, so that
$$
|U(\cdot+\alpha)^{-1} S_E^{\lambda}(\cdot)  U(\cdot)- R_\theta |_{r} \leq c'_9(\alpha,\lambda,\delta,\tilde r) e^{-( \pi(\tilde r-r) -4 \delta) n_l}.
$$
 This concludes the proof of the almost reducibility statement in (1).

Assume now that $\theta$ is not $\epsilon_0-$resonant, then  $\theta=\pm \rho{(\alpha, S_E^{\lambda})}+\frac{k' \alpha}2$ for some $k' \in \Z$ (see Remark 4.2 in \cite{AvilaJito}).  If $2 \rho{(\alpha, S_E^\lambda)} - j\alpha \notin \Z$ for all $j\in \Z$, then $2 \theta - j\alpha \notin \Z$ for all $j \in \Z$. Theorem \ref{thm_almost_almost-2} (a) actually follows from Theorem 2.5 of \cite{AvilaJito}.
Now if  $2 \rho{(\alpha, S_E^\lambda)} - k\alpha \in \Z$ for some $k \in \Z$, we thus have $2 \theta-n\alpha \in \Z$ for some $n=n(k) \in \Z$. As in Proposition \ref{eq1estimee}, there exist $U \in C^\omega_{\tilde r} ( \T,\mathrm{PSL}(2,\C))$, $\varphi \in \R$ such that
\begin{equation*}
U(\cdot+\a)^{-1}  S_E^\lambda(\cdot) \, U(\cdot)=
\begin{pmatrix}
1 & \varphi \\[1mm]
0 & 1
\end{pmatrix}.
\end{equation*}
Note that the case $\varphi=0$ cannot happen. Otherwise $\widehat H_{\lambda,\alpha,\theta}=\lambda H_{ \lambda^{-1},\alpha,\theta}$ would have an eigenvalue with two linearly independent eigenvectors in $\ell^2(\Z)$, which is impossible by the limit-point character of Schr\"odinger operators.  The estimate on $U$ and the relation between $k$ and $n$ are obtained as in Proposition \ref{eq1estimee}. To estimate $\varphi$, we argue as in the proof of Theorem \ref{prop_duality_para}, but using the following improved version of Proposition \ref{lemma first conjugaison} in the case of almost Mathieu operators. \end{proof}

\begin{lemma}
Assume that $2\theta-n\alpha\in\Z$ for some $n \in \Z$.
For any $\delta \in(0, -\frac{\ln \lambda}{6 \pi})$, there exist $c'_i=c'_i(\alpha,\lambda,\delta)>0$, $i=10,11,12$, and there exists $B\in C^{\omega}(\T, \mathrm{PSL}(2,\C))$ with
$|B|_{-\frac{\ln \lambda}{6 \pi}} \leq c'_{10} e^{\delta |n|}$ such that
\begin{equation}\label{conj par complexe bis-2}
B(\cdot+\a)^{-1} S_E^\lambda(\cdot) B(\cdot)=\begin{pmatrix}
e^{2 \pi \mathrm{i} \theta} & 0\\[1mm]
0 & e^{-2 \pi \mathrm{i} \theta}
\end{pmatrix}+\begin{pmatrix}
\beta_1(\cdot) & \beta(\cdot)\\[1mm]
\beta_3(\cdot) & \beta_4(\cdot)
\end{pmatrix},
\end{equation}
with  $|\beta_j|_{-\frac{\ln \lambda}{6 \pi}}\leq c'_{11} e^{- (-\frac{\ln \lambda}{3}-2 \pi\delta) |n|} $ and $|\beta|_{\T} \leq c'_{12} e^{-(-\frac{\ln \lambda}{3}-2 \pi\delta) |n|}$.
\end{lemma}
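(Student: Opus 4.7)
This lemma is the almost-Mathieu sharpening of Proposition~\ref{lemma first conjugaison}: the generic almost-localization rate $\epsilon_1$ is replaced by the optimal rate $|\ln\lambda|$ extracted from Theorem~\ref{thm_almost_almost-1}, applied to the dual operator $\widehat H=\lambda H_{\lambda^{-1},\alpha,\theta}$ (whose coupling exceeds $1$). Set $r:=-\frac{\ln\lambda}{6\pi}$, so that $2\pi r=|\ln\lambda|/3$ matches the target decay rate. Since $2\theta-n\alpha\in\Z$, the integer $n$ plays the role of a resonance $n_l$ of $\theta$, and Theorem~\ref{thm_almost_almost-1} provides $|\widehat u_j|\le e^{-2\pi r_0|j|}$ for any $r_0$ slightly smaller than $-\frac{\ln\lambda}{2\pi}$, throughout the window $2|n_{l-1}|+\eta|n|<|j|<|n|/2$.

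The argument then follows the three steps of Proposition~\ref{lemma first conjugaison}. First, truncate the dual eigenfunction by setting $J:=[-\lfloor n/2\rfloor+2,\lfloor n/2\rfloor-2]$, $u^J(z):=\sum_{j\in J}\widehat u_j e^{2\pi\mathrm i j z}$, and $\mathcal U^J(z):=(e^{2\pi\mathrm i\theta}u^J(z),u^J(z-\alpha))^\top$. Because $\widehat v_{\pm 1}=\lambda$ and $\widehat H\widehat u=E\widehat u$, the residual $g^*$ defined by $S_E^\lambda\mathcal U^J=e^{2\pi\mathrm i\theta}\mathcal U^J(\cdot+\alpha)+e^{2\pi\mathrm i\theta}(g^*,0)^\top$ has Fourier support confined to $|j|\in\{\lfloor n/2\rfloor-3,\ldots,\lfloor n/2\rfloor+1\}$, exactly as in the proof of Theorem~\ref{thm_almost_almost-2}. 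Combining the decay of $\widehat u_j$ at $|j|\approx n/2$ with the growth factor $e^{2\pi r|j|}$ on the strip of width $r$ yields the key bound $|g^*|_r\le C e^{-(|\ln\lambda|/3-2\pi\delta)|n|}$, provided $r_0$ is chosen close enough to $-\frac{\ln\lambda}{2\pi}$. Lagrange interpolation (Lemma~\ref{theo lagrang interp}) together with the sub-exponential growth of Corollary~\ref{subexp groowth}, used exactly as in \eqref{lowboundu}, then gives $ce^{-\delta|n|}\le|\mathcal U^J(z)|\le Ce^{\delta|n|}$ uniformly on $|\Im z|\le r$. Feeding $\mathcal U^J$ into Theorem~\ref{corona theorem} manufactures $U_2\in C_r^\omega(\T,\mathrm{PSL}(2,\mathbb C))$ with $\mathcal U^J$ as first column, satisfying $|U_2|_r\le Ce^{2\delta|n|}$ and
\[
U_2(\cdot+\alpha)^{-1}S_E^\lambda(\cdot)U_2(\cdot)=\begin{pmatrix}e^{2\pi\mathrm i\theta}&0\\0&e^{-2\pi\mathrm i\theta}\end{pmatrix}+\begin{pmatrix}\widetilde\beta_1&\varphi^{(2)}\\ \widetilde\beta_3&\widetilde\beta_4\end{pmatrix},
\]
in which the $|\widetilde\beta_j|_r$ inherit the $g^*$-decay while $|\varphi^{(2)}|_r$ is only bounded by $Ce^{2\delta|n|}$.

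It remains to absorb the low Fourier modes of $\varphi^{(2)}$. As in Proposition~\ref{lemma first conjugaison}, I solve the cohomological equation $\varphi^{(2)}(z)-e^{-2\pi\mathrm i\theta}\tau(z+\alpha)+e^{2\pi\mathrm i\theta}\tau(z)=\sum_{|j|\ge n}\widehat\varphi_j e^{2\pi\mathrm i j z}$ by $\widehat\tau_j:=-e^{-2\pi\mathrm i\theta}\widehat\varphi_j/(1-e^{-2\pi\mathrm i(2\theta-j\alpha)})$ for $|j|<n$. The crucial small-divisor input is that, since $2\theta=n\alpha\bmod\Z$, one has $\|2\theta-j\alpha\|_\T=\|(n-j)\alpha\|_\T\ge\tfrac12 e^{-o(|n-j|)}$ for $j\neq n$ under the hypothesis $\beta(\alpha)=0$; this forces $|\tau|_r\le Ce^{2\delta|n|}$ with at most a $\delta$ loss in the exponent. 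Setting $B:=U_2\left(\begin{smallmatrix}1&\tau\\0&1\end{smallmatrix}\right)$ yields the claimed conjugation, in which $\varphi^{(2)}$ is replaced by the high-mode tail $\varsigma(z):=\sum_{|j|\ge n}\widehat\varphi_j e^{2\pi\mathrm i j z}$; restricting to $\T$, this tail is bounded by $|\varphi^{(2)}|_r e^{-2\pi r|n|}\le Ce^{-(|\ln\lambda|/3-2\pi\delta)|n|}$ precisely thanks to $2\pi r=|\ln\lambda|/3$, while the new entries $\beta_1,\beta_3,\beta_4$ arise from products of the already-small $\widetilde\beta_j$ with $\tau$ and inherit the same rate. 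The main technical hurdle, identical to its counterpart in Proposition~\ref{lemma first conjugaison}, is the uniform lower bound on $\mathcal U^J$ across the full strip of width $r$ rather than on a single horizontal line; this is what forces the Lagrange-interpolation argument and is responsible for the arbitrarily small $2\pi\delta$ loss in the final exponent.
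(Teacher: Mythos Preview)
Your proposal is correct and follows essentially the same approach as the paper. The paper's proof is much terser---it simply sets $h:=-\frac{\ln\lambda}{6\pi}$, observes that the estimate~\eqref{def sup g' on r'} from the proof of Theorem~\ref{thm_almost_almost-2} becomes $|g^*|_h\le c'_1 e^{-2\pi(h-\delta)|n|}$ on this narrower strip, and then cites~\eqref{con-2} to conclude---whereas you have unpacked the truncation, Corona, and cohomological-equation steps explicitly; the route and all the key ingredients (the interval $J$, the finite Fourier support of $g^*$, the Lagrange-interpolation lower bound, the tail estimate on $\varsigma$) coincide.
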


\begin{proof}
Set $h:=-\frac{\ln \lambda}{6 \pi}$ and let $\delta >0$ be taken arbitrarily small. With the same notations as above, for $|n|$ which is large enough, \eqref{def sup g' on r'} becomes now
$$
|g^*|_{h} \leq c'_1(\alpha,\lambda,\delta) e^{-2 \pi (h-\delta) |n|}.
$$
Consequently, \eqref{con-2} holds with $|B|_{h} \leq c'_7 (\alpha,\lambda,\delta)  e^{\delta  |n|}$,
$|\beta_j|_{h}\leq c'_8(\alpha,\lambda,\delta) e^{-2 \pi( h -\delta) |n|}$, for  $j=1,2,3,4$, while
$$|\varsigma|_{\T}\leq \sum_{|j| \geq |n|} |\varphi^{(2)}|_{h}e^{-2 \pi h |j|} \leq c'_{5}(\alpha,\lambda,\delta) e^{-2 \pi( h- \delta) |n|},$$ which concludes.
\end{proof}

\section{Global to local reduction}\label{subsectlocalglobal}

In this section, we extend the reducibility results for Schr\"odinger cocycles, which were obtained previously for small potentials, to the global subcritical regime.

\subsection{General subcritical potential}

 We first consider typical $V\in C^\omega(\T,\R)$ such that  $\Sigma_{V,\alpha}$  presents the structure given in Theorem \ref{global-red}.
We denote by $\{I_i\}_{1\leq i \leq m}$ the intervals such that the energies in $\Sigma_{V,\alpha}\cap I_i$ are subcritical, and by $\{J_i\}_{1\leq i \leq m'}$ (the  number $m'$ of intervals can be $m-1$, $m$ or $m+1$) the intervals such that the energies in $\Sigma_{V,\alpha}\cap J_i$ are supercritical. Let $\Sigma^{\mathrm{sup}}_{V,\alpha}:=\bigcup_{i}(\Sigma_{V,\alpha} \cap J_i)$ and
 $\Sigma^{\rm sub}_{V,\alpha}:=\bigcup_{i}(\Sigma_{V,\alpha} \cap I_i) $.

Let us focus on the case where $E\in\Sigma^{\rm sub}_{V,\alpha}$.
By Theorem \ref{arc-conjecture},  the Schr\"odinger cocycle $(\alpha, S^V_E)$ is almost reducible.
While almost reducibility allows one to conjugate the dynamics of a cocycle close to constant, it is convenient to have the conjugated cocycle in Schr\"odinger form, since many results (particularly those depending on Aubry duality) are obtained only in this setting.

\begin{prop}\label{prop_ar_1}
Let $\alpha\in\R\backslash\Q$ satisfy $\beta(\alpha)=0$. There exists $h_1=h_1(V,\alpha)>0$ such that for any $\eta>0$, $E\in \Sigma_{V,\alpha}^{\rm sub}$, one can find $\Phi_E\in C^\omega(\T,{\rm PSL}(2,\R))$ with $|\Phi_E|_{h_1}<\Lambda$ for some
$\Lambda=\Lambda(V,\alpha, \eta,h_1)>0$, $E_*=E_*(E)$ locally constant (as a function of $E$), and $V_{*}=V_*(E) \in C^\omega_{h_1}(\T,\R)$, $|V_*|_{h_1}<\eta$, such that \begin{equation}\label{conjPhi}
\Phi_E(\cdot+\alpha)^{-1} S_E^V(\cdot) \Phi_E(\cdot)=S_{E_*}^{V_*}(\cdot).
\end{equation}
\end{prop}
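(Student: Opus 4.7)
The plan is to combine Avila's Almost Reducibility Conjecture (Theorem~\ref{arc-conjecture}) with an openness-plus-compactness argument, and then to convert the resulting almost-constant cocycle into a genuine Schr\"odinger cocycle by a local perturbative step.

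\textbf{Step 1 (pointwise almost reducibility).} Fix $E \in \Sigma_{V,\alpha}^{\rm sub}$. Theorem~\ref{arc-conjecture} yields, for any prescribed $\eta' > 0$, some $h_E > 0$, $\tilde\Psi_E \in C^{\omega}_{h_E}(\T,\mathrm{PSL}(2,\R))$ and a constant matrix $A^*_E \in \mathrm{SL}(2,\R)$ with $|\tilde\Psi_E(\cdot+\alpha)^{-1} S_E^V(\cdot) \tilde\Psi_E(\cdot) - A^*_E|_{h_E} < \eta'$. Since $E \in \Sigma_{V,\alpha}$, the cocycle is not uniformly hyperbolic, so $A^*_E$ must be elliptic or parabolic.

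\textbf{Step 2 (uniformity in $E$).} By Theorem~\ref{global-red}, the subcritical spectrum $\Sigma_{V,\alpha}^{\rm sub}$ is compact and contained in a finite union of closed intervals on which every energy is subcritical. I will exploit two structural features of Avila's global theory \cite{Aglobal, A2}: subcriticality is an open condition in $E$, and the quantitative almost reducibility constants behave uniformly on compact subsets of the subcritical regime. Covering $\Sigma_{V,\alpha}^{\rm sub}$ by finitely many open sets on each of which a common target constant matrix and a common strip width are valid, I take $h_1=h_1(V,\alpha)$ to be the minimum of these widths, and choose $A^*_E$ to be constant on each piece of the cover. This furnishes, for any $\eta > 0$, a conjugacy $\tilde\Psi_E \in C^\omega_{h_1}(\T,\mathrm{PSL}(2,\R))$ with norm bounded uniformly by some $\Lambda_0=\Lambda_0(V,\alpha,\eta,h_1)$ such that $\tilde\Psi_E^{-1}(\cdot+\alpha) S_E^V(\cdot) \tilde\Psi_E(\cdot)$ is within an arbitrarily small prescribed distance of the locally constant $A^*_E$.

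\textbf{Step 3 (passage to Schr\"odinger form).} Since $A^*_E$ is elliptic or parabolic, there is a constant $M_E \in \mathrm{SL}(2,\R)$, of size bounded uniformly in $E$, such that $M_E^{-1} A^*_E M_E = S_{E_*}^0$, where $E_*=E_*(E)\in[-2,2]$ is determined by the conjugacy class of $A^*_E$; the local constancy of $A^*_E$ then transfers to $E_*$. Conjugating by $M_E$ produces $S_{E_*}^0 + F_E(\cdot)$ with $|F_E|_{h_1}$ small. The Schr\"odinger form is then recovered by a standard local inverse function theorem argument: the parametrization $(B, V_*) \mapsto B(\cdot+\alpha) S_{E_*}^{V_*}(\cdot) B(\cdot)^{-1}$ is a local submersion near $(\mathrm{Id}, 0)$ transversally to the coboundary action, so for $|F_E|_{h_1}$ small enough one finds $B \in C^\omega_{h_1}(\T, \mathrm{PSL}(2,\R))$ close to the identity and $V_* \in C^\omega_{h_1}(\T,\R)$ with $|V_*|_{h_1} < \eta$ realizing $B(\cdot+\alpha)^{-1} (S_{E_*}^0 + F_E)(\cdot) B(\cdot) = S_{E_*}^{V_*}(\cdot)$. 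Setting $\Phi_E := \tilde\Psi_E M_E B$ gives \eqref{conjPhi}, with $|\Phi_E|_{h_1}$ bounded by some $\Lambda=\Lambda(V,\alpha,\eta,h_1)$.

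The main obstacle is Step~2: passing from pointwise almost reducibility to a uniform quantitative statement with a strip width $h_1$ independent of $E$ and a norm bound depending only on $(V,\alpha,\eta,h_1)$. This rests entirely on openness of the almost reducibility property combined with the compactness of $\Sigma_{V,\alpha}^{\rm sub}$, both of which are part of Avila's global theory. Step~3, by contrast, is a standard finite-dimensional perturbative manipulation, provided it is carried out on the fixed strip $h_1$ so as to preserve analytic control.
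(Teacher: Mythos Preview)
Your outline matches the paper's strategy: invoke the Almost Reducibility Conjecture, use compactness of $\Sigma_{V,\alpha}^{\rm sub}$ to obtain uniform constants, and land back in Schr\"odinger form. The paper, however, does not carry out your Steps~1 and~3 separately; it invokes a single black-box result of Avila--Jitomirskaya (Lemma~\ref{lemma conjugaaison socrhd}) which, given any almost reducible $(\alpha,A)$, produces directly a conjugacy to a Schr\"odinger cocycle $S_E^{V'}$ with $|V'|_{h_0}<\eta$ on a strip $h_0$ \emph{independent of $\eta$}, together with a stability clause under perturbations of $A$. That stability clause is what drives the compactness argument and what makes $E_*$ locally constant.

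Two points in your write-up deserve more care. First, raw almost reducibility (Theorem~\ref{arc-conjecture}) only says that for each $\eta'$ there is \emph{some} strip on which the conjugated cocycle is $\eta'$-close to constant; it does not by itself give a single $h_E$ valid for all $\eta'$. The independence of $h_1$ from $\eta$ is precisely one of the nontrivial outputs of Lemma~\ref{lemma conjugaaison socrhd}, and your Step~2 tacitly assumes it. Second, your Step~3 is not a ``standard local inverse function theorem'' argument: the linearization of $(B,V_*)\mapsto B(\cdot+\alpha)S_{E_*}^{V_*}B(\cdot)^{-1}$ at $(\mathrm{Id},0)$ leads to a twisted cohomological equation whose solvability with tame analytic estimates is exactly the substance of the Avila--Jitomirskaya lemma, not a consequence of abstract submersion. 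So while your route is morally the same as the paper's, both gaps are filled in the literature precisely by the lemma the paper cites; you should invoke it rather than re-derive it.
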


\begin{remark}
The crucial fact in this proposition is that we can choose $h_1$ to be independent of $E$ and $\eta$, and choose $\Gamma$ to be independent of $E$.
\end{remark}

\begin{proof}
%[Lemma 2.2 of Avila-Jitomirskaya \cite{AvilaJitoHolder}]
The key ingredient  for us  is the following:
 \begin{lemma}[Avila-Jitomirskaya \cite{AvilaJitoHolder}]\label{lemma conjugaaison socrhd}
Let $\alpha\in\R\backslash\Q$ and $A \in C_{h_*}^\omega(\T,{\rm SL}(2,\R))$ for some $h_*>0$, such that $(\alpha,A)$ is almost reducible. There exists $h_0\in(0,h_*)$ such that for any $\eta>0$,
one can find $V \in C_{h_0}^\omega(\T,\R)$ with $|V|_{h_0} < \eta$, $E \in \R$,
and $Z \in C_{h_0}^\omega(\T,{\rm PSL}(2,\R))$ such that $$Z(\cdot+\alpha)^{-1}A(\cdot)Z(\cdot)=S_E^V(\cdot).$$
%\begin{equation}\label{conjugaison cocycle schrodd}
%Z(\cdot+\alpha)^{-1}A(\cdot)Z(\cdot)=S_E^V(\cdot).
%\end{equation}
Moreover, for every $0< h\leq h_0$, there is $\delta>0$ such that if $A' \in C_{h}^\omega(\T,{\rm SL}(2,\R))$ satisfies $|A-A'|_{h} < \delta$, then there exist $ V' \in C_{h}^\omega(\T,\R)$ with $|V'|_{h} < \eta$ and $Z' \in   C_{h}^\omega(\T,{\rm PSL}(2,\R))$ such that $|Z-Z'|_{h} < \eta$ and $$ Z'(\cdot+\alpha)^{-1}A'(\cdot) Z'(\cdot)=S_E^{V'}(\cdot).$$
\end{lemma}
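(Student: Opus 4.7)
The plan is to combine the almost reducibility of $(\alpha, A)$ with a normalization carried out via an implicit function theorem, converting an approximately constant cocycle into an exactly Schr\"odinger one.

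First, almost reducibility yields, for some $h' \in (0, h_*)$ and any $\epsilon > 0$, a conjugacy $W_\epsilon \in C^\omega_{h'}(\T, \mathrm{PSL}(2,\R))$ and a constant $B_\epsilon \in \mathrm{SL}(2,\R)$ with $|W_\epsilon(\cdot+\alpha)^{-1} A(\cdot) W_\epsilon(\cdot) - B_\epsilon|_{h'} < \epsilon$. Passing to a subsequence I may assume $B_{\epsilon_n} \to B_\infty \in \mathrm{SL}(2,\R)$; modulo a short separate argument for the non-generic scalar case $B_\infty = \pm I$, a constant matrix $C \in \mathrm{SL}(2,\R)$ conjugates $B_\infty$ to its companion form $S_{E_0}^0 = \begin{pmatrix} E_0 & -1 \\ 1 & 0 \end{pmatrix}$ with $E_0 := \mathrm{tr}(B_\infty)$. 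Setting $\tilde W_\epsilon := W_\epsilon C$, the problem reduces to studying cocycles of the form $\tilde A_\epsilon(\theta) = S_{E_0}^0 + F_\epsilon(\theta)$ with $|F_\epsilon|_{h'}$ arbitrarily small.

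Second, I would apply an implicit function theorem to the smooth map $\Psi(Z, V, E) := Z(\cdot+\alpha) \, S_E^V(\cdot) \, Z(\cdot)^{-1}$ on a neighbourhood of $(I, 0, E_0)$ in $C^\omega_{h_0}(\T, \mathrm{PSL}(2,\R)) \times C^\omega_{h_0}(\T, \R) \times \R$, with $h_0 < h'$ chosen small enough to absorb the loss of regularity. The linearization is
\begin{equation*}
D\Psi(I, 0, E_0)(Y, v, e) = Y(\cdot+\alpha) \, S_{E_0}^0 - S_{E_0}^0 \, Y(\cdot) + (e - v(\cdot)) E_{11},
\end{equation*}
where $E_{11}$ is the matrix unit in position $(1,1)$. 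Taking traces gives a relation that fixes $v(\cdot)$ in terms of $e$ and the trace component of the target, while the traceless part is a twisted cohomological equation $Y(\cdot+\alpha) S_{E_0}^0 - S_{E_0}^0 Y(\cdot) = G(\cdot)$ which I would solve mode by mode in Fourier, using the remaining scalar freedom in $v$ at each frequency to absorb the one-dimensional cokernel of the linearized operator. The IFT then produces $(Z, V, E)$ with $|V|_{h_0} < \eta$ and $Z(\cdot+\alpha)^{-1} \tilde A_\epsilon(\cdot) Z(\cdot) = S_E^V(\cdot)$; composing with $\tilde W_\epsilon$ yields the required global conjugacy.

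The perturbation statement follows because the IFT fixed point depends continuously on the cocycle data: for $A'$ close to $A$ in $|\cdot|_h$, re-running the procedure with $E$ held fixed (which removes one degree of freedom but is still enough, as the traceless cohomological operator remains invertible) produces $V', Z'$ close to $V, Z$. The main obstacle is the small divisor problem in inverting the cohomological operator $Y \mapsto Y(\cdot+\alpha) S_{E_0}^0 - S_{E_0}^0 Y(\cdot)$ in the absence of any Diophantine assumption on $\alpha$: the Fourier multipliers $e^{2\pi i n \alpha} - e^{\pm 4\pi i \xi}$ (with $E_0 = 2\cos 2\pi \xi$ in the elliptic case) can accumulate arbitrarily close to zero. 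The resolution is to combine two ingredients: shrinking $h_0$ so that the exponential decay of the Fourier tails of $F_\epsilon$ dominates any polynomial loss from small divisors, and exploiting the scalar freedom in $v$ at each Fourier mode to absorb the one-dimensional cokernel at potentially resonant frequencies. This normal-form reduction is essentially the content of local arguments already implicit in Avila's proof of the Almost Reducibility Conjecture.
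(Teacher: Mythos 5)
This lemma is cited as a black-box result from Avila--Jitomirskaya \cite{AvilaJitoHolder}; the present paper does not reprove it, so there is no internal proof to compare against. What you have proposed is a plausible-looking strategy, but it contains a genuine gap at the crucial step, and your own closing paragraph is the place where it breaks down. The claim that ``shrinking $h_0$ so that the exponential decay of the Fourier tails of $F_\epsilon$ dominates any polynomial loss from small divisors'' is incorrect for general irrational $\alpha$: the lemma is stated for arbitrary $\alpha\in\R\setminus\Q$, in particular for Liouville $\alpha$, and then the Fourier multipliers of $Y\mapsto BY(\cdot)-Y(\cdot+\alpha)B$ (of the form $e^{2\pi\mathrm{i}n\alpha}-1$ and $e^{2\pi\mathrm{i}n\alpha}-e^{\pm4\pi\mathrm{i}\xi}$) can be \emph{super-exponentially} small along the sequence of denominators of the continued fraction expansion, not polynomially small, so no choice of $h_0<h'$ allows the exponential Fourier decay of $F_\epsilon$ to compensate.

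The only way an IFT/linearization argument of the type you propose could work is if the scalar freedom in $v$ at each Fourier mode \emph{completely eliminates} the small-divisor direction, leaving an operator with inverse bounded uniformly in $n$ (and uniformly in $\alpha$) -- i.e., strip-shrinking plays no role in controlling divisors at all. You gesture at this (``absorb the one-dimensional cokernel''), but it is exactly the part that must be proved and is not: one needs to (i) show the cokernel of $\mathcal{L}_n = B(\cdot) - e^{2\pi\mathrm{i}n\alpha}(\cdot)B$ acting on $\mathrm{sl}(2,\C)$ is at most one-dimensional at every $n$, which can fail if $B_\infty$ is close to $\pm\mathrm{Id}$ or if $2\xi$ has an accidental arithmetic relation to $\alpha$, so that two of the multipliers vanish simultaneously; (ii) verify that the direction of $E_{11}$ (written in the basis diagonalizing $B_\infty$) projects non-degenerately onto the cokernel at every resonant mode; and (iii) show that after killing that component, the residual inverse is bounded \emph{uniformly in $n$}, not merely up to a loss you then try to absorb by shrinking the strip. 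Without these three verifications the linearized operator is not invertible on any neighbourhood independent of $\alpha$, and the implicit function theorem does not apply. In the cited reference the normalization to Schr\"odinger form is achieved by a construction that sidesteps mode-by-mode division entirely, which is why it holds uniformly for all irrational frequencies; your Fourier-analytic route would either have to replicate that uniformity via the careful cokernel analysis above or would secretly require a Diophantine hypothesis that the lemma does not impose.
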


For any $E_0\in\Sigma^{\rm sub}_{V,\alpha}$, the cocycle $(\alpha, S_{E_0}^V)$ is subcritical, hence almost reducible by Theorem  \ref{arc-conjecture}. Fix $\eta>0$.
By Lemma \ref{lemma conjugaaison socrhd}, there is $h_0= h_0(E_0,V,\alpha)>0$, such that
one can find $V_*(E_0) \in C_{h_0}^\omega(\T,\R)$ with $|V_*(E_0)|_{h_0}\leq \eta$,
$E_*= E_*(E_0)\in\R$ and $\Psi_{E_0}\in C_{h_0}^\omega(\T,\mathrm{PSL}(2,\R))$ with
$|\Psi_{E_0}|_{h_0}\leq\tilde\Lambda$ for $\tilde\Lambda=\tilde\Lambda(V,\alpha, \eta, h_0,E_0)>0$
such that
$$\Psi_{E_0}(\cdot+\alpha)^{-1} S_{E_0}^V(\cdot) \Psi_{E_0}(\cdot)= S_{ E_*}^{V_*(E_0)}(\cdot).$$
Moreover, for every $0<h \leq h_0$, there exists $\delta > 0$ such that for  any $E\in (E_0-\delta, E_0 +\delta)$, one can find $\Psi_{E}\in C_{h}^\omega(\T,{\rm PSL}(2,\R))$ with $|\Psi_{E}-\Psi_{E_0}|_{h}\leq \eta$,   and $V'_*(E)\in C_{h}^\omega(\T,\R)$ with $|V'_*(E)|_{h}\leq \eta$ satisfying
$$
\Psi_{E}(\cdot + \alpha)^{-1} S_{E}^V(\cdot) \Psi_{E} (\cdot) = S_{ E_*}^{ V'_*(E)}(\cdot).
$$

By Theorem \ref{global-red},  $\Sigma^{\rm sub}_{V,\alpha}$ is compact, by compactness argument, we obtain $h_1=h_1(V,\alpha)$ and $\Lambda=\Lambda(V,\alpha,\eta,h_1)$, both independent of $E$, such that for any $E \in \Sigma^{\rm sub}_{V,\alpha}$, there exist $\Phi_E\in C_{h_1}^\omega(\T,{\rm PSL}(2,\R))$ with $|\Phi_E|_{h_1} < \Lambda$, and $V_*=V_*(E) \in C^\omega_{h_1}(\T,\R)$ with $|V_*(E)|_{h_1}<\eta$, such that  $(\ref{conjPhi})$ holds.
\end{proof}

%
%
% By footnote 5 of \cite{A3} and $(\ref{equideg})$, we have
%  $$|{\rm deg}(\Phi_E')|\leq  C(h_1) \ln \tilde \Gamma.$$
%Let $k_*=k_*(E)={\rm deg}(\tilde\Phi_E)$ and let $\Phi_E=\tilde\Phi_E R_{k_*x/2}$, then one has
%\begin{equation}
%\Phi_E(\cdot+\alpha)^{-1} S_E^V(\cdot) \Phi_E(\cdot)=\tilde R_{\phi(E)- k_{*}\alpha/2}+F_E.
%\end{equation}
%with  ${\rm deg}(\Phi_E)=0$. Moreover, one can  find uniform $0<h_0(\eta) < h_1(\eta)$ such that
%$$  |F_E|_{h_0(\eta)} \leq e^{C(h_1) \ln \tilde \Gamma h_0(\eta)} \eta^2 \leq \eta.$$ Also,
% $|\Phi_E|_{h_0(\eta)}\leq \tilde \Gamma \cdot e^{|k_*|h_0}$, which gives the uniform upper bound of $|\Phi_E|_{h_0}$.

\begin{corollary}\label{prop_duality_global}
Let  $\alpha\in\R\backslash\Q$ satisfy $\beta(\alpha)=0$. There exist  $h_1=h_1(V,\alpha)>0$, $0<c=c(V,\alpha)<h_1$, $\tilde{k}=\tilde{k}(V,\alpha)>0$, such that
for any $E \in \Sigma^{\rm sub}_{V,\alpha}$ satisfying $2\rho(\alpha, S_E^{V})- k\alpha\in \Z$ with $|k| \geq \tilde{k}$,  there exist $Y \in C^\omega_{\frac{c}{2\pi}}(\T,{\rm PSL}(2,\R))$ and $\varphi\in \R$  s.t.
$$
Y(\cdot+\a)^{-1} S_{E}^{V}(\cdot) Y(\cdot) =
\begin{pmatrix}
1 & \varphi\\
0 & 1
\end{pmatrix}.
$$
 Moreover, there is $n=n(k)\in \Z$ satisfying $ |n|\geq \frac{|k|}{5}$ such that
 $|\varphi| \leq C e^{-\frac{c}{10}|n|}$ and
 $$|Y|_{r''} \leq C_7(V,\alpha,r'') e^{22\pi r''|n|},\quad \forall \  r''\in(0,\frac{c}{2\pi}).$$
\end{corollary}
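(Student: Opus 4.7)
\smallskip

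\noindent\textbf{Proof plan.} The approach is to reduce to the local (small-potential) Theorem \ref{prop_duality_para} via the global-to-local conjugacy provided by Proposition \ref{prop_ar_1}. First I would fix the parameter $h_1=h_1(V,\alpha)>0$ of Proposition \ref{prop_ar_1}, and choose $\eta:=c_0 h_1^{3}$, where $c_0$ is the absolute constant from Theorem \ref{almostredth}. This produces, for every $E\in\Sigma^{\mathrm{sub}}_{V,\alpha}$, a conjugacy $\Phi_E\in C^\omega_{h_1}(\T,\PSL(2,\R))$ with $|\Phi_E|_{h_1}\leq \Lambda=\Lambda(V,\alpha)$, a locally constant map $E\mapsto E_*(E)$, and a potential $V_*=V_*(E)\in C^\omega_{h_1}(\T,\R)$ with $|V_*|_{h_1}\leq c_0 h_1^3$, such that $\Phi_E(\cdot+\alpha)^{-1}S_E^V(\cdot)\Phi_E(\cdot)=S_{E_*}^{V_*}(\cdot)$. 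In particular, the smallness hypothesis of Theorem \ref{prop_duality_para} is met uniformly for each $V_*$, which yields a value $\epsilon_1=\epsilon_1(h_1)\in(0,h_1)$, and allows me to choose a fixed radius $r\in(0,\epsilon_1/(2\pi))$ close to $\epsilon_1/(2\pi)$. I then set $c:=2\pi r$, so that $c/(2\pi)\leq h_1$ and the strip $\{|\Im z|<c/(2\pi)\}$ sits inside the domain of $\Phi_E$.

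Next I would track the rotation-number label. By the transformation law \eqref{rot-conj}, the conjugacy $\Phi_E$ of degree $\deg\Phi_E$ gives
\[
2\rho(\alpha,S_E^V)-k\alpha=2\rho(\alpha,S_{E_*}^{V_*})-k'\alpha,\qquad k':=k-\deg\Phi_E,
\]
so the hypothesis $2\rho(\alpha,S_E^V)-k\alpha\in\Z$ translates to $2\rho(\alpha,S_{E_*}^{V_*})-k'\alpha\in\Z$. The crucial uniformity input is that $\deg\Phi_E$ is bounded uniformly in $E$: since $\Sigma^{\mathrm{sub}}_{V,\alpha}$ is compact (Theorem \ref{global-red}), and $E\mapsto \Phi_E$ can be made locally continuous (indeed, in the proof of Proposition \ref{prop_ar_1}, $\Phi_E$ varies continuously on a neighborhood of each $E_0$, and $E_*$ is locally constant), a finite sub-cover yields $K=K(V,\alpha)>0$ with $|\deg\Phi_E|\leq K$ for every $E\in\Sigma^{\mathrm{sub}}_{V,\alpha}$. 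I then set $\tilde k:=\max(5k_1,\,5K)$ where $k_1=k_1(\alpha,h_1,r)$ is the threshold from Theorem \ref{prop_duality_para}; for $|k|\geq \tilde k$ this ensures $|k'|\geq k_1$ and $|k'|\geq \tfrac{4|k|}{5}$.

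Now I apply Theorem \ref{prop_duality_para} to $(\alpha, S_{E_*}^{V_*})$ at the label $k'$: I obtain $U\in C^\omega_{r}(\T,\PSL(2,\R))$, $\varphi\in\R$ and $n\in\Z$ with $|n|\geq |k'|/4\geq |k|/5$, satisfying
\[
U(\cdot+\alpha)^{-1}S_{E_*}^{V_*}(\cdot)U(\cdot)=\begin{pmatrix}1&\varphi\\ 0&1\end{pmatrix},\qquad |\varphi|\leq C_3 e^{-\pi r|n|/5},\qquad |U|_{r''}\leq C_4 e^{22\pi r''|n|},
\]
for every $r''\in(0,r]$. Setting $Y:=\Phi_E\cdot U$, a direct computation combining the two conjugacies gives
\[
Y(\cdot+\alpha)^{-1}S_E^V(\cdot)Y(\cdot)=\begin{pmatrix}1&\varphi\\ 0&1\end{pmatrix},
\]
and the triangle inequality yields $|Y|_{r''}\leq \Lambda\cdot C_4 e^{22\pi r''|n|}=:C_7(V,\alpha,r'')e^{22\pi r''|n|}$. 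With our choice $c=2\pi r$, the bound $|\varphi|\leq C_3 e^{-\pi r|n|/5}=C_3 e^{-c|n|/10}$ gives the desired estimate.

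The main technical obstacle is the uniform control of $\deg\Phi_E$ and of the constants $h_1,\Lambda,\epsilon_1$ across the whole subcritical set, which rests on the compactness of $\Sigma^{\mathrm{sub}}_{V,\alpha}$ and the local constancy/continuity built into Proposition \ref{prop_ar_1}; once these are secured, the rest is a clean composition of conjugacies together with the rotation-number accounting \eqref{rot-conj}. Minor care is needed in choosing $r$ strictly less than $\epsilon_1/(2\pi)$ so that the $22\pi r''|n|$ growth rate of $|U|_{r''}$ survives the final strip $c/(2\pi)=r$.
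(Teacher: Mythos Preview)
Your proof is correct and follows essentially the same route as the paper: pass from the global subcritical cocycle to a small-potential Schr\"odinger cocycle via Proposition~\ref{prop_ar_1} with $\eta=c_0h_1^3$, then apply Theorem~\ref{prop_duality_para} and compose the two conjugacies. The only noteworthy difference is how you control $\deg\Phi_E$ uniformly in $E$: you argue by local continuity of $E\mapsto\Phi_E$ (from the proof of Proposition~\ref{prop_ar_1}, where $|\Psi_E-\Psi_{E_0}|_h\leq\eta$ on a neighborhood of $E_0$, hence the degree is locally constant) combined with compactness of $\Sigma^{\mathrm{sub}}_{V,\alpha}$, whereas the paper invokes a general norm-to-degree bound ($|\deg\Phi_E|\leq C|\ln\Lambda|$) to obtain the same uniform control directly from $|\Phi_E|_{h_1}\leq\Lambda$. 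Both arguments yield the same conclusion; yours is slightly more self-contained, while the paper's avoids revisiting the construction of $\Phi_E$.
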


\begin{proof}
Let $(\eta_n)_n$  be a sequence of positive numbers going to zero. For any  $E \in \Sigma^{\rm sub}_{V,\alpha}$,
%the cocycle  $(\alpha, S_E^{V})$ is subcritical thus almost reducible.   Therefore
we can apply Proposition \ref{prop_ar_1} and get $h_1=h_1(V,\alpha)>0$,  $\Lambda_n=\Lambda_n(V,\alpha,\eta_n,h_1)>0$, $\Phi_E^{n}\in C^\omega(\T,{\rm PSL}(2,\R))$, $E^n=E^n(E)\in\R$ and $V^n=V^n(E) \in C^\omega_{h_1}(\T,\R)$ such that
$$
\Phi_E^{n}(\cdot+\alpha)^{-1} S_E^V(\cdot) \Phi_E^{n}(\cdot)=S_{E^{n}}^{V^n}(\cdot),
$$
with $|\Phi_E^{n}|_{h_1}<\Lambda_n$ and  $|V^n|_{h_1}<\eta_n$.
Note that since $h_1(V,\alpha)$ is fixed, one can always find $N_*=N_*(h_1)$ large enough such that
$\eta_{N_*} \leq  c_0 h_1^3$,
where $c_0>0$ is the absolute constant given by Theorem \ref{almostredth}.  It follows that
$$
|\Phi_E^{N_*}|_{h_1} \leq \Lambda = \Lambda(V,\alpha, c_0h^3_1, h_1).
$$
By footnote 5 of \cite{A3},  $|{\rm deg}\Phi_E^{N_*}|\leq  C|\ln \Lambda|$ with $C=C(V,\alpha)$ independent of $E$.

Assume that $2\rho{(\alpha, S_E^{V})}-k\alpha\in\Z$. We abbreviate $E_*=E^{N_*}(E)$,  $V_*=V^{N_*}(E)$,  and $k_*={\rm deg}\Phi_E^{N_*}$. Then
 $2\rho{(\alpha, S_{ E_*}^{ V_{*}})}-( k -k_*)\alpha\in\Z$.
Clearly, $ E_* \in \Sigma_{ V_{*}, \alpha}$ since uniform hyperbolicity is invariant under conjugacy, and then $(\alpha,\, S^{V_{*}}_{ E_*})$ is not uniformly hyperbolic.
Applying Theorem \ref{prop_duality_para},  we thus get $c=c(h_1)>0$,  $k_1=k_1(\alpha, h_1, \frac{c}{2\pi})$, $\varphi\in \R$ and $U \in C^\omega_{\frac{c}{2\pi}}(2\T, {\rm SL}(2,\R))$ such that
$$U(\cdot+\alpha)^{-1}S^{V_{*}}_{E_*}(\cdot)U(\cdot)=\begin{pmatrix}
1 & \varphi \\
0 & 1
\end{pmatrix}.$$
 Moreover, if $|k-k_*|\geq k_1$,  then we have
 $$|\varphi| \leq C_3(\alpha,h_1, \frac{c}{2\pi}) e^{-\frac{c}{10}|n|}< C(V,\alpha) e^{-\frac{c}{10}|n|}$$
and for any $r''\in(0, \frac{c}{2\pi})$,
$$|U|_{r''} \leq C_4 (\alpha, h_1, r'') e^{22\pi r'' |n|}$$
for some $n$ satisfying  $|{\rm deg}U|=|k-k_*| \leq 4|n|$.
Set $Y := \Phi^{N_*}_{E_*} U$. Thus, if $$|k|\geq k_1+C |\ln\Lambda|:=\tilde k(V,\alpha),$$ and hence $|k-k_*| \geq  k_1$, then
$$|{\rm deg}Y|=|k| \leq |k_*|+|k-k_*|\leq C |\ln\Lambda| + 4 |n|\leq 5 |n|.$$
Furthermore, $$|Y|_{r''}\leq \Lambda(V,\alpha, c_0h^3_1, h_1)
 C_4(\alpha,h_1,r'')  e^{22\pi r'' |n|} \leq C_7(V,\alpha,r'') e^{22\pi r'' |n|}.$$
 Finally, let us emphasize that the constants $c=c(h_1)$, $C_3=C_3(\alpha, h_1,\epsilon)$, which come from almost localization estimates, only depend on the sizes of the strip, but not on the potential $V_*$, thus not on our choice of $E$. It is the main reason why the estimates  we get are uniform with respect to $k$.
\end{proof}

\subsection{Almost Mathieu operator}

Now we focus on the subcritical almost Mathieu operator. Compared with Corollary \ref{prop_duality_global}, we obtain even stronger results for $\alpha \in {\rm DC}$:

\begin{prop}\label{redu amo case}
Let $\alpha\in {\rm DC}$.  Given $0<\lambda<1$, we consider
the operator $H_{\lambda, \alpha,\theta}$.
For any $E\in \Sigma_{\lambda,\alpha}$ satisfying $2\rho{(\alpha, S_E^\lambda)} - k \alpha\in \Z$ with  $k\in \Z \backslash\{0\}$, for any $0< r<\frac{1}{2\pi}|\ln \lambda|$,
there exist $U \in C_{r}^\omega(\T,{\rm PSL}(2,\R))$ and $\varphi \in \R\backslash\{0\}$ such that
\begin{equation}\label{reduci_para}
U(\cdot+\a)^{-1} S_{E}^{\lambda}(\cdot) U(\cdot) =
\begin{pmatrix}
1 & \varphi \\
0 & 1
\end{pmatrix}.
\end{equation}
Moreover,  there exist $C_8, C_9>0$, depending on $\lambda,\alpha,r$, such that $|\varphi| \leq C_{8} e^{- 2\pi r |k|}$ and for any $r''\in (0,r]$, $|U|_{r''} \leq C_{9}  e^{\frac{3 \pi r''}2  |k|}$.
\end{prop}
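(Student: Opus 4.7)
\medskip

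\textbf{Proof plan for Proposition \ref{redu amo case}.}
The overall strategy is a two-step reduction, exploiting first the global subcriticality of the almost Mathieu cocycle to bring it near a constant in a large strip, and then the Diophantine condition on $\alpha$ to apply the quantitative KAM reducibility of Theorem \ref{thm_gap_edge_SL} with its sharp exponential estimates. Throughout, fix $0<r<\frac{|\ln\lambda|}{2\pi}$ and pick an intermediate $r'\in(r,\frac{|\ln\lambda|}{2\pi})$; the smallness parameter $\eta>0$ to be chosen later will depend only on $\lambda,\alpha,r,r'$.

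\smallskip

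\emph{Step 1 (almost reducibility in a near-maximal strip).} For $0<\lambda<1$ and $E\in\Sigma_{\lambda,\alpha}$, the cocycle $(\alpha,S_E^\lambda(\cdot+{\rm i}\epsilon))$ has vanishing Lyapunov exponent for $|\epsilon|<\frac{|\ln\lambda|}{2\pi}$, hence $(\alpha,S_E^\lambda)$ is subcritical; the Almost Reducibility Conjecture (Theorem \ref{arc-conjecture}) applies since $\alpha\in{\rm DC}$ implies $\beta(\alpha)=0$. Combining ARC with the \textquotedblleft conjugation to Schr\"odinger form\textquotedblright\ statement of Lemma \ref{lemma conjugaaison socrhd} (exactly as in the proof of Proposition \ref{prop_ar_1}, but using the fact that the subcritical strip for AMO has the explicit width $\frac{|\ln\lambda|}{2\pi}$) yields, for any prescribed $\eta>0$, a conjugacy $\Psi\in C^\omega_{r'}(\T,{\rm PSL}(2,\R))$, an energy $E_*\in\R$ and a potential $V_*\in C^\omega_{r'}(\T,\R)$ with $|V_*|_{r'}<\eta$ such that
\[
\Psi(\cdot+\alpha)^{-1}S_E^{\lambda}(\cdot)\Psi(\cdot)=S_{E_*}^{V_*}(\cdot),\qquad |\Psi|_{r'}\leq\Lambda,\qquad |\deg\Psi|\leq M,
\]
where $\Lambda=\Lambda(\lambda,\alpha,\eta,r')$ and $M=M(\lambda,\alpha,\eta)$ are constants independent of $E$ (the degree bound coming from footnote 5 of \cite{A3}).

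\smallskip

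\emph{Step 2 (tracking the rotation label and applying KAM).} Let $k_*:=\deg\Psi$, so $|k_*|\leq M$. By the conjugation formula \eqref{rot-conj}, $2\rho(\alpha,S_{E_*}^{V_*})-(k-k_*)\alpha\in\Z$. Moreover $(\alpha,S_{E_*}^{V_*})$ is not uniformly hyperbolic, since this property is invariant under conjugacy. Decompose $S_{E_*}^{V_*}=A_0+F_0$ with $A_0=S_{E_*}^{0}\in{\rm SL}(2,\R)$ (bounded, since $|E_*|$ lies in a compact range) and $|F_0|_{r'}\leq|V_*|_{r'}<\eta$. For $|k|$ large enough that $k-k_*\neq 0$, and for $\eta$ small enough that the smallness hypothesis of Theorem \ref{thm_gap_edge_SL} holds (with parameters $|A_0|,\gamma,\tau,r',r,d=1$), we obtain $W\in C^\omega_{r}(\T,{\rm PSL}(2,\R))$ such that
\[
W(\cdot+\alpha)^{-1}(A_0+F_0(\cdot))W(\cdot)=\begin{pmatrix}1 & \kappa\\[1mm] 0 & 1\end{pmatrix},
\]
with $|\kappa|\leq\eta^{3/4}e^{-2\pi r|k-k_*|}$ and $|W|_{r''}\leq D_1 e^{\frac{3\pi r''}{2}|k-k_*|}$ for every $r''\in(0,r]$.

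\smallskip

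\emph{Step 3 (combining the two conjugacies).} Set $U:=\Psi W$ and $\varphi:=\kappa$, so that \eqref{reduci_para} holds with $U\in C^\omega_{r}(\T,{\rm PSL}(2,\R))$. Since $|k_*|\leq M$ is uniformly bounded, the factors $e^{\pm 2\pi r M}$ and $e^{\pm 3\pi r'' M/2}$ can be absorbed into constants depending only on $\lambda,\alpha,r$ (resp.\ $\lambda,\alpha,r''$), giving
\[
|\varphi|\leq \eta^{3/4}e^{2\pi r M}e^{-2\pi r|k|}=:C_8 e^{-2\pi r|k|},\quad |U|_{r''}\leq \Lambda D_1 e^{\frac{3\pi r''M}{2}}e^{\frac{3\pi r''}{2}|k|}=:C_9 e^{\frac{3\pi r''}{2}|k|};
\]
finitely many small values of $|k|$ are handled by enlarging $C_8,C_9$. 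Finally $\varphi\neq 0$: otherwise $(\alpha,S_E^{\lambda})$ would be conjugate to the identity cocycle, whence (by Aubry duality) the dual operator $\widehat H_{\lambda,\alpha,\theta}=\lambda H_{\lambda^{-1},\alpha,\theta}$ would admit two linearly independent $\ell^2$ eigenvectors at energy $E$, contradicting limit-point character (exactly as in the proof of Theorem \ref{thm_almost_almost-2}(2)(b)).

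\smallskip

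\emph{Main obstacle.} The delicate point is Step 1: pushing the analyticity radius $r'$ of the conjugation into Schr\"odinger form all the way up to $\frac{|\ln\lambda|}{2\pi}$, rather than being stuck at the abstract radius $h_1(V,\alpha)$ produced by the compactness argument of Proposition \ref{prop_ar_1}. This requires exploiting the explicit knowledge of the subcritical strip for the AMO together with ARC; the point is that once this is granted, the DC condition on $\alpha$ immediately upgrades the weak rates of Theorem \ref{thm_almost_almost-2}(2)(b) (in terms of $|n|\geq|k|/4$, with exponents $\tfrac{2\pi r}{3}$ and $22\pi r''$) to the sharp rates stated here, via the KAM step of Theorem \ref{thm_gap_edge_SL}.
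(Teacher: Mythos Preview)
Your overall architecture matches the paper's: reduce to near-constant in a strip close to $\frac{|\ln\lambda|}{2\pi}$, then invoke Theorem~\ref{thm_gap_edge_SL}. The gap is precisely where you flag it, in Step~1, and it is not a technicality you can hand-wave. Neither ARC (Theorem~\ref{arc-conjecture}) nor Lemma~\ref{lemma conjugaaison socrhd} gives any control over the strip width: ARC only asserts \emph{some} analytic almost reducibility, and Lemma~\ref{lemma conjugaaison socrhd} produces \emph{some} $h_0\in(0,h_*)$, not an $h_0$ arbitrarily close to $h_*$. The paper remarks explicitly (end of Section~1.4) that the strip-shrinking in Avila's proof of ARC is exactly why the ARC route cannot yield the sharp exponent for AMO. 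Knowing the width of the subcritical strip tells you where the Lyapunov exponent vanishes, not where you can conjugate to near-constant; so your proposed Step~1 does not follow from the tools you cite.

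The paper bypasses ARC here and uses Theorem~\ref{thm_almost_almost-2}(1) instead, which for AMO delivers almost reducibility to a rotation $R_\theta$ directly in any strip of width $\tilde r<\frac{|\ln\lambda|}{2\pi}$. That theorem is proved through the sharp almost-localization estimates of Theorem~\ref{thm_almost_almost-1} (decay rate close to $\ln\lambda$ rather than the abstract $\epsilon_1$), not through ARC. With $\tilde r=\frac12(r+\frac{|\ln\lambda|}{2\pi})$ fixed, the paper conjugates $S_E^\lambda$ to $R_{\phi_n}+F_n$ with $|F_n|_{\tilde r}$ as small as desired; a local-perturbation-plus-compactness argument on $\Sigma_{\lambda,\alpha}$ makes the conjugacy norm bound $\Gamma$ uniform in $E$. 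Theorem~\ref{thm_gap_edge_SL} is then applied with $A_0=R_{\phi_{N_*}}\in{\rm SO}(2,\R)$ (Remark~\ref{uniformcons} gives the uniform smallness threshold independent of $A_0$). Your Steps~2--3 are correct as written; only Step~1 must be replaced by the invocation of Theorem~\ref{thm_almost_almost-2}(1).
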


\begin{proof} Assume that $\alpha\in {\rm DC}(\gamma,\tau)$.
Fix $r\in (0,\frac{1}{2\pi}|\ln \lambda|)$.
Let $\tilde r:=\frac12(-\frac{\ln \lambda}{2\pi}+r)$.
By Theorem \ref{thm_almost_almost-2}, for any sequence of positive numbers $(\eta_n)_n$ going to zero, there are $\Phi_E^{n}\in C_{\tilde r}^\omega(\T,{\rm PSL}(2,\R))$, $F_n\in C_{\tilde r}^\omega(\T,{\rm gl}(2,\R))$  and $\phi_n=\phi_n(E)\in\T$ such that
$$\Phi_E^{n}(\cdot+\alpha)^{-1} S_E^{\lambda }(\cdot) \Phi_E^{n}(\cdot)=R_{\phi_n}+F_n(\cdot),$$
with $|F_n|_{\tilde r}< \eta_n/2$ and $|\Phi_E^{n}|_{\tilde r}<\Gamma_n$ for some $\Gamma_n=\Gamma_n(\lambda,\alpha, \eta_n, \tilde r,E)>0$.
As a consequence, for any $E'\in \R$, one has
$$
\left|\Phi_E^{n}(\cdot+\alpha)^{-1} S_{E'}^{\lambda}(\cdot) \Phi_E^{n}(\cdot) -  R_{\phi_n}\right|_{\tilde r} < \frac{\eta_n}{2}+ |E-E'| \, | \Phi_E^{n}|_{\tilde r}^2.
$$
It follows that with the same $\Phi_E^{n}$, we have $|\Phi_E^{n}(x+\alpha)^{-1} S_{E'}^{\lambda}(x) \Phi_E^{n}(x) - R_{\phi_n}|_{ \tilde r} < \eta_n$ for any energy $E'$ in a neighborhood $\mathcal{U}(E)$ of $E$.

One can always take $N_*$ large enough such that $\eta_{N_*} \leq \varepsilon_*(\gamma, \tau, \tilde r, r, 1)$,  where  $\varepsilon_*(\gamma, \tau, \tilde r, r, 1)$ is define in Theorem \ref{thm_gap_edge_SL} (see also Remark \ref{uniformcons}).  It follows that \begin{equation}\label{norm}|\Phi_E^{N_*}|_r \leq \Gamma:=\Gamma_{N_*}(\lambda,\alpha, \eta_{N_*},\tilde r,E).\end{equation}
By the compactness of $\Sigma_{\lambda,\alpha}$, $\Gamma>0$ can be chosen  independently of $E$.

Let $k_*:={\rm deg}\Phi_E^{N_*}$, the assumption $2\rho{(\alpha, S_E^{\lambda})}-k\alpha\in\Z$ implies $2\rho{(\alpha,R_{\phi_{N_*}}+F_{N_*})}- (k-k_*)\alpha\in \Z$.
Since we have chosen $\eta_{N_*} \leq \varepsilon_*(\gamma, \tau, \tilde r, r, 1)$, then by Theorem \ref{thm_gap_edge_SL}, we get $\phi\in\R$ and $W\in C_{r}^\omega(\T, {\rm PSL}(2,\R))$ such that
$$W(\cdot+\alpha)^{-1} (R_{\phi_{N_*}}+F_{N_*}(\cdot)) W(\cdot)=\begin{pmatrix}
1 & \phi \\
0 & 1
\end{pmatrix}.$$
Letting $U:=\Phi_E^{N_*} W\in C^\omega_{r}(\T, {\rm PSL}(2,\R))$, we have (\ref{reduci_para}).
Moreover,
 \begin{eqnarray*} && |\varphi|\leq \varepsilon_*^{\frac34}e^{2\pi r|k_*|}e^{-2\pi r|k|} \leq C_{8}(\lambda,\alpha,r) e^{-2\pi r|k|},\\
&&  |U|_{r''} \leq  \Gamma \cdot D_1 e^{\frac{3\pi r}{2}|k_*|}e^{\frac{3\pi r''}{2}|k|}\leq C_{9}(\lambda,\alpha,r) e^{\frac{3\pi r''}{2}|k|},\quad \forall \ r''\in(0,r].\end{eqnarray*}
The above inequalities follows since by $(\ref{norm})$ and footnote 5 of \cite{A3}, we have $|{\rm deg}\Phi_E^{N_*}|\leq  C |\ln \Gamma|$.
\end{proof}

\section{Gap estimates via Moser-P\"oschel argument}\label{Sec_bounds}

\noindent

We consider the quasi-periodic Schr\"odinger operator on $\ell^2(\Z)$:
$$
(H_{V,\alpha,\theta} u)_n= u_{n+1}+u_{n-1} + V( \theta+n\alpha) u_n,
$$
with $\alpha\in\T^d$ such that $(1,\alpha)$ is rationally independent, and $V\in C^\omega(\T^d, \R)$ non-constant. Based on Moser-P\"oschel argument \cite{Moser-Poschel},
we will estimate the size of the spectral gap $G_k(V)=(E_k^-, E_k^+)$ via quantitative reducibility of corresponding Schr\"odinger cocycle at its edge points.

Now we assume that the cocycle $(\alpha,S_{E_k^+}^{V})$ is reducible, i.e., there exist $X \in C_R^{\omega}(\T^d, {\rm PSL}(2,\R))$ for some $0<R<1$ and a constant matrix $B$, such that
$$
X(\cdot+\alpha)^{-1}S_{E_k^+}^{V}(\cdot)X(\cdot)=B.
$$
Since $ E_k^+\in \Sigma_{V,\alpha}$ is a right edge point of a gap,  $(\alpha,S_{E_k^+}^{V})$ is reduced to a  constant parabolic cocycle $B=
\begin{pmatrix}
1 & \zeta \\
0 & 1
\end{pmatrix}$ with $0\leq \zeta< \frac12$.
Recall that $\zeta=0$  if and only if the corresponding gap is  collapsed. We will show that the size of gap is determined by $X$ and $\zeta$.

For any $0<\delta<1$, a direct calculation yields
$$X(\cdot+\alpha)^{-1}S_{E_k^+ -\delta}^{V}(\cdot)X(\cdot)= B-\delta P(\cdot)$$ with
$$P(\cdot):=
\begin{pmatrix}
X_{11}(\cdot) X_{12}(\cdot) - \zeta X_{11}^2(\cdot) & -\zeta X_{11}(\cdot) X_{12}(\cdot) + X_{12}^2(\cdot) \\[1mm]
- X_{11}^2(\cdot)  & - X_{11}(\cdot) X_{12}(\cdot)
\end{pmatrix}.$$
Obviously,
\begin{equation}\label{esti_PX_on_T}
|P|_{r''}\leq (1+\zeta)|X|^2_{r''}< 2|X|^2_{r''},\quad  \forall \  r''\in (0, R].
\end{equation}
 In fact, moving the energy $E$ from the right end of the gap $E_k^+$ to $E_k^+-\delta$, we can determine the other edge point of the spectral gap according to the variation of the rotation number $\rho{(\alpha, X(\cdot+\alpha)^{-1}S^{V}_{E_k^+-\delta}(\cdot)X(\cdot))}$.
Note that the rotation number of the constant cocycle $(\alpha, B)$ vanishes since $B$ is parabolic. Then, as shown symbolically in Figure \ref{f.graph}, we have the following:
\begin{itemize}
  \item If the rotation number of $(\alpha, X(\cdot+\alpha)^{-1}S^{V}_{E_k^+-\delta_1}(\cdot)X(\cdot))$ is positive, then $E_k^+-\delta_1$ is beyond the left edge of $G_k(V)$, thus $|G_k(V)|\leq \delta_1$.
  \item If the rotation number of $(\alpha, X(\cdot+\alpha)^{-1}S^{V}_{E_k^+-\delta_2}(\cdot)X(\cdot))$ vanishes, then $E_k^+-\delta_2$ is still in $\overline{G_k(V)}$ and hence $|G_k(V)|\geq \delta_2$.
\end{itemize}
Of course, one can estimate the size of spectral gap $G_k(V)$ similarly by starting from the left edge point $E_k^-$.

\begin{figure}
\begin{center}
\begin{tikzpicture}[yscale=1.5]
\draw [->] (-0.5,0) -- (6.5,0);
\draw [->] (0,-0.8) -- (0,1.5);
\draw [thick,domain=1.22:1.3] plot (\x, {0.49+0.7*sqrt(1.3-\x)});
\draw [thick,domain=1.3:1.47] plot (\x, {0.49});
\draw [thick,domain=1.47:1.48] plot (\x, {0.42+0.7*sqrt(1.48-\x)});
\draw [thick,domain=1.48:1.52] plot (\x, {0.42});
\draw [thick,domain=1.52:1.56] plot (\x, {0.28+0.7*sqrt(1.56-\x)});
\draw [thick,domain=1.56:1.86] plot (\x, {0.28});
\draw [thick,domain=1.86:1.9] plot (\x, {0.14+0.7*sqrt(1.9-\x)});
\draw [thick,domain=1.9:1.96] plot (\x, {0.14});
\draw [thick,domain=1.96:2] plot (\x, {0.7*sqrt(2-\x)});
\draw [thick,domain=2:3.5] plot (\x, {0});
\draw [thick,domain=3.5:3.54] plot (\x, {-0.7*sqrt(\x-3.5)});
\draw [thick,domain=3.54:3.6] plot (\x, {-0.14});
\draw [thick,domain=3.6:3.64] plot (\x, {-0.14-0.7*sqrt(\x-3.6)});
\draw [thick,domain=3.64:3.94] plot (\x, {-0.28});
\draw [thick,domain=3.94:3.98] plot (\x, {-0.28-0.7*sqrt(\x-3.94)});
\draw [thick,domain=3.98:4.02] plot (\x, {-0.42});
\draw [thick,domain=4.02:4.03] plot (\x, {-0.42-0.7*sqrt(\x-4.02)});
\draw [thick,domain=4.03:4.2] plot (\x, {-0.49});
\draw [thick,domain=4.2:4.28] plot (\x, {-0.49-0.7*sqrt(\x-4.2)});
\draw [fill=red] (1.9,0) circle [radius=0.025];
\draw [fill=red] (3.5,0) circle [radius=0.025];
\draw [fill=red] (2.1,0) circle [radius=0.025];
%\draw [thin,domain=2.1:2.5] plot (\x, {3*\x-6.3});
\node [above] at (8,1.3) {$E':=E_k^+-\delta_1$};
\node [above] at (8,0.9) {$E'':=E_k^+-\delta_2$};
\node [above] at (0,1.5) {$\rho$};
\node [below] at (-0.15,0) {$0$};
\node [right] at (6.5,0) {$E$};
\node [above] at (3.5,0) {$E_k^+$};
\node [below] at (1.85,-0.01) {$E'$};
\node [above] at (2.3,0) {$E''$};
%\node [below] at (3,0) {$G(k)$};
%\node[align=left, below] at (0,-0.5)%
%{Type $\I$};
\end{tikzpicture}
\caption{Rotation number of cocycle $(\alpha, X(\cdot+\alpha)^{-1}S^{V}_{E_k^+ -\delta}(\cdot)X(\cdot))$}
\label{f.graph}
\end{center}
\end{figure}
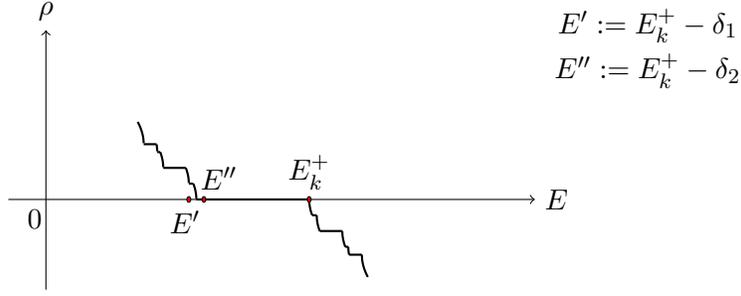

%\marginpar{Perhaps you can use notation $\tilde{\epsilon}$  such that $\inf_{j\in\Z}|\la n, \alpha \ra-j| \geq  \tilde c \, e^{-\tilde{\epsilon}|n|}$
%, then if $\beta>0$, then set $\tilde{\epsilon}=\beta+\epsilon$, and $\tilde c=\tilde c(\alpha,\epsilon,d)>0$ independent of $\beta$.}

%{\clblue Given $d\geq1$, $\beta\geq 0$, we say that $\alpha\in{\CF}_d(\beta)\subset\R^d$ if for any
%$\tilde\epsilon>0$, there exists $\tilde c=\tilde c(\alpha,\tilde\epsilon)>0$ such that
%\begin{equation}\label{alpha_beta_geq_0}
%\inf_{j\in\Z}|\la n, \alpha \ra-j| \geq  \tilde c \, e^{-(\beta+\tilde\epsilon)|n|},\quad \forall \  n\in\Z^d\backslash \{0\}.
%\end{equation}
%Obviously, ${\rm DC}_d\subset{\CF}_d(0)$, and, in view of (\ref{equibeta}), $\alpha\in {\CF}_1(\beta(\alpha))$ for any $\alpha$ with $0\leq \beta(\alpha)<\infty$.}

Although we focus on the case of a Diophantine frequency, our approach also works for a Liouvillean frequency. For any  rationally independent $\alpha\in\T^d$, we set
$$
\beta=\beta(\alpha):=\limsup_{k\rightarrow \infty} \frac{1}{|k|} \ln \frac{1}{\|\la k,\alpha\ra\|_{\T}},
$$
which is a generalization of $(\ref{equibeta})$ to the multi-frequency case. For convenience, we let
\begin{equation}\label{D1}
D_{\alpha,R}:= 2+40\sum_{n\in\Z^d} \frac{e^{-(R+3\beta)|n|/2}}{|e^{{\rm i}\la n,\alpha\ra}-1|^3},\end{equation}
which is finite if $R>3\beta$. For $\tau>d-1$, let
\begin{equation}\label{D2}
D_{\tau}:= 2^{4\tau+9} \, \Gamma(4\tau+2).\end{equation}

In the following, we first apply one standard KAM step to the cocycle $(\alpha, B-\delta P(\cdot))$, which is the starting point of our estimate on the size of the  gap.

\begin{lemma}\label{prop_ave}
Given $\alpha \in \T^d$ with $R>3\beta(\alpha)\geq 0$. We have the following:
\begin{enumerate}
\item If $0<\delta<  D_{\alpha,R}^{-1}|X|^{-2}_R$, then there exist $\tilde X\in C_{\frac{R-3\beta}2}^{\omega}(\T^d, {\rm SL}(2, \R))$ and $P_1\in C_{\frac{R-3\beta}2}^\omega(\T^d, {\rm gl}(2, \R))$ such that
\begin{equation}\label{first_ave}
\tilde X(\cdot+\alpha)^{-1}(B-\delta P(\cdot))\tilde X(\cdot)
=e^{b_0-\delta b_1}+ \delta^2 P_1(\cdot),
\end{equation}
where $b_0:=\begin{pmatrix}
0 & \zeta \\[1mm]
0 & 0
\end{pmatrix}$ and
$$b_1:=
\begin{pmatrix}
[X_{11} X_{12}] - \frac{\zeta}{2} [X_{11}^2] &  -\zeta [X_{11} X_{12}] + [X_{12}^2] \\[1mm]
-[X_{11}^2]  &   - [X_{11} X_{12}] + \frac{\zeta}{2} [X_{11}^2]
\end{pmatrix},$$
with the estimates
\begin{equation}\label{first_ave_esti}
|\tilde X- {\rm Id}|_{\frac{R-3\beta}2}\leq 2 D_{\alpha,R} \, \delta| X|^2_{R},
\quad |P_1|_{\frac{R-3\beta}2}  \leq  2 D_{\alpha,R}^2 \left| X\right|^4_{R}.
\end{equation}
\item In particular, for $\alpha\in{\rm DC}_d(\gamma,\tau)$, if $0<\delta< D^{-1}_{\tau} \gamma^{3} R^{4\tau+1}|X|^{-2}_R$, then  (\ref{first_ave}) holds with
\begin{equation}\label{first_ave_esti_DC}
|\tilde X- {\rm Id}|_{\frac{R}2}\leq 2 D_{\tau}\gamma^{-3} R^{-(4\tau+1)}\delta|X|^2_{R},
\quad |P_1|_{\frac{R}2}  \leq 2 D^2_{\tau}\gamma^{-6}  R^{-2(4\tau+1)} |X|^4_{R}.
\end{equation}
\end{enumerate}
\end{lemma}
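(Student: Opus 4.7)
The strategy is one standard averaging step. Set $\tilde X(\theta) = \mathrm{Id} + Y(\theta)$ for $Y$ of order $\delta$ to be determined. Expanding,
\[
\tilde X(\theta+\alpha)^{-1}(B - \delta P(\theta))\tilde X(\theta) = B - \delta P(\theta) + BY(\theta) - Y(\theta+\alpha)B + O(\delta^2),
\]
so one imposes the cohomological equation $BY(\theta) - Y(\theta+\alpha)B = \delta(P(\theta) - P_*)$ for a suitable constant matrix $P_*$, the residual quadratic-in-$Y$ terms being collected into $\delta^2 P_1$. In Fourier coefficients, for $n\neq 0$ the equation reads $L_n\hat Y(n) = \delta \hat P(n)$ where $L_n(M) := BM - \lambda_n MB$ and $\lambda_n := e^{2\pi i\langle n,\alpha\rangle}$. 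Writing $L_n = (1-\lambda_n)\mathrm{Id} + R_n$ with $R_n(M) := b_0 M - \lambda_n M b_0$, the relation $b_0^2 = 0$ implies $R_n^3 = 0$, so the Neumann series terminates:
\[
L_n^{-1} = \frac{\mathrm{Id}}{1-\lambda_n} - \frac{R_n}{(1-\lambda_n)^2} + \frac{R_n^2}{(1-\lambda_n)^3}.
\]
This is the origin of the cubic denominator $|1-\lambda_n|^{-3}$ in the definition of $D_{\alpha,R}$. For $n = 0$, the operator $L_0$ has kernel equal to the centralizer of $B$, namely $\mathrm{span}\{\mathrm{Id},\, b_0\}$, and two-dimensional image consisting of trace-zero upper-triangular matrices; hence $P_*$ is determined modulo this image by $[P]$. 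The remaining two-parameter freedom in the constant mode $\hat Y(0)$ is used to match $B - \delta P_*$ with the first-order Taylor expansion of $e^{b_0 - \delta b_1}$ in $\delta$; using $b_0^2 = 0$ together with the Cayley--Hamilton identity $A^2 = -\det(A)\mathrm{Id}$ for trace-free $2\times 2$ matrices, a direct calculation verifies that the matrix $b_1$ written in the statement is the unique trace-free matrix producing this match, the diagonal entries $\pm\tfrac{\zeta}{2}[X_{11}^2]$ being precisely what is required to absorb the $R_n^2$-type contribution in the exponential expansion.

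Combining these ingredients one obtains $\|\hat Y(n)\| \leq C \delta \|\hat P(n)\|/|1-\lambda_n|^3$. The analyticity bound $\|\hat P(n)\| \leq 2|X|_R^2 e^{-2\pi R|n|}$ (from \eqref{esti_PX_on_T} and analyticity of $X$ in the strip of width $R$) together with $|1-\lambda_n|^{-1} \leq C e^{(\beta + o(1))|n|}$ (the definition of $\beta(\alpha)$) allow one to sum the Fourier series in the strip of width $(R-3\beta)/2$, losing $3\beta/2$ of analyticity to absorb the three small divisors. This yields $|Y|_{(R-3\beta)/2} \leq 2 D_{\alpha,R}\,\delta|X|_R^2$ under the smallness assumption, and the stated bound on $P_1$ follows by applying the quadratic Neumann remainder of $(\mathrm{Id}+Y)^{-1}$ to the $O(Y^2)$ terms generated above. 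Part (2) is the specialization to $\alpha \in \mathrm{DC}_d(\gamma,\tau)$: from $|1-\lambda_n|^{-1} \leq |n|^\tau/(2\gamma)$ one gets cubic denominator bounded by $C|n|^{3\tau}/\gamma^3$, and $\sum_n |n|^{3\tau} e^{-\pi R|n|} \leq D_\tau/R^{4\tau+1}$ by a standard Gamma-function estimate, with ample room in the exponent $4\tau+1$.

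\textbf{Main obstacle.} The overall scheme is routine single-step KAM averaging, so the difficulty is technical rather than conceptual, and lies in tracking three refinements simultaneously: (i) the \emph{cubic} power of the small divisor, forced by the Jordan block of $B$ through the identity $R_n^3 = 0$; (ii) the \emph{quartic} dependence of $P_1$ on $|X|_R$, arising from the second-order Neumann remainder of $(\mathrm{Id}+Y)^{-1}$; and (iii) the clean repackaging of the averaged matrix as $e^{b_0 - \delta b_1}\in {\rm SL}(2,\R)$ rather than the raw $B - \delta[P]$, which exploits the precise structure of $\ker L_0$ and $\mathrm{Im}\,L_0$ to select the right constant part of $Y$. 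The principal bookkeeping challenge is carrying the explicit constants $D_{\alpha,R}$ and $D_\tau$ cleanly through these estimates so that the output of the lemma can be iterated in the Moser--P\"oschel argument of Section \ref{Sec_bounds}.
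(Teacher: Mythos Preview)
Your plan is essentially the paper's: one averaging step, a cohomological equation solved Fourier-mode by Fourier-mode, with the cubic small divisor arising from the nilpotency $b_0^2=0$. Your Neumann-series inversion $L_n^{-1} = (1-\lambda_n)^{-1}\mathrm{Id} - (1-\lambda_n)^{-2}R_n + (1-\lambda_n)^{-3}R_n^2$ is exactly equivalent to the paper's explicit triangular solve (first $\widehat Y_{21}$, then $\widehat Y_{11}$, then $\widehat Y_{12}$), just packaged more abstractly.

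Two points where you diverge from the paper and should adjust. First, you take $\tilde X = \mathrm{Id} + Y$, but the statement requires $\tilde X \in \mathrm{SL}(2,\R)$; the paper instead sets $G := -\delta B^{-1}P$ (which is trace-free since $\mathrm{tr}(B^{-1}P)=0$), solves the cohomological equation with $Y \in \mathrm{sl}(2,\R)$-valued, and takes $\tilde X = e^Y$, which lands in $\mathrm{SL}(2,\R)$ automatically. Your $\mathrm{Id}+Y$ does not, and fixing this by a determinant normalization would muddy the constants. Second, the paper does \emph{not} use freedom in $\widehat Y(0)$ to engineer the form $e^{b_0-\delta b_1}$; it simply sets $\widehat Y(0)=0$, obtains $\tilde X(\cdot+\alpha)^{-1}(B-\delta P)\tilde X = Be^{[G]} + \tilde P$ with $\tilde P = O(\delta^2)$, and then observes by direct computation that $Be^{[G]} = B - \delta[P] + O(\delta^2) = \mathrm{Id} + (b_0-\delta b_1) - \tfrac{\delta}{2}(b_0 b_1 + b_1 b_0) + O(\delta^2)$, which matches $e^{b_0-\delta b_1}$ to second order once one uses $b_0^2=0$. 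This is more direct than your kernel/image analysis of $L_0$, though your description of that structure is correct.
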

\begin{proof}Let $G:=-\delta B^{-1} P$. Noting that $B^{-1}=\begin{pmatrix}
1 & -\zeta \\
0 & 1
\end{pmatrix}$, we can see that ${\rm tr}(B^{-1} P)=0$, hence $G\in {\CB}_R$.
By a standard KAM step, we can construct $Y\in {\CB}_{\frac{R-3\beta}{2}}$ such that
\begin{equation}\label{eq_homo}
Y(\cdot+\alpha)B-BY(\cdot)=B(G(\cdot)-[G]).
\end{equation}
Indeed, by identifying the Fourier coefficients of the two sides of (\ref{eq_homo}), we have
\begin{equation}\label{homo_Y}
\left \{\begin{array}{l}
 \widehat Y_{21}(n)=\frac{\widehat G_{21}(n)}{e^{{\rm i}\la n,\alpha \ra}-1}\\[1mm]
 \widehat Y_{11}(n)=\frac{\widehat G_{11}(n)+\zeta\widehat Y_{21}(n)}{e^{{\rm i}\la n,\alpha \ra}-1}\\[1mm]
 \widehat Y_{12}(n)=\frac{\widehat G_{12}(n)-\zeta(1+e^{{\rm i}\la n,\alpha \ra} )\widehat Y_{11}(n)}{e^{{\rm i}\la n,\alpha \ra}-1}
\end{array}\right.  , \quad \forall \  n\in\Z^d\backslash\{0\}.
\end{equation}
Hence, by the decay property of the Fourier coefficient $\widehat G(n)$, we have
$$|Y|_{\frac{R-3\beta}{2}}= \sum_{n\in\Z^d}  |\widehat Y(n)| e^{\frac{R-3\beta}{2}|n|} \leq \frac12(D_{\alpha,R}-2) \, \delta |P|_{R}.$$

In the same manner as in Proposition 2 of \cite{HA}, for $\tilde X:=e^Y$, we have
 $$\tilde X(\cdot+\alpha)^{-1}(B-\delta P(\cdot))\tilde X(\cdot)=Be^{[G]}+ \tilde P(\cdot),$$
 where
 \begin{align*}
\tilde P(\cdot)&:=BY(\cdot)-Y(\cdot+\alpha)B-B [G]- \delta P(\cdot) +\sum_{m+n\geq 2}\frac{1}{m!}(-Y(\cdot+\alpha))^m B \frac{1}{n!}Y(\cdot)^n\\
&+ \delta\sum_{m+n\geq 1} \frac{1}{m!}(-Y(\cdot+\alpha))^m P(\cdot) \frac{1}{n!}Y(\cdot)^n +B \sum_{n\geq 2}\frac{1}{n!} [G]^n.
\end{align*}
Obviously, $|\tilde X-{\rm Id}|_{\frac{R-3\beta}{2}}\leq 2|Y|_{\frac{R-3\beta}{2}}\leq D_{\alpha,R} \delta |P|_{R}.$
Since $\sum_{m+n=k}\frac{k!}{m!n!}=2^k$ and $|G|_R\leq \delta|P|_R$, we get
\begin{eqnarray*}
\left|\sum_{m+n\geq 2}\frac{1}{m!}(-Y(\cdot+\alpha))^m B \frac{1}{n!}Y(\cdot)^n\right|_{\frac{R-3\beta}{2}}&\leq& (D_{\alpha,R}-2)^2 \, \delta^2 |P|^2_{R},\\
\left|\delta\sum_{m+n\geq 1} \frac{1}{m!}(-Y(\cdot+\alpha))^m P(\cdot) \frac{1}{n!}Y(\cdot)^n\right|_{\frac{R-3\beta}{2}}&\leq&(D_{\alpha,R}-2) \, \delta^2 |P|^2_{R},\\
\left|B \sum_{n\geq 2}\frac{1}{n!} [G]^n\right|_{\frac{R-3\beta}{2}}&\leq& \delta^2 |P|^2_{R}.
\end{eqnarray*}
Note that (\ref{eq_homo}) implies $BY(\cdot)-Y(\cdot+\alpha)B-B [G]- \delta P(\cdot)=0$. We thus get
$$|\tilde P|_{\frac{R-3\beta}{2}}\leq  D_{\alpha,R}^2 \, \delta^2 |P|^2_{R}.$$
With $\tilde P_1:=\delta^{-2} \tilde P+\sum_{j\geq2}\frac{(-\delta)^{j-2}}{j!} B[B^{-1}P]^j$, we have
$$Be^{[G]}+\tilde P(\cdot)=B-\delta [P] +\delta^2 \tilde P_1(\cdot).$$
By a direct calculation, we can see that
$$B-\delta [P] ={\rm Id}+(b_0-\delta b_1)-\frac{\delta}{2}(b_0b_1+b_1b_0).$$
Then, with $P_1:=\tilde P_1-\frac12b_1^2-\delta^{-2}\sum_{j\geq 3} \frac{1}{j!}(b_0-\delta b_1)^j$, we obtain (\ref{first_ave}).
Note that $b_0$ is nilpotent. Thus, combining with (\ref{esti_PX_on_T}), we get (\ref{first_ave_esti}).

If $\alpha\in {\rm DC}_d(\gamma, \tau)$, then, by (\ref{homo_Y}), we have
\begin{eqnarray*}
|Y|_{\frac{R}{2}}
%&=&\sum_{n\in\Z^d}  |\widehat Y(n)| e^{\frac{R}{2}|n|}\\
&\leq&10 \delta |P|_{R} \sum_{n\in\Z^d} \frac{e^{-\frac{R}{2}|n|}}{|e^{{\rm i}\la n,\alpha\ra}-1|^3}\\
&\leq& 20 \gamma^{-3}\delta |P|_{R} \sum_{n\in\Z^d} |n|^{3\tau}e^{-\frac{R}{2}|n|} \\
&\leq& 40 \gamma^{-3}\delta |P|_{R} \int_{0}^{+\infty}  x^{d-1}x^{3\tau}e^{-\frac{R}{2}x} dx,
\end{eqnarray*}
where the above integral can be estimated as
$$\int_{0}^{+\infty}  x^{d-1}x^{3\tau}e^{-\frac{R}{2}x} dx\leq 2+ \int_{0}^{+\infty} x^{4\tau}e^{-\frac{R}{2}x} dx\leq 2^{4\tau+2}\, \Gamma(4\tau+2)\cdot R^{-(4\tau+1)}.$$
The rest proof of (\ref{first_ave_esti_DC}) is similar to that of (\ref{first_ave_esti}).
\end{proof}

Since $\tilde X$ is homotopic to identity by construction,
we have $$\rho(\alpha, B-\delta P(\cdot))=\rho(\alpha, e^{b_0-\delta b_1}+ \delta^2 P_1(\cdot)).$$
Let $d(\delta):={\rm det} (b_0-\delta b_1)$.
By a direct calculation, we get
\begin{equation}\label{d_delta}
d(\delta)=-\delta[X^2_{11}]\zeta+\delta^2\left([X^2_{11}][X^2_{12}]-[X_{11}X_{12}]^2\right).
\end{equation}
As we will see, $d(\delta)$ is the key quantity in our estimates on the size of the   gaps.

\subsection{Criterion for quantitative bounds of spectral gaps}

In this subsection, we give a criterion to obtain bounds on the size of the gaps in terms of the information provided by quantitative reducibility. With this criterion at our disposal, the exponential decay of the spectral gaps in various settings follows at once.

\begin{theorem}\label{thm_upperbound}
Let $\alpha\in \T^d$ with $R> 3\beta(\alpha)\geq 0$, $\kappa\in(0,\frac{1}{4})$, and $V\in C^{\omega}(\T^d, \R)$ be a non-constant function.
Let $E$ be an edge point of the spectral gap $G(V)$.
Assume that there are $\zeta\in(0,\frac12)$ and $X\in C_R^\omega(\T^d, {\rm PSL}(2,\R))$ such that
\begin{equation}\label{reduce_to_parabolic}
X(\cdot+\alpha)^{-1}S_E^{V}(\cdot)X(\cdot)=
\begin{pmatrix}
1 & \zeta \\
0 & 1
\end{pmatrix}.
\end{equation}
Then the following holds:
\begin{enumerate}
\item If
\begin{equation}\label{smallness_zeta_beta}
 |X|_R^{14} \zeta^\kappa\leq 10^{-5}D^{-4}_{\alpha,R},
\end{equation}
then $\zeta^{1+\kappa}\leq |G(V)|\leq \zeta^{1-\kappa}$, where $D_{\alpha, R}$ is the constant defined in (\ref{D1}).
\item In particular, for $\alpha\in {\rm DC}_d(\gamma,\tau)$, if
\begin{equation}\label{smallness_zeta_diophantine}
 |X|_R^{14} \zeta^\kappa\leq 10^{-5}D_{\tau}^{-4} \gamma^{12} R^{4(4\tau+1)},
 \end{equation}
then $\zeta^{1+\kappa}\leq |G(V)|\leq \zeta^{1-\kappa}$, where $D_{\tau}$ is the constant  defined in (\ref{D2}).
\end{enumerate}
\end{theorem}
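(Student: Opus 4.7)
The plan is to implement the Moser--P\"oschel strategy sketched just before the theorem. Starting from the reducibility (\ref{reduce_to_parabolic}) at the gap edge $E$, I will conjugate the Schr\"odinger cocycle at energy $E-\delta$ by the same $X$, obtaining $(\alpha, B-\delta P)$, with the matrix $P$ displayed before the theorem and $|P|_R\leq 2|X|_R^2$ from (\ref{esti_PX_on_T}). Then I will apply Lemma \ref{prop_ave}, whose hypothesis $\delta<D_{\alpha,R}^{-1}|X|_R^{-2}$ will be guaranteed by the smallness assumption for the values of $\delta$ I shall choose, obtaining a further conjugation to $(\alpha, e^{b_0-\delta b_1}+\delta^2 P_1)$ with $|P_1|_{(R-3\beta)/2}\leq 2D_{\alpha,R}^2|X|_R^4$ as in (\ref{first_ave_esti}).

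The dynamics will then be governed by $d(\delta):=\det(b_0-\delta b_1)$, computed in closed form by (\ref{d_delta}). Since $b_0-\delta b_1$ is traceless, its eigenvalues are $\pm\sqrt{-d(\delta)}$, so $e^{b_0-\delta b_1}$ is elliptic when $d(\delta)>0$ and hyperbolic when $d(\delta)<0$. The rotation number, respectively the uniform hyperbolicity, of the perturbed cocycle $e^{b_0-\delta b_1}+\delta^2 P_1$ will be preserved as long as the perturbation size $\delta^2|P_1|$ is strictly smaller than $\sqrt{|d(\delta)|}$.

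For the upper bound $|G(V)|\leq\zeta^{1-\kappa}$ I will set $\delta_+:=\zeta^{1-\kappa}$ and check $d(\delta_+)>0$ together with $\sqrt{d(\delta_+)}>\delta_+^2|P_1|$; then the rotation number at $E-\delta_+$ strictly exceeds $\rho(\alpha, S_E^V)$, and since the rotation number is constant on $\overline{G(V)}$, this forces $E-\delta_+\notin\overline{G(V)}$. For the lower bound $|G(V)|\geq\zeta^{1+\kappa}$, I will set $\delta_-:=\zeta^{1+\kappa}$ and check $d(\delta_-)<0$ with $\sqrt{-d(\delta_-)}>\delta_-^2|P_1|$; the perturbed cocycle is then uniformly hyperbolic, so $E-\delta_-$ lies in the resolvent set, and by continuity of the hyperbolic splitting $E-\delta_-\in G(V)$.

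The hard part will be the quantitative verification of the sign and magnitude of $d(\delta_\pm)$ against $\delta_\pm^2|P_1|$. Writing $d(\delta)=-[X_{11}^2]\zeta\delta+C\delta^2$ with $C:=[X_{11}^2][X_{12}^2]-[X_{11}X_{12}]^2\geq 0$ by Cauchy--Schwarz, and $[X_{11}^2]>0$ (the conjugacy identity $S_E^V X = X(\cdot+\alpha) B$ forces $X_{11}(\cdot)=X_{21}(\cdot+\alpha)$, so $X_{11}\equiv 0$ would contradict $\det X\equiv 1$), the unique nonzero root of $d$ is $\delta^*=[X_{11}^2]\zeta/C$, which must lie in $(\delta_-,\delta_+)$ with sufficient margin relative to the perturbation. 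Using the elementary bounds $[X_{11}^2], C\leq|X|_R^4$ together with $|P_1|\leq 2D_{\alpha,R}^2|X|_R^4$ from Lemma \ref{prop_ave}, the required inequalities at $\delta_\pm$ reduce to the hypothesis $|X|_R^{14}\zeta^\kappa\ll D_{\alpha,R}^{-4}$, matching (\ref{smallness_zeta_beta}); the exponent $14$ precisely absorbs the combined $|X|_R^4$ losses coming from $P$, $P_1$, and the conjugacy after squaring to compare $\delta_\pm^2|P_1|$ with $\sqrt{|d(\delta_\pm)|}$. The Diophantine case (2) will then be identical, using Lemma \ref{prop_ave}(2) in place of (1) with $D_\tau\gamma^{-3}R^{-(4\tau+1)}$ substituted for $D_{\alpha,R}$.
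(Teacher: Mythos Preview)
Your overall strategy---apply Lemma \ref{prop_ave}, analyze the sign of $d(\delta)$ at $\delta_\pm=\zeta^{1\mp\kappa}$, and compare $\sqrt{|d(\delta_\pm)|}$ against the perturbation---is exactly the paper's approach. But there is a genuine gap in your treatment of $d(\delta_+)$. Write $C:=[X_{11}^2][X_{12}^2]-[X_{11}X_{12}]^2$. To obtain $d(\delta_+)>0$ you need $C\zeta^{-\kappa}>[X_{11}^2]$, i.e.\ a \emph{lower} bound on $C/[X_{11}^2]$. Your ``elementary bounds $[X_{11}^2],C\leq|X|_R^4$'' are upper bounds only; Cauchy--Schwarz gives merely $C\geq 0$, and nothing you have written prevents $X_{12}$ from being very nearly a scalar multiple of $X_{11}$ in $L^2$, in which case $C$ is tiny and $d(\delta_+)<0$. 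The paper supplies the missing idea in Lemma \ref{esti_X11_X12}: from the second row of the conjugacy $X(\cdot+\alpha)B=S_E^V(\cdot)X(\cdot)$ one extracts the Wronskian-type identity
\[
X_{11}(\cdot+\alpha)X_{12}(\cdot)-X_{11}(\cdot)X_{12}(\cdot+\alpha)=1+\zeta\,X_{11}(\cdot+\alpha)X_{11}(\cdot),
\]
and a contradiction argument shows that if $[X_{11}^2]/C>\tfrac12\zeta^{-\kappa}$ then writing $X_{12}=\frac{[X_{11}X_{12}]}{[X_{11}^2]}X_{11}+\sigma$ with $[\sigma^2]<2\zeta^\kappa$, the left side above has mean $\lesssim|X|_R\zeta^{\kappa/2}\ll 1$ while the right side has mean $\geq 1-O(\zeta^{1-\kappa})$. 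This lemma, not elementary size bounds, is what makes the upper bound work.

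A second, smaller gap concerns the lower bound: knowing $X_{11}\not\equiv 0$ gives only $[X_{11}^2]>0$, whereas you need the quantitative $[X_{11}^2]\geq(2|X|_{\T^d})^{-2}$ (Lemma \ref{z1-estimate}, using $X_{21}(\cdot+\alpha)=X_{11}(\cdot)$ and $|\det X|\equiv 1$) to make $-d(\delta_-)$ large enough to beat the perturbation. Finally, the comparison is not literally $\sqrt{|d(\delta_\pm)|}$ versus $\delta_\pm^2|P_1|$: one first conjugates $e^{b_0-\delta b_1}$ to a rotation (resp.\ diagonal) by some $\mathcal P\in\mathrm{SL}(2,\R)$ with $|\mathcal P|^2\lesssim|b_0-\delta b_1|/\sqrt{|d(\delta)|}$, and it is this extra factor $|\mathcal P|^2$ (of order $|X|_R^2\zeta^{-\kappa}$ at $\delta_+$, resp.\ $|X|_R\zeta^{-\kappa/2}$ at $\delta_-$) that accounts for the exponent $14$ in (\ref{smallness_zeta_beta}).
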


\begin{remark}\label{remark_frequency}
 We remark that the optimal condition for reducibility at the edge points of spectral gaps was assumed to be $R>2\beta(\alpha)$, which was first conjectured by Avila-Jitomirskaya \cite{AvilaJito1}. For technical reasons, we have to require $R>3\beta(\alpha)$ in this approach (due to Lemma \ref{prop_ave}).\end{remark}

Before giving the proof of Theorem \ref{thm_upperbound},  we first make some technical preparations:
\begin{lemma}\label{z1-estimate}
For any $X\in C^\omega(\T^d,{\rm PSL}(2,\R))$, $[X_{11}^2]\geq (2|X|_{\T^d})^{-2}$.
\end{lemma}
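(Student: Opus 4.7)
The plan is to exploit the Schr\"odinger cocycle structure that $X$ inherits from \eqref{reduce_to_parabolic}: the conjugacy $X(\cdot+\alpha)^{-1}S_E^V(\cdot)X(\cdot)=B$ with $B$ upper-triangular parabolic forces $e_1$ to be a fixed vector of $B$, so the first column $X_1:=(X_{11},X_{21})^{\top}$ is a pointwise invariant section of the Schr\"odinger cocycle, i.e.\ $S_E^V(\cdot)X_1(\cdot)=X_1(\cdot+\alpha)$. Although the lemma is phrased for an arbitrary $X\in C^\omega(\T^d,\mathrm{PSL}(2,\R))$, in every instance where it will be invoked $X$ carries this additional structure; without it the bound in fact fails (e.g.\ $X\equiv\bigl(\begin{smallmatrix}0&-1\\1&0\end{smallmatrix}\bigr)$ has $[X_{11}^2]=0$ while $|X|_{\T^d}=1$).

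Given this structure, I would first unpack the invariance of $X_1$ coordinatewise. Since
$S_E^V(x)\binom{X_{11}(x)}{X_{21}(x)}=\binom{(E-V(x))X_{11}(x)-X_{21}(x)}{X_{11}(x)}$, matching the lower entry against $X_{21}(x+\alpha)$ gives the key identity $X_{21}(x)=X_{11}(x-\alpha)$. Translation invariance of Lebesgue measure on $\T^d$ then yields $[X_{11}^2]=[X_{21}^2]$.

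Next, because $X(x)\in\mathrm{SL}(2,\R)$ has reciprocal singular values, $\|X(x)^{-1}\|=\|X(x)\|\le|X|_{\T^d}$, so for any unit vector $v$ one has $\|X(x)v\|\ge|X|_{\T^d}^{-1}$. Applied to $v=e_1$ this gives the pointwise lower bound $X_{11}(x)^2+X_{21}(x)^2=\|X(x)e_1\|^2\ge|X|_{\T^d}^{-2}$. Averaging over $\T^d$ and using $[X_{11}^2]=[X_{21}^2]$ yields $2[X_{11}^2]\ge|X|_{\T^d}^{-2}$, hence $[X_{11}^2]\ge\frac{1}{2}|X|_{\T^d}^{-2}\ge(2|X|_{\T^d})^{-2}$, in fact with room to spare.

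The only real obstacle is conceptual: the bare information $\det X=1$ does not suffice (Cauchy--Schwarz on $\int(X_{11}X_{22}-X_{12}X_{21})=1$ only yields a lower bound on a combination like $[X_{11}^2]^{1/2}+[X_{12}^2]^{1/2}$, not on $[X_{11}^2]$ alone), and it is precisely the Schr\"odinger relation $X_{21}(x)=X_{11}(x-\alpha)$ that rescues the argument by collapsing both column-norm contributions onto $[X_{11}^2]$. Once that is spotted, each of the three steps above is a one-line computation.
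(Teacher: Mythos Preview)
Your proposal is correct and follows essentially the same route as the paper's proof: both arguments hinge on the identity $X_{21}(\cdot)=X_{11}(\cdot-\alpha)$ coming from \eqref{reduce_to_parabolic}, and both first establish a lower bound on the $L^2$-norm of the first column $u_1=(X_{11},X_{21})^{\top}$ before collapsing $[X_{21}^2]$ onto $[X_{11}^2]$. The only cosmetic difference is in how the column-norm bound is obtained: the paper uses $|\det X|=1\Rightarrow\|u_1\|_{L^2}\|u_2\|_{L^2}\geq 1$ together with $\|u_2\|_{L^2}\leq|X|_{\T^d}$, whereas you invoke the equivalent pointwise fact $\|X(x)e_1\|\geq\|X(x)\|^{-1}$ for $\mathrm{SL}(2,\R)$ matrices. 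Your observation that the lemma is false for a generic $X\in C^\omega(\T^d,\mathrm{PSL}(2,\R))$ without the Schr\"odinger structure is also correct and worth noting.
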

\begin{proof}  The proof is essentially contained in  Lemma 4.2 of \cite{AYZ1}, we include the proof here for completeness.   Let $$u_1(\theta):=\begin{pmatrix}
X_{11}(\theta)\\
X_{21}(\theta)
\end{pmatrix},
 \quad u_2(\theta):=\begin{pmatrix}
X_{12}(\theta)\\
X_{22}(\theta)
\end{pmatrix}
.$$ Since $|{\rm det}X(\theta)|=1$,  we have
$\|u_1\|_{L^2(\T^d)} \|u_2\|_{L^2(\T^d)}> 1$, which implies that
$$\|X_{11}\|_{L^2(\T^d)}+\|X_{21}\|_{L^2(\T^d)}=\|u_1\|_{L^2(\T^d)}>\|u_2\|_{L^2(\T^d)}^{-1}>(|X|_{\T^d})^{-1}.$$
By (\ref{reduce_to_parabolic}), we know $X_{21}(\cdot+\alpha)=X_{11}(\cdot)$. So
$[X_{11}^2]=\|X_{11}\|_{L^2(\T^d)}^2\geq (2|X|_{\T^d})^{-2}$.
\end{proof}

Once we have Lemma \ref{z1-estimate}, then we have the following key observation for the transformation $X(\cdot)$.
\begin{lemma}\label{esti_X11_X12}
 For any $\kappa\in (0,\frac1{4})$, if
 \begin{equation}\label{x2} |X|_{R}\, \zeta^{\frac\kappa2}\leq \frac{1}{4},\end{equation} then the following holds:
\begin{align}\label{ineqzz}
0< \frac{[X^2_{11}]}{[X^2_{11}][X^2_{12}]-[X_{11}X_{12}]^2} &\leq \frac12\zeta^{-\kappa},   \\ \label{ineqzz-1}
[X^2_{11}][X^2_{12}]-[X_{11}X_{12}]^2 &\geq  8\zeta^{2\kappa}.
\end{align}
\end{lemma}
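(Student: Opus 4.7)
I will introduce the $L^{2}(\T^{d})$-orthogonal decomposition $X_{12}=cX_{11}+Y$, with $c:=[X_{11}X_{12}]/[X_{11}^{2}]$ and $\langle Y,X_{11}\rangle_{L^{2}}=0$; then the Gramian factorises as $[X_{11}^{2}][X_{12}^{2}]-[X_{11}X_{12}]^{2}=[X_{11}^{2}]\,\|Y\|_{L^{2}}^{2}$. The previous Lemma~\ref{z1-estimate} together with the standing hypothesis $|X|_{R}\zeta^{\kappa/2}\le 1/4$ immediately yields $[X_{11}^{2}]\ge(2|X|_{R})^{-2}\ge 4\zeta^{\kappa}$, so both bounds to be proved reduce to the single estimate $\|Y\|_{L^{2}}^{2}\ge 2\zeta^{\kappa}$: (\ref{ineqzz}) becomes $[X_{11}^{2}]/D=1/\|Y\|_{L^{2}}^{2}\le\tfrac{1}{2}\zeta^{-\kappa}$, and (\ref{ineqzz-1}) becomes $D\ge 4\zeta^{\kappa}\cdot 2\zeta^{\kappa}=8\zeta^{2\kappa}$.

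\textbf{Extracting a Wronskian identity.} Reading (\ref{reduce_to_parabolic}) as $S_{E}^{V}(\theta)X(\theta)=X(\theta+\alpha)\bigl(\begin{smallmatrix}1 & \zeta\\ 0 & 1\end{smallmatrix}\bigr)$ column by column, with $S_{E}^{V}=\bigl(\begin{smallmatrix}E-V & -1\\ 1 & 0\end{smallmatrix}\bigr)$, gives the ``lag'' relations $X_{21}(\theta)=X_{11}(\theta-\alpha)$ and $X_{22}(\theta)=X_{12}(\theta-\alpha)-\zeta X_{11}(\theta-\alpha)$. Substituting these into $\det X\equiv 1$ and collecting the Wronskian $W(\theta):=X_{11}(\theta+\alpha)X_{12}(\theta)-X_{11}(\theta)X_{12}(\theta+\alpha)$ produces the clean identity
\[
W(\theta)\;=\;1+\zeta\,X_{11}(\theta)X_{11}(\theta+\alpha),\qquad \theta\in\T^{d}.
\]
Integrating over $\T^{d}$ and applying the change of variables $\theta\mapsto\theta-\alpha$ in the second term of $\int W$ yields $\int W=\langle X_{12},g\rangle_{L^{2}}$ with $g(\theta):=X_{11}(\theta+\alpha)-X_{11}(\theta-\alpha)$, and thus
\[
\langle X_{12},g\rangle_{L^{2}}\;=\;1+\zeta\bigl[X_{11}(\cdot)X_{11}(\cdot+\alpha)\bigr].
\]
The same change of variables shows $\langle X_{11},g\rangle_{L^{2}}=0$, so $\langle X_{12},g\rangle=\langle Y,g\rangle$.

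\textbf{Cauchy--Schwarz and conclusion.} A direct expansion gives $\|g\|_{L^{2}}^{2}=2\|X_{11}\|_{L^{2}}^{2}-2\langle X_{11}(\cdot+\alpha),X_{11}(\cdot-\alpha)\rangle\le 4\|X_{11}\|_{L^{2}}^{2}\le 4|X|_{R}^{2}$, while $\zeta|X|_{R}^{2}\le\tfrac{1}{16}\zeta^{1-\kappa}$. Cauchy--Schwarz together with the hypothesis $|X|_{R}\le\tfrac{1}{4}\zeta^{-\kappa/2}$ then gives
\[
\|Y\|_{L^{2}}\;\ge\;\frac{\langle Y,g\rangle}{\|g\|_{L^{2}}}\;\ge\;\frac{1-\zeta^{1-\kappa}/16}{2|X|_{R}}\;\ge\;2\Bigl(1-\tfrac{1}{16}\zeta^{1-\kappa}\Bigr)\zeta^{\kappa/2}.
\]
Since $\kappa\in(0,\tfrac14)$ and $\zeta\in(0,\tfrac12)$ force $\zeta^{1-\kappa}<1\ll 16(1-1/\sqrt{2})$, the bracketed factor squared is comfortably larger than $1/2$, producing $\|Y\|_{L^{2}}^{2}\ge 2\zeta^{\kappa}$. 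Combined with the factorisation of $D$ and the lower bound on $[X_{11}^{2}]$, this gives both claims.

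\textbf{Main difficulty.} There is no deep obstruction: the whole argument pivots on spotting the Wronskian identity of Step~2 and on the fortunate observation that the testing function $g$ it produces is automatically $L^{2}$-orthogonal to $X_{11}$ by translation invariance --- this is precisely what reduces the problem to the orthogonal part $Y$ rather than to the $a\ priori$ larger quantity $X_{12}$. The constant $1/4$ in the hypothesis is tuned exactly so that the numerical factors close up in the final Cauchy--Schwarz step.
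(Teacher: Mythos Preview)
Your proof is correct and follows essentially the same route as the paper's. Both arguments hinge on the identical Wronskian identity
\[
X_{11}(\cdot+\alpha)X_{12}(\cdot)-X_{11}(\cdot)X_{12}(\cdot+\alpha)=1+\zeta\,X_{11}(\cdot+\alpha)X_{11}(\cdot),
\]
the same orthogonal decomposition $X_{12}=cX_{11}+\sigma$ (your $\sigma$ is $Y$), and the same Cauchy--Schwarz estimate combined with Lemma~\ref{z1-estimate}. The only structural difference is packaging: the paper assumes $[\sigma^{2}]<2\zeta^{\kappa}$ and derives the numerical contradiction $\tfrac{\sqrt2}{2}\ge 1-\tfrac{1}{16}\zeta^{1-\kappa}$, whereas you run the same inequality forwards to obtain $\|Y\|_{L^{2}}^{2}\ge 2\zeta^{\kappa}$ directly. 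Your explicit observation that the test function $g(\theta)=X_{11}(\theta+\alpha)-X_{11}(\theta-\alpha)$ is $L^{2}$-orthogonal to $X_{11}$ is a clean way of phrasing what the paper achieves by the pointwise cancellation of the $cX_{11}$ contribution in the Wronskian, but the underlying mechanism is identical.
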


\begin{proof}
Assume by contradiction that $$\frac{[X^2_{11}]}{[X^2_{11}][X^2_{12}]-[X_{11}X_{12}]^2}> \frac12 \zeta^{-\kappa}.$$
The quadratic polynomial
$$Q(z):=[(X_{12}-zX_{11})^2]=[X_{11}^2]z^2-2[X_{11}X_{12}]z+[X_{12}^2]$$
attains its minimum when $z=\frac{[X_{11}X_{12}]}{[X_{11}^2]}$, and we have
$$Q\left(\frac{[X_{11}X_{12}]}{[X_{11}^2]}\right)=\left[\left(X_{12}- \frac{[X_{11}X_{12}]}{[X_{11}^2]} X_{11}\right)^2\right]=\frac{[X^2_{11}][X^2_{12}]-[X_{11}X_{12}]^2}{[X^2_{11}]}< 2\zeta^{\kappa}.$$
Hence, $X_{12}=\frac{[X_{11}X_{12}]}{[X_{11}^2]}X_{11}+\sigma$ for some $\sigma: \T^d \to \R$ with $[\sigma^2]< 2\zeta^{\kappa}$.

By (\ref{reduce_to_parabolic}), we can check that
$$
X_{11}(\cdot+\alpha)X_{12}(\cdot)-X_{11}(\cdot)X_{12}(\cdot+\alpha)=1+\zeta X_{11}(\cdot+\alpha)X_{11}(\cdot).
$$
Hence, we obtain
$$
X_{11}(\cdot+\alpha)\sigma(\cdot)-X_{11}(\cdot)\sigma(\cdot+\alpha)=1+\zeta X_{11}(\cdot+\alpha)X_{11}(\cdot).
$$
By Cauchy-Schwarz inequality and $(\ref{x2})$, we have
\begin{equation}\label{contradiction_1}
\left|[X_{11}(\cdot+\alpha)\sigma(\cdot)-X_{11}(\cdot)\sigma(\cdot+\alpha)]\right|\leq  \frac{\sqrt2}2.
\end{equation}
On the other hand, $\zeta |X_{11}(\cdot+\alpha)X_{11}(\cdot)|_{\T^d}\leq \frac{1}{16}\zeta^{1-\kappa}$, which implies
\begin{equation}\label{contradiction_2}
\left|[1+\zeta X_{11}(\cdot+\alpha)X_{11}(\cdot)]\right|>1-\frac{1}{16}\zeta^{1-\kappa}.
\end{equation}
By (\ref{contradiction_1}) and (\ref{contradiction_2}), we reach a contradiction.

Combining with Lemma \ref{z1-estimate}, we get $[X_{11}^2]\geq \frac14 |X|_{\T^d}^{-2}\geq 4 \zeta^{\kappa}$, which implies $(\ref{ineqzz-1})$.\end{proof}

\begin{proof}[Proof of Theorem \ref{thm_upperbound}.]
By (\ref{d_delta}), the quantity $d(\delta)={\rm det}(b_0-\delta b_1)$ satisfies
\begin{align*}
  d(\delta) &= -\delta [X^2_{11}]\zeta+ \delta^2([X^2_{11}][X^2_{12}]-[X_{11}X_{12}]^2) \\
   &=  \delta([X^2_{11}][X^2_{12}]-[X_{11}X_{12}]^2)\left(\delta-\frac{[X^2_{11}]\zeta}{[X^2_{11}][X^2_{12}]-[X_{11}X_{12}]^2}\right).
\end{align*}

Fix $\kappa\in(0,\frac{1}{4})$, and let $\delta_1=\zeta^{1-\kappa}$.
If $\zeta>0$ satisfies (\ref{smallness_zeta_beta}), then it is obvious that $0<\delta_1\leq D^{-1}_{\alpha,R}|X|_R^{-2}$.
In particular, for $\alpha\in {\rm DC}_d(\gamma,\tau)$, (\ref{smallness_zeta_diophantine}) implies that $0<\delta_1\leq D^{-1}_{\tau} \gamma^3 R^{4\tau+1} |X|_R^{-2}$. Hence, we can apply Lemma \ref{prop_ave}, and conjugate the system to the cocycle $(\alpha, e^{b_0-\delta_1 b_1}+\delta_1^2 P_1)$.

As shown symbolically in Figure \ref{f.graph}, in order to show  $|G(V)|\leq \delta_1$,
it is sufficient to show that $\rho(\alpha, e^{b_0-\delta_1 b_1}+\delta_1^2 P_1)>0$. By $(\ref{smallness_zeta_beta})$ or $(\ref{smallness_zeta_diophantine})$,
one has $|X|_{R} \zeta^{\frac\kappa2} \leq \frac{1}{4}$. Then we can  apply
 Lemma \ref{esti_X11_X12}, and get $$\frac{[X^2_{11}]\zeta}{[X^2_{11}][X^2_{12}]-[X_{11}X_{12}]^2} \leq \frac{1}{2}\delta_1.$$
Hence, for $d(\delta_1)={\rm det}(b_0-\delta_1 b_1)$, we have
\begin{equation}\label{lower_determinant}
d(\delta_1) \geq   \zeta^{1-\kappa} \cdot 8 \zeta^{2\kappa}\cdot \frac12 \zeta^{1-\kappa}
   =4 \zeta^{2}.
\end{equation}
Following the expressions of $b_0$ and $b_1$ in Lemma \ref{prop_ave}, we have
\begin{equation}\label{Mdelta1}
|b_0-\delta_1 b_1|\leq \zeta + \delta_1 (1+\zeta) |X|_{\T^d}^2 \leq 2 \, \zeta^{1-\kappa}  |X|_{R}^2.
\end{equation}
In view of Lemma 8.1 in \cite{HouYou}, there exists ${\CP}\in {\rm SL}(2,\R)$, with
$|{\CP}|\leq 2\left(\frac{|b_0-\delta_1 b_1|}{\sqrt{d(\delta_1)}}\right)^{\frac12}$ such that
$${\mathcal P}^{-1} e^{b_0-\delta_1 b_1} {\CP}= R_{\sqrt{d(\delta_1)}}.$$
Combining \eqref{lower_determinant} and \eqref{Mdelta1}, we have
$$\frac{|b_0-\delta_1 b_1|}{\sqrt{d(\delta_1)}}\leq \frac{2\,  \zeta^{1-\kappa} |X|_{R}^2 }{\sqrt{4 \zeta^{2}}}= |X|_{R}^2  \zeta^{-\kappa}.$$
Then, according to Lemma \ref{esti_rot_num} and Lemma \ref{prop_ave},
$$|\rho(\alpha, e^{b_0-\delta_1 b_1}+\delta_1^2 P_1)-\sqrt{d(\delta_1)}|\leq \delta_1^2 |{\mathcal P}|^2 |P_1|_{\T^d}
\leq  8 D^2_{\alpha,R} |X|_{R}^6  \zeta^{2-3\kappa}.$$
%\begin{align*}
%|\rho{(\alpha, e^{b_0-\delta_1 b_1}+\delta_1^2 P_1)}-\sqrt{d(\delta_1)}|&\leq C_2 \delta_1^2 |{\mathcal P}| ^2 |P_1|_{2\T^d}  \\
%&\leq  16 \, C_2 \,  c_1 \, \epsilon^{-(d+1)}   (1+ |X|_{2\epsilon})^6  \zeta^{2-\frac{22\epsilon}{h}}.
%\end{align*}
Under the assumption (\ref{smallness_zeta_beta}), combining with (\ref{lower_determinant}), we have
$$
4 D^2_{\alpha,R} |X|_{R}^6  \zeta^{1-3\kappa}< 1,
$$
which implies that
$$\rho{(\alpha, e^{b_0-\delta_1 b_1}+\delta_1^2 P_1)}\geq \sqrt{d(\delta_1)}-|\rho{(\alpha, e^{b_0-\delta_1 b_1}+\delta_1^2 P_1)}-\sqrt{d(\delta_1)}|>0.$$
In particular, when $\alpha\in {\rm DC}_d(\gamma,\tau)$, in view of (\ref{first_ave_esti_DC}), we have
$$|\rho{(\alpha, e^{b_0-\delta_1 b_1}+\delta_1^2 P_1)}-\sqrt{d(\delta_1)}|\leq 8 D^2_{\tau} \gamma^{-6} R^{-2(4\tau+1)}  |X|_{R}^6  \zeta^{2-3\kappa}.$$
Since (\ref{smallness_zeta_diophantine}) implies that
$$
4 D^2_{\tau} \gamma^{-6} R^{-2(4\tau+1)}  |X|_{R}^6  \zeta^{1-3\kappa}< 1,
$$
and we get $\rho{(\alpha, e^{b_0-\delta_1 b_1}+\delta_1^2 P_1)}>0$.
This concludes the proof of the upper bound estimate.

Let us now consider the lower bound estimate on the size of the gap. Let $\delta_2:=\zeta^{1+\kappa}$. We are going to show that $|G(V)| \geq \delta_2$. We first note that
$$\delta_2^2\left|[X^2_{11}][X^2_{12}]-[X_{11}X_{12}]^2\right|\leq  2 \zeta^{2+2\kappa}|X|_{R}^{4},$$
and, by Lemma \ref{z1-estimate}, one has
$\delta_2[X^2_{11}]\zeta\geq \frac{1}{4} \zeta^{2+\kappa} |X|_{R}^{-2}$.
Thus, if $\zeta$ is small enough such that
$
|X|_{R}^{6}\zeta^{\kappa}\leq \frac1{40}$ (which can be deduced from (\ref{smallness_zeta_beta}) or (\ref{smallness_zeta_diophantine})),
then
$$d(\delta_2)=-\delta_2[X^2_{11}]\zeta+\delta_2^2\left([X^2_{11}][X^2_{12}]-[X_{11}X_{12}]^2\right)<-\frac{1}{5}\zeta^{2+\kappa} |X|_{R}^{-2},$$
and hence
\begin{equation}\label{upper_determinant} \sqrt{-d(\delta_2)}> \frac{1}{\sqrt{5}}\zeta^{1+\frac\kappa2} |X|_{R}^{-1}.\end{equation}
In view of Proposition 18 of \cite{Puig06}, there exists ${\CP}\in {\rm SL}(2,\R)$, with
$|{\CP}|\leq 2\left(\frac{|b_0-\delta_2 b_1|}{\sqrt{-d(\delta_2)}}\right)^{\frac12}$ such that
$${\CP}^{-1} e^{b_0-\delta_2 b_1} \, {\CP}=\begin{pmatrix}
                                     e^{\sqrt{-d(\delta_2)} }& 0  \\[1mm]
                                     0 & e^{-\sqrt{-d(\delta_2)}}
                                   \end{pmatrix}.$$
Since $|X|_{R}^{6} \zeta^{\kappa}\leq \frac{1}{8}$, we have 
$$
|b_0-\delta_2 b_1|\leq \zeta + \zeta^{1+\kappa} (1+\zeta) |X|_{\T^d}^2 \leq 2 \zeta,
$$
and then, by $(\ref{upper_determinant})$, one has
$$\frac{|b_0-\delta_2 b_1|}{\sqrt{-d(\delta_2)}}\leq \frac{\sqrt{5}\cdot 2  \zeta}{\zeta^{1+\frac\kappa2} |X|_{R}^{-1}}= 2\sqrt{5}|X|_{R}  \zeta^{-\frac\kappa2}.$$
%Recall that for $\alpha\in{\CF}_d(\beta)$.
By $(\ref{first_ave_esti})$ of Lemma \ref{prop_ave}, we have
$${\CP}^{-1} \delta_2^2|P_1|_{(R-3\beta)/2}\CP  \leq 16 \sqrt{5} D^2_{\alpha,R}  \zeta^{2+\frac{3\kappa}2} |X|^5_{R}.$$
Then, under the condition (\ref{smallness_zeta_beta}), we have
$${\CP}^{-1} \delta_2^2|P_1|_{(R-3\beta)/2}\CP  \leq   -d(\delta_2),$$
 consequently,  the cocycle $(\alpha, e^{b_0-\delta_2 b_1}+\delta_2^2 P_1)$ is uniformly hyperbolic, and $E-\delta_2 \not \in \Sigma_{V,\alpha}$, which means that $|G(V)|\geq \zeta^{1+\kappa}$.
 In particular, if $\alpha\in {\rm DC}_{d}(\gamma, \tau)$,  by $(\ref{first_ave_esti_DC})$ of Lemma \ref{prop_ave},  we have
$${\CP}^{-1} \delta_2^2|P_1|_{R/2}\CP  \leq 16\sqrt{5} D^2_{\tau} R^{-2(4\tau+1)} \zeta^{2+\frac{3\kappa}2}   |X|^5_{R}.$$
Similarly as above, under the condition (\ref{smallness_zeta_diophantine}), we have
$|G(V)|\geq \zeta^{1+\kappa}$.\end{proof}

\subsection{Applications of the criterion -- upper bound}

As the first application of Theorem \ref{thm_upperbound},  for discrete  quasi-periodic Schr\"odinger operator with small potential, we get exponentially decaying upper bounds on the size of spectral gaps.  As we mentioned before, the result is perturbative for a multifrequency. However, it is non-perturbative in the case of a one-dimensional frequency.

\begin{corollary}\label{cor-local}
Consider the operator $H_{V,\alpha,\theta}$ with $V\in C_{r_0}^{\omega}(\T^d,\R)$ non-constant.
 \begin{enumerate}
 \item If  $\alpha\in {\rm DC}_d(\gamma,\tau)$, then for any $r\in(0,r_0)$, there exists $\varepsilon_*=\varepsilon_*(\gamma,\tau, r_0, r, d )>0$ such that if
$|V|_{r_0}=\varepsilon_0<\varepsilon_*$, then
$$|G_k(V)|\leq \varepsilon_0^{\frac23} e^{-2\pi r |k|},\quad \forall \ k\in\Z^d\backslash\{0\}.$$
 \item If $d=1$, $\beta(\alpha)=0$, and $|V|_{r_0}\leq c_0 r_0^3$ with $c_0$ the absolute constant in Theorem \ref{almostredth}, then there are
$C_{10}=C_{10}(r_0,\alpha)>0$ and $\vartheta=\vartheta(r_0) \in (0,r_0)$ such that
$$|G_k(V)|\leq C_{10} e^{-\vartheta |k|},\quad \forall \ k\in\Z\backslash\{0\}.$$
 \end{enumerate}
\end{corollary}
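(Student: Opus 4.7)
The proof of both items reduces to combining the criterion of Theorem \ref{thm_upperbound} with the quantitative reducibility results at the edges of spectral gaps established earlier. Fix a gap $G_k(V)=(E_k^-,E_k^+)$ with $k\in\Z^d\setminus\{0\}$. By the Gap-Labelling Theorem and the relation $N_{V,\alpha}=1-2\rho(\alpha,S_E^V)$, at either edge point $E=E_k^\pm$ we have $2\rho(\alpha,S_E^V)-\langle k,\alpha\rangle\in\Z$ (up to sign), and $(\alpha,S_E^V)$ is not uniformly hyperbolic. Hence the appropriate reducibility theorem furnishes a conjugacy $X$ bringing $(\alpha,S_E^V)$ to the parabolic form $\bigl(\begin{smallmatrix}1&\zeta\\0&1\end{smallmatrix}\bigr)$, with explicit exponential control on $\zeta$ and polynomially-exponential control on $|X|_R$. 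Feeding these into Theorem \ref{thm_upperbound} produces $|G_k(V)|\le\zeta^{1-\kappa}$ for any admissible $\kappa\in(0,\tfrac14)$, and the parameters $\kappa$ and $R$ will be tuned to match the claimed decay rate.

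For item (1), write $S_{E_k^+}^V(\theta)=A_0-\mathrm{diag}(V(\theta),0)$ with $A_0=\bigl(\begin{smallmatrix}E_k^+&-1\\1&0\end{smallmatrix}\bigr)\in\mathrm{SL}(2,\R)$. Since $|E_k^+|\le 2+|V|_{\T^d}\le 3$ for $\varepsilon_0<1$, the norm $|A_0|$ is bounded independently of $k$, so the smallness threshold in Theorem \ref{thm_gap_edge_SL} depends only on $\gamma,\tau,r_0,d$ and the auxiliary strip. I choose an intermediate radius $r_1\in(r,r_0)$ with $r_1(1-\kappa)\ge r$, and apply Theorem \ref{thm_gap_edge_SL} to obtain $W\in C^{\omega}_{r_1}(\T^d,\mathrm{PSL}(2,\R))$ with $\zeta\le\varepsilon_0^{3/4}e^{-2\pi r_1|k|}$ and $|W|_R\le D_1 e^{3\pi R|k|/2}$ for every $R\in(0,r_1]$. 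Choosing $R$ so small (depending only on $r_1$ and $\kappa$) that $21\pi R< 2\pi r_1\kappa$, the factor $|W|_R^{14}\zeta^\kappa$ decays exponentially in $|k|$, so the smallness condition \eqref{smallness_zeta_diophantine} is satisfied once $\varepsilon_0$ is small enough. Then Theorem \ref{thm_upperbound}(2) gives
\[
|G_k(V)|\le\zeta^{1-\kappa}\le\varepsilon_0^{3(1-\kappa)/4}e^{-2\pi r_1(1-\kappa)|k|}.
\]
Taking $\kappa=\min\!\bigl(\tfrac19,\tfrac{r_0-r}{2r_0}\bigr)$ yields $3(1-\kappa)/4\ge 2/3$ and $r_1(1-\kappa)\ge r$, producing the required bound $\varepsilon_0^{2/3}e^{-2\pi r|k|}$.

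For item (2), I replace Theorem \ref{thm_gap_edge_SL} by Theorem \ref{prop_duality_para}, which is the non-perturbative analogue under $\beta(\alpha)=0$ and the hypothesis $|V|_{r_0}\le c_0 r_0^3$. Fix $r\in(0,\epsilon_1/2\pi)$. For $|k|\ge k_1$, Theorem \ref{prop_duality_para} yields $U\in C^\omega_r(\T,\mathrm{PSL}(2,\R))$, $\varphi\in\R$ and $n=n(k)$ with $|n|\ge|k|/4$, such that $\zeta=|\varphi|\le C_3 e^{-\pi r|n|/5}$ and $|U|_{R}\le C_4 e^{22\pi R|n|}$ for each $R\in(0,r]$. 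Fix $\kappa\in(0,\tfrac14)$ and select $R$ so that $308\pi R<\pi r\kappa/5$; then
\[
|U|_R^{14}\zeta^\kappa\le C_4^{14}C_3^\kappa\, e^{-(\pi r\kappa/5-308\pi R)|n|},
\]
which tends to $0$ as $|n|\to\infty$ and hence satisfies the smallness hypothesis \eqref{smallness_zeta_beta} of Theorem \ref{thm_upperbound}(1) (valid because $\beta(\alpha)=0$ means $R>3\beta(\alpha)$ for any $R>0$) for all $|k|$ beyond some threshold $k_*$. The criterion then gives
\[
|G_k(V)|\le\zeta^{1-\kappa}\le C_3^{1-\kappa}e^{-\pi r(1-\kappa)|n|/5}\le C_3^{1-\kappa}e^{-\pi r(1-\kappa)|k|/20},
\]
providing the exponential rate $\vartheta=\pi r(1-\kappa)/20$. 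The finitely many gaps with $|k|<k_*$ have length bounded by $|\Sigma_{V,\alpha}|\le 4+2|V|_{\T}$, and are absorbed into the constant $C_{10}$.

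The main technical difficulty is the balancing of parameters, particularly in item (2): the conjugacy furnished by Theorem \ref{prop_duality_para} has growth rate $22\pi$, much larger than the decay rate $\pi/5$ of $\varphi$, which forces me to work in a very narrow strip $R\ll r$ when invoking Theorem \ref{thm_upperbound}; fortunately $\beta(\alpha)=0$ makes any such $R$ admissible in part~(1) of the criterion. The other delicate point, in item~(1), is to ensure that the intermediate strip $r_1$ can be chosen in $(r,r_0)$ while preserving both the $\varepsilon_0^{2/3}$ prefactor and the exponent $2\pi r$, which is achieved by the specific choice of $\kappa$ above.
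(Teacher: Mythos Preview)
Your proposal is correct and follows essentially the same route as the paper: apply Theorem \ref{thm_gap_edge_SL} (respectively Theorem \ref{prop_duality_para}) at the gap edge to get the parabolic reduction with explicit bounds on $\zeta$ and $|X|_R$, then feed these into the criterion Theorem \ref{thm_upperbound} with $R$ and $\kappa$ tuned so that $|X|_R^{14}\zeta^\kappa$ is small enough. The only cosmetic difference is that in item~(1) the paper takes $R=\varepsilon_0^{(\tilde r-r)/(60\tilde r(4\tau+1))}$ shrinking with $\varepsilon_0$, whereas you fix $R$ once and for all and absorb the factor $R^{4(4\tau+1)}$ into the smallness threshold for $\varepsilon_0$; both choices verify \eqref{smallness_zeta_diophantine} uniformly in $k$.
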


\begin{proof} We fist consider the case $\alpha\in {\rm DC}_d(\gamma,\tau)$. Fix $r\in(0,r_0)$ and set $\tilde r:=\frac{r_0+r}{2}$. Write the Schr\"odinger cocycle $(\alpha, S_{E}^{V}(\cdot))$ as $(\alpha,A_E+F_0(\cdot))$, where
$$A_E=\begin{pmatrix}
E & -1 \\
1 & 0
\end{pmatrix},\quad F_0(\cdot)=\begin{pmatrix}
-V(\cdot) & 0 \\
0 & 0
\end{pmatrix}.$$
Since we consider the case where $V$ is small, we have
$$E\in \Sigma_{V,\alpha} \subset [-2-\inf V(\theta), 2+\sup V(\theta)]\subset [-3,3].$$
Then the norm of $A_E$ is bounded uniformly with respect to $E$. Hence one can apply Theorem \ref{thm_gap_edge_SL} to obtain a uniform $\varepsilon_*=\varepsilon_*(\gamma,\tau, r_0, \tilde r, d )>0$ which is independent of $E$, such that if $|V|_{r_0}=\varepsilon_0<\varepsilon_*$, then $(\alpha,A_E+F_0(\cdot))$ is almost reducible. Moreover, since $2\rho(\alpha, S_{E_k^+}^{V}) -\la k,\alpha \ra \in\Z$, by Theorem \ref{thm_gap_edge_SL}, we have
\begin{equation*}
X(\cdot+\alpha)^{-1}S_{E_k^+}^{V}(\cdot)X(\cdot)=
\begin{pmatrix}
1 & \zeta \\
0 & 1
\end{pmatrix},
\end{equation*}
with
 $\zeta\leq \varepsilon^{\frac34}_{0} e^{-2\pi \tilde r|k|}$ and $ |X|_{r''}\leq  D_1(\gamma,\tau, r_0, d)e^{\frac32\pi r'' |k|}$ for any $r''\in (0,\tilde r)$.

 Let $\kappa:=\frac{\tilde r-r}{9\tilde r}$ and $R:=\varepsilon_0^{\frac{\tilde r -r}{60\tilde r(4\tau+1)}}$. Then for any $k\in\Z^d\backslash\{0\}$,
we have
$$|X|_{R}^{14} \zeta^\kappa\leq   D_1^{14} e^{21\pi R |k|} \cdot
\varepsilon_0^{\frac{\tilde r- r}{12\tilde r}} e^{-\frac{2\pi(\tilde r- r)}{9}|k|}\leq 10^{-5} D_{\tau}^{-4}\gamma^{12} R^{4(4\tau+1)}.$$
The above inequality is possible since $\varepsilon_0$ is sufficiently small (the smallness only depend on $\gamma$, $\tau$,  $r_0$,  $\tilde r$, $d$).
Hence, by Theorem \ref{thm_upperbound}, we have
$$|G_k(V)|\leq \zeta^{1-\kappa}\leq \varepsilon_0^{\frac{8\tilde r+r}{12\tilde r}} e^{-\frac{2\pi}{9}(8\tilde r+r)|k|}\leq \varepsilon_0^{\frac23} e^{-2\pi r|k|},\quad \forall \  k\in\Z^d\backslash\{0\}. $$
This concludes the proof of the first statement.

Now we consider the case where $d=1$, $\beta(\alpha)=0$ and $|V|_{r_0}\leq c_0 r_0^3$.
By Theorem \ref{prop_duality_para}, there exists $r_1=r_1(r_0)\in (0,r_0)$, such that for any $r\in(0,r_1)$, one has $X\in C^{\omega}_{r}(\T,{\rm PSL}(2,\R))$ and $\zeta\in \R$ such that
$$
X(\cdot+\alpha)^{-1}S_{E_k^+}^{V}(\cdot)X(\cdot)=
\begin{pmatrix}
1 & \zeta \\
0 & 1
\end{pmatrix}.
$$
Moreover, there exists $k_1=k_1(\alpha,r_0,r)>0$ such that if $|k| \geq k_1$, then for some $n=n(k)\in\Z$ with $|n|\geq \frac{|k|}{4}$, one has
$\zeta\leq C_3(\alpha,r_0,r) e^{-\frac{\pi r}{5} |n|}$, and for any $r''\in (0,r]$,
$|X|_{r''}\leq  C_4(\alpha, r_0, r'') e^{22\pi r'' |n|}$.

Fix any $\kappa\in (0,\frac{1}{4})$ and let $R:=\frac{\kappa r_1}{2100}$, $r'=\frac56 r_1$.
By a direct calculation, if $|k|$ is large enough (hence $|n|$ is large enough), then
$$|X|_{R}^{14} \zeta^\kappa\leq C_4^ {14} C_3^{\kappa} e^{-\pi\kappa r_1 (\frac{1}{6}-\frac{11}{75})|n|}    \leq 10^{-5}D_{\alpha,R}^{-4}.$$
Thus, by Theorem \ref{thm_upperbound}, we have
$$|G_k(V)|\leq \zeta^{1-\kappa}\leq C_{10}(\alpha, r_0)e^{-\frac{\pi r_1}{6}(1-\kappa)|n|}\leq C_{10}(\alpha, r_0) e^{-\frac{3\pi r_1}{24}|k|}.$$
Modifying the constant coefficient $C_{10}$, we get the exponential upper bound for all $k\in\Z\backslash\{0\}$. This thus concludes the whole proof. 
\end{proof}

As the second application of Theorem \ref{thm_upperbound}, we get exponential decay of the upper bounds of the spectral gaps for subcritical quasi-periodic Schr\"odinger operators.

\begin{corollary}\label{cor_global}
Consider the Schr\"odinger operator $H_{V,\alpha,\theta}$ with $\beta(\alpha)=0$.
For a typical potential $V\in C^{\omega} (\T,\R)$,
% with $\Sigma_{V,\alpha}=\Sigma^{\rm sub}_{V,\alpha}\cup\Sigma^{\rm sup}_{V,\alpha}$,
 there exist constants $C,\vartheta>0$ depending only on $V$ and $\alpha$, such that
 $$|G_k(V)| \leq C e^{-\vartheta |k|}, \quad  \forall \  k\in \Z\backslash \{0\} \; { with } \;  \overline{G_k(V)} \cap \Sigma_{V,\alpha}^{\mathrm{sub}}\neq \emptyset.$$
\end{corollary}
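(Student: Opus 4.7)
The plan is to combine the quantitative reduction at subcritical gap edges provided by Corollary \ref{prop_duality_global} with the criterion of Theorem \ref{thm_upperbound}, which converts such data into an exponential bound on $|G_k(V)|$. Fix a typical $V$ and $\alpha$ with $\beta(\alpha)=0$, and take $k\in\Z\setminus\{0\}$ with $\overline{G_k(V)}\cap \Sigma^{\mathrm{sub}}_{V,\alpha}\neq\emptyset$. Let $E\in\overline{G_k(V)}\cap \Sigma^{\mathrm{sub}}_{V,\alpha}$ be the edge we work with; by Gap-Labelling, $2\rho(\alpha,S_E^V)\equiv k\alpha\!\mod\!\Z$. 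For the finitely many $k$ with $|k|<\tilde k(V,\alpha)$, the bound $|G_k(V)|\leq|\Sigma_{V,\alpha}|$ is absorbed into the constant $C$. For $|k|\geq \tilde k$, Corollary \ref{prop_duality_global} yields $c=c(V,\alpha)>0$, $Y\in C_{c/(2\pi)}^\omega(\T,\mathrm{PSL}(2,\R))$, $\varphi\in\R$ and $n=n(k)$ with $|n|\geq|k|/5$ such that
$$Y(\cdot+\alpha)^{-1}S_E^V(\cdot)Y(\cdot)=\begin{pmatrix}1 & \varphi\\ 0 & 1\end{pmatrix},\qquad |\varphi|\leq C\,e^{-c|n|/10},$$
and $|Y|_{r''}\leq C_7(V,\alpha,r'')\,e^{22\pi r''|n|}$ for every $r''\in(0,c/(2\pi))$.

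If $\varphi=0$ the gap is collapsed and the bound is trivial; otherwise, possibly swapping $E_k^-$ with $E_k^+$ and conjugating by a fixed rotation, we may assume $\zeta:=\varphi\in(0,\tfrac12)$ for $|k|$ large. Since $\beta(\alpha)=0$, the hypothesis $R>3\beta(\alpha)$ of Theorem \ref{thm_upperbound} holds for \emph{any} $R>0$ and $D_{\alpha,R}$ in \eqref{D1} is finite. Fix $\kappa\in(0,\tfrac14)$ once and for all, and choose $R=R(V,\alpha)>0$ small enough that $308\pi R<c\kappa/20$. Then
$$|Y|_R^{14}\,\zeta^\kappa\;\leq\;C_7(V,\alpha,R)^{14}\,C^{\kappa}\,\exp\!\bigl((308\pi R-c\kappa/10)|n|\bigr),$$
and our choice of $R$ makes the exponent strictly negative; hence the smallness condition \eqref{smallness_zeta_beta} holds as soon as $|n|$ (equivalently $|k|$) exceeds a threshold depending only on $V,\alpha$. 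Theorem \ref{thm_upperbound}(1) applies and gives
$$|G_k(V)|\;\leq\;\zeta^{1-\kappa}\;\leq\;C^{1-\kappa}\,e^{-c(1-\kappa)|n|/10}\;\leq\;C\,e^{-\vartheta|k|},$$
for constants $C,\vartheta>0$ depending only on $V$ and $\alpha$, completing the proof after enlarging $C$ to cover the finitely many exceptional small-$|k|$ gaps.

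The main obstacle is uniformity in $k$. The criterion demands $|Y|_R^{14}$ to be dominated by $\zeta^{-\kappa}$, so the exponential growth rate $22\pi R\cdot 14$ of the conjugacy must be strictly smaller than the decay rate $c/10$ of $\zeta$ \emph{after} $R$ is fixed. This is possible because both rates in Corollary \ref{prop_duality_global} are controlled by constants $c,C_7$ that depend only on $(V,\alpha)$ (not on the particular gap or energy), a uniformity that itself rests on the global compactness of $\Sigma^{\mathrm{sub}}_{V,\alpha}$ (Theorem \ref{global-red}) combined with the almost-localization-based estimates of Theorem \ref{prop_duality_para}. The assumption $\beta(\alpha)=0$ is used twice: to ensure the almost-localization input is available, and to guarantee that $D_{\alpha,R}<\infty$ for arbitrarily small $R$, which is exactly what lets us shrink $R$ to satisfy the growth-versus-decay balance above.
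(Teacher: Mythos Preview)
Your proof is correct and follows essentially the same route as the paper: the paper's own proof simply says ``same as Corollary \ref{cor-local}(2), replacing Proposition \ref{prop_duality_para} by Corollary \ref{prop_duality_global}'', and that is exactly what you carry out, with the same balancing of the conjugacy growth $22\pi R\cdot 14$ against the decay $c\kappa/10$ of $\zeta^\kappa$ by shrinking $R$. Your added discussion of the sign normalization of $\varphi$ and of why the constants are uniform in $k$ is accurate and makes explicit what the paper leaves implicit.
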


\begin{proof}
The proof is the same as that of Corollary \ref{cor-local} (2). One only needs to replace Proposition \ref{prop_duality_para} with  Corollary \ref{prop_duality_global}.
\end{proof}

If we restrict ourselves to subcritical almost Mathieu operators, we obtain  much better estimates:

\begin{corollary}\label{cor_mathieu_upperbound}
Consider the almost Mathieu operator $H_{\lambda,\alpha,\theta}$ with $0<\lambda<1$.   For any  $0<\xi<1$, the following assertions hold.
\begin{enumerate}
\item For $\alpha\in \R\backslash\Q$ with $\beta(\alpha)=0$, there exists
$C_{11}=C_{11}(\lambda,\alpha,\xi)>0$ such that
$$|G_k(\lambda)|\leq C_{11}(\lambda,\alpha,\xi) \lambda^{ \frac{\xi}{12} |k|}, \quad \forall \ k\in \Z\backslash\{0\}.$$
\item For $\alpha\in{\rm DC}$, there exists
$C_{12}=C_{12}(\lambda,\alpha,\xi)>0$ such that
$$|G_k(\lambda)|\leq C_{12}(\lambda,\alpha,\xi) \lambda^{\xi |k|}, \quad \forall \ k\in \Z\backslash\{0\}.$$
\end{enumerate}
\end{corollary}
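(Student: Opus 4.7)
\vspace{2mm}
\noindent\textbf{Proof plan.} The strategy for both items is identical: apply the quantitative reducibility results for subcritical almost Mathieu cocycles (Theorem \ref{thm_almost_almost-2}(2)(b) for part (1), Proposition \ref{redu amo case} for part (2)) at the right edge $E_k^+$ of each spectral gap $G_k(\lambda)$, which by the Gap-Labelling theorem satisfies $2\rho(\alpha,S_{E_k^+}^\lambda)-k\alpha\in\Z$. This produces a conjugacy $U$ to a constant parabolic cocycle $\begin{pmatrix}1 & \varphi\\ 0 & 1\end{pmatrix}$ with quantitative control on $\zeta=|\varphi|$ and on $|U|_{r''}$. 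The gap-size estimate $|G_k(\lambda)|\leq\zeta^{1-\kappa}$ then comes from the criterion Theorem \ref{thm_upperbound}, provided that the smallness condition $|U|_R^{14}\zeta^\kappa\leq 10^{-5}D_{\alpha,R}^{-4}$ (or its DC variant) is satisfied after a judicious choice of the auxiliary strip width $R$ and the exponent $\kappa$.

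\vspace{1mm}
\noindent\textit{Part (2) -- the Diophantine case.} Fix $\xi<1$, choose $\e,\kappa\in(0,\tfrac14)$ small with $(1-\kappa)(1-\e)\geq \xi$, and set $r:=\tfrac{1-\e}{2\pi}|\ln\lambda|<\tfrac{|\ln\lambda|}{2\pi}$. Proposition \ref{redu amo case} yields $|\varphi|\leq C_8\lambda^{(1-\e)|k|}$ and $|U|_{r''}\leq C_9 e^{\frac{3\pi r''}{2}|k|}$ for all $r''\in(0,r]$. Now pick $R>0$ so small that $14\cdot\tfrac{3\pi R}{2}<\kappa(1-\e)|\ln\lambda|$; then $|U|_R^{14}\zeta^\kappa$ decays exponentially in $|k|$, and hypothesis \eqref{smallness_zeta_diophantine} holds for $|k|$ large. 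Theorem \ref{thm_upperbound}(2) then gives $|G_k(\lambda)|\leq \zeta^{1-\kappa}\leq C_8^{1-\kappa}\lambda^{(1-\kappa)(1-\e)|k|}\leq C_{12}\lambda^{\xi|k|}$, with the bound extended to all $k\neq 0$ by adjusting the constant.

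\vspace{1mm}
\noindent\textit{Part (1) -- the Liouville case $\beta(\alpha)=0$.} The structure is the same but now Theorem \ref{thm_almost_almost-2}(2)(b) produces integers $n=n(k)$ with $|n|\geq|k|/4$, $|\varphi|\leq C_5 e^{-\frac{2\pi r}{3}|n|}$, and $|U|_{r''}\leq C_6 e^{22\pi r''|n|}$, for any $r\in(0,\tfrac{|\ln\lambda|}{2\pi})$. Fix $\xi<1$ and as before choose $r=\tfrac{1-\e}{2\pi}|\ln\lambda|$, $\kappa\in(0,\tfrac14)$ with $(1-\kappa)(1-\e)\geq\xi$. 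The crucial point -- and the only real delicate choice -- is that both $|U|_{r''}$ and the decay of $\zeta$ are measured in $|n|$ rather than $|k|$; hence we pick $R$ so small that $14\cdot 22\pi R<\tfrac{2\pi r\kappa}{3}$, making $|U|_R^{14}\zeta^\kappa$ decay exponentially in $|n|$. Hypothesis \eqref{smallness_zeta_beta} of Theorem \ref{thm_upperbound}(1) then holds for $|n|$ (hence $|k|$) large, and we conclude
\[
|G_k(\lambda)|\leq \zeta^{1-\kappa}\leq C_5^{1-\kappa}e^{-\frac{2\pi r(1-\kappa)}{3}|n|}\leq C_5^{1-\kappa}\lambda^{\frac{(1-\e)(1-\kappa)}{12}|k|}\leq C_{11}\lambda^{\frac{\xi}{12}|k|},
\]
where the factor $\tfrac{1}{12}$ is precisely the product of the $\tfrac{2}{3}$ from the decay rate of $\varphi$, the $\tfrac{1}{4}$ from $|n|\geq |k|/4$, and the normalization $\tfrac{1}{2\pi}$ converting $r$ into $|\ln\lambda|$.

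\vspace{1mm}
\noindent\textit{Main obstacle.} The proof is essentially a careful bookkeeping of exponents rather than a new dynamical argument; the only subtle point is that in part (1) the quantities controlling $U$ and $\varphi$ scale with the resonance index $|n|$, which a priori may be much larger than $|k|$. Choosing $R$ small enough to dominate the $22\pi R|n|$ growth of $|U|$ by the $\tfrac{2\pi r\kappa}{3}|n|$ decay of $\zeta^\kappa$ is what forces the loss of the factor $\tfrac{1}{12}$ relative to the Diophantine case and is the sole quantitative obstacle to matching part (2).
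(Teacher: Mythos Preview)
Your proposal is correct and follows essentially the same approach as the paper: apply the quantitative reducibility (Theorem \ref{thm_almost_almost-2}(2)(b) for part (1), Proposition \ref{redu amo case} for part (2)) at the gap edge, then feed the resulting bounds on $\zeta$ and $|U|_{r''}$ into the criterion Theorem \ref{thm_upperbound} after choosing $R$ small enough that $|U|_R^{14}\zeta^\kappa$ decays. The only difference is cosmetic: the paper parametrizes by first fixing $r=-\tfrac{\xi\ln\lambda}{2\pi}$ and then setting $\tilde r=\tfrac12(r-\tfrac{\ln\lambda}{2\pi})$, $\kappa=\tfrac{\tilde r-r}{10\tilde r}$, $R=\tfrac{\kappa\tilde r}{700}$, whereas you take $r$ close to the maximal width and choose $\kappa$ and $R$ separately; both routes land on the same exponent $\tfrac{\xi}{12}$ in part (1) for the same reasons you identify (the $\tfrac23$ from the decay of $\varphi$ and the $\tfrac14$ from $|n|\geq|k|/4$).
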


\begin{proof} We first consider the case $\beta(\alpha)=0$. If $2\rho{(\alpha,S_E^{\lambda})}-k\alpha \in\Z$, then by Theorem \ref{thm_almost_almost-2}, for any $0<\tilde r<-\frac{1}{2\pi}\ln\lambda$, there exist $X\in C_{\tilde r}^{\omega}(\T, {\rm PSL}(2,\R))$,  $k_2=k_2(\lambda, \alpha, \tilde r)>0$ such that if  $|k|\geq k_2$, then we have
$$
X(\cdot+\alpha)^{-1}S_{E_k^+}^{\lambda}(\cdot)X(\cdot)=
\begin{pmatrix}
1 & \zeta \\
0 & 1
\end{pmatrix}
$$
with $\zeta \leq C_5(\lambda,\alpha, \tilde r) e^{-\frac{2\pi \tilde r}{3} |n|}$ and $|X|_{r''}\leq C_6(\lambda,\alpha, r'')  e^{22\pi r'' |n|}$ for any $r''\in(0,\tilde r]$, where $n=n(k)\in \Z$ satisfies $|n|\geq \frac{|k|}{4}$.

For any $\xi\in(0,1)$, let $r:=-\frac{\xi\ln \lambda}{2\pi}$, $\tilde r:=\frac12(r-\frac{\ln \lambda}{2\pi})$, $\kappa:=\frac{\tilde r-r}{10\tilde r}$ and $R:=\frac{\kappa \tilde r}{700}$.
By a direct calculation, if $k$ is large enough (thus $n$ is large enough too), then we have
$$  |X|_{R}^{14} \zeta^\kappa\leq C_6^ {14} C_5^{\kappa} e^{-\pi\kappa\tilde r(\frac{2}{3}-\frac{11}{25})|n|}\leq 10^{-5} D_{\alpha,R}^{-4}.$$
Hence, by Theorem \ref{thm_upperbound}, we have
$$|G_k(\lambda)|\leq \zeta^{1-\kappa}\leq  C_5^{1-\kappa} e^{-\frac{\pi}{15}(9\tilde r+r)|n|}\leq C_{11}(\lambda,\alpha,\xi) \lambda^{ \frac{\xi}{12} |k|},$$
since $|n|\geq \frac{|k|}{4}$.
Modifying the constant coefficient $C_{11}$, we get the exponential upper bound for all $k\in\Z\backslash\{0\}$.

If $\alpha\in{\rm DC}$,  the proof is similar to that of Corollary \ref{cor-local} (1). One only needs to replace Theorem \ref{thm_gap_edge_SL} with Proposition \ref{redu amo case} and corresponding arguments. Therefore, for any $0<r <-\frac{ \ln \lambda}{2\pi}$, one has $|G_k(\lambda)|\leq C_8(\lambda ,\alpha, r)  e^{-2\pi r|k|}$. Now for any $0<\xi<1$, let $r:=-\frac{\xi \ln \lambda}{2\pi}$,  which gives the desired result.
 \end{proof}

%
%\proof[Proof of (2)]
%If $\alpha\in {\rm DC}$ and $2\rho{(\alpha,S_E^\lambda)}-k\alpha \in\Z$,  then by Proposition \ref{redu amo case}, for any $\tilde r\in (0,h_0)$,
%we can get (\ref{reduce_to_parabolic}) with
%$\zeta\leq C_{10} e^{-2\pi \tilde r |k|}$ for some $C_{10}=C_{10}(\lambda,\alpha, \tilde r)>0$,
% and, for any $0<r''<\tilde r$, $|X|_{r''}\leq C_{10} e^{\frac{3\pi}2 r'' |k|}$.
%As in the above proof, we have
%$|G_k(\lambda)|\leq C_{14} e^{-2\pi r|k|}$.
%\qed
%
%\smallskip

In the same way, we can also derive a criterion to obtain quantitative upper bounds on the size of spectral gaps for continuous quasi-periodic Schr\"odinger operators on $L^2(\R)$:
$$({\CL}_{V,\varpi}\psi)(x)=-\psi''(x)+V(\varpi x)\psi(x)$$
with $V\in C^{\omega}(\T^{d},\R)$ sufficiently small and $\varpi\in {\rm DC}_{d}$, $d\geq 2$.
As the proof is the same as Theorem \ref{thm_upperbound}, we state the result without proof. In fact, as the reader can see, our result is based on  Moser-P\"oschel argument \cite{Moser-Poschel}, which was first stated in the case of a continuous operator.

%
%
%By \cite{Moser-Poschel}, we know that the size of spectral gaps is determined by the reducibility of the linear system:
%\begin{equation}\label{linear_system_Schrodinger}
%\left(\begin{array}{c}
%\psi\\
%\psi'
%\end{array}\right)'=\begin{pmatrix}
%0 & 1\\
%V(\theta) -E & 0
%\end{pmatrix}\left(\begin{array}{c}
%\psi \\ \psi'
%\end{array}\right) ,\quad \theta'=\varpi.
%\end{equation}
%%At the edge point $E_k^+$ of the spectral gap $G_k(V)$, the system (\ref{linear_system_Schrodinger}) can be reduced to a parabolic one, i.e., by a conjugacy $X\in C^{\omega}(2\T^{d+1}, {\rm SL}(2,\R))$, we have
%%$$\partial_{\varpi}X=\begin{pmatrix}
%%0 & 1\\
%%V(\theta) -E_k^+ & 0
%%\end{pmatrix}X-X\begin{pmatrix}
%%0 & \zeta\\
%%0 & 0
%%\end{pmatrix}$$
%%for some $\zeta>0$.
%Since the original Moser-P\"oschel argument is made for the continuous Schr\"odinger operator and it is parallel to the discrete one,
%we present the following theorem corresponding to Theorem \ref{thm_upperbound} in the  case of a Diophantine vector of frequencies.

\begin{theorem}[Continuous version of Theorem \ref{thm_upperbound} -- upper bound]
\label{thm_upperbound_continuous}
Consider the operator ${\CL}_{V,\varpi}$ with $V\in C^{\omega}(\T^{d},\R)$ non-constant and $\varpi\in {\rm DC}_{d}(\gamma, \tau)$, $d\geq 2$.
Assume that there are $\zeta>0$ and $X\in C_{R}^\omega(\T^d, {\rm PSL}(2,\R))$ for some $R>0$ such that
$$\partial_{\varpi}X=\begin{pmatrix}
0 & 1\\
V(\theta) -E & 0
\end{pmatrix}X-X\begin{pmatrix}
0 & \zeta\\
0 & 0
\end{pmatrix}.$$
Fix $\kappa\in (0, \frac{1}{4})$.
If $ |X|_{R}^{14} \zeta^\kappa\leq 10^{-5} D^{-4}_{\tau} \gamma^{12} R^{4(4\tau+1)} $,
then $|G(V)|\leq  \zeta^{1-\kappa}$.
\end{theorem}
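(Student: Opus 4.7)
The plan is to adapt the upper bound proof of Theorem \ref{thm_upperbound} to the continuous setting, replacing the one-step KAM reduction of Lemma \ref{prop_ave} by its (technically simpler, since perturbations act infinitesimally on the generator) continuous counterpart. For the energy perturbation $E\mapsto E-\delta$ (chosen so that we move into the gap), the generator differs from $A_E=\bigl(\begin{smallmatrix}0 & 1\\ V-E & 0\end{smallmatrix}\bigr)$ by $\delta E_{21}$ with $E_{21}=\bigl(\begin{smallmatrix}0 & 0\\ 1 & 0\end{smallmatrix}\bigr)$. Using the defining relation $\partial_\varpi X=A_E X-X b_0$ with $b_0=\bigl(\begin{smallmatrix}0 & \zeta\\ 0 & 0\end{smallmatrix}\bigr)$, the conjugated system has generator $b_0+\delta Q$, where
\[
Q(\theta)=X(\theta)^{-1}E_{21}X(\theta)=\begin{pmatrix}-X_{11}X_{12} & -X_{12}^{2}\\ X_{11}^{2} & X_{11}X_{12}\end{pmatrix},\qquad |Q|_{R}\leq |X|_{R}^{2}.
\]

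The one-step KAM reduction then solves $\partial_\varpi Y=[b_0,Y]+\delta(Q-[Q])$ using the Diophantine lower bound $|\langle n,\varpi\rangle|\geq \gamma|n|^{-\tau}$; the small divisor analysis is identical to the discrete Lemma \ref{prop_ave}(2), only simpler since the denominators are $\mathrm{i}\langle n,\varpi\rangle$ rather than $e^{\mathrm{i}\langle n,\varpi\rangle}-1$. Setting $\tilde X=e^{Y}$ conjugates $b_0+\delta Q$ into $b_0-\delta b_1+\delta^{2}P_{1}$ with $b_1:=-[Q]$ and $|P_{1}|_{R/2}\leq 2D_{\tau}^{2}\gamma^{-6}R^{-2(4\tau+1)}|X|_{R}^{4}$. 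A direct computation of $\det(b_0-\delta b_1)$, together with $\det Q=-([X_{11}X_{12}])^{2}+[X_{11}^{2}][X_{12}^{2}]$ and the off-diagonal term $\zeta[X_{11}^{2}]$, yields exactly
\[
d(\delta):=\det(b_0-\delta b_1)=-\delta\zeta[X_{11}^{2}]+\delta^{2}\bigl([X_{11}^{2}][X_{12}^{2}]-[X_{11}X_{12}]^{2}\bigr),
\]
identical in form to \eqref{d_delta}.

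Only the continuous analogs of Lemmas \ref{z1-estimate} and \ref{esti_X11_X12} remain to be ported. Lemma \ref{z1-estimate} transfers verbatim, since it only uses $\det X=1$ and $\partial_\varpi X_{11}=X_{21}$ (so $X_{21}(\cdot)$ replaces the shifted $X_{11}(\cdot+\alpha)$). For Lemma \ref{esti_X11_X12}, the discrete invariance identity is replaced by the differential identity
\[
X_{11}\partial_{\varpi}X_{12}-X_{12}\partial_{\varpi}X_{11}=1-\zeta X_{11}^{2},
\]
obtained at once from $\partial_\varpi X=A_E X-X b_0$ together with $\det X=1$. Writing $X_{12}=cX_{11}+\sigma$, integrating over $\T^{d}$, and integrating by parts turns the left-hand side into $-2[\sigma\,\partial_\varpi X_{11}]=-2[\sigma X_{21}]$, and Cauchy--Schwarz combined with $\|X_{21}\|_{L^{2}}\leq|X|_{R}$ reproduces the same contradiction as in the discrete argument under $|X|_{R}\zeta^{\kappa/2}\leq 1/4$. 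Hence $[X_{11}^{2}][X_{12}^{2}]-[X_{11}X_{12}]^{2}\geq 8\zeta^{2\kappa}$.

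With all pieces assembled, setting $\delta_1=\zeta^{1-\kappa}$ gives $d(\delta_1)\geq 4\zeta^{2}$; diagonalizing $e^{b_0-\delta_1 b_1}$ as a rotation by angle $\sqrt{d(\delta_1)}$ via an $\mathrm{SL}(2,\R)$ matrix $\mathcal P$ of norm bounded by $|X|_{R}\zeta^{-\kappa/2}$, and invoking Lemma \ref{esti_rot_num} to control $|\rho-\sqrt{d(\delta_1)}|$ by $\delta_1^{2}|\mathcal P|^{2}|P_{1}|_{R/2}$, the smallness hypothesis $|X|_{R}^{14}\zeta^{\kappa}\leq 10^{-5}D_{\tau}^{-4}\gamma^{12}R^{4(4\tau+1)}$ absorbs the error term and forces $\rho>0$. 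Since the rotation number of a continuous Schr\"odinger cocycle is locally constant on gaps of $\Sigma_{V,\varpi}$, this rules out $E-\delta_1\in\overline{G(V)}$ and yields $|G(V)|\leq\zeta^{1-\kappa}$. The only genuinely new technical point compared with the discrete argument is the derivative bound in Step 3, which I expect to be the main obstacle; however, the relation $\partial_\varpi X_{11}=X_{21}$ trades the derivative for a matrix entry and reduces everything to $|X|_{R}$, so this obstacle dissolves at the symbolic level.
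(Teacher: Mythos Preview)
Your adaptation is on the right track and matches the paper's intent (the paper gives no proof, only asserting that it is identical to the discrete case). The continuous KAM step, the formula for $d(\delta)$, and your differential identity $X_{11}\partial_\varpi X_{12}-X_{12}\partial_\varpi X_{11}=1-\zeta X_{11}^{2}$ for Lemma~\ref{esti_X11_X12} are all correct.

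There is, however, one genuine slip: the claim that Lemma~\ref{z1-estimate} ``transfers verbatim'' is not justified. In the discrete proof the crucial step is $\|X_{11}\|_{L^2}=\|X_{21}\|_{L^2}$, which follows from $X_{21}(\cdot+\alpha)=X_{11}(\cdot)$ together with translation invariance of Lebesgue measure. The continuous relation $X_{21}=\partial_\varpi X_{11}$ does \emph{not} yield equality of $L^2$ norms --- a small function can have a large directional derivative --- so the argument does not go through ``at the symbolic level''. To repair it, bring in the second row of the conjugacy equation, $\partial_\varpi X_{21}=(V-E)X_{11}$: integration by parts on $\T^d$ gives
\[
[X_{21}^{2}]=[X_{21}\,\partial_\varpi X_{11}]=-[X_{11}\,\partial_\varpi X_{21}]=[(E-V)X_{11}^{2}]\le(|E|+|V|_{\T^d})[X_{11}^{2}],
\]
whence $[X_{11}^{2}]+[X_{21}^{2}]\le(1+|E|+|V|_{\T^d})[X_{11}^{2}]$. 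Combined with $\|u_1\|_{L^2}^{2}\ge|X|_{\T^d}^{-2}$ (which does rely only on $\det X=1$), this yields $[X_{11}^{2}]\ge\big((1+|E|+|V|_{\T^d})|X|_{\T^d}^{2}\big)^{-1}$. The extra $(1+|E|+|V|_{\T^d})$ factor is harmless: the proof only needs $[X_{11}^{2}]\ge 4\zeta^{\kappa}$, i.e.\ a bound of the form $|X|_R^{2}\zeta^{\kappa}\le c(V)$, which is amply implied by the hypothesis $|X|_R^{14}\zeta^{\kappa}\le 10^{-5}D_\tau^{-4}\gamma^{12}R^{4(4\tau+1)}$ (recall $|X|_R\ge 1$), and in the sole application (Theorem~\ref{sharpcon}) $V$ is small anyway.
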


\noindent

\begin{theorem}\label{sharpcon}
Let $\varpi  \in {\rm DC}_d$ and $V\in C_{r_0}^{\omega}(\T^{d},\R)$. For any $r\in(0,r_0)$, there exists $\varepsilon_0= \varepsilon_0(V, \varpi, r_0, r)>0$  such that if $|V|_{r_0} < \varepsilon_0$, then for the operator ${\CL}_{V,\varpi}$,
$$ |G_k(V)| \leq \varepsilon_0^{\frac23} e^{- r |k|}, \quad \forall \  k\in \Z^d\backslash\{0\}.$$
\end{theorem}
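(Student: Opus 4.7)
The proof follows exactly the strategy of Corollary \ref{cor-local}(1), substituting the continuous versions of the two key tools: Theorem \ref{thm_gap_edge_algebra} for the quantitative almost reducibility of the continuous linear system at a gap edge, and Theorem \ref{thm_upperbound_continuous} for the resulting gap-size criterion. I would first write the equation $\CL_{V,\varpi}\psi = E\psi$ as the linear system $(\varpi, A_E + F_0)$ with
\[
A_E := \begin{pmatrix} 0 & 1 \\ -E & 0 \end{pmatrix} \in {\rm sl}(2,\R), \qquad F_0(\theta) := \begin{pmatrix} 0 & 0 \\ V(\theta) & 0 \end{pmatrix},
\]
fix $\tilde r := (r_0 + r)/2$, and apply Theorem \ref{thm_gap_edge_algebra} at the right edge point $E_k^+$ of each non-collapsed gap $G_k(V)$ (collapsed gaps satisfy the claim vacuously). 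By the Gap-Labelling Theorem, $2\rho(\varpi, A_{E_k^+} + F_0) \equiv \la k, \varpi\ra \pmod{\Z}$, and $(\varpi, A_{E_k^+} + F_0)$ is not uniformly hyperbolic at the edge. Theorem \ref{thm_gap_edge_algebra}(2) then produces $X \in C^\omega_{\tilde r}(\T^d, {\rm PSL}(2,\R))$ with
\[
\partial_\varpi X = (A_{E_k^+} + F_0) X - X\begin{pmatrix} 0 & \zeta \\ 0 & 0 \end{pmatrix},
\]
and estimates $|\zeta| \leq \varepsilon_0^{3/4} e^{-2\pi \tilde r |k|}$ together with $|X|_{r''} \leq D_1 e^{3\pi r'' |k|/2}$ for all $r'' \in (0, \tilde r]$.

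Next, set $\kappa := (\tilde r - r)/(9\tilde r) \in (0,1/4)$ and $R := \varepsilon_0^{(\tilde r - r)/(60\tilde r(4\tau+1))}$, and feed these data into Theorem \ref{thm_upperbound_continuous}. The verification
\[
|X|_R^{14} \zeta^\kappa \leq D_1^{14} e^{21\pi R |k|} \cdot \varepsilon_0^{(\tilde r - r)/(12\tilde r)} e^{-2\pi(\tilde r - r)|k|/9} \leq 10^{-5} D_\tau^{-4} \gamma^{12} R^{4(4\tau+1)}
\]
is the same parameter computation carried out in the proof of Corollary \ref{cor-local}(1), valid once $\varepsilon_0$ is small enough depending on $\gamma, \tau, r_0, r, d$. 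Theorem \ref{thm_upperbound_continuous} then yields
\[
|G_k(V)| \leq \zeta^{1-\kappa} \leq \varepsilon_0^{(8\tilde r + r)/(12\tilde r)} e^{-2\pi(8\tilde r + r)|k|/9} \leq \varepsilon_0^{2/3} e^{-r|k|},
\]
as required.

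\textbf{Main obstacle.} The subtlety that does not appear in the discrete case is that $\Sigma_{V,\varpi}$ is unbounded above, so $|A_{E_k^+}|$ (which scales like $\sqrt{|E_k^+|}$) is not uniformly controlled in $k$. I would need to check that the smallness threshold $\varepsilon_*(|A_{E_k^+}|, \gamma, \tau, r_0, \tilde r, d)$ furnished by Theorem \ref{thm_gap_edge_algebra} can be chosen uniformly in $k$. An inspection of the smallness condition \eqref{smallness_condition} in Proposition \ref{prop_iteration} shows that its dependence on $|A|$ is monotone non-decreasing, so that a single $\varepsilon_0 = \varepsilon_0(V, \varpi, r_0, r)$ suffices uniformly across all gap labels; once this point is established, the rest of the argument is the routine parameter bookkeeping already performed in the discrete setting.
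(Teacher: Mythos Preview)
Your proposal is correct and follows exactly the paper's own proof, which consists of the single sentence: replace Theorem \ref{thm_gap_edge_SL} by Theorem \ref{thm_gap_edge_algebra} and Theorem \ref{thm_upperbound} by Theorem \ref{thm_upperbound_continuous} in the argument of Corollary \ref{cor-local}(1). You actually go further than the paper by flagging the unboundedness of the continuous spectrum and observing that the smallness condition \eqref{smallness_condition} is monotone in $|A|$; the paper does not comment on this point at all.
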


\begin{proof}The proof is exactly the same as that of Corollary \ref{cor-local} (1). One only needs to replace Theorem \ref{thm_gap_edge_SL}  with Theorem \ref{thm_gap_edge_algebra}, and replace Theorem \ref{thm_upperbound} with Theorem \ref{thm_upperbound_continuous}. \end{proof}

%[Proof of Theorem \ref{sharpdecay}] Consider the system $(\varpi, A_0+F_0)$:
%$$A_0:=\begin{pmatrix}
%0 & 1\\
%-E_k^+ & 0
%\end{pmatrix},\quad F_0(\theta):=\begin{pmatrix}
%0 & 0\\
%V(\theta) & 0
%\end{pmatrix}$$
%with $V\in C^{\omega}_h(\T^{d},\R)$ and $\varpi\in {\rm DC}_{d}(\gamma,\tau)$.
%Given any $r'\in (0,h)$, by Theorem \ref{thm_gap_edge_algebra}, there is $\varepsilon_*=\varepsilon_*(\gamma,\tau,h,r',d)>0$ such that if $|V|_h< \varepsilon_0< \varepsilon_*$, then
%(\ref{reduce_to_parabolic_continuous}) holds with $|\zeta|\leq \varepsilon_0^{\frac34}e^{-2\pi r'|k|}$ and, for any $\epsilon\in (0,\frac{r'}{2})$, $|X|_{2\epsilon}\leq D_1e^{3\pi\epsilon|k|}$
%some constant $D_1= D_1(\gamma,\tau, h, d)>0$.
%
%Let $r:=\frac{h+r'}{2}$ and $\epsilon:=\varepsilon_0^{\frac{1}{180(\tau+2)}}$. Since $\varepsilon_0$ is sufficiently small (depending on $\gamma,  \tau,  h, r'=\frac{r_0+r}{2}, d$), we have
%$$\left(\varepsilon_0^{\frac{9\epsilon}{r'}} e^{-24\pi\epsilon|k|} +\varepsilon_0^{\frac7{10}-\frac{15\epsilon}{r'}} e^{-(r'-20\epsilon)|k|}\right)\left(1+D_1e^{3\pi\epsilon |k|}\right)^6  \leq \vartheta_{2}, \quad \forall \  k\in\Z^d\backslash\{0\},$$
%with $\vartheta_{2}=\vartheta_{2}(\gamma, \tau, d)$ given in Theorem \ref{thm_upperbound_continuous}.
%Hence, by Theorem \ref{thm_upperbound_continuous},
%$$|G_k(V)|\leq 2\zeta^{1-\frac{8\epsilon}{r'}}\leq 2\varepsilon_0^{\frac34(1-\frac{8\epsilon}{r'})} e^{-2\pi \left(r'-8\epsilon\right)|k|}\leq \varepsilon_0^{\frac23} e^{-2\pi r|k|},\quad \forall \  k\in\Z^d\backslash\{0\}.\quad \qed$$

\subsection{Applications of the criterion -- lower bound}\label{subsec_lower_bound}

For general Schr\"odinger operators, the spectral gaps may collapse since the corresponding off-diagonal element $\zeta$ may vanish.
However, this is not true for non-critical almost Mathieu operators $H_{ \lambda,\alpha,\theta}$ \cite{AvilaJito,AYZ}. Now we further derive exponentially decaying lower bounds on the  size of the gaps $G_k(\lambda)$.

%For the almost Mathieu operator $H_{ \lambda,\alpha,\theta}$, we have the following lower bound for each spectral gap.

\begin{corollary}\label{thm_lowerbound}
Consider the almost Mathieu operator $H_{\lambda,\alpha,\theta}$ with $0<\lambda<1$, $\alpha\in {\rm DC}(\gamma,\tau)$. There exists an absolute constant $\tilde\xi>1$ such that
$$|G_k(\lambda)| \geq C(\lambda,\alpha) \lambda^{\tilde\xi |k|}, \quad \forall \ k\in\Z\backslash\{0\}.$$
\end{corollary}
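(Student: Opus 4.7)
The strategy is to run the gap criterion of Theorem \ref{thm_upperbound}(2) in the reverse direction: it provides the two-sided estimate $\zeta^{1+\kappa}\leq|G_k(\lambda)|\leq \zeta^{1-\kappa}$, so any lower bound on the off-diagonal parameter $\zeta$ at the gap edge translates directly into a lower bound on the gap. Corollary \ref{cor_mathieu_upperbound} used the upper inequality together with the upper bound on $\zeta$ coming from Proposition \ref{redu amo case}; here, the central task is to combine the same reducibility with a matching \emph{lower} bound on $\zeta$, which is the content of a key proposition from \cite{AYZ} already exploited there to settle the non-critical Dry Ten Martini Problem.

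First, I would apply Proposition \ref{redu amo case} at the right edge point $E_k^+$ of $G_k(\lambda)$, which is legitimate since $2\rho(\alpha,S_{E_k^+}^\lambda)-k\alpha\in\Z$. For a fixed $r\in(0,-\tfrac{\ln\lambda}{2\pi})$ close to the maximum, this produces $X\in C_r^\omega(\T,\mathrm{PSL}(2,\R))$ and $\zeta\in\R$ with
\[
X(\cdot+\alpha)^{-1}S_{E_k^+}^{\lambda}(\cdot)X(\cdot)=\begin{pmatrix}1 & \zeta\\ 0 & 1\end{pmatrix},\qquad |\zeta|\leq C_8 e^{-2\pi r|k|},\qquad |X|_{r''}\leq C_9 e^{\tfrac{3\pi r''}{2}|k|}.
\]
To invoke Theorem \ref{thm_upperbound}(2), fix some $\kappa\in(0,\tfrac{1}{4})$ and pick an auxiliary radius of the form $R=\rho/|k|$ with $\rho>0$ small but fixed. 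Then $|X|_R^{14}\leq C$ is bounded, $\zeta^\kappa$ is exponentially small in $|k|$, while the right-hand side of \eqref{smallness_zeta_diophantine} decays only polynomially in $|k|^{-1}$; hence the hypothesis of Theorem \ref{thm_upperbound}(2) is satisfied for all sufficiently large $|k|$, and we obtain $|G_k(\lambda)|\geq \zeta^{1+\kappa}$.

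The remaining and central step is to establish a matching lower bound $|\zeta|\geq c(\lambda,\alpha)\,\lambda^{\xi_0|k|}$ with $\xi_0$ an absolute constant. This is exactly the content of the key proposition from \cite{AYZ}: via Aubry duality, the subcritical operator with coupling $\lambda$ corresponds to the supercritical operator with coupling $\lambda^{-1}$, whose localized eigenfunction at the dual phase $\theta$ (on resonance $2\theta-k\alpha\in\Z$) decays at rate $|\ln\lambda|$. Writing the conjugation $X$ through the associated Bloch wave and tracking how the normalization forces the off-diagonal term, one obtains a two-sided exponential estimate of rate $|\ln\lambda|$ (up to polynomial factors in $|k|$), so that $|\zeta|\geq c(\lambda,\alpha)\lambda^{\xi_0|k|}$ for some absolute $\xi_0>0$. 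Combining with $|G_k(\lambda)|\geq\zeta^{1+\kappa}$ yields
\[
|G_k(\lambda)|\geq c(\lambda,\alpha)^{1+\kappa}\,\lambda^{(1+\kappa)\xi_0|k|},
\]
and setting $\tilde{\xi}:=(1+\kappa)\xi_0$ (still absolute, since $\kappa$ is fixed) gives the desired bound, after absorbing the finitely many small values of $|k|$ into the constant $C(\lambda,\alpha)$.

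\textbf{Main obstacle.} The crux is the lower bound on $\zeta$. The reducibility estimates of Proposition \ref{redu amo case} are one-sided and certify only that $\zeta$ is small, which is insufficient for a lower bound on the gap; without the \cite{AYZ} input one could not even rule out $\zeta=0$ (this is exactly the Dry Ten Martini content), and the quantitative form of that proposition is what ensures the exponential rate comes out as an absolute constant, rather than a rate that deteriorates with the Diophantine constants of $\alpha$ or with $\lambda$.
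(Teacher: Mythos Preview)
Your plan is correct and mirrors the paper's proof: apply Proposition~\ref{redu amo case} at the gap edge, verify the hypothesis of Theorem~\ref{thm_upperbound}(2) to get $|G_k(\lambda)|\geq\zeta^{1+\kappa}$, and then bound $\zeta$ from below via the \cite{AYZ} input. Two small points of comparison. First, the paper takes a \emph{fixed} radius $R=r/210$ (with $r=-\tfrac{1}{4\pi}\ln\lambda$) and $\kappa=\tfrac{1}{10}$, so that both $|X|_R^{14}$ and the right-hand side of \eqref{smallness_zeta_diophantine} are $k$-independent constants while $\zeta^\kappa$ decays exponentially; your choice $R=\rho/|k|$ also works for the criterion, but a fixed $R$ is what you need for the second step. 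Second, and more substantively, the \cite{AYZ} input is used in the paper not as a direct Bloch-wave computation of $\zeta$ but as an \emph{obstruction} (Proposition~\ref{estimate-kappa}): for a suitable fixed $R<-\tfrac{1}{2\pi}\ln\lambda$ there exists $T=T(R,\lambda)=C'(\tfrac{\ln\lambda}{2\pi R})^2$ such that no $Z$ with $|Z|_R\leq\varepsilon^{-1}$ can conjugate $S_E^\lambda$ to within $\varepsilon^T$ of the identity. Applying this contrapositively with $\varepsilon^{-1}:=|X|_R\leq C_9 e^{\frac{3\pi R}{2}|k|}$ yields $\zeta>|X|_R^{-T}\geq C(\lambda,\alpha)\lambda^{\xi_0|k|}$, with $\xi_0$ absolute because $R$ is a fixed multiple of $-\ln\lambda$. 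Your description via ``normalization of the Bloch wave forcing the off-diagonal term'' is a plausible heuristic for why such an obstruction should hold, but it is not the actual mechanism; the Bloch-wave construction by itself only yields upper bounds on $\zeta$.
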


\smallskip

%We consider \eqref{reduce_to_parabolic} for the choices $X:=Z$ and $\zeta:=\kappa$, where $Z$ and $\kappa$ are taken as in the conclusion of Proposition \ref{redu amo case}.
%Let $d(\delta)$ be taken as in (\ref{d_delta}). To get a lower bound on $|G_k(\lambda)|$, one only needs  to estimate the first-order term of $d(\delta)$, and one therefore has
%\begin{equation}\label{low-bound}
%|G_k(\lambda)| \geq  c_2 \zeta [X_{11}^2]
%\end{equation}
%for some constant $c_2>0$ independent of $k$.
%Since this fact is quite standard (see \cite{AYZ, Moser-Poschel}), we omit the details.

The following proposition plays a key role in Avila-You-Zhou's proof in solving the non-critical ``Dry Ten Martini Problem''  \cite{AYZ}. 
We point out that it works for all irrational frequencies while in \cite{AYZ} the authors mainly deal with Liouvillean frequencies.

\begin{prop}[Avila-You-Zhou \cite{AYZ}]\label{estimate-kappa}
Let $\alpha\in\R\backslash \Q$, $0<\lambda<1$, $E\in\Sigma_{\lambda,\alpha}$ and $0<R<-\frac{1}{2\pi}\ln \lambda$.
There exists $T=T(R,\lambda)>0$ such that for $\varepsilon>0$ sufficiently small, there is no $Z \in C_{R}^\omega(\T, {\rm PSL}(2,\R))$ satisfying
\begin{equation}\label{almost}
Z(\cdot+\alpha)^{-1}S_{E}^{\lambda}(\cdot)Z(\cdot)=\mathrm{Id}+F(\cdot),
\end{equation}
with
$|Z|_R \leq \varepsilon^{-1}$, $|F|_R\leq\varepsilon^T$.
\end{prop}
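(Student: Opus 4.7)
The plan is to argue by contradiction, passing through Aubry duality to exploit positivity of the Lyapunov exponent for the dual operator. Suppose such $Z$ exists, and write $Z=(\mathcal{V}^{(1)},\mathcal{V}^{(2)})$ in columns, where $\mathcal{V}^{(k)}=(v_1^{(k)},v_2^{(k)})^{\mathrm{T}}\in C^{\omega}_R(\T,\R^2)$. Since $\det Z\equiv 1$, the two columns are pointwise linearly independent on $\T$, and the hypothesis translates to
$$S_E^{\lambda}(\theta)\,\mathcal{V}^{(k)}(\theta)=\mathcal{V}^{(k)}(\theta+\alpha)+O(\varepsilon^{T-1}), \qquad k=1,2,$$
uniformly on $\{|\Im\theta|<R\}$.

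The first step is to convert this into an approximate dual eigenvalue problem. Writing out the two coordinates of the relation, one finds $v_2^{(k)}(\theta)=v_1^{(k)}(\theta-\alpha)+O(\varepsilon^{T-1})$ and
$$v_1^{(k)}(\theta+\alpha)+v_1^{(k)}(\theta-\alpha)+2\lambda\cos(2\pi\theta)\,v_1^{(k)}(\theta)=E\,v_1^{(k)}(\theta)+O(\varepsilon^{T-1}).$$
Fourier expanding $v_1^{(k)}(\theta)=\sum_j \widehat{u}^{(k)}_j e^{2\pi\mathrm{i}j\theta}$, this is exactly the eigenvalue equation $\widehat{H}_{\lambda,\alpha,0}\widehat{u}^{(k)}=E\widehat{u}^{(k)}$ modulo an error term whose $j$-th Fourier coefficient is $O(\varepsilon^{T-1}e^{-2\pi R|j|})$. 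Analyticity of $v_1^{(k)}$ in the strip yields $|\widehat{u}^{(k)}_j|\leq\varepsilon^{-1}e^{-2\pi R|j|}$, so each $\widehat{u}^{(k)}$ is an approximate $\ell^2$-eigenfunction of the \emph{dual} operator $\widehat{H}_{\lambda,\alpha,0}=\lambda H_{\lambda^{-1},\alpha,0}$ at the energy $E$.

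The second step is to show that $\widehat{u}^{(1)}$ and $\widehat{u}^{(2)}$ are quantitatively linearly independent in $\ell^2$, with a lower bound on the determinant of their Gram matrix that is independent of $\varepsilon$. This is a consequence of $\det Z\equiv 1$ together with the isometric correspondence between the $L^2(\T,\R^2)$ norm of $\mathcal{V}^{(k)}$ and the $\ell^2(\Z)$ norms of $(\widehat{u}^{(k)},\text{Fourier coefficients of }v_2^{(k)})$; the pointwise non-degeneracy of $Z$ forbids simultaneous cancellation of both coordinates in the $L^2$ pairing.

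The final step delivers the contradiction: since $0<\lambda<1$, the dual cocycle $(\alpha,\widehat{S}_E)$ is supercritical with Lyapunov exponent $L=-\ln\lambda>2\pi R$, and the self-adjoint Jacobi operator $\widehat{H}_{\lambda,\alpha,0}$ is limit-point at both ends. Positivity of $L$ together with uniform hyperbolicity of the dual cocycle on the line $\{\Im z=0\}$ forces the $\ell^2$-eigenspace at any fixed energy to be at most one-dimensional, with a quantitative Green's function estimate controlling the distance from an approximate $\ell^2$-eigenfunction to that subspace. Truncating each $\widehat{u}^{(k)}$ to a window $|j|\leq N\asymp |\ln\varepsilon|$ and applying the resolvent bound, one deduces that $\widehat{u}^{(1)},\widehat{u}^{(2)}$ must lie within $\varepsilon^{c(L-2\pi R)T}$ of a one-dimensional subspace. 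Choosing $T=T(R,\lambda)$ large enough that this bound beats the fixed linear-independence constant from Step 2 yields a contradiction for $\varepsilon$ sufficiently small.

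The principal obstacle is the final quantitative step: making the heuristic \emph{``positive Lyapunov exponent plus limit-point self-adjointness forbids two independent approximate $\ell^2$-eigenfunctions''} into an effective estimate with the correct dependence on $\varepsilon^T$. The natural calibration is $T(R,\lambda)\sim (L-2\pi R)^{-1}$ up to an absolute factor, so the argument degenerates as $R\uparrow -\frac{1}{2\pi}\ln\lambda$, which reflects the essential use of the gap between the subcritical strip width $R$ and the acceleration threshold $-\frac{1}{2\pi}\ln\lambda$.
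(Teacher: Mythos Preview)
This proposition is not proved in the paper; it is quoted from \cite{AYZ}, with the subsequent remark recording that their argument gives $T(R,\lambda)=C'(\ln\lambda/2\pi R)^{2}$. Your Aubry-duality route---Fourier-expand the columns of $Z$ to obtain two approximate $\ell^2$-eigenfunctions of the dual operator $\lambda H_{\lambda^{-1},\alpha,0}$, then contradict simplicity of the point spectrum---is indeed the natural approach and close in spirit to \cite{AYZ}. Steps~1--4 are sound (Step~4 can be made precise via Minkowski's determinant inequality applied to $\int_{\T} Z^{\top}Z\,d\theta$, which gives Gram determinant $\geq 1$).

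The genuine gap is Step~5. The dual cocycle is \emph{not} uniformly hyperbolic on $\{\Im z=0\}$: since $E\in\Sigma_{\lambda,\alpha}$, duality places $E/\lambda$ in $\Sigma_{\lambda^{-1},\alpha}$, so one has positive Lyapunov exponent $-\ln\lambda$ but no uniform hyperbolicity, and hence no Green's-function or resolvent bound to invoke. The quantitative step from ``two approximate decaying solutions'' to ``nearly proportional'' therefore cannot proceed as you suggest; it must instead exploit directly the gap between the Fourier decay rate $2\pi R$ and the localization rate $-\ln\lambda$ of dual eigenfunctions, and this is exactly the work you have not supplied (as your closing paragraph concedes). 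That your predicted scaling $T\sim(-\ln\lambda-2\pi R)^{-1}$ behaves oppositely in $R$ to the $T\sim(-\ln\lambda/2\pi R)^{2}$ recorded in the paper's remark is further evidence that the actual mechanism in \cite{AYZ} differs from your resolvent heuristic.
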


\begin{remark}\label{remark_T}
If one checks the argument in  \cite{AYZ}, it  gives us $T(R,\lambda)=C' \left(\frac{\ln \lambda}{2\pi R}\right)^2$ where $C'$ is a large absolute constant. If (\ref{almost}) in the above proposition can be promoted to  reducibility, i.e.,
$$Z(\cdot+\alpha)^{-1}S_{E}^{\lambda}(\cdot)Z(\cdot)={\rm Id}+F$$
for some constant $F$, then one can actually obtain more precise estimates on $C'$.
\end{remark}

\begin{proof}
[Proof of Corollary \ref{thm_lowerbound}]
By Proposition \ref{redu amo case} (2), for $r=-\frac{1}{4\pi}\ln\lambda$, and any $k\in\Z\backslash\{0\}$,
there exist $\zeta\in\R$ and
$X\in C^\omega_r(\T,{\rm PSL}(2,\R))$ such that
$$X(\cdot+\alpha)^{-1}S_{E_k^+}^{\lambda}(\cdot)X(\cdot)
= \begin{pmatrix}
1 & \zeta \\
0 & 1 \\
\end{pmatrix}
$$
with $\zeta\leq C_{8}(\lambda, \alpha) e^{-2\pi r |k|}$ and $|X|_{r''} \leq C_{9}(\lambda, \alpha) e^{\frac{3\pi r''}{2}|k|}$ for any $0<r''< r$.
Then for $R:=\frac{r}{210}$ and $\kappa:=\frac{1}{10}$, it is easy to see that for $|k|$ large enough,
$$|X|_{R}^{14} \zeta^\kappa \leq C_{9}^{14} C_{8}^{\frac{1}{10}} e^{-\frac{\pi r}{10} |k|}\leq 10^{-5} D^{-4}_{\tau} \gamma^{12} R^{4(4\tau+1)}.$$
Hence, by Theorem \ref{thm_upperbound}, we get $|G_k(\lambda)|\geq \zeta^{\frac{11}{10}}$.

Now it suffices to obtain a lower bound on $\zeta$.
By Proposition \ref{estimate-kappa} (see also Remark \ref{remark_T}), for $|k|$ large enough, we have $$\zeta> C_{9}^{-T(R,\lambda)} e^{-\frac{\pi T(R,\lambda) }{140} r |k|} >C(\lambda,\alpha)e^{- \frac{40\pi}{11}\tilde{\xi} r|k|},$$ where $\tilde{\xi}>1$ is an absolute constant.
As a consequence, one can conclude that
$$|G_k(\lambda)|>C(\lambda,\alpha) \lambda^{\tilde{\xi}|k|}.$$
By modifying the constant $C$, we get the above lower bound for every $k\in\Z\backslash\{0\}$. This finishes the proof of Corollary \ref{thm_lowerbound}.
\end{proof}

\begin{proof}[Proof of Theorem \ref{thm_bounds_Mathieu}]
By Aubry duality, it is enough to consider the case where $0<\lambda<1$. Then the result follows from
 the assertion (2) of Corollary \ref{cor_mathieu_upperbound} and Corollary \ref{thm_lowerbound}.
\end{proof}

\section{Homogeneous spectrum}\label{sec_homo}

\subsection{Criterion for homogeneity of spectrum}

We first present a general criterion for establishing the homogeneous spectrum via gap estimates. The idea first appeared in Corollary 3 of \cite{HA} and then Theorem H of \cite{DGL1}. The philosophy is that the H\"older continuity together with some decay of the spectral gaps should yield homogeneity of the spectrum. In the following,  contrary to \cite{DGL1,HA}, we give a criterion which works for a large potential and a Liouvillean frequency. We emphasize that the homogenous spectrum for $\beta(\alpha)=0$  can be obtained if the exponential decay of the spectral gaps is established.

\begin{theorem}\label{thm_homo_spec}
Let $\alpha\in\T^d$ with $\beta=\beta(\alpha)\geq 0$, and let $V\in C^{\omega}(\T^d,\R)$ be non-constant. Assume that
\begin{itemize}
\item [(H1)] $N=N_{V,\alpha}$ is $\sigma-$H\"older continuous on $[a,b]$ with $0<\sigma<1$.
\end{itemize}
Then for any $\tilde\epsilon>0$, there exists $C_{12}=C_{12}(V,\alpha,\sigma,\tilde\epsilon)>0$ such that
for any two spectral gaps $G_k(V)$ and $G_{k'}(V)$ with
$$\overline{G_k(V)}\cap [a,b], \;\  \overline{G_{k'}(V)}\cap [a,b] \neq \emptyset,$$
we have
\begin{align}
\mathrm{dist}(G_k(V),G_{k'}(V)) &\geq  C_{12} e^{-(\frac{\beta}{\sigma}+\tilde\epsilon) |k-k'|}, \;\  if  \    k\neq k',\label{distance_1}\\
|E_k^--\underline{E}|&\geq C_{12} e^{-(\frac{\beta}{\sigma}+\tilde\epsilon) |k|},\;\ if \ a=\underline{E}, \label{distance_01}\\
|E_k^+-\overline{E}| &\geq C_{12} e^{-(\frac{\beta}{\sigma}+\tilde\epsilon) |k|}, \;\ if \ b=\overline{E}. \label{distance_02}
\end{align}
Furthermore, for an interval  $[a, b]$  with $a=\underline{E}$ or $E_{m}^+$ for some $m\in\Z^d\backslash\{0\}$ and $b=\overline{E}$ or $E_{n}^-$ for some $n\in\Z^d\backslash\{0\}$, if
\begin{itemize}
\item [(H2)]  there exist $C$, $\vartheta>0$, which only depend on $V,\alpha$, such that $|G_k(V)|\leq C e^{-\vartheta |k|}$ if $\overline{G_k(V)}\cap [a,b] \neq \emptyset$,
\item [(H3)] $ \beta=\beta(\alpha)\leq \frac{\sigma \vartheta}{2}$,
\end{itemize}
 hold, then there exists $\mu=\mu(a,b,V, \alpha, \sigma, C, \vartheta, d)\in (0,1)$, such that
$$| (E-\epsilon,E+\epsilon) \cap \Sigma_{V, \alpha}| > \mu\epsilon,\quad \forall \ E\in \Sigma_{V, \alpha}\cap [a,b], \;\  \forall \ 0<\epsilon\leq {\rm diam}\Sigma_{V, \alpha}.$$
\end{theorem}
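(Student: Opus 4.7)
I would prove the two parts in sequence, using Part 1 as input to Part 2.

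\textbf{Part 1 (gap-separation estimates).} The plan is to combine the Gap Labelling Theorem with the Hölder hypothesis (H1). If $E\in\overline{G_k(V)}$ and $E'\in\overline{G_{k'}(V)}$ both lie in $[a,b]$, gap labelling gives $N(E)\equiv \la k,\alpha\ra$ and $N(E')\equiv \la k',\alpha\ra \pmod{\Z}$, whence
$$\|\la k-k',\alpha\ra\|_{\T}\leq |N(E)-N(E')|\leq C_{\mathrm{H}}\,|E-E'|^{\sigma}.$$
Minimising over $E,E'$ gives $\|\la k-k',\alpha\ra\|_{\T}\leq C_{\mathrm{H}}\,\mathrm{dist}(G_k,G_{k'})^{\sigma}$. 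The definition of $\beta(\alpha)$ supplies, for any $\tilde{\epsilon}>0$, a constant $c_{\tilde{\epsilon}}>0$ with $\|\la n,\alpha\ra\|_{\T}\geq c_{\tilde{\epsilon}}\,e^{-(\beta+\tilde{\epsilon})|n|}$ for every $n\neq 0$. Solving for $\mathrm{dist}(G_k,G_{k'})$ and rescaling $\tilde{\epsilon}\mapsto\sigma\tilde{\epsilon}$ reproduces \eqref{distance_1}; the boundary estimates \eqref{distance_01} and \eqref{distance_02} follow by the same argument with one gap replaced by the spectrum boundary, using $N(\underline{E})=0$ or $N(\overline{E})=1$.

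\textbf{Part 2 (homogeneity).} Fix $E\in \Sigma_{V,\alpha}\cap[a,b]$ and $\epsilon\in (0,\mathrm{diam}\,\Sigma_{V,\alpha}]$, set $U:=(E-\epsilon,E+\epsilon)$, and write $K_{\epsilon}:=\{k\in\Z^d : G_k(V)\cap U\neq\emptyset\}$. Two distinct gaps meeting $U$ are at distance $<2\epsilon$, so Part 1 forces $K_{\epsilon}$ to be $M_{\epsilon}$-separated in $\Z^d$, where
$$M_{\epsilon}:=\frac{\sigma\,|\log(C_{12}/(2\epsilon))|}{\beta+\sigma\tilde{\epsilon}}.$$
Therefore at most one label $k^{*}\in K_{\epsilon}$ can lie in the ball $|k|\leq M_{\epsilon}/2$, and every other element of $K_{\epsilon}$ satisfies $|k|\geq M_{\epsilon}/2$. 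I would then decompose
$$|U\setminus \Sigma_{V,\alpha}|=\sum_{k\in K_{\epsilon}}|G_k\cap U|\leq |G_{k^{*}}\cap U|+\sum_{k\in K_{\epsilon}\setminus\{k^{*}\}}|G_k(V)|.$$
The first term is at most $\epsilon$ because $E\in\Sigma_{V,\alpha}$ lies outside $G_{k^{*}}$, forcing that single gap entirely to one side of $E$ and occupying at most half of $U$. The second term, via (H2) together with the $M_{\epsilon}$-separation of $K_{\epsilon}$, is dominated by a packing sum of the form $O(M_{\epsilon}^{d-1}e^{-\vartheta M_{\epsilon}/2})$, which in terms of $\epsilon$ becomes $O\bigl((\log 1/\epsilon)^{d-1}\epsilon^{\sigma\vartheta/(2(\beta+\sigma\tilde{\epsilon}))}\bigr)$. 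Hypothesis (H3) $\beta\leq\sigma\vartheta/2$ allows $\tilde{\epsilon}$ to be chosen small enough to push this tail below any prescribed positive fraction of $\epsilon$. Consequently $|U\cap\Sigma_{V,\alpha}|\geq \epsilon-o(\epsilon)\geq \mu_{1}\epsilon$ for all $\epsilon$ below some threshold $\epsilon_{0}(a,b,V,\alpha,\sigma,C,\vartheta,d)$. For $\epsilon\in[\epsilon_{0},\mathrm{diam}\,\Sigma_{V,\alpha}]$, the compactness of $\Sigma_{V,\alpha}\cap[a,b]$, together with the fact that (H2) forces the total gap length inside $[a,b]$ to be strictly less than $|b-a|$, yields a uniform positive $\mu_{2}$ by a continuity-in-$(E,\epsilon)$ argument. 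The final $\mu$ is $\min(\mu_{1},\mu_{2})$.

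\textbf{Main obstacle.} The delicate step is the packing estimate in Part 2: Part 1 produces $M_{\epsilon}$-separation and (H2) produces exponential decay of gap sizes, but only (H3) guarantees that the decay $e^{-\vartheta|k|}$ outruns the combinatorial growth of $M_{\epsilon}$-separated lattice points in successive annuli, making the tail genuinely small compared to $\epsilon$. A subsidiary but essential observation is that $E\in\Sigma_{V,\alpha}$ restricts the dominant gap $G_{k^{*}}$ to contribute at most $\epsilon$ (not $2\epsilon$) to $U\setminus\Sigma_{V,\alpha}$; without this single geometric input, the counting argument would be vacuous.
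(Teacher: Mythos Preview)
Your Part 1 and the core of Part 2 coincide with the paper's proof: Gap Labelling plus H\"older continuity plus the definition of $\beta(\alpha)$ for the separation estimates; isolating one dominant label and summing (H2) over the remaining large labels for the homogeneity. There is, however, one genuine gap in your Part 2.

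Your separation claim on $K_\epsilon$ invokes Part 1, which only applies to gaps whose closures meet $[a,b]$. When $E$ lies close to an endpoint of $[a,b]$ the window $U=(E-\epsilon,E+\epsilon)$ spills past $[a,b]$ for arbitrarily small $\epsilon$, so no threshold $\epsilon_0$ uniform in $E$ can come from your argument as written; and if you simply discard $U\setminus[a,b]$ (cost $\leq\epsilon$) \emph{and} also isolate a dominant gap inside $[a,b]$ (another cost $\leq\epsilon$), the remaining budget is exhausted and the estimate is vacuous. The paper resolves this with an explicit case split. When $U\subset[a,b]$ the argument is yours. When $U$ meets, say, $(-\infty,a)$ with $a=E_m^+$ or $a=\underline{E}$, no dominant gap is singled out at all: every gap $G_k$ with $\overline{G_k}\cap[a,b]\neq\emptyset$ that meets $U$ satisfies $|E_k^--a|\leq 2\epsilon$, so Part 1 applied to the pair $(G_k,G_m)$ (or \eqref{distance_01}) forces $|k|$ to be large once $\epsilon$ is below a threshold depending only on $m$; the entire sum over such $k$ is then $o(\epsilon)$, and the single $\epsilon$ loss is spent on $|U\cap(-\infty,a)|$. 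A smaller point: for $\epsilon\geq\epsilon_0$ the paper uses the monotonicity $|U\cap\Sigma|\geq|(E-\epsilon_0,E+\epsilon_0)\cap\Sigma|\geq\tfrac34\epsilon_0\geq\tfrac{3\epsilon_0}{4\,\mathrm{diam}\,\Sigma}\,\epsilon$ rather than a continuity/compactness argument, which would presuppose the very positivity you are trying to establish.
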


\begin{remark}
Since the decay rate $\vartheta$ is related to the analytic radius of the potential, ${\rm (H3)}$ means that the radius should be relatively large compared to $\beta$. This kind of condition is necessary for the  homogeneity of the spectrum, see counterexamples of Avila-Last-Shamis-Zhou \cite{ALSZ}.
\end{remark}

\begin{proof} For two different gaps $G_k(V)$, $G_{k'}(V)$ with $\overline{G_k(V)}\cap [a,b]\neq \emptyset$ and $\overline{G_{k'}(V)}\cap [a,b]\neq \emptyset$, without loss of generality, we assume that $E_{k}^+\leq E_{k'}^-$. Hence $$\mathrm{dist}(G_k(V),G_{k'}(V) )=E_{k'}^--E_k^+.$$
On the one hand, the $\sigma-$H\"older continuity of $N$ on $[a,b]$ implies
$$|N(E_{k}^-)-N(E_{k'}^+)|\leq c' (E_k^--E_{k'}^+)^{\sigma}$$
for some constant $c'>0$ independent of $E$.
On the other hand, the definition of $\beta=\beta(\alpha)$ means that for any $\tilde\epsilon>0$, there exists $\tilde c=\tilde c(\alpha, \sigma\tilde\epsilon)>0$ such that
$$
|N(E_{k}^-)-N(E_{k'}^+)| \geq \|\la k-k', \alpha \ra \|_{\T} \geq  \tilde c \, e^{-(\beta+\sigma\tilde\epsilon)|k-k'|}.$$
Combining the above estimates, we conclude that
$$\mathrm{dist}(G_k(V),G_{k'}(V) ) \geq  \left(\frac{\tilde c}{c'}\right)^{\frac1\sigma} e^{-(\frac{\beta}{\sigma}+\tilde\epsilon) |k-k'|},$$
which gives (\ref{distance_1}).  The proof of  (\ref{distance_01}) and (\ref{distance_02}) is similar,  we omit the details.

By assumption (H3), we have $D:=\frac{\vartheta\sigma+2\beta}{2\beta}>2$.
Given any $E\in \Sigma_{V, \alpha}\cap [a,b]$ and any $\epsilon>0$,
let $${\CN}={\CN}(E,\epsilon):=\{k \in \Z^d \backslash\{0\}:  G_k(V) \cap (E-\epsilon, E+\epsilon)\neq \emptyset, \;  \overline{G_k(V)} \cap [a,b] \neq \emptyset \},$$ and let $k_0\in {\CN}$ be such that $|k_0|=\min_{k\in {\CN}}|k|$.
By (\ref{distance_1})--(\ref{distance_02}), there exists a constant $C_{12}=C_{12}(V,\alpha,\sigma,\frac{\vartheta}{D}-\frac{\beta}{\sigma})>0$ such that
\begin{eqnarray}
&&{\rm dist}(G_k(V),G_{k_0}(V) )\geq C_{12} e^{-\frac{\vartheta}{D} |k-k_0|}\geq C_{12} e^{-\frac{2\vartheta}{D} |k|},\   \forall \ k\in{\CN}\backslash\{k_0\},\label{dist_1}\\
&&|E_k^--\underline{E}|\geq C_{12} e^{-\frac{\vartheta}{D} |k|},\  \forall \  k\in{\CN}, \quad {\rm if}  \   a=\underline{E},\label{dist_01}\\
&&|E_k^+-\overline{E}| \geq C_{12} e^{-\frac{\vartheta}{D} |k|},\  \forall \  k\in{\CN}, \quad {\rm if}  \   b=\overline{E}.\label{dist_02}
\end{eqnarray}
Since $E \in \Sigma_{V,\alpha}$, it is easy to see that
\begin{align*}
&|G_{k_0}(V) \cap (E-\epsilon,E+\epsilon)|\leq \epsilon,\\
&|(-\infty, \underline{E})\cap(E-\epsilon, E+\epsilon)|\leq \epsilon, \quad {\rm if} \;\ a=\underline{E}, \\
&|(\overline{E}, +\infty)\cap(E-\epsilon, E+\epsilon)|\leq \epsilon,\quad {\rm if} \;\  b=\overline{E}.
\end{align*}

Without loss of generality, assume that $\epsilon< \frac{b-a}{2}$. Then $[a,b] \not\subset (E-\epsilon, E+\epsilon)$. We consider the following three cases.

\textbf{ Case 1. $(E-\epsilon, E+\epsilon)\subset [a,b]$.}    By the definition of ${\CN}$, we have
$${\rm dist}(G_k(V),G_{k_0}(V) )\leq 2\epsilon,\quad \forall \  k\in {\CN}.$$
Combining with (\ref{dist_1}), we get $|k|\geq \frac{D}{2\vartheta}\left|\ln\frac{2\epsilon}{C_{12}}\right|$ for any $k\in{\CN}\backslash\{k_0\}$. Thus,
$$\sum\limits_{k \in {\CN}\backslash \{k_0\}} |G_k(V) \cap (E-\epsilon,E+\epsilon)|
\leq C \sum_{|k|\geq \frac{D}{2\vartheta}\left|\ln\frac{2\epsilon}{C_{12}}\right|} e^{-\vartheta|k|}\leq \epsilon^{\frac{D+2}4},$$
provided that $0<\epsilon\leq\epsilon_1$ for some $\epsilon_1=\epsilon_1(V, \alpha, \sigma, C, \vartheta, d)>0$ (but independent of the choice of $E$).
So we have
\begin{align}
 & |(E-\epsilon,E+\epsilon)\cap \Sigma_{V,\alpha}|\nonumber\\
\geq&\ 2 \epsilon - |G_{k_0}(V)\cap(E-\epsilon, E+\epsilon)|
 -\, \sum_{k \in {\CN}\backslash\{k_0\} } |G_k(V) \cap (E-\epsilon,E+\epsilon)|\nonumber\\
\geq&\ 2\epsilon  - \epsilon -  \epsilon^{\frac{D+2}4}\nonumber\\
\geq&\ \frac34 \epsilon, \quad   \forall \   0<\epsilon\leq \epsilon_1.\label{local_homo_0}
\end{align}

\textbf{ Case 2. $(E-\epsilon, E+\epsilon)\cap (-\infty,a)\neq\emptyset$.}
In this case, one has
$$|E_{k}^--a|\leq 2\epsilon,\quad \forall \  k\in {\CN}.$$
We need to distinguish two cases:
if $a=\underline{E}$, then by  $(\ref{dist_01})$, we get $|k|\geq \frac{D}{\vartheta}\left|\ln\frac{2\epsilon}{C_{12}}\right|$.
If $a=E^{+}_{m}$, then by (\ref{dist_1}), if $\epsilon$ is small enough (the smallness depends on $m$), we have
$$|k|\geq \frac{D}{\vartheta}\left|\ln\frac{2\epsilon}{C_{12}}\right|-|m| \geq  \frac{D}{2\vartheta}\left|\ln\frac{2\epsilon}{C_{12}}\right|.$$
Hence, if $0<\epsilon\leq \epsilon_2$ for some $\epsilon_2=\epsilon_2(a, V, \alpha, \sigma, C, \vartheta, d)$, then we have
$$\sum\limits_{k \in {\CN}} |G_k(V) \cap (E-\epsilon,E+\epsilon)|
\leq C \sum_{k \in {\CN}} e^{-\vartheta|k|}\leq \epsilon^{\frac{D+2}4}.$$
So we have
\begin{align}
 & |(E-\epsilon,E+\epsilon)\cap \Sigma_{V,\alpha}|\nonumber\\
  \geq  &\ |(E-\epsilon,E+\epsilon)\cap \Sigma_{V,\alpha}\cap [a,b]|\nonumber\\
\geq&\ 2 \epsilon - |(-\infty, a)\cap(E-\epsilon, E+\epsilon)|
 -\, \sum_{k \in {\CN} } |G_k(V) \cap (E-\epsilon,E+\epsilon)|\nonumber\\
\geq&\ 2\epsilon  - \epsilon -  \epsilon^{\frac{D+2}4}\nonumber\\
\geq&\ \frac34 \epsilon,\quad \forall  \  0<\epsilon\leq \epsilon_2.\label{local_homo_1}
\end{align}

\textbf{ Case 3. $(E-\epsilon, E+\epsilon)\cap (b,+\infty)\neq\emptyset$.}
Similarly to the above case, there exists $\epsilon_3=\epsilon_3(b, V, \alpha, \sigma, C, \vartheta, d)>0$ such that
\begin{equation}\label{local_homo_2}
|(E-\epsilon,E+\epsilon)\cap \Sigma_{V,\alpha}|\geq \frac34 \epsilon, \quad \forall  \  0<\epsilon\leq \epsilon_3.
\end{equation}

Let $\epsilon_0:=\min\{\frac{b-a}{2}, \epsilon_1,\epsilon_2, \epsilon_3\}$. By (\ref{local_homo_0}) -- (\ref{local_homo_2}), for any $E\in\Sigma_{V,\alpha}\cap [a,b]$, we have
$$|(E-\epsilon,E+\epsilon)\cap \Sigma_{V,\alpha}|\geq \frac34 \epsilon, \quad \forall  \  0<\epsilon\leq \epsilon_0.$$
As for the case $\epsilon\in (\epsilon_0, {\rm diam}\Sigma_{V,\alpha})$, we have
$$|(E-\epsilon,E+\epsilon)\cap \Sigma_{V,\alpha}|\geq |(E-\epsilon_0,E+\epsilon_0)\cap \Sigma_{V,\alpha}|\geq \frac34 \epsilon_0\geq \frac{3\epsilon_0}{4\, {\rm diam}\Sigma_{V,\alpha}} \cdot \epsilon,$$
which completes the proof.
\end{proof}

\subsection{Applications of the criterion}

%According to Theorem \ref{thm_homo_spec}, we see that the H\"older continuity of integrated density of states and the exponential upper bound of spectral gap imply the homogeneity of spectrum.
Let us consider the spectrum of $H_{V,\alpha,\theta}$ with $\alpha\in\R$ satisfying $\beta(\alpha)=0$, and $V\in C^\omega(\T,\R)$. Split $\Sigma_{V,\alpha}$ into $\Sigma^{\rm sup}_{V,\alpha}\cup\Sigma^{\rm sub}_{V,\alpha}$ as presented in Theorem \ref{global-red}.
As an application of Theorem \ref{thm_homo_spec}, we can get the homogeneity of $\Sigma_{V, \alpha}^{\rm sub}=\bigcup_{i} (\Sigma_{V, \alpha}\cap I_i)$ after showing the $\frac12-$H\"older continuity of the integrated density of states $N=N_{V,\alpha}$ in the global subcritical regime.

\begin{prop}\label{thm_holder}
If $\beta(\alpha)=0$, then IDS is $\frac{1}{2}-$H\"older continuous on $I_i$, $1\leq i \leq m$.
\end{prop}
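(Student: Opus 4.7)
The plan is to reduce the global subcritical setting to a small-potential setting via the local-to-global reduction of Section 5, and then appeal to the quantitative almost reducibility established in Section 4. The target is a uniform $\tfrac{1}{2}$-H\"older bound for the rotation number $\rho(\alpha, S_E^V)$ on $I_i$, from which the claim for $N_{V,\alpha}=1-2\rho(\alpha,S^V_{\cdot})$ is immediate.

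First, I would use Proposition \ref{prop_ar_1}: for each $E_0 \in \Sigma_{V,\alpha}^{\mathrm{sub}}\cap I_i$ and any preselected $\eta>0$, we obtain $h_1=h_1(V,\alpha)$ (independent of $E_0$ and of $\eta$), $\Phi_{E_0}\in C^\omega_{h_1}(\T,\mathrm{PSL}(2,\R))$ with $|\Phi_{E_0}|_{h_1}<\Lambda(V,\alpha,\eta,h_1)$, and $V_*=V_*(E_0)\in C^\omega_{h_1}(\T,\R)$ with $|V_*|_{h_1}<\eta$ and $E_*=E_*(E_0)$ such that $\Phi_{E_0}(\cdot+\alpha)^{-1}S_{E_0}^V(\cdot)\Phi_{E_0}(\cdot)=S_{E_*}^{V_*}(\cdot)$. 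By continuity of $\Phi_{E_0}$ in the energy parameter, for every $E$ in a small neighborhood $U(E_0)$ of $E_0$ in $I_i$, one still has
\[
\Phi_{E_0}(\cdot+\alpha)^{-1}S_E^V(\cdot)\Phi_{E_0}(\cdot) = S_{E_*+(E-E_0)}^{V_*}(\cdot)+(E-E_0)R(\cdot),
\]
for some $R \in C^\omega_{h_1}(\T,\mathrm{gl}(2,\R))$ with $|R|_{h_1}\leq C\Lambda^2$. By the invariance of $\rho$ under real conjugacy up to an integer multiple of $\tfrac{1}{2}\langle\deg\Phi_{E_0},\alpha\rangle$ (cf.\ \eqref{rot-conj}), H\"older estimates for $\rho(\alpha,S_E^V)$ on $U(E_0)$ follow from H\"older estimates for the rotation number of this small-potential, mildly-perturbed family.

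Second, the central step is to establish $\tfrac{1}{2}$-H\"older continuity of $\rho$ for small analytic quasi-periodic $\mathrm{SL}(2,\R)$-cocycles over $(\T,\alpha)$ under the sole assumption $\beta(\alpha)=0$. Choosing $\eta$ so that $|V_*|_{h_1}\leq c_0 h_1^3$ (with $c_0$ the absolute constant of Theorem \ref{almostredth}), the family $\{\widehat H_{V_*,\alpha,\theta}\}_\theta$ is almost localized; via Aubry duality and the subexponential growth estimate of Corollary \ref{subexp groowth}, each cocycle $(\alpha,S_{E_*+s}^{V_*})$ is, for energies in the spectrum, conjugate to constant up to an arbitrarily small analytic error on a fixed strip. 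The standard Moser-style perturbation argument then yields $|\rho(\alpha,S_{E_*+s}^{V_*})-\rho(\alpha,S_{E_*}^{V_*})|\leq C|s|^{1/2}$: a parabolic/elliptic constant cocycle perturbed by $O(\delta)$ has its rotation number shifted by at most $O(\sqrt{\delta})$, and almost reducibility transports this bound, uniformly in $E_*$, under $\beta(\alpha)=0$ since no Diophantine denominators enter. Combined with the trivial Lipschitz bound outside the spectrum (where $\rho$ is locally constant on each gap), this gives $\tfrac{1}{2}$-H\"older continuity on $U(E_0)$ with a constant depending only on $V,\alpha$.

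Third, $I_i\cap \Sigma^{\mathrm{sub}}_{V,\alpha}$ is compact by Theorem \ref{global-red}, so finitely many such neighborhoods $U(E_0)$ cover it; the remainder of $I_i\setminus \Sigma_{V,\alpha}$ consists of spectral gaps on which $N$ is constant. Patching the local H\"older estimates with a uniform constant yields the desired global bound on $I_i$.

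The main obstacle is the second step: making the $\sqrt{|s|}$-control of the rotation number uniform in the small-potential regime without a Diophantine assumption on $\alpha$. This is exactly where one needs the almost-localization machinery of Theorem \ref{almostredth} and the subexponential bounds of Corollary \ref{subexp groowth}, since the usual KAM estimates of Section \ref{sec_Quantitative} rely on small divisor bounds that degenerate when $\alpha$ is Liouvillean. In particular, the conjugacy furnished by Aubry duality must be controlled only in terms of the resonance structure of the dual phase, not in terms of $\gamma,\tau$, which is precisely the content of Section 4.
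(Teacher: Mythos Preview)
Your first move---using Proposition~\ref{prop_ar_1} to push the problem into the small-potential regime with $|V_*|_{h_1}\le c_0h_1^3$, uniformly in $E$---is exactly what the paper does. The divergence is in the second step. You propose a \emph{direct} rotation-number argument: conjugate $(\alpha,S^{V_*}_{E_*})$ close to a constant and then invoke that an $O(\delta)$ perturbation of a constant $\mathrm{SL}(2,\R)$ cocycle moves the rotation number by $O(\sqrt{\delta})$. The paper instead goes through the \emph{complex Lyapunov exponent and Thouless formula}: from the almost reducibility of Theorem~3.8 in \cite{A1} one gets a conjugacy $\Psi$ with $|\Psi|_c\le e^{o(n)}$, then applies the diagonal rescaling $D=\mathrm{diag}(d^{-1},d)$ with $d=\varepsilon^{1/4}|\Psi|_c$ so that $W=\Psi D$ satisfies $|W|_c\le C'\varepsilon^{-1/4}$, deduces $L(\alpha,S^V_{E+i\varepsilon})\le C''\varepsilon^{1/2}$, and finally converts this via
\[
L(\alpha,S^V_{E+i\varepsilon})\;=\;\tfrac12\!\int\!\ln\!\Big(1+\tfrac{\varepsilon^2}{(E-E')^2}\Big)\,dN(E')\;\ge\;c'\big(N(E+\varepsilon)-N(E-\varepsilon)\big)
\]
into the exact $\tfrac12$--H\"older bound.

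The gap in your route is the sentence ``almost reducibility transports this bound, uniformly in $E_*$.'' Under $\beta(\alpha)=0$ without a Diophantine condition, the conjugacy $B$ coming from Proposition~\ref{lemma first conjugaison} satisfies $|B|\lesssim e^{\delta n}$ while the residual error is $\lesssim e^{-cn}$; after perturbing the energy by $s$ the conjugated perturbation has size $e^{-cn}+s\,e^{2\delta n}$, and optimizing over $n$ yields $|\rho(E+s)-\rho(E)|\lesssim s^{\frac12-\epsilon}$ for every $\epsilon>0$, with the constant blowing up as $\epsilon\to 0$. That is enough for all applications in Section~\ref{sec_homo} (Theorem~\ref{thm_homo_spec} only needs some $\sigma\in(0,1)$), but it does not prove Proposition~\ref{thm_holder} as stated. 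The rescaling trick in the paper's argument is precisely what absorbs the subexponential growth of $\Psi$ and recovers the sharp exponent~$\tfrac12$; the Thouless-formula detour is not incidental but is what makes the loss-free estimate go through when $\alpha$ is merely $\beta(\alpha)=0$.
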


In the non-perturbative regime considered in Section \ref{subs_duality}, the above result has been shown in \cite{A1} and \cite{AvilaJito}.
In the global subcritical regime, the result was assumed to be known, however,  we could not find a reference in the literature. For completeness, we give a proof in Appendix \ref{Appendix_B}.

\begin{proof}[Proof of Theorem \ref{theo calibration gaps bands}] Let us focus on $E\in \Sigma_{V,\alpha}\cap I_i$ such that the corresponding cocycle is subcritical. Set $I_i:=[a'_i,b'_i]$. By Theorem \ref{global-red},  $a'_i=\underline{E}$ or $E_{m_i}^+$ for some $m_i\in\Z\backslash\{0\}$ and $b'_i=\overline{E}$ or $E_{n_i}^-$ for some $n_i\in\Z\backslash\{0\}$.

Assertion (1) in Theorem \ref{theo calibration gaps bands} was already shown in Corollary \ref{cor_global}.
Since $\alpha\in \R\backslash \Q$ with $\beta(\alpha)=0$,  hypothesis (H2) holds with $C$ and $\vartheta$ given by Corollary \ref{cor_global}. Moreover, (H1) follows from Proposition \ref{thm_holder} with $\sigma=\frac12$, while (H3) holds automatically, since  $\vartheta> 0=\beta(\alpha)$. Hence, by applying Theorem \ref{thm_homo_spec}, we get the assertion (2) of Theorem \ref{theo calibration gaps bands}, and there exists $\mu_i=\mu_i(a'_i,b'_i,V, \alpha)\in (0,1)$, such that
$$| (E-\epsilon,E+\epsilon) \cap \Sigma_{V, \alpha}| > \mu_i\epsilon,\quad \forall \  E\in \Sigma_{V, \alpha}\cap [a'_i,b'_i], \;\  \forall \  0<\epsilon\leq {\rm diam}\Sigma_{V, \alpha}.$$
Taking $\mu_0:=\min_{1\leq i\leq m}\{\mu_i\}$, we can show
assertion (3) of Theorem \ref{theo calibration gaps bands}. Note that the structure of $\Sigma_{V,\alpha}$ is uniquely determined by $V$ and $\alpha$,  thus $\mu_0$  only depends on $V$ and $\alpha$.
\end{proof}

\begin{proof}[Proof of Theorem \ref{theorem_homo_spec}]
For $\alpha \in \mathrm{SDC}$ as defined in \eqref{strongdiophantinecondition}, we have that $\Sigma_{V,\alpha}^{\rm sup}$ is $\mu_1-$homogeneous for some $\mu_1\in(0,1)$ in view of Theorem H in \cite{dgsv}.
In particular, $\alpha \in \mathrm{SDC}$ implies that $\beta(\alpha)=0$.
Combining with the homogeneity of $\Sigma_{V,\alpha}^{\rm sub}$ (Theorem \ref{theo calibration gaps bands} (3)), we can prove directly that $\Sigma_{V,\alpha}=\Sigma_{V,\alpha}^{\rm sub}\cup\Sigma_{V,\alpha}^{\rm sup}$ is $\mu-$homogeneous for  some $0<\mu\leq \min\{\mu_0, \, \mu_1\}$.\end{proof}

As another application of the criterion, we prove the homogeneity of spectrum for the noncritical almost Mathieu operator $H_{\lambda,\alpha,\theta}$.

\begin{proof}[Proof of Theorem \ref{homo-amo}]
By Aubry duality, it is enough for us to consider the case where $0<\lambda<1$.  If $\beta(\alpha)=0$, assertion (1) was shown in Corollary \ref{cor_mathieu_upperbound},  which implies the hypothesis (H2) for $[a,b]=[\underline{E},\overline{E}]$ with $\vartheta= -\frac{\ln \lambda}{4}$, while the hypothesis (H3) holds automatically since $\vartheta>0=\beta(\alpha)$.  Moreover, the $\frac12-$H\"older continuity of the integrated density of states on $[\underline{E},\overline{E}]$ was shown in  Corollary 3.10 of \cite{A1}.
So we can apply Theorem \ref{thm_homo_spec} and get assertions (2) and (3).
\end{proof}

\section{Deift's conjecture -- Proof of Theorem \ref{thm_deift_conjecture}}

To consider Toda lattice equation (\ref{Toda}) or equivalently the Lax pair (\ref{Toda_Lax}),
let us recall some basic notions and results about the almost periodic Jacobi matrix in the work of Sodin-Yuditskii \cite{SY95, SY97}.

Consider a self-adjoint almost periodic Jacobi matrix $J$:
\begin{equation}\label{Jacobi_matrix}
(Ju)_n=a_{n-1}u_{n-1}+b_nu_n+a_n u_{n-1},
\end{equation}
with a compact spectrum $\Sigma=[\inf\Sigma,\sup\Sigma]\setminus\bigcup_{k\in\Z}(E_k^-,E_k^+).$
Assume that $\Sigma$ is homogeneous, and let ${\CJ}_\Sigma$ be the class of reflectionless Jacobi matrices with spectrum $\Sigma$.
Let $\pi_1(\C\backslash\Sigma)$ be the fundamental group of $\C\backslash\Sigma$.
This is a free group admitting a set of generators  $\{c_k\}_{k\in\Z}$, where $c_k$ is a counterclockwise simple loop intersecting $\R$ at $\inf\Sigma-1$ and $\frac12(E_k^{+}+E_k^{-})$.
Then, consider the Abelian compact group
$\pi_*(\C\backslash\Sigma)$ of unimodular characters on $\pi_1(\C\backslash\Sigma)$.
Here a character means a function ${\CK}\colon\pi_1(\C\backslash\Sigma)\rightarrow\T$ satisfying
$$ {\CK}(\gamma_1 \gamma_2)={\CK}(\gamma_1)\, {\CK}(\gamma_2), \quad  \gamma_1, \, \gamma_2 \in \pi_1(\C\backslash\Sigma).$$
An element ${\CK}\in \pi_*(\C\backslash\Sigma)$ is uniquely determined by its action on loops $c_k$, so we can write ${\CK} =({\CK}(c_k))_{k\in\Z}= (e^{2\pi i \tilde{K}_k})_{k\in\Z}$.
%For any ${\CK}\in \pi_*(\C\backslash\Sigma)$, we define ${\CE}={\CE}({\CK}):\pi_1(\C\backslash\Sigma)\to \C$ such that
%\begin{equation}\label{exponential_character}
%{\CE}(c_k)=e^{-{2\pi{\rm i}{\CK}(c_k)}}, \quad k\in\Z.
%\end{equation}

\begin{theorem}[Sodin-Yuditskii \cite{SY97}]\label{theorem_SY97}
There is a continuous one-to-one correspondence between almost periodic Jacobi matrices $J\in {\CJ}_{\Sigma}$ and characters ${\CK}\in\pi_*(\C\backslash \Sigma)$.
\end{theorem}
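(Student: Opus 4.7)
My plan is to realize the correspondence via Dirichlet spectral data, transported to the character group $\pi_*(\C\backslash\Sigma)$ by a generalized Abel map. The main analytic engine will be Widom's theory of character-automorphic Hardy spaces on the domain $\Omega:=\overline{\C}\backslash\Sigma$; homogeneity of $\Sigma$ is decisive here because by Pommerenke--Hasumi it forces the \emph{Direct Cauchy Theorem} (DCT) to hold on $\Omega$, which is precisely what makes the Abel map invertible.

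For the forward map $J\mapsto {\CK}_J$, fix $J\in{\CJ}_\Sigma$ and exploit $(\ref{green_m+-})$ together with the reflectionless condition $\Re G_J(0,0;E+{\rm i}0)=0$ on $\Sigma$ to show that $a_0^2(m_J^++m_J^-)$ has exactly one zero $\mu_k$ in each closed gap $\overline{G_k(V)}$, equipped with a branch sign $\epsilon_k\in\{\pm 1\}$ recording on which sheet of the hyperelliptic double cover of $\Omega$ the corresponding pole of $G_J(0,0;\cdot)$ lies. The Dirichlet divisor $D(J):=\{(\mu_k,\epsilon_k)\}_{k\in\Z}$ is then mapped to a character by the Abel-type map $\mathcal{A}$ built from a basis of holomorphic differentials dual to the loops $c_k$ (equivalently, from harmonic measures of the gaps), giving ${\CK}_J:=\mathcal{A}(D(J))$. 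Injectivity of $J\mapsto{\CK}_J$ reduces, via reconstruction of $m_J^\pm$ from $(\mu_k,\epsilon_k)$ by reflectionlessness and the standard continued-fraction inversion $m_J^\pm\to J$, to injectivity of $\mathcal{A}$ on admissible divisors.

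The main obstacle is the inverse map: given ${\CK}\in\pi_*(\C\backslash\Sigma)$, one must solve the Jacobi inversion problem $\mathcal{A}(D)={\CK}$, and for a generic Widom domain this can fail. I would uniformize $\Omega$ by the disk with covering $\pi\colon\D\to\Omega$ and Fuchsian group $\Gamma\simeq\pi_1(\Omega)$, and work in the character-automorphic Hardy space $H^2({\CK})$ on which $\Gamma$ acts by multiplication by ${\CK}$. Homogeneity of $\Sigma$ yields the DCT, which in turn gives well-behaved reproducing kernels $k^{{\CK}}(\cdot,z_0)$ and Widom's generalized Jacobi inversion, producing a unique admissible divisor $D_{{\CK}}$. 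From $D_{{\CK}}$ the reflectionless product formula reconstructs $G_J(0,0;\cdot)$; splitting the associated Herglotz function gives $m_J^\pm$, and a three-term-recurrence read-off yields $J\in{\CJ}_\Sigma$ with ${\CK}_J={\CK}$. Both the forward uniqueness and the backward existence rest on this same DCT input, which is why homogeneity, and not merely the (weaker) Widom condition, is essential.

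Continuity and almost periodicity then fall out. Continuity of $J\mapsto{\CK}_J$, in the product topology on matrix coefficients versus the compact topology on $\pi_*(\C\backslash\Sigma)$, follows from the implicit function theorem applied to the pole equations defining the $(\mu_k,\epsilon_k)$, with uniformity in $k$ coming from the lower bound on $|\overline{G_k(V)}|$ provided by homogeneity; continuity of $\mathcal{A}$ on divisors is a standard harmonic-measure estimate. Bicontinuity is then automatic from compactness of $\pi_*(\C\backslash\Sigma)$ together with the uniqueness established in the injectivity step. Finally, the shift $S\colon J\mapsto SJS^{-1}$ on ${\CJ}_\Sigma$ corresponds under the bijection to translation of $\pi_*(\C\backslash\Sigma)$ by a fixed character (determined by the harmonic measure at $\infty$), so every orbit is equicontinuously recurrent and the coefficient sequences of any $J\in{\CJ}_\Sigma$ are Bohr almost periodic, which completes the identification of ${\CJ}_\Sigma$ with the almost periodic Jacobi matrices of spectrum $\Sigma$.
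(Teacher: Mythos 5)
The paper does not prove this statement; it is quoted verbatim as an external result from Sodin--Yuditskii \cite{SY97}, so there is no internal proof to compare against. What you have written is a reconstruction of the Sodin--Yuditskii argument itself, and as a sketch of that argument it is broadly faithful: the Dirichlet divisor $(\mu_k,\epsilon_k)_k$ on the hyperelliptic double of $\overline{\C}\backslash\Sigma$, the Abel map into $\pi_*(\C\backslash\Sigma)$ built from harmonic measures of the gaps, the use of Widom's character-automorphic Hardy spaces $H^2(\mathcal K)$ under the Fuchsian uniformization, the role of the Direct Cauchy Theorem in making Jacobi inversion solvable and unique, and the observation that the shift on $\mathcal{J}_\Sigma$ transports to translation by a fixed character (which yields almost periodicity) are all the right ingredients and appear in the right order. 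Your remark that the forward and inverse constructions both hinge on the same DCT input is exactly the structural point that \cite{SY97} emphasizes.

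Two small cautions about the sketch. First, the attribution of ``homogeneity $\Rightarrow$ DCT'' to Pommerenke--Hasumi is imprecise; Hasumi's work concerns DCT for general Widom domains and the implication from Carleson homogeneity is established in the Sodin--Yuditskii circle of papers (and related work of Jones--Marshall and Volberg--Yuditskii), so be careful with the citation if you develop this into a full proof. Second, your bicontinuity argument inverts the natural direction: the clean route is to show that the \emph{inverse} map $\pi_*(\C\backslash\Sigma)\to\mathcal J_\Sigma$ is a continuous bijection from a compact group to the Hausdorff space $\mathcal J_\Sigma$ (equipped with the topology of entrywise convergence, under which $\mathcal J_\Sigma$ is metrizable and in fact compact by the uniform coefficient bounds), and then conclude it is a homeomorphism. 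As stated you first build $J\mapsto\mathcal K_J$ and then invoke compactness of $\pi_*$, which only gives bicontinuity once you also know the forward map is surjective; that is true but should be said explicitly. Also note that the notation $G_k(V)$ is borrowed from the Schr\"odinger setting earlier in the paper; in the Jacobi context of this theorem the gaps are simply the $(E_k^-,E_k^+)$.
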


If we identify the Jacobi matrix $J$ given in (\ref{Jacobi_matrix}) with $(a,b)\in\ell^\infty(\Z)\times\ell^\infty(\Z)$, then by Theorem \ref{theorem_SY97},
there exists a continuous map ${\CH}\colon\T^{\Z}\to \ell^\infty(\Z)\times\ell^\infty(\Z)$ such that for any $J\in {\CJ}_{\Sigma}$ given as in (\ref{Jacobi_matrix}), one can find a unique ${\CK}\in\pi_*(\C\backslash \Sigma)$ such that
\begin{equation}\label{one-to-one_map}
(a,b)={\CH}\left(\left({\CK}(c_k)\right)_{k\in\Z}\right)={\CH}\left(\left(e^{-{2\pi{\rm i}\tilde{K}_k}}\right)_{k\in\Z}\right).
\end{equation}

 \smallskip

Now we consider the Lax pair (\ref{Toda_Lax}) in a more general form. Given any $f\in L^\infty(X,\R)$ with $\Sigma\subset X$, we define the infinite-dimensional matrix $f(J)$ in the sense of standard functional calculus and decompose it into $
f^+(J)+f^-(J)$,
the sum of an upper triangular matrix $f^+(J)$ and a lower triangular matrix $f^-(J)$. We also set
$
M_f(J):=f^+(J)-f^-(J).
$
Then, given an almost periodic Jacobi matrix $J_0\in{\CJ}_{\Sigma}$, we  define the Lax pair
\begin{equation}\label{Toda_Lax_proof}
\frac{d}{dt}J(t)=[M_f(J(t)),J(t)], \quad J(0)=J_0.
\end{equation}
%Theorem 1.3 of Vinnikov-Yuditskii \cite{VY}
\begin{theorem}[Vinnikov-Yuditskii \cite{VY}]\label{theorem_VY}
Assume that $\Sigma$ is homogeneous and the almost periodic Jacobi matrix $J_0\in{\CJ}_\Sigma$ has purely absolutely continuous spectrum.
Given $f \in L^\infty(X,\R)$ with $\Sigma \subset X$, the following holds.
\begin{enumerate}
  \item  There exists a unique solution $J=J(t)$ of (\ref{Toda_Lax_proof}), well-defined for all $t \in \R$. Moreover, for every $t$, $J(t)$ is an almost periodic Jacobi matrix with constant spectrum $\Sigma$.
  \item For $t\in\R$, let ${\CK}^t\in \pi_*(\C\backslash\Sigma)$ be the character corresponding to $J(t)$. There exists a homomorphism $\xi\colon\pi_1(\C\backslash\Sigma)\rightarrow \R$, depending on $f$, such that ${\CK}^t(c_k)={\CK}^0(c_k) \, e^{-2\pi {\rm i} t\xi(c_k)}$.
\end{enumerate}
\end{theorem}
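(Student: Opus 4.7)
The plan is to combine the Lax pair structure of \eqref{Toda_Lax_proof} with Sodin--Yuditskii's parametrization (Theorem \ref{theorem_SY97}) to reduce the nonlinear flow to a linear one on the compact abelian group $\pi_*(\C\backslash\Sigma)$. First, since $J_0$ is bounded and $f\in L^\infty(X,\R)$ with $\Sigma\subset X$, standard functional calculus shows that $f(J_0)$, and hence $M_f(J_0)$, is a bounded operator on $\ell^2(\Z)$. The map $J\mapsto [M_f(J),J]$ is locally Lipschitz on the Banach space of bounded self-adjoint operators (via the spectral mapping theorem and the resolvent identity applied to $f$), so \eqref{Toda_Lax_proof} admits a unique short-time solution $J(t)$. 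The Lax pair structure immediately gives $J(t)=U(t)J_0\,U(t)^{-1}$ for a family of unitaries $U(t)$ solving $\frac{d}{dt}U(t)=M_f(J(t))\,U(t)$, so $J(t)$ is isospectral with $\Sigma(J(t))=\Sigma$ throughout the interval of existence.

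Second, I would show that $J(t)\in{\CJ}_\Sigma$ for every $t$, i.e., that the reflectionless property is preserved. Using the identity \eqref{green_m+-} relating $G_{J(t)}(0,0;z)$ to the Weyl $m$-functions $m_{J(t)}^{\pm}(z)$, one derives from the Lax equation an evolution equation for $m_{J(t)}^{\pm}(z)$ on $\HH$; by Herglotz representation one then checks that the non-tangential boundary values on $\Sigma$ satisfy $m_{J(t)}^+(E+i0)=-\overline{m_{J(t)}^-(E+i0)}$ for Lebesgue-a.e.\ $E\in\Sigma$, which in view of \eqref{green_m+-} is equivalent to $\Re(G_{J(t)}(0,0;E+i0))=0$ a.e.\ on $\Sigma$. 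Thus $J(t)$ is reflectionless, and homogeneity of $\Sigma$ lets us invoke Theorem \ref{theorem_SY97} to assign a unique character ${\CK}^t\in\pi_*(\C\backslash\Sigma)$ to $J(t)$.

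Third --- and this is the heart of the matter --- one must show that $t\mapsto {\CK}^t$ is a linear flow on $\pi_*(\C\backslash\Sigma)$. The coordinates $\tilde K_k^t$ of ${\CK}^t$ can be expressed through Abelian integrals on the (generally infinite-genus) Riemann surface $\mathcal{R}$ of $\sqrt{\prod_k(z-E_k^-)(z-E_k^+)}$, evaluated at the pair of Dirichlet-type divisor points in the $k$-th gap that encode the half-line restrictions of $J(t)$. Differentiating this representation along $t$ and substituting the Lax equation, the gradient of $\tilde K_k^t$ against the motion of the divisor points produces, after a residue calculation on $\mathcal{R}$, a constant
\[
\frac{d}{dt}\tilde K_k^t=\xi(c_k)\in\R,
\]
independent of $t$ and depending only on the class $c_k\in\pi_1(\C\backslash\Sigma)$ and on $f$ (through the values $f(E)$ integrated against the harmonic measure of $\C\backslash\Sigma$ on the $k$-th gap). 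Extending $c_k\mapsto\xi(c_k)$ by the group law yields the required homomorphism $\xi\colon\pi_1(\C\backslash\Sigma)\to\R$, and gives ${\CK}^t(c_k)={\CK}^0(c_k)\,e^{-2\pi i t\xi(c_k)}$. The main obstacle is precisely this linearization: it requires potential-theoretic control on $\mathcal{R}$ and crucially uses the Direct Cauchy Theorem property, which on domains of the form $\C\backslash\Sigma$ with $\Sigma$ homogeneous is guaranteed by the results of \cite{SY97}, to ensure convergence of the (in general infinite) sums of Abelian differentials involved.

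Finally, since ${\CK}^t$ is produced by a globally defined linear flow on the compact group $\pi_*(\C\backslash\Sigma)$, it exists for all $t\in\R$. Composing with the continuous inverse map from Theorem \ref{theorem_SY97}, $J(t)={\CH}\bigl(({\CK}^t(c_k))_{k\in\Z}\bigr)$ is defined on all of $\R$, takes values in bounded almost periodic Jacobi matrices with spectrum $\Sigma$, and depends continuously on $t$. The resulting uniform $\ell^\infty$ bound on the coefficients $(a(t),b(t))$ together with the local uniqueness from step one rules out blow-up and patches the local solutions into a global one; the character formula then directly gives conclusion (2), while the construction via ${\CH}$ gives (1).
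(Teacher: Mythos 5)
The paper does not prove this theorem: it is cited verbatim from Vinnikov--Yuditskii \cite{VY} and used as a black box. So there is no in-paper proof to compare against; what follows is a review of your reconstruction on its own merits.

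Your step one has a genuine gap. You claim that $J\mapsto [M_f(J),J]$ is locally Lipschitz on the space of bounded self-adjoint operators because of the ``spectral mapping theorem and the resolvent identity applied to $f$.'' For $f$ merely in $L^\infty(X,\R)$ this is false: the functional calculus map $J\mapsto f(J)$ need not even be continuous in operator norm. (Take $f=\chi_{(-\infty,0]}$ and a family $J_\epsilon$ whose spectrum slides across $0$; then $f(J_\epsilon)$ jumps.) Thus Picard--Lindel\"of cannot be applied directly, and your local-in-time existence, on which the rest of your argument is scaffolded (isospectrality, propagation of the reflectionless property, and then globalization via the compact torus), collapses at the start. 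The theorem as stated really does allow general $f\in L^\infty$, so one cannot rescue this by assuming $f$ smooth.

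The way \cite{VY} handle this is in the opposite logical order from yours: they first \emph{define} the candidate solution on the character torus $\pi_*(\C\backslash\Sigma)$ as the linear flow ${\CK}^t={\CK}^0\,e^{-2\pi i t\xi(\cdot)}$ (with $\xi$ built from $f$ and the harmonic measures of the gaps, exactly as you suggest in step three), push it through the Sodin--Yuditskii homeomorphism ${\CH}$ to obtain a curve $t\mapsto J(t)\in{\CJ}_\Sigma$, and only then \emph{verify} that this curve satisfies the Lax equation $\dot J=[M_f(J),J]$ in a suitable weak/strong sense. Uniqueness is then established separately by showing that any solution of the Lax pair with $J(0)=J_0\in{\CJ}_\Sigma$ must stay in ${\CJ}_\Sigma$ and have the same character flow. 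This construct-then-verify order is essential precisely because the vector field is not Lipschitz. Your step three correctly identifies the substance of the argument (the infinite-genus Abel map, divisor dynamics, and the crucial role of the Direct Cauchy Theorem for homogeneous $\Sigma$ from \cite{SY97}), but because it is presented as a consequence of a local ODE solution that you have not actually produced, the overall logical architecture does not close.
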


Obviously, with $f(x)=x$ and assuming that all the diagonal elements of $M_f(J)$ vanish, we get the Lax pair (\ref{Toda_Lax}), which is equivalent to the Toda flow (\ref{Toda}).
By the assertion (2) of Theorem \ref{theorem_VY}, combining with (\ref{one-to-one_map}), we get
 $$(a(t),b(t))={\CH}\left(\left(   e^{-2\pi {\rm i} \tilde{K}_k^t}  \right)_{k\in\Z}\right)={\CH}\left(\left(   e^{-2\pi {\rm i} \left[\tilde{K}_k^0 + t \xi(c_k)\right]} \right)_{k\in\Z}\right),$$
which implies the time almost periodicity of solutions of (\ref{Toda}).

\smallskip

Now we are going to prove Theorem \ref{thm_deift_conjecture}.
Let $V\in C^{\omega}(\T,\R)$ be subcritical and $\alpha\in\R\backslash\Q$ with $\beta(\alpha)=0$.
By Kotani's theory \cite{Kot84}, for almost every $\theta\in\T$, for almost every $E$ such that $L_{V,\alpha}(E) = 0$, we have $m^+_{H_{V,\alpha,\theta}}(E) = -\overline{m^-_{H_{V,\alpha,\theta}}}(E)$. It was later improved in Theorem 2.2 of \cite{A1}, where it is shown that the above assertion is true for every $\theta\in\T$.
By (\ref{green_m+-}), we have that $H_{V,\alpha,\theta}$ is reflectionless for every $\theta\in\T$.
Moreover, it follows from Theorem \ref{theo calibration gaps bands} that $\Sigma_{V,\alpha}$ is homogeneous. Thus, by Theorem \ref{theorem_VY}, it is sufficient to verify the purely absolute continuity of spectrum.

\begin{theorem}[Avila \cite{A2}] \label{thm_suncritical_ac}
If $\beta(\alpha)=0$ and $V\in C^{\omega}(\T,\R)$ is subcritical, then the spectrum of the operator $H_{V,\alpha,\theta}$ is purely absolutely continuous.
\end{theorem}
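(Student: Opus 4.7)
The plan is to combine the Almost Reducibility Conjecture (Theorem \ref{arc-conjecture}) with a general criterion that converts almost reducibility of Schr\"odinger cocycles into absolute continuity of the spectral measure. Since $V$ is subcritical, for every $E\in\Sigma_{V,\alpha}$ the cocycle $(\alpha,S_{E}^{V})$ is subcritical in the sense of Avila's global theory, so Theorem \ref{arc-conjecture} applies and yields almost reducibility at every energy in the spectrum. In particular, for every $\delta>0$ and every $E\in\Sigma_{V,\alpha}$, there exist $B_{E}\in C^{\omega}(\T,\mathrm{SL}(2,\R))$ and a constant matrix $A_{*}(E)$ with $|B_{E}(\cdot+\alpha)^{-1}S_{E}^{V}(\cdot)B_{E}(\cdot)-A_{*}(E)|_{\T}<\delta$, which forces the iterates $\mathcal{A}_{n}^{E}$ to grow subexponentially at every energy.

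Next I would use the arithmetic assumption $\beta(\alpha)=0$ to upgrade almost reducibility to a much stronger dynamical statement on a Lebesgue-full subset of $\Sigma_{V,\alpha}$. Splitting energies according to whether $\rho(\alpha,S_{E}^{V})$ is rationally related to $\alpha$ or not, a Borel--Cantelli argument (essentially the same as in Avila--Jitomirskaya's treatment in \cite{AvilaJito} and as already invoked in Corollary \ref{subexp groowth} above) shows that the set of resonant energies in the spectrum has zero Lebesgue measure. Off this exceptional set, almost reducibility promotes to \emph{reducibility} of $(\alpha,S_{E}^{V})$ to a rotation $R_{\theta(E)}$, producing a pair of bounded quasi-periodic Bloch waves solving $H_{V,\alpha,\theta}u=Eu$.

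Finally, I would conclude with the Gilbert--Pearson subordinacy theorem in the formulation of Last--Simon: at every energy where all solutions of the eigenvalue equation are bounded, the spectral measure restricted to a neighborhood cannot have any singular continuous or pure point part. Since bounded solutions exist on a Lebesgue-full subset of the spectrum, and since the Lyapunov exponent vanishes identically on $\Sigma_{V,\alpha}$ (so the singular spectrum, if any, must be supported in $\{E:L(\alpha,S_{E}^{V})=0\}$, and further inside the full-measure subset we control), the spectral measure $\mu_{V,\alpha,\theta}$ is purely absolutely continuous.

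The main obstacle is the upgrade from almost reducibility to reducibility on a full Lebesgue-measure set of energies. ARC as stated only gives approximate conjugations whose analytic norms may blow up as $\delta\to 0$, so one cannot conclude boundedness of eigenfunctions directly. The decisive input is the hypothesis $\beta(\alpha)=0$: it rules out the accumulation of small divisors $\|2\rho(\alpha,S_{E}^{V})-k\alpha\|_{\T}$ that would otherwise obstruct reducibility, and it is the precise arithmetic condition for which Avila's almost localization argument provides enough quantitative control to close the argument. In the presence of $\beta(\alpha)>0$ this mechanism can genuinely fail, which is why the hypothesis is sharp.
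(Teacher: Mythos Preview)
Your outline has a genuine gap in the final step. Subordinacy theory (Gilbert--Pearson, Last--Simon) tells you that the absolutely continuous part of $\mu_{V,\alpha,\theta}$ is supported on the set of energies where no subordinate solution exists, and that this set contains your full-Lebesgue-measure set of non-resonant energies where $(\alpha,S_E^V)$ is reducible to a rotation. It does \emph{not} say that the singular part vanishes. Singular continuous and pure point measures are, by definition, supported on Lebesgue-null sets, so the fact that you control a full-measure subset of $\{L=0\}=\Sigma_{V,\alpha}$ gives no obstruction whatsoever to singular spectrum living on the resonant exceptional set. Your sentence ``at every energy where all solutions are bounded, the spectral measure restricted to a neighborhood cannot have any singular continuous or pure point part'' is simply false: boundedness of solutions at $E$ is a pointwise statement about $E$, not about a neighborhood.

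The paper does not attempt to close this gap via subordinacy. It offers two routes. The first is the global-to-local reduction of Proposition~\ref{prop_ar_1}: conjugate $(\alpha,S_E^V)$ uniformly over $E\in\Sigma_{V,\alpha}$ into a Schr\"odinger cocycle with potential $|V_*|_{h_1}<c_0h_1^3$, and then invoke Avila's result in \cite{A1} for that regime. The proof in \cite{A1} does not rely on reducibility at a.e.\ energy; instead it uses the \emph{quantitative} almost reducibility valid at \emph{every} energy (including resonant ones) to bound the spectral measure of intervals directly, which is what actually kills the singular part. The second route is inverse spectral: once one has homogeneity of the spectrum (Theorem~\ref{theo calibration gaps bands}), finite total gap length, and the reflectionless property (Kotani theory, valid for all $\theta$ by \cite{A1}), the result of Gesztesy--Yuditskii \cite{GY} yields pure absolute continuity. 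Either of these supplies the missing ingredient that your subordinacy argument cannot.
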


Theorem \ref{thm_suncritical_ac} was proved from the viewpoint of dynamics.
Roughly speaking, in view of  Theorem \ref{global-red}, we can transform the corresponding Schr\"odinger cocycle into the ``non-perturbative regime" (Proposition \ref{prop_ar_1}), for which the purely absolute continuity has been shown in \cite{A1}.

Theorem \ref{thm_suncritical_ac} can also be shown by inverse spectral theory.
Assuming  finite total gap length, homogeneity of the spectrum together with the reflectionless condition, Gesztesy-Yuditskii \cite{GY} have shown that the corresponding spectral measure is purely absolutely continuous. Then, Theorem \ref{thm_suncritical_ac} follows from  Theorem \ref{theo calibration gaps bands}.
\appendix

\section{Proof of Theorem \ref{thm_almost_almost-1}}\label{Appendix_A}

Given $\theta \in \R$ and $\epsilon_0>0$, we denote by $\{n_l\}_l$  the  set of $\epsilon_0-$resonances of $\theta$, i.e.,
$$
\|2 \theta - n_l \alpha\|_\T \leq e^{-\epsilon_0|n_l|},\quad \text{and}\quad \|2 \theta - n_l \alpha\|_\T=\min_{|m|\leq |n_l|} \|2 \theta - m \alpha\|_\T.
$$

%\begin{lemma}\label{lemma technique almost loc}
%Fix $C_0>1$, $\eta>0$. For any $\delta\in (0,\ln \lambda)$, and
%for $|j|$ large enough,
%\begin{equation}\label{decay exp lyap}
%|u_{j}|\leq e^{-(\ln \lambda -\delta)|j|},\quad \forall \ 2C_0 |n_l|+\eta|n_{l+1}| < |j| < (2C_0)^{-1} |n_{l+1}|.
%\end{equation}
%\end{lemma}
%\proof
Let $\lambda>1$. By \cite{AvilaJito}, the family $\{H_{\lambda,\alpha,\theta}\}_\theta$ is almost localized. Fix $\theta \in \R$, and let $u=(u_j)_{j\in\Z}$ be a generalized solution to $ H_{\lambda,\alpha,\theta}  u= E  u$, with $ u_0=1$ and $| u_{j}|\leq 1$ for all $j \in \Z$.
%The explicit estimates on the decay rate of $u$
%come from  the proof of almost localization in \cite{AvilaJito}.
Given an interval $I=[i_1,i_2] \subset \Z$ of length $N\geq 0$, we denote by $G_I$ the Green's function $(x,y)\mapsto (H_{ \lambda,\alpha,\theta}-E)^{-1}(x,y)$ restricted to $I$ with zero boundary conditions at $i_1-1$ and $i_2+1$. Then for any $j \in I$, we have
\begin{equation}\label{exrpe green}
 u_j=-G_I(i_1,j)  u_{i_1-1}-G_I(j,i_2)  u_{i_2+1}.
\end{equation}
Let us denote by $P_m(\theta)$ the upper-left coefficient of the $m^{th}$ iterate $(m\alpha,\mathcal{A}_{m}(E))$ of the cocycle $(\alpha,S_E^\lambda)$. Then by Cramer's rule, we have
\begin{equation*}\label{cramerrule}
|G_I(i_1,j)|=\left| \frac{P_{i_2-j}(\theta+(j+1)\alpha)}{P_N(\theta+ i_1 \alpha)}\right|,\quad |G_I(j,i_2)|=\left| \frac{P_{j-i_1}(\theta+i_1 \alpha)}{P_N(\theta+ i_1 \alpha)}\right|.
\end{equation*}
Given $\xi > 0$ and $m\in \N$, we say that $y \in \Z$ is $(\xi,m)-$\textit{regular} if there exists an interval $J=[x_1,x_2]\subset \Z$ of length $m$ such that $y \in J$ and
$$
|G_{J}(y,x_i)| < e^{-\xi |y-x_i|},\quad |y-x_i| \geq \frac{1}{7}m,\quad i=1,2.
$$
%Otherwise $y$ is called $(\xi,\ell)-$\textit{singular}.

Recall that $L(\alpha,S_E^\lambda)=\ln \lambda$ for any energy $E\in \Sigma_{\lambda,\alpha}$. By subadditivity, %(see Remark 2.1 in \cite{AvilaJito}),
for any $\eta>0$, any $E' \in \Sigma_{\lambda,\alpha}$, and for  $m \geq 0$  large enough, we have %there exists $C=C(\eta)>0$ such that
%\begin{equation}\label{lyapu growth}
$|\mathcal{A}_{m}(E')|_\T \leq e^{ (\ln \lambda- \eta)m}$. In particular,  $P_m(\theta) \leq e^{ (\ln \lambda- \eta)m}$.

%Given $C_0,\epsilon_0>0$, let us denote by $\{n_j\}_{1\leq j\leq J}$ the set of $\epsilon_0-$resonances of $\theta$; in particular, $n_J=n$.
Let $(q_i)_{i \geq 1}$ be the sequence of denominators of best approximants of $\alpha$. We associate with any integer $C_0 |n_l| < |j| < C_0^{-1} |n_{l+1}|$ scales $\ell \geq 0$ and $s\geq 1$ so that
$$
2 s q_\ell \leq\zeta j <   \min(2(s+1)q_\ell,2q_{\ell+1}),
$$
where $\zeta:=\frac{1}{32}$ if $2|n_l|< j < 2^{-1} |n_{l+1}|$, and $\zeta:=\frac{C_0-1}{16C_0}$ otherwise.
We set
\begin{itemize}
\item $I_1:=[-2s q_\ell+1,0]$ and $I_2:=[j-2 s q_\ell +1,j+2 sq_\ell]$ if $j<|n_{l+1}|/3$, $n_{l}\geq 0$.
\item $I_1:=[1,2s q_\ell]$ and $I_2:=[j-2 s q_\ell +1,j+2 sq_\ell]$ if $j<|n_{l+1}|/3$ and $n_{l}<0$.
\item $I_1:=[-2s q_\ell+1,2s q_\ell]$ and $I_2:=[j-2 s q_\ell +1,j]$ if $|n_{l+1}|/3\leq j< |n_{l+1}|/2$.
\item $I_1:=[-2s q_\ell+1,2s q_\ell]$ and $I_2:=[j+1,j+2 sq_\ell]$ if $j\geq |n_{l+1}|/2$.
\end{itemize} In particular, the total number of elements in $I_1 \cup I_2$ is $6 sq_\ell$.
Fix $\delta>0$ arbitrary. If $\epsilon_0>0$ is chosen sufficiently small, %and since $\theta\equiv \la n \ra\ \mathrm{mod}\ \Z/2$ is non-resonant,
then in view of $\beta(\alpha)=0$, Lemma 5.8 in \cite{AvilaJito} implies that there exists an integer $j_0=j_0(C_0,\alpha,n,\delta)>0$ such that for $j> j_0$, the set $\{\theta_m:=\theta+m \alpha\}_{m \in I_1 \cup I_2}$ is $\delta-$\textit{uniform}, i.e.,
$$
\max\limits_{z \in [-1,1]}\max\limits_{m \in I_1 \cup I_2} \prod\limits_{m\neq p\in I_1 \cup I_2} \frac{|z-\cos (2 \pi \theta_p)|}{|\cos (2 \pi \theta_m)-\cos(2 \pi \theta_p)|}< e^{(6sq_\ell-1)\delta}.
$$
Following the proof of Lemma 5.4 in \cite{AvilaJito}, we conclude that
%By Lemma 5.3 in \cite{AvilaJito}, for any $0<\delta<1/2$, $\epsilon>0$, there exist $N_0=N_0(\lambda,\alpha,\delta) \in \N$ and $\sigma=(\lambda,\alpha,\delta)>0$ such that if $y \in \Z$ is $(-\ln |\lambda|-\epsilon,N)-$singular for some $N>N_0$, then for any $x \in \Z$ such that $y-(1-\delta)N \leq x \leq y - \delta N$, we have $|P_N(\theta+x \alpha)|\leq e^{(N+1)(-\ln |\lambda|-\sigma)}$.
for any $\eta>0$, there exists $j_1=j_1(C_0,\alpha,\lambda,\eta)>0$ such that any $j> j_1$ is $(\ln \lambda-\eta,6 s q_\ell-1)-$regular.

%Fix $\eta>0$, and let $\ell>0$ be chosen sufficiently large such that $q_\ell \geq j_1$.

\begin{proof}[Proof of Theorem \ref{thm_almost_almost-1} (2)] We consider the case that $\theta$ is $\epsilon_0-$resonant.
We will show that the sequence $( u_j)_j$ decays exponentially in some suitable interval between two consecutive resonances, with a rate close to the Lyapunov exponent $L(\alpha,S_E^{\lambda})=\ln\lambda$.
By the condition $\beta(\alpha)=0$, we know that $|n_l|=o(|n_{l+1}|)$. Let us fix some small $\eta>0$. Given $l>0$ sufficiently large, take $\ell>0$ such that $2q_\ell \leq \zeta(2C_0 |n_l|+1)<2q_{\ell+1}$, and let $2C_0 |n_l|+\eta|n_{l+1}| \leq |j| \leq (2C_0)^{-1} |n_{l+1}|$. We set $b_l:=2 C_0 |n_l|+1$.
Then
for any $y \in [b_l,2j]$, there exists an interval $I(y)=[x_1,x_2]\subset \Z$ %\subset [-4j,4j]$
%of length $6sq_\ell-1$
with $y \in I(y)$ and
$$
\mathrm{dist}(y,\partial I(y))\geq \frac{1}{7}|I(y)|\geq \frac{6 q_{\ell}-1}{7}\geq \frac{q_{\ell}}{2},
$$
where $\partial I(y):=\{x_1,x_2\}$, and such that
$$
|G_{I(y)}(y,x_i)|\leq e^{-(\ln \lambda-\eta)|y-x_i|}\leq e^{-(\ln \lambda-\eta)\frac{q_{\ell}}{2}},\quad i=1,2.
$$
For $z \in \partial I(y)$,  we denote by $z'$ the neighbour of $z$ not belonging to $I(y)$. If $x_2+1<2 j$ or $x_1-1 >b_l$, we can expand $ u_{x_2+1}$ or $u_{x_1-1}$ following \eqref{exrpe green}, with $I=I(x_2+1)$ or $I=I(x_1-1)$. We continue to expand each term until we arrive to $\widetilde z$ such that either $\widetilde z\leq b_l$, or  $\widetilde z> 2j$, or the number %$s+1$
of $G_I$ terms in the following product becomes $\lfloor \frac{2j}{q_{\ell}}\rfloor$, whichever comes first:
$$
 u_j=\sum\limits_{r,\ z_{i+1} \in \partial I(z_i')} G_{I(j)}(j,z_1)G_{I(z_1')}(z_1',z_2)\dots G_{I(z_r')}(z_r',z_{r+1}) u_{z_{r+1}'}.
$$
In the first two cases, we estimate
\begin{align*}
&|G_{I(j)}(j,z_1)G_{I(z_1')}(z_1',z_2)\dots G_{I(z_r')}(z_r',z_{r+1}) u_{z_{r+1}'}|\\ \leq\ &e^{-(\ln \lambda-\eta)(|j-z_{1}|+\sum_{i=1}^r |z_i'-z_{i+1}|)}\\
\leq\ &e^{-(\ln \lambda-\eta)(|j-z_{r+1}|-(r+1))} \\
\leq\ &\max\big(e^{-(\ln \lambda-\eta)(j-b_l-\frac{2j}{q_{\ell}})},e^{-(\ln \lambda-\eta)(2j-j-\frac{2j}{q_{\ell}})}\big),\\
\leq\ &e^{-(\ln \lambda-\eta)(j+o(j))},
\end{align*}
where we have used that $|b_l| = o(|j|)$, while in the third case, we have
$$
|G_{I(j)}(j,z_1)G_{I(z_1')}(z_1',z_2)\dots G_{I(z_r')}(z_r',z_{r+1}) u_{z_{r+1}'}|\leq e^{-(\ln \lambda-\eta)\frac{q_{\ell}}{2}\lceil \frac{2j}{q_{\ell}}\rceil}.
$$
Fix $\delta>0$ arbitrarily small.
By taking $|j|$ to be sufficiently large, resp. $\eta$ small enough in the previous expression, we conclude that $| u_j| \leq e^{-(\ln \lambda-\delta) |j|}$ for $|j|$ large enough with $2C_0 |n_l|+\eta|n_{l+1}| \leq |j| \leq (2C_0)^{-1} |n_{l+1}|$.\end{proof}

\begin{proof}[Proof of Theorem \ref{thm_almost_almost-1} (1)]
We consider the other case, i.e., when $\theta$ is not $\epsilon_0-$resonant. Denote by $n$ its last $\epsilon_0-$resonance, set $b:=2 C_0 |n|+1$ and let $|j | \geq b$. Let us fix some small $\eta>0$.
Then
for any $y \in [b,2j]$, there exists an interval $I(y)=[x_1,x_2]\subset \Z$ %\subset [-4j,4j]$
%of length $6sq_\ell-1$
with $y \in I(y)$ and
$$
\mathrm{dist}(y,\partial I(y))\geq \frac{1}{7}|I(y)|\geq \frac{6 q_{\ell}-1}{7}\geq \frac{q_{\ell}}{2},
$$
where $\partial I(y):=\{x_1,x_2\}$, and such that
$$
|G_{I(y)}(y,x_i)|\leq e^{-(\ln \lambda-\eta)|y-x_i|}\leq e^{-(\ln \lambda-\eta)\frac{q_{\ell}}{2}},\quad i=1,2.
$$
As previously, we can expand $ u_{x_2+1}$ or $u_{x_1-1}$ following \eqref{exrpe green}, with $I=I(x_2+1)$ or $I=I(x_1-1)$. We continue to expand each term until we arrive to $\widetilde z$ such that either $\widetilde z\leq b$, or  $\widetilde z> 2j$, or the number %$s+1$
of $G_I$ terms in the following product becomes $\lfloor \frac{2j}{q_{\ell}}\rfloor$, whichever comes first:
$$
 u_j=\sum\limits_{r,\ z_{i+1} \in \partial I(z_i')} G_{I(j)}(j,z_1)G_{I(z_1')}(z_1',z_2)\dots G_{I(z_r')}(z_r',z_{r+1}) u_{z_{r+1}'}.
$$
In the first two cases, we estimate
\begin{align*}
&|G_{I(j)}(j,z_1)G_{I(z_1')}(z_1',z_2)\dots G_{I(z_r')}(z_r',z_{r+1}) u_{z_{r+1}'}|\\ \leq\ &e^{-(\ln \lambda-\eta)(|j-z_{1}|+\sum_{i=1}^r |z_i'-z_{i+1}|)}\\
\leq\ &e^{-(\ln \lambda-\eta)(|j-z_{r+1}|-(r+1))} \\
\leq\ &\max\left(e^{-(\ln \lambda-\eta)(j-b-\frac{2j}{q_{\ell}})},e^{-(\ln \lambda-\eta)(2j-j-\frac{2j}{q_{\ell}})}\right),\\
\leq\ &e^{-(\ln \lambda-\eta)(j+o(j))},
\end{align*}
while in the third case, we have
$$
|G_{I(j)}(j,z_1)G_{I(z_1')}(z_1',z_2)\dots G_{I(z_r')}(z_r',z_{r+1}) u_{z_{r+1}'}|\leq e^{-(\ln \lambda-\eta)\frac{q_{\ell}}{2}\lfloor \frac{2j}{q_{\ell}}\rfloor}.
$$
Fix $\delta>0$ arbitrarily small.
By taking $|j|$ be sufficiently large, resp. $\eta$ small enough in the previous expression, we conclude that  $| u_j| \leq e^{-(\ln \lambda-\delta) |j|}$ for $|j|$ large enough.
\end{proof}

\section{Proof of Proposition \ref{thm_holder}}\label{Appendix_B}

The proof follows Theorem 1.6 of \cite{AvilaJito} (see also Corollary 3.10 of \cite{A1}),  the key points are the quantitative almost reducibility results and Thouless formula.

If $\beta(\alpha)=0,$ then by Proposition \ref{prop_ar_1} (see also Corollary \ref{prop_duality_global}), there exists $0<h_1=h_1(V,\alpha)<1$, such that for any  $E\in\Sigma_{V,\alpha}\cap I_i$, $1\leq i \leq m$,  there exists $\Phi_E\in C^\omega(\T,{\rm PSL}(2,\R))$ with $|\Phi_E|_{h_1}<\Lambda=\Lambda(V,\alpha, c_0 h_1^{3}, h_1)$, $E_*=E_*(E)$ locally constant, and $V_{*}=V_*(E) \in C^\omega_{h_1}(\T,\R)$, $|V_*|_{h_1}<c_0 h_1^{3}$, such that
$$
\Phi_E(\cdot+\alpha)^{-1} S_E^V(\cdot) \Phi_E(\cdot)=S_{E_*}^{V_*}(\cdot),
$$
where $c_0> 0$ is the absolute constant given in Theorem \ref{almostredth}. In particular, the family $\{\widehat H_{V_*,\alpha,\theta}\}_{\theta\in \T}$ is almost localized.

% there is $\Phi_E\in C^\omega(2\T,{\rm SL}(2,\R))$ which conjugates the cocycle $(\alpha, S_{E}^{V})$ to $(\alpha, S_{E_*}^{V_*})$ in the non-perturbative regime considered in \cite{A1} and \cite{AvilaJito}, that is, with our choice of $\epsilon$, such that $|V_*|_{\epsilon}< c_0  \epsilon^{3}$

Therefore, by %Proposition \ref{lemma first conjugaison},
Theorem 3.8 of \cite{A1},
 there exist a phase $\theta'=\theta'(E) \in \T$ and positive constants $C=C(\alpha,h_1)$,  $c=c(\alpha,h_1)$, $\epsilon_0=\epsilon_0(h_1)$ such that the following is true.
Let $\{n_j\}_{j}$ be the set of $\epsilon_0-$resonances of  $\theta'$, ordered in such a way that $|n_j| \leq |n_{j+1}|$. For any small $\varepsilon>0$, take $j$ such that $e^{-c{\CN}}\leq\varepsilon\leq e^{-o(n)}$, with $n:=|n_j|+1$ and ${\CN}:=|n_{j+1}|$ (if defined, otherwise ${\CN}:=+\infty$).
%Then one can find $B\in C_c^{\omega}(\T, {\rm PSL}(2,\C))$ satisfying $|B|_c\lesssim n^C$ such that
%$$B(\cdot+\alpha)^{-1} S_{E}^{V_*}(\cdot) B(\cdot)=
%\begin{pmatrix}
%e^{2 \pi{\rm i}\theta'} & 0 \\
%0 & e^{-2 \pi{\rm i}\theta'}
%\end{pmatrix}
%+
%\begin{pmatrix}
%q_1(\cdot) & q(\cdot) \\[1mm]
%q_3(\cdot) & q_4(\cdot)
%\end{pmatrix},$$
%with  $|q_1|_{c},\, |q_3|_{c},\, |q_4|_{c}\leq  Ce^{-c{\CN}}$ and $|q|_{c}\leq Ce^{-cn}$.
By composing $\Phi_E$ with the conjugacy $B$ given by
%Proposition \ref{lemma first conjugaison},
Theorem 3.8 of \cite{A1},
 and noting that $\Phi_E$ is uniformly bounded, we get $\Psi:=\Phi_E B\in C_c^\omega(\T, {\rm PSL}(2,\C))$
satisfying $|\Psi|_{c}\leq e^{o(n)}$, such that
$$\Psi(\cdot+\alpha)^{-1} S_{E}^{V}(\cdot) \Psi(\cdot)=
\begin{pmatrix}
e^{2 \pi{\rm i}\theta'} & 0 \\
0 & e^{-2 \pi{\rm i}\theta'}
\end{pmatrix}
+
\begin{pmatrix}
q_1(\cdot) & q(\cdot) \\[1mm]
q_3(\cdot) & q_4(\cdot)
\end{pmatrix},$$
with  $|q_1|_{c},\, |q_3|_{c},\, |q_4|_{c}\leq  Ce^{-c{\CN}}$ and $|q|_{c}\leq Ce^{-cn}$.
Let $D:=\begin{pmatrix}
d^{-1} & 0\\
0 & d
\end{pmatrix}$ with $d:= \varepsilon^{\frac14} |\Psi|_c$, and set $W:=\Psi D\in C_c^{\omega}(2\T, {\rm SL}(2,\C))$. It follows from the bounds on $\Psi$ and $\varepsilon$ that $|W|_{c}\leq C'\varepsilon^{-\frac14}$ for some uniform constant $C'>0$. Hence, for
$$U_\varepsilon(\cdot):=W(\cdot+\alpha)^{-1} S_{E+\mathrm{i}\varepsilon}^{V}W(\cdot)=W(\cdot+\alpha)^{-1} \left[S_{E}^{V}(\cdot)+  \begin{pmatrix}
\mathrm{i} \varepsilon & 0\\
0 & 0
\end{pmatrix}\right ]W(\cdot),$$
we get $|U_\varepsilon|_{c}\leq 1+ C''\varepsilon^{\frac12}$ for some uniform constant $C''>0$.
As a result, we obtain the following estimate on the Lyapunov exponent:
\begin{equation}\label{distance_subcritical}
L(\a, S^V_{E+i \varepsilon})= L(\alpha,U_\varepsilon)\leq \ln |U_\varepsilon|_c \leq C'' \varepsilon^{\frac12}.
\end{equation}
The above conclusions are similar to Theorem 4.4 and Corollary 4.6 in \cite{AvilaJito}, and we refer to them for more details.

On the other hand, by Thouless formula,  there exists a constant $c'>0$ such that for any $\varepsilon>0$,
\begin{align*}
L(\a, S^V_{E+i \varepsilon})&= L(\a, S^V_{E+i \varepsilon})-L(\a, S^V_{E})\\ &=\frac12 \int \ln \left(1+\frac{\varepsilon^2}{(E-E')^2}\right)dN_{V,\alpha}(E')\\ &\geq c'(N_{V,\alpha}(E+\varepsilon)-N_{V,\alpha}(E-\varepsilon)).
\end{align*}
Combining the last estimate with \eqref{distance_subcritical}, we deduce that
$$N_{V,\alpha}(E+\varepsilon)-N_{V,\alpha}(E-\varepsilon)\leq C'' c'^{-1}\varepsilon^{\frac12}$$ for
$E\in \Sigma_{V,\alpha}\cap I_i$ and $0<\varepsilon<1$ such that $[E-\varepsilon,E+\varepsilon]\subset I_i$.
%Therefore, for $E\in\Sigma_{V,\alpha}\cap I_i$, there exists $\tilde\varepsilon=\tilde\varepsilon(E)>0$ such that $N$ is uniformly $\frac12-$H\"older on $[E-\tilde\varepsilon,E+\tilde\varepsilon]$.
%By the compactness of the spectrum, we can cover $\Sigma_{V,\alpha}$ it by a finite number of such intervals, and we conclude that the integrate density of states $N$ is uniformly $\frac12-$H\"older on $\Sigma_{V,\alpha}\cap I_i$.
Since $N_{V,\alpha}$ is locally constant on the complement of $\Sigma_{V,\alpha}$, we have that $N_{V,\alpha}$ is $\frac12-$H\"older on $I_i$.

\section{Acknowledgements}
We would like to thank A. Avila, M. Goldstein and S. Jitomirskaya for useful discussions. During his PhD, M. Leguil was supported by IMJ-PRG/Universit\'e Paris 6/7 and by ``ANR-15-CE40-0001-03" for the project ``BEKAM". J. You  was partially supported by NSFC grant (11471155) and
973 projects of China (2014CB340701). Z. Zhao was supported by ANR grant ``ANR-15-CE40-0001-03" for the project ``BEKAM". Q. Zhou was partially supported by "Deng Feng Scholar Program B" of Nanjing University, Specially-appointed professor programe of Jiangsu province and NSFC grant (11671192).

\end{document}